\theoremstyle{plain}
\newtheorem{corollary}[subsection]{Corollary}
\newtheorem{lemma}[subsection]{Lemma}
\newtheorem{proposition}[subsection]{Proposition}
\newtheorem{conjecture}{Conjecture}
\newtheorem{theorem}[subsection]{Theorem}
\theoremstyle{definition}
\newtheorem{definition}[subsection]{Definition}
\newtheorem{example}[subsection]{Example}
\newtheorem{remark}[subsection]{Remark}
\newcommand\nc{\newcommand}
\nc\on{\operatorname}
\nc\renc{\renewcommand}
\nc\ssec{\subsection}
\nc\sssec{\subsubsection}
\nc{\mf}{\mathfrak}
\nc\N{{\mathbf N}}
\nc\Z{{\mathbf Z}}
\nc\Q{{\mathbf Q}}
\nc\R{{\mathbf R}}
\nc\C{{\mathbf C}}
\nc\T{{\mathbf T}}
\nc\D{{\mathbf D}}
\nc{\B}{{\mathbf B}}
\nc{\G}{{\mathbf G}}
\nc\mO{{\mathcal O}}
\nc\oP{\overline{P}}
\nc\oB{\overline{B}}
\nc\oN{\overline{N}}
\nc{\tG}{\widetilde{G}}
\nc{\mc}{\mathcal }
\nc{\bb}{\mathbb }
\nc{\HH}{\mathrm{H}}
\nc{\eps}{\varepsilon}
\nc\F{{\mathbb F}}
\nc{\Hom}{\on{Hom}}
\nc{\Aut}{\on{Aut}}
\nc{\End}{\on{End}}
\nc{\Spec}{\on{Spec}}
\nc{\Reg}{\on{Reg}}
\nc{\Specm}{\on{Specm}}
\nc{\Spl}{\on{Spl}}
\nc{\Gal}{\on{Gal}}
\nc{\Cl}{\on{Cl}}
\nc{\GL}{\on{GL}}
\nc{\SL}{\on{SL}}
\nc{\Frob}{\on{Frob}}
\nc{\Mod}{\on{Mod}}
\nc{\Ind}{\on{Ind}}
\nc{\fs}{\on{fs}}
\nc{\la}{\on{la}}
\nc{\Lie}{\on{Lie}}
\nc{\Inf}{\on{Inf}}
\nc{\supp}{\on{supp}}
\nc{\Rep}{\on{Rep}}
\nc{\Spf}{\on{Spf}}
\nc{\Tor}{\on{Tor}}
\nc{\ps}{\on{ps}}
\nc{\lalg}{\on{lalg}}
\nc{\Sp}{\on{Sp}}
\nc{\weight}{\on{wt}}
\nc{\cyc}{\on{cyc}}
\nc{\CE}{\on{CE}}
\nc{\cts}{\on{cts}}
\nc{\ip}[1]{\langle #1 \rangle}
\nc{\ips}[1]{\langle #1,#1 \rangle}
\nc{\cis}{\cos\theta + i \sin\theta}
\nc{\Real}{\on{Re}}
\nc{\Imag}{\on{Im}}
\nc{\Res}{\on{Res}}
\nc{\TODO}{\textcolor{red}}
\nc\A{{\mathbb A}}
\nc\BP{{\mathbf P}}
\nc{\Proj}{\on{Proj}}
\nc\fa{{\mathfrak a}}
\nc\fp{{\mathfrak p}}
\nc\fq{{\mathfrak q}}
\nc\fm{{\mathfrak m}}
\nc\pt{\mathrm{pt}}
\nc{\bd}{\mathbf{d}}
\nc{\disc}{\on{disc}}
\nc{\Tr}{\on{Tr}}
\nc{\WD}{\on{WD}}
\nc{\rec}{\on{rec}}
\nc{\Art}{\on{Art}}
\nc{\tri}{\on{tri}}
\nc{\RHom}{\on{RHom}}
\nc{\Ext}{\on{Ext}}
\nc{\rbar}{\ol{r}}
\nc{\rhobar}{\ol{\rho}}
\nc\K{\mathbf{K}}
\nc{\Ban}{\on{Ban}}
\nc\ol{\overline}
\nc{\ul}{\underline}
\nc\wt{\widetilde}
\nc{\wh}{\widehat}
\nc{\one}{{\mathbf{1}}}
\nc{\id}{\mathrm{id}}
\nc{\uHom}{\ul\Hom}
\nc{\tHom}{\ul\uHom}
\nc\E{{\mathbb E}}
\nc\CC{{\mathcal C}}
\nc\CO{{\mathcal O}}
\nc\abs[1]{| #1 |}
\title{Eigenvarieties over CM fields and trianguline representations}
\author{Vaughan McDonald}
\date{}
\begin{document}

\maketitle
\begin{abstract}
    We show that the Galois representations associated to points on certain (derived) eigenvarieties for $\GL_n$ over a CM field are trianguline with the expected Sen weights, verifying an analogue of a conjecture of Hansen in many cases. The proof follows the strategy of passing to a larger unitary group $\wt{G}$ of signature $(n,n)$, where the key new input is an analytic continuation result for an eigenvariety for $\wt{G}$ localised at an Eisenstein maximal ideal. We also discuss the (subtle) relation of eigenvarieties for $\GL_n$ with the trianguline variety.
\end{abstract}
\setcounter{tocdepth}{1}
\tableofcontents

\section{Introduction}
The purpose of this article is to clarify the relation between families of finite slope $p$-adic automorphic forms, or so-called \textit{eigenvarieties}, and Galois representations. More precisely we show that points on certain eigenvarieties for the group $\GL_n$ over a CM field first constructed by Fu in \cite{FuDerived} have associated Galois representations which are \textit{trianguline} at $p$, confirming an analogue of a conjecture of Hansen \cite[Conjecture 1.2.2]{Han17}. 

To explain our result, we need some notation. See \cref{subsection: definingEigenvarietiesAndGaloisRepresentations} for more definitions and constructions. Let $F$ be an imaginary CM field. Fix a sufficiently small tame level $K^p \subset \GL_n(\A_F^{p,\infty})$, and let $\T(K^p)$ be a ``big'' spherical Hecke algebra as defined in \cite[2.1.10]{GeeNewton}. Let $\mf{m}\subset \T(K^p)$ be a maximal ideal which is non-Eisenstein. For any $i\ge 0$ we can construct from this data a rigid analytic space $\mc{E}^{i}(K^p)_{\mf{m}}$ following \cite[\textsection 6]{FuDerived}, the eigenvariety for $\GL_n/F$ of tame level $K^p$ and cohomological degree $i$, localized at $\mf{m}$. These spaces are a (derived) variant of Emerton's eigenvarieties from \cite{EmertonInterpolation}. The space $\mc{E}^i(K^p)_{\mf{m}}$ is naturally a closed subspace of $(\Spf \T(K^p)_{\mf{m}}^{\on{red}})^{\on{rig}}\times \widehat{T}$, where $\widehat{T}$ is the rigid space of locally analytic characters of the diagonal maximal torus $T_n(F_p)\subset \GL_n(F_p)$, and $F_p:= F\otimes_{\Q}\Q_p$. Scholze's seminal work \cite{Sch15} on torsion cohomology implies that there is a Galois representation $\rho:  \Gal_F \to \GL_n(\T(K^p)_{\mf{m}}^{\on{red}})$, which implies, for each point $(x, \delta) \in \mc{E}^i(K^p)_{\mf{m}}\subset (\Spf \T(K^p)_{\mf{m}}^{\on{red}})^{\on{rig}}\times \widehat{T}$, there is an associated Galois representation $\rho_x: \Gal_F \to \GL_n(\overline{\Q}_p)$. First, we need to define a character $\rho_{\cyc}^{G} \in \widehat{T}$ via the formula $(\rho_{\cyc}^{G})_{\nu}:= (1, \varepsilon_{\cyc}^{-1}\circ \Art_{F_{\nu}}, \dots, \varepsilon_{\cyc}^{-1 + n}\circ \Art_{F_{\nu}})$, where $\Art_{F_{\nu}}: F_{\nu}^{\times} \simeq W_{F_{\nu}}^{\on{ab}}$ is the local Artin map. Our main result then is as follows:
\begin{theorem}
\label{theorem: MainTheorem1}
    Suppose $\mf{m} \subset \T(K^p)$ is decomposed generic (in the sense of Caraiani--Scholze) and non-Eisenstein, $F$ contains an imaginary quadratic field $F_0$, and $p$ is a prime that splits in $F_0$. Then for any point $(x,\delta = (\delta_1,\dots, \delta_n)) \in \mc{E}^i(K^p)_{\mf{m}}$ and each place $\nu \mid p$ of $F$, the associated Galois representation $\rho_{x}|_{\Gal_{F_{\nu}}}$ is trianguline of parameter $(\delta + \rho_{\cyc}^{G})_{\nu} \cdot \delta'$, where $\delta'$ is an algebraic character, and the $\tau$-Sen weights are $\{\on{wt}_{\tau}(\delta_{1,\nu}) , \on{wt}_{\tau}(\delta_{2,\nu}) + 1, \dots, \on{wt}_{\tau}(\delta_{n,\nu}) + n-1\}$.
\end{theorem}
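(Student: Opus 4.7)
The plan is to follow the strategy outlined in the abstract: realize $\mc{E}^i(K^p)_{\mf{m}}$ as a closed subspace of an eigenvariety for a quasi-split unitary group $\wt{G}$ of signature $(n,n)$ over $F^+$, with the image sitting over an Eisenstein maximal ideal corresponding to the Siegel parabolic $P \subset \wt{G}$ whose Levi $M$ is $\on{Res}_{F/F^+}\GL_n$. Concretely, $p$-adic interpolation of Eisenstein cohomology sends a point $(x,\delta) \in \mc{E}^i(K^p)_{\mf{m}}$ to a point $y$ on an eigenvariety $\mc{E}^*(\wt{K}^p)_{\wt{\mf{m}}}$ for $\wt{G}$, with Galois representation $\wt{\rho}_y$ obtained from $\rho_x$ by parabolic induction along $P$. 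The appearance of $\rho_{\cyc}^G \cdot \delta'$ in the theorem reflects the standard normalization discrepancy (modulus character of $P$ together with a half-sum-of-positive-roots twist) between the torus character on the $\wt{G}$-side and the trianguline parameter on the $\GL_n$-side.

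Next, on the $\wt{G}$-eigenvariety one has a good trianguline theory at non-Eisenstein points: classical points of sufficiently regular cohomological weight correspond, under the decomposed generic hypothesis and by Caraiani--Scholze vanishing, to crystalline Galois representations whose refinements are automatically trianguline via Colmez's classification, and global triangulation theory (Kedlaya--Pottharst--Xiao, or its refinement by Hellmann--Mann--Schraen) propagates this triangulation with its parameter across the non-Eisenstein components where such classical points are Zariski dense.

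The central obstacle, and the key new input promised in the abstract, is that $\wt{\mf{m}}$ is Eisenstein: the Galois representations at points in the image of $\mc{E}^i(K^p)_{\mf{m}}$ are reducible (being parabolically induced), so the usual density-of-classical-points and global triangulation machinery does not directly apply at these points, and classical points need not be Zariski dense in the relevant Eisenstein component. The required analytic continuation would propagate trianguline parameters of the expected form from neighborhoods of non-Eisenstein points across the Eisenstein locus of $\mc{E}^*(\wt{K}^p)_{\wt{\mf{m}}}$. The hardest step will be identifying, at a reducible $\wt{\rho}_y$, which of the multiple possible triangulations is realized as the limit from nearby non-Eisenstein points; this will require a local analysis of the eigenvariety at reducible points in the spirit of Bella\"iche--Chenevier, combined with an accumulation statement for classical Eisenstein contributions in the completed cohomology of $\wt{G}$.

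With the $\wt{G}$-side statement in hand, pulling back along the closed embedding and restricting the triangulation to the Levi block corresponding to $\rho_x$ yields the desired triangulation of $\rho_x|_{\Gal_{F_\nu}}$ with parameter $(\delta + \rho_{\cyc}^G)_{\nu} \cdot \delta'$, and the $\tau$-Sen weights $\on{wt}_\tau(\delta_{i,\nu}) + i-1$ then follow from the parameter formula after accounting for the $\rho_{\cyc}^G$ twist.
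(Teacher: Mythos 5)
Your high-level skeleton — pass to the quasi-split $U(n,n)$ group $\wt{G}$, relate $\mc{E}^i(K^p)_{\mf{m}}$ to a $\wt{G}$-eigenvariety whose classical points are trianguline, then pull back and untwist — matches the paper, and you correctly flag that the central difficulty is that the image lands in an "Eisenstein" locus where $\wt{\rho}_y$ is reducible and ordinary density arguments do not obviously apply. But the mechanism you propose for surmounting this is where the proposal breaks down, and it misses the paper's key structural insight.

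You write that one should "propagate trianguline parameters of the expected form from neighborhoods of non-Eisenstein points," using "a local analysis of the eigenvariety at reducible points in the spirit of Bella\"{i}che--Chenevier, combined with an accumulation statement for classical Eisenstein contributions." This has no teeth as written: you have given no reason why points with irreducible Galois representation should accumulate at the points you care about, and a Bella\"{i}che--Chenevier-style tangent space analysis at reducible points addresses smoothness and dimensions of reducibility loci, not Zariski density of classical points. The paper does something different and more decisive. It works throughout with the \emph{middle-degree compactly supported} eigenvariety $\mc{E}_c^d(\wt{K}^p)_{\wt{\mf{m}}}$ — as opposed to $\mc{E}^d$ or some ad hoc "Eisenstein eigenvariety" — and establishes a torsion/torsion-free dichotomy over weight space: the boundary Jacquet module $\mc{M}_{\partial,\wt{\mf{m}}}^{j}$ is locally \emph{torsion} over $\mc{W}$ (\cref{corollary: SmallSupport}, a consequence of a small-slope classicality theorem and Clozel's purity lemma forcing the boundary cohomology to vanish at CTG weights), whereas $\mc{M}_{c,\wt{\mf{m}}}^d$ is locally \emph{torsion-free} (\cref{prop: FredholmHypersurface}, using the Caraiani--Scholze vanishing to represent $\Pi_c(\wt{K}^p)_{\wt{\mf{m}}}$ by a finite complex of injectives in degrees $[0,d]$). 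This forces $\mc{E}_c^d$ to be equidimensional with Zariski-open image under the weight map (\cref{proposition: EquidimensionalAndOpennessOfWeightMap}), and — because the boundary contribution is small — classical \emph{cuspidal} points with CTG weight accumulate at every algebraic-weight point of $\mc{E}_c^d$ (\cref{corollary: EcdClassicalPointsDense}). In other words, your worry that "classical points need not be Zariski dense in the relevant Eisenstein component" is precisely the thing this dichotomy refutes: cuspidal points are dense in all of $\mc{E}_c^d$, including the locus containing the image of $\mc{E}^i(K^p)_{\mf{m}}$.

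There is a second gap in your last paragraph. You assert that "restricting the triangulation to the Levi block corresponding to $\rho_x$" yields the parameter $(\delta+\rho_{\cyc}^G)_\nu\cdot\delta'$, but a triangulation of $\rho_x \oplus \rho_x^{c,\vee}(1-2n)$ does not on its own restrict to a triangulation of $\rho_x$ with the expected parameter: the two summands may share Sen weights or have trianguline parameters differing by algebraic characters, in which case the triangulation can mix the blocks or realize the wrong refinement. The paper resolves this by an auxiliary twisting argument (\cref{lemma: TwistByCharacters} and the proof of \cref{theorem: MainTheorem1}): one chooses two nonalgebraic characters $\chi_1, \chi_2: \Gal_F\to\mO_L^\times$ so that $\rho_x$ and the twisted $\chi_i^{-1}\chi_i^{c,\vee}\rho_x^{c,\vee}(1-2n)$ have no pairwise Sen weights differing by integers, then intersects the two resulting constraints to isolate the $\rho_x$-block. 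A further technical issue you do not address: $\mc{E}_c^d$ carries only a pseudorepresentation, not a representation, so one must invoke Bella\"{i}che--Chenevier (Lemma 7.8.11 of \cite{BC09}) to produce an actual $\Gal_F$-module over a proper cover $Y\to V_m$ before applying \cite[Theorem 6.3.13]{KPX14} — this is where that machinery enters the paper, not at "local analysis of reducible points."
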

In essence, this result can be viewed as a $p$-adic (trianguline) local global compatibility result for $\GL_n$-eigenvarieties, which contextualizes recent progress in the Langlands program for CM fields in the framework of $p$-adic automorphic forms. A natural extension of \cref{theorem: MainTheorem1} would be to directly relate $\mc{E}^i(K^p)_{\mf{m}}$ to triangulation deformation spaces as studied in \cite{BHSAnnalen, BHSIHES}. See \cref{subsection: finalsubsection} for more discussion of this topic.

\subsection{History}

Let us review the history of relating finite slope $p$-adic automorphic forms (or eigenvarieties) to trianguline Galois representations. For $\GL_2/\Q$, this link was first established by \cite{Ki03} in the form of showing that if $f$ is an overconvergent $p$-adic eigenform of finite slope, then $\rho_f$ admits a crystalline period. Colmez \cite{Col08} then crucially reinterpetted admitting crystalline periods in terms of being a \textit{trianguline Galois representation}, i.e., the associated rigid analytic $(\varphi, \Gamma_K)$-module being totally reducible, which he studied in detail in the $2$-dimensional case. For eigenvarieties for definite unitary groups of any dimension, triangulinity results were proved in many cases by \cite{BC09} and later \cite{He12}. These results were vastly generalized with the development of the theory of families of $(\varphi, \Gamma_K)$-modules by Kedlaya--Pottharst--Xiao \cite{KPX14} and independently Liu \cite{Liu15}. In particular, their results imply there is a triangulinity result whenever the associated eigenvariety satisfies a suitable local-global compatibility for classical forms and the ``classical points'' are Zariski-dense (see \cite[Theorem 6.3.13]{KPX14}). 

Simultaneously, the theory of eigenvarieties for groups without discrete series was being developed by various authors \cite{EmertonInterpolation, AS08, Urb11, Han17, FuDerived}. Hansen \cite[Conjecture 1.2.2]{Han17} was the first to explicitly posit the connection between such eigenvarieties and trianguline representations. Our main result in essence proves his conjecture for the eigenvarieties of \cite{FuDerived} under some assumptions on $F$, after localizing at certain maximal ideals. The main novelty in the result we prove is that one \textit{does not expect} a Zariski-density of ``classical points'' for eigenvarieties for groups without discrete series, such as $\GL_n/F$ for $n \ge 2$ and $F$ an imaginary CM field! See \cite[\textsection 1]{CalMaz09} for further discussion of this issue.

\subsection{Proof method}

Let us discuss the proof strategy. We follow the lead of the 10 author paper \cite{10AuthorPaper}, which shows a local--global compatibility result in the ordinary case (see \cite[Theorem 5.5.1]{10AuthorPaper}). Their work proceeds integrally, which presents some additional complications, but is also easier in certain steps due to the (relative) simplicity of the ordinary case.

As in \cite{10AuthorPaper}, we prove our local-global compatibility result by passing to a larger unitary group $\wt{G} = U(n,n)/F^+$, and considering its eigenvariety (in a fixed middle degree!). It is useful to briefly mention how our eigenvarieties are constructed. The spaces $\mc{E}^i(K^p)_{\mf{m}}$ arise as the support of the Hecke algebra $\T(K^p)_{\mf{m}}$ acting on a certain coherent sheaf $\mc{M}_{\mf{m}}^i$ on the space $\widehat{T}$, whose global sections $\mc{M}_{\mf{m}}^i(\widehat{T}) = (\HH^i(J_{B_n}(\Pi(K^p)_{\mf{m}}^{\la})))'$ are the dual of a certain (derived version of) Emerton's Jacquet module applied to (a complex computing) completed cohomology. The method of attack is to realize this coherent sheaf in the boundary cohomology of $\wt{G}$, and interpolate from the interior. Namely, we have eigenvarieties $\mc{E}^d(\wt{K}^p)_{\wt{\mf{m}}}, \mc{E}_c^d(\wt{K}^p)_{\wt{\mf{m}}}, \mc{E}_{\partial}^i(\wt{K}^p)_{\wt{\mf{m}}}$ associated to the cohomology of $\wt{G}$, the compactly supported cohomology of $\wt{G}$, and the boundary cohomology of $\wt{G}$, respectively. These spaces arise as the Hecke module support of coherent sheaves $\mc{M}_{\wt{\mf{m}}}^d, \mc{M}_{c,\wt{\mf{m}}}^d, \mc{M}_{\partial,\wt{\mf{m}}}^i$, which fit into a short exact sequence (see \cref{lemma: FundamentalLongExactSequence})

\begin{equation}
\label{eq: FundamentalExactSequenceIntro}
0 \to \mc{M}_{\partial, \wt{\mf{m}}}^{d} \to \mc{M}_{\wt{\mf{m}}}^d\to \mc{M}_{c,\wt{\mf{m}}}^d \to \mc{M}_{\partial, \wt{\mf{m}}}^{d-1} \to 0.
\end{equation}
With this sequence in mind, our work essentially consists of four steps:
\begin{enumerate}
    \item ``\textit{Degree shifting}:'' relate the (co)homology $\mc{M}_{\mf{m}}^i$ of $\GL_n/F$ (in varying degrees $i$) to the boundary (co)homology $\mc{M}_{\partial, \wt{\mf{m}}}^{d-1}$.
    \item Show the boundary (co)homology $\mc{M}_{\partial, \wt{\mf{m}}}^{d-1}$ is ``small.''
    \item Show the middle degree (co)homology $\mc{M}_{c,\mf{m}}^d$ is ``big'' and then via Step 2 can be interpolated by classical \textit{cuspidal} automorphic forms for $\tG$.
    \item Use Steps 1 and 3 to deduce triangulinity results for $\GL_n/F$.
\end{enumerate}

We now say more about each step. An important player in our arguments is the so-called \textit{weight space} $\mc{W}:= \widehat{T_n(\prod_{\nu \mid p}\mO_{F_{\nu}})}$, the space of characters of the maximal compact torus $T_0 \subset T$. The projection map $\wh{T} \to \mc{W}$ induces a natural map $\kappa: \mc{E}^i(K^p)_{\mf{m}} \to \mc{W}$, and similar maps for the other eigenvarieties for $\tG$. 

On the level of spherical Hecke algebras, the unitary group $\wt{G}$ and $G := \GL_n/F$ are related via a Satake transfer map $\mc{S}:\wt{\T}(\wt{K}^p)_{\wt{\mf{m}}} \to \T(K^p)_{\mf{m}}$. Step 1 is then encapsulated in the following (see \cref{theorem: Embeddings of eigenvarieties} for a more precise statement):

\begin{theorem}
\label{theorem: Step1}
For all $i$ the map $\mc{S}: \wt{\T}(\wt{K}^p)_{\wt{\mf{m}}} \to \T(K^p)_{\mf{m}}$ induces a closed embedding $\mc{E}^i(K^p)_{\mf{m}} \hookrightarrow \mc{E}_c^d(\wt{K}^p)_{\wt{\mf{m}}}$ fitting into a diagram
\[
\begin{tikzcd}
    \mc{E}^{i}(K^p)_{\mf{m}} \ar[r, hook]\ar[d, "{\kappa}"] & \mc{E}_{c}^d(\wt{K}^p)_{\wt{\mf{m}}}\ar[d, "\wt{\kappa}"]\\
    \mc{W}\ar[r, "{w^{-1} \cdot_{\on{alg}} }"] & \mc{W}
\end{tikzcd},
\]
where $w \in W^{\oP}$ (see \cref{Section3: ParabolicInduction} for a definition) satisfies $\ell(w) = d - 1 - i$, and the bottom map denotes the dot action $\lambda \mapsto w^{-1} \cdot_{\on{alg}} \lambda := w^{-1}(\lambda + w_0^{\tG}\rho^{\tG}) - w_0^{\tG}\rho^{\tG}$, where $\rho^{\tG}$ is the half sum of positive roots for $\tG$, and $w_0^{\tG}$ is the long element of the Weyl group of $\tG$.
\end{theorem}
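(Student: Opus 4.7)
The plan is to factor the desired map as the composition
$$\mc{E}^i(K^p)_{\mf{m}} \hookrightarrow \mc{E}_\partial^{d-1}(\wt{K}^p)_{\wt{\mf{m}}} \hookrightarrow \mc{E}_c^d(\wt{K}^p)_{\wt{\mf{m}}}.$$
The second arrow is immediate: in the fundamental exact sequence \eqref{eq: FundamentalExactSequenceIntro} the sheaf $\mc{M}_{\partial,\wt{\mf{m}}}^{d-1}$ appears as a quotient of $\mc{M}_{c,\wt{\mf{m}}}^d$, so its support is a closed subscheme and the induced map on weight space is the identity. The heart of the theorem is therefore the first arrow, together with its twisted compatibility via $w^{-1} \cdot_{\on{alg}}$.

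For that first arrow I would compute $\mc{M}_{\partial,\wt{\mf{m}}}^{d-1}$ via the Borel--Serre compactification of the locally symmetric space of $\tG$: its boundary is stratified by conjugacy classes of standard rational parabolics, and the relevant stratum is attached to the Siegel parabolic $\oP$ of $\tG$, whose Levi is $M \cong \GL_n/F$ (up to centre). Its contribution to completed boundary cohomology is the completed cohomology of $G := \GL_n/F$ coupled with Lie-algebra cohomology of the unipotent radical $\mf{n}_{\oP}$. Dualising and applying Emerton's locally analytic Jacquet functor $J_{B_{\tG}}$ (for a Borel of $\tG$ containing the diagonal Borel $B_n$ of $G$), the Kostant / Hochschild--Serre decomposition of $\HH^*(\mf{n}_{\oP}, -)$ yields a natural filtration indexed by $w \in W^{\oP}$ whose $w$-piece in total boundary degree $i + \ell(w)$ is identified, up to the Satake transfer $\mc{S}$ and the dot action $w^{-1} \cdot_{\on{alg}}$ on the character side, with $\mc{M}_{\mf{m}}^i$; the dot action is the standard shift from Kostant's theorem on infinitesimal characters. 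Extracting total degree $d-1$ isolates the unique $w$ with $\ell(w) = d-1-i$ and produces a subquotient of $\mc{M}_{\partial,\wt{\mf{m}}}^{d-1}$ of the desired form.

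The non-Eisenstein hypothesis on $\mf{m}$ is then what upgrades this subquotient relation to a genuine closed embedding on supports. First, it kills the contributions of other parabolic strata, whose Hecke eigensystems correspond to forms on non-conjugate Levis and so cannot give non-Eisenstein lifts of $\rhobar_{\mf{m}}$. Second, it separates the different Kostant pieces within the $\oP$-stratum, which carry mutually distinct Hecke normalizations, so after localizing at the appropriate preimage $\wt{\mf{m}}$ of $\mf{m}$ under $\mc{S}$ only the single $w$-piece with $\ell(w) = d-1-i$ persists and the Jordan--Hölder filtration effectively splits off that piece. The main obstacle I anticipate is the rigorous $p$-adic interpolation of the classical Borel--Serre plus Kostant computation inside the derived eigenvariety framework of \cite{FuDerived}: constructing a boundary spectral sequence compatible with the locally analytic Jacquet functor and controlling its degeneration after localization at $\wt{\mf{m}}$, so that the surviving Kostant piece really does cut out $\mc{E}^i(K^p)_{\mf{m}}$ on supports. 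Careful bookkeeping of the $w^{-1} \cdot_{\on{alg}}$ twists against the Satake normalization is the most delicate technical point.
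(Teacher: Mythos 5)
Your overall architecture matches the paper: you factor through the boundary eigenvariety $\mc{E}_\partial^{d-1}(\wt{K}^p)_{\wt{\mf{m}}}$, use \cref{lemma: FundamentalLongExactSequence} for the second arrow with the identity on weight space (this part is fine and is exactly what the paper does), isolate the Siegel parabolic stratum in boundary cohomology via the non-Eisenstein hypothesis (this corresponds to \cref{prop: InductionAsDirectSummandOfBoundary}), and then do Lie-algebra-cohomology degree shifting with a dot-action twist. The choice of Levi, the role of $W^{\oP}$, and the appearance of $\ell(w) = d-1-i$ are all correct.

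The genuine gap is in your claim that the non-Eisenstein hypothesis ``separates the different Kostant pieces within the $\oP$-stratum'' so that ``only the single $w$-piece with $\ell(w)=d-1-i$ persists and the Jordan--H\"older filtration effectively splits off that piece.'' This is not how the argument works, and in the locally analytic setting it cannot work this way. There are actually two nested filtrations, and you have conflated them. First, there is the Bruhat filtration of the locally analytic induction indexed by $w \in W^{\oP}$ (\cref{lemma: BruhatSES}). Second, inside the $w$-stratum $I_w\Pi^{\la}$, the constant term map lands in $\Pi^{\la,\mf{n}_w}$, which by the Hochschild--Serre sequence for $0 \to \mf{n}_{w,\ol{U}} \to \mf{n}_w \to \mf{n}_n \to 0$ and a central idempotent decomposes into $\ell(w)+1$ pieces indexed by $j = 0,\dots,\ell(w)$ (\cref{lemma: SpecSeqDegen}). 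The non-Eisenstein hypothesis does not distinguish the pieces for different $j$, nor for different $w$ of the same length: they carry the same Hecke eigensystems away from $p$, up to central twists. Moreover, and this is the crucial point, unlike in the smooth mod-$p$ setting of \cite[\textsection 5.3]{10AuthorPaper}, the constant term map $\on{CT}_w$ is \emph{not} surjective in general, and the paper explicitly remarks (after \cref{prop: MainDegreeShiftProp}) that one should not expect surjectivity onto the $j < \ell(w)$ pieces.

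What the paper actually does is the following, and your proposal is missing it. It constructs (in \cref{prop: MainDegreeShiftProp}) an explicit chain-level \emph{section} $s\circ\iota_\ast$ to the composition of $\on{CT}_w$ with the projection onto the top piece $j = \ell(w)$, built by iterating connecting maps along a reduced word for $w$ as a product of reflections. This yields a surjection only onto that single top Kostant summand $\tau_w^{-1}\HH^{i-\ell(w)}(\Pi_G^{\la,\mf{n}_n})\otimes L(\chi_w)$, and there is no claim that the other pieces vanish or split off. The conclusion (\cref{theorem: JacquetSubQuotient}) is therefore a \emph{subquotient} statement for the Jacquet module, not a direct-sum decomposition. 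This is already enough, because a $\wt{\T}$-equivariant surjection onto a subquotient of the coadmissible module $\mc{M}_{c,\wt{\mf{m}}}^d$ induces a surjection $\wt{\mc{A}} \twoheadrightarrow L(\chi_{w_i})\otimes_L\tau_{w_i}^{-1}\mc{A}$ of the sheaves of Hecke algebras over $\wh{T}$, hence a closed embedding of the relative spectra; this is precisely how \cref{theorem: Embeddings of eigenvarieties} is deduced. So you should replace the separation-by-$\mf{m}$ argument with the construction of this section, and keep in mind that one only controls the top Hochschild--Serre degree. As a minor point, $w$ with $\ell(w) = d-1-i$ is not unique in $W^{\oP}$; the statement only requires choosing one.
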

\noindent The proof consists of a computation of (a piece of) the nilpotent Lie algebra cohomology of a certain parabolic induction of a complex.

Step 2 can be summarized as the following (see \cref{corollary: SmallSupport} for notation):
\begin{theorem}
\label{theorem: SmallSupportIntro}
There is an admissible affinoid open cover $\bigcup_i U_i = \supp_{\mc{W}\times \G_m} J_B^{\vee}(C_{\bullet}^{\partial,\on{BS}}(\wt{K}^pI, D(I))) \subset \mc{W} \times \G_m$ such the image $\pi_{\mc{W}}=: W_i \subset \mc{W}$ under the map $\pi_{\mc{W}}: \mc{W}\times \G_m \to \mc{W}$ is an affinoid open, and for all $j\ge 0$, $M_{\partial, \wt{\mf{m}}}^j(U_i)$ is a finitely generated \emph{torsion} $\mc{O}_{\mc{W}}(W_i)$-module.
\end{theorem}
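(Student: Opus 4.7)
The strategy is to filter the boundary chain complex by the stratification of the Borel--Serre boundary of the locally symmetric space for $\tG$ by standard proper parabolic subgroups $Q = M_Q N_Q \subsetneq \tG$, and then control the locally analytic Jacquet module $J_B^{\vee}$ stratum by stratum. Concretely, the closed cover $\partial X^{\on{BS}} = \bigcup_{Q} \partial_Q X^{\on{BS}}$ induces a finite filtration on $C_{\bullet}^{\partial,\on{BS}}(\wt{K}^p I, D(I))$ whose associated graded pieces are quasi-isomorphic to complexes computing (a $D(I)$-valued, nilpotent-Lie-cohomology-twisted) completed cohomology of arithmetic quotients for the Levis $M_Q$. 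The admissible affinoid cover $\bigcup_i U_i$ of the support in $\mc{W}\times \G_m$ is then produced by the standard finite-slope machinery: one takes $U_i = W_i \times \{\,v_p \leq h_i\,\}$ for affinoid opens $W_i \subset \mc{W}$ and a cofinal collection of slope bounds, so the projection $\pi_{\mc{W}}(U_i) = W_i$ is automatically an affinoid open.

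The next step is to compute the support of $J_B^{\vee}$ applied to each graded piece. Since the graded pieces are built from completed cohomology of the Levis $M_Q$, a version of the geometric lemma (e.g.\ Bernstein--Zelevinsky, or more pertinently Emerton's locally analytic analogue together with the Hochschild--Serre/van~Est computation for $\Lie(N_Q)$-cohomology) shows that the support of $J_{\wt B}^{\vee}$ on such a piece lies, in $\wh{T}$, inside the finite union of Weyl translates of the image of the closed embedding $\wh{T_{M_Q,0}} \hookrightarrow \wh{T_0} = \mc{W}$, twisted by the modulus character $\delta_Q$. Because $Q$ is a \emph{proper} parabolic, $M_Q \subsetneq \tG$, so each such translate is a proper Zariski-closed subspace of $\mc{W}$. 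Consequently, on each $W_i$ there is a nonzero $f \in \mc{O}_{\mc{W}}(W_i)$ vanishing on this finite union and hence annihilating the graded piece; since there are only finitely many strata and torsion is preserved under extensions, this shows $M_{\partial, \wt{\mf{m}}}^j(U_i)$ is a torsion $\mc{O}_{\mc{W}}(W_i)$-module for every $j\geq 0$.

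Finite generation, on the other hand, is a coherence statement: on each slope-decomposition affinoid $U_i$, Emerton's (and the derived refinement due to Fu) coherence result for the finite-slope Jacquet module of completed cohomology applies to each Levi contribution, so the graded pieces, and hence $M_{\partial,\wt{\mf{m}}}^j(U_i)$, are coherent over $\mc{O}_{\mc{W}\times\G_m}(U_i)$. Since $U_i = W_i \times \{\,v_p \leq h_i\,\}$ is affinoid and $\mc{O}(U_i)$ is finite over $\mc{O}_{\mc{W}}(W_i)$ on the finite-slope locus, coherence over $\mc{O}_{\mc{W}\times\G_m}(U_i)$ translates into finite generation over $\mc{O}_{\mc{W}}(W_i)$, completing the claim.

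The main obstacle is setting up the parabolic stratification of the boundary chain complex compatibly with the derived, locally analytic Jacquet module on $D(I)$-coefficients: one needs the identification of the graded pieces with Levi-completed-cohomology complexes to be functorial enough that $J_{\wt B}^{\vee}$ and the slope decomposition commute with it, and one needs a careful bookkeeping of the Weyl translates and modulus twists arising from the geometric lemma so as to pin down the \emph{finite} union of proper subspaces in $\mc{W}$. Once that compatibility is in place, the torsion conclusion is essentially forced by the strict inequality $\dim M_Q < \dim \tG$, and coherence for each Levi delivers finite generation.
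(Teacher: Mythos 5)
Your proposal has a genuine gap at the crucial torsion step, and it also diverges substantially from the paper's actual argument.

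The fatal issue is the claim that "Because $Q$ is a proper parabolic, $M_Q \subsetneq \tG$, so each such translate is a proper Zariski-closed subspace of $\mc{W}$." This is false as stated. The Levi $M_Q$ of a standard parabolic contains the same maximal torus $T$ as $\tG$, so the maximal compact torus $T_{M_Q,0} = T_0$ and hence $\wh{T_{M_Q,0}} = \wh{T_0} = \mc{W}$; the map $\wh{T_{M_Q,0}} \hookrightarrow \mc{W}$ is an isomorphism, not a proper closed embedding. The strict inclusion $M_Q \subsetneq \tG$ gives you nothing about the \emph{weight-space} image of the Levi's Jacquet module, and in particular does not force torsion. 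The ``codimension'' that makes the boundary small is a fact about where the \emph{classical cuspidal cohomology of the Levi} lives, not about the character group of its torus, and it requires a genuine automorphic input: after localizing at a non-Eisenstein maximal ideal $\wt{\mf{m}}$, Clozel's purity lemma forces the classical boundary cohomology $\HH^{\ast}(\partial \wt{X}_{\wt{K}^p\wt{K}_p}, \mathsf{L}(\wt{\lambda}))_{\wt{\mf{m}}}$ to vanish unless $\wt{\lambda}$ satisfies a codimension-$\geq 1$ system of linear conditions (the paper's ``CTG'' condition, \cref{definition: CTG}, \cref{Lemma: MostWeightsAreCTG}). Your argument never invokes the localization at $\wt{\mf{m}}$ or any purity-type constraint, and without it the torsion conclusion is simply not true.

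The paper's proof route (\cref{corollary: SmallSupport}) is also structurally quite different from yours, and worth noting because it is designed to sidestep exactly the compatibility issues you flag as your ``main obstacle.'' Rather than filter the boundary chain complex by the parabolic stratification of $\partial \overline{X}^{\tG}$ and chase the geometric lemma through the derived locally analytic Jacquet module, the paper works directly on the slope-$\leq h$ truncations over an affinoid $\Omega \subset \mc{W}$ (produced by \cref{proposition: U_pfactorisationDiagram}/\cref{corollary: ComplexesFredholmhypersurfaces} rather than an ad hoc $v_p \leq h_i$ cover). It then specializes to an algebraic CTG weight $w_0\lambda \in \Omega$ via the universal coefficient spectral sequence (\cref{lemma: UniversalCoeffSpecSeq}), uses the Jones BGG resolution and a small slope classicality statement (\cref{lemma: BGGSpectralSequence}, \cref{corollary: SmallSlopeClassicality}) to identify that specialization with \emph{classical} boundary cohomology in weight $\lambda$, kills the latter by the CTG vanishing after localizing at $\wt{\mf{m}}$, and finally concludes torsion via Nakayama. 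So: the covering in your first paragraph and the coherence argument in your third are in the right spirit, but the middle paragraph is where the actual content lies, and that is where your proposal replaces the necessary purity/CTG input with a dimensionality claim about weight spaces that does not hold.
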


\noindent This result is not entirely new, as it follows the strategy of \cite[\textsection 4]{Han17}, who explained how to prove such a result for \textit{overconvergent cohomology}. In fact, our proof strategy is to adapt the tools Hansen uses (that is, technical tools about slope decompositions from \cite{AS08} and \cite{Urb11}) to our setting. The key point is that the \textit{classical} (cuspidal) cohomology of $\GL_n$ is only non-zero for certain weights $\lambda$, which live in a proper subvariety of weight space. A similar fact holds true for boundary cohomology. Then using a small slope classicality result, we can show many fibres of this boundary Jacquet module are zero, which shows torsion-ness.

For Step 3, we can phrase everything in terms of the middle degree eigenvariety $\mc{E}_{c}^d(\wt{K}^p)_{\wt{\mf{m}}}$:
\begin{theorem}
\label{theorem: Step 3}
    The eigenvariety $\mc{E}_c^d(\wt{K}^p)_{\wt{\mf{m}}}$ is equidimensional of dimension $\dim \mc{W} = 2n[F^+:\Q]$ with no embedded components, and the image of an irreducible component of $\mc{E}_c^d(\wt{K}^p)_{\wt{\mf{m}}}$ under the weight map $\wt{\kappa}$ is Zariski open in $\mc{W}$.     
    Moreover, $\mc{E}_c^d(\wt{K}^p)_{\wt{\mf{m}}}$ contains a Zariski-dense and accumulating set of points corresponding to classical cohomological cuspidal automorphic representations of $\wt{G}(\A_{F^+})$.
\end{theorem}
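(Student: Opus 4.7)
The eigenvariety $\mc{E}_c^d(\wt{K}^p)_{\wt{\mf{m}}}$ is cut out on $\wh{T}$ as the support of the coherent sheaf $\mc{M}_{c,\wt{\mf{m}}}^d$, which over an affinoid slope datum $(V, h)$ in $\mc{W} \times \G_m$ is the slope-$\leq h$ part of $H^d$ of the complex $J_B^\vee C_c^\bullet(\wt{K}^p I, D(I))$. My plan is first to show that $\mc{M}_{c,\wt{\mf{m}}}^d$ is locally a finitely generated flat $\mc{O}_{\mc{W}}$-module of a controllable generic rank; by the general eigenvariety formalism (\`a la Chenevier and Bella\"iche--Chenevier), this immediately yields equidimensionality of dimension $\dim \mc{W}$, absence of embedded components, and Zariski-openness of the weight map on each irreducible component. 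I would then produce the Zariski-dense and accumulating classical cuspidal points via a small-slope classicality criterion combined with density of regular dominant algebraic weights in $\mc{W}$.

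To establish the flatness, the essential input is an extension of \cref{theorem: SmallSupportIntro} asserting that $\mc{M}_{c,\wt{\mf{m}}}^i$ is $\mc{O}_{\mc{W}}$-torsion for every $i \neq d$. I would obtain this by combining three ingredients: (i) the four-term exact sequence \eqref{eq: FundamentalExactSequenceIntro} together with its analogues in other degrees coming from the long exact sequence of the pair $(X^{\on{BS}}, \partial X^{\on{BS}})$; (ii) an extension of \cref{theorem: SmallSupportIntro} to boundary cohomology in all degrees, which should go through by the same slope-decomposition plus small-slope-classicality method; and (iii) Poincar\'e duality on $\tG$, which identifies $\mc{M}_{c,\wt{\mf{m}}}^i$ with $\mc{M}_{\wt{\mf{m}}}^{2d-i}$ up to a twist of the character. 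Since $\tG = U(n,n)$ has discrete series precisely in middle degree $d$, classical cuspidal cohomology of $\tG$ vanishes outside $d$, so classical non-middle-degree cohomology is Eisenstein and hence boundary-type; combining (ii) and (iii) then pins down non-middle-degree finite-slope compactly supported cohomology as $\mc{O}_{\mc{W}}$-torsion. A standard alternating Euler characteristic argument applied to the bounded complex of finite projective $\mc{O}(V)$-modules computing $\mc{M}_{c,\wt{\mf{m}}}^\bullet$ in slope $\leq h$ then forces $\mc{M}_{c,\wt{\mf{m}}}^d$ at slope $\leq h$ to be finite flat, of generic rank $\sum_i (-1)^i \dim_E H^i_c(\tG, V_\lambda)_{\wt{\mf{m}}}^{\leq h}$ at any regular dominant algebraic $\lambda \in V$.

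For the Zariski-density and accumulation of classical cuspidal points, I would invoke a small-slope classicality theorem in the style of Hansen to assert that over a sufficiently small affinoid neighbourhood of each regular dominant algebraic weight $\lambda \in V$, points of $\mc{E}_c^d$ with small enough slope above $\lambda$ correspond to classical automorphic representations of $\wt{G}(\A_{F^+})$ contributing to $H^d_c(\tG, V_\lambda)_{\wt{\mf{m}}}$. The non-Eisenstein hypothesis on $\wt{\mf{m}}$---which rules out Galois parameters arising from parabolic induction with the prescribed residual Hecke data---then forces these representations to be cuspidal. Zariski density and the accumulation property follow from the standard argument: regular dominant algebraic weights form a Zariski-dense accumulation subset of $\mc{W}$, and the finite flatness established above ensures that classical cuspidal points span fibres of $\mc{M}_{c,\wt{\mf{m}}}^d$ generically. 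The principal technical obstacle I anticipate is the extension of \cref{theorem: SmallSupportIntro} to all cohomological degrees, together with the careful bookkeeping needed to deduce non-middle-degree torsion from Poincar\'e duality combined with boundary torsion; once that is in hand, the density argument should be fairly standard.
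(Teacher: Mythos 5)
Your overall frame (control $\mc{M}_{c,\wt{\mf{m}}}^d$ locally over weight space, then feed the result into an eigenvariety-formalism equidimensionality argument, then deduce density of classical points from small-slope classicality and torsionness of boundary) is the right one, and several ingredients (the four-term sequence, the boundary torsion from \cref{theorem: SmallSupportIntro}, Poincar\'e duality relating $\mc{M}_c^i$ to $\mc{M}^{2d-i}$) are genuinely in the paper's orbit. But there are two real gaps.

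First, the step ``torsion in all $i \neq d$ plus Euler characteristic $\implies$ $\mc{M}_{c,\wt{\mf{m}}}^d$ is finite flat'' does not hold. If $P^\bullet$ is a bounded complex of finite projective modules over a Noetherian domain with $\HH^i(P^\bullet)$ torsion for $i \neq d$, the Euler characteristic pins down the \emph{generic rank} of $\HH^d$ but does not imply flatness, nor even torsion-freeness ($A \xrightarrow{a} A$ in degrees $d-1,d$ with $a$ a nonzerodivisor nonunit has $\HH^{d-1}=0$, $\HH^d = A/a$ torsion). In fact the paper never proves flatness at all---only \emph{torsion-freeness} of $\mc{M}_{c,\wt{\mf{m}}}^d$---and torsion-freeness over the ring $\mc{O}_{\mc{W}}(W_i')$ (which has dimension $\geq 2$) is strictly weaker than flatness. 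The paper's actual route: by Caraiani--Scholze torsion vanishing (\cref{theorem: CaraianiScholzeTorsionVanishing}) plus a minimal-resolution argument (\cref{proposition: compactlysupportedBoundedComplex}), the complex $\Pi_c(\wt{K}^p)_{\wt{\mf{m}}}$ can be represented by a complex of injective admissible Banach representations concentrated \emph{on the nose} in degrees $[0,d]$. Therefore $\HH^d(J_B(\Pi_c^{\on{bdd},\la}))$ is a \emph{quotient} of the top term $J_B(\Pi_c^{\on{bdd},d,\la})$; after dualizing, $\mc{M}_{c,\wt{\mf{m}}}^d$ becomes a \emph{submodule} of a module $\mc{N}$ which is locally finite projective over $\mc{O}_{\mc{W}}$ (\cref{prop: FredholmHypersurface}). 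A submodule of a finite projective module over a Noetherian domain is torsion-free, and that, fed into the endomorphism-ring argument (\cref{proposition: EquidimensionalAndOpennessOfWeightMap}), is what yields equidimensionality, absence of embedded primes, and Zariski-openness of the weight map. So you need the concentration of the \emph{representing complex} in $[0,d]$, not merely torsion of the cohomology in other degrees.

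Second, your cuspidality argument is incorrect: the non-Eisenstein hypothesis on $\mf{m}\subset \T^S(K^p)$ is a hypothesis on the Hecke algebra for $G = \GL_n/F$, but $\wt{\mf{m}} = \mc{S}^\ast(\mf{m})$ is \emph{Eisenstein} for $\wt{G}$---the associated residual Galois representation is $\ol{\rho}_{\mf{m}} \oplus \ol{\rho}_{\mf{m}}^{c,\vee}(1-2n)$, which is reducible. So no vanishing of Eisenstein series comes for free from a ``non-Eisenstein'' hypothesis on $\wt{\mf{m}}$. What the paper does instead (\cref{proposition: AbstractDensityCriterion}, \cref{corollary: EcdClassicalPointsDense}) is apply an abstract density criterion to the set of points avoiding both (a) the support $S$ of the torsion boundary module $\mc{M}^{d-1}_{\partial,\wt{\mf{m}}}$ (by \cref{corollary: SmallSupport}, a proper closed subset), and (b) the non-CTG locus $Y$ of weight space (\cref{Lemma: MostWeightsAreCTG}). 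Points avoiding $S$ map to points of $\mc{E}_{\wt{\mf{m}}}^d$ (via the four-term exact sequence and Nakayama), which are then classical by small-slope classicality for the usual (non-derived) Jacquet functor (\cref{lemma: SmallSlopeClassicalForUsualJacquetFunctor}), and \emph{cuspidal} precisely because at a CTG weight the classical boundary cohomology localised at $\wt{\mf{m}}$ vanishes (Clozel's purity lemma). The cuspidality is therefore a consequence of avoiding the torsion boundary plus CTG weights, not of any non-Eisenstein condition on $\wt{\mf{m}}$.
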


\noindent We note that in contrast to the torsion boundary $\mc{M}_{\partial, \wt{\mf{m}}}^{d-1}$, this middle degree unitary group (co)homology $\mc{M}_{c,\wt{\mf{m}}}^d$ will turn out to be locally \textit{torsion-free} over weight space! On the other hand, the fact that $\mc{M}_{\partial, \wt{\mf{m}}}^{d-1}$ is \textit{torsion} is absolutely crucial to the Zariski-density of classical cuspidal points in \cref{theorem: Step 3}. The reason is that while $\mc{E}^d(\wt{K}^p)_{\mf{m}}$ may not be equidimensional, it coincides with the usual eigenvariety constructed by Emerton, and thus many classical points contribute. We hope this strategy might be helpful to study other properties of eigenvarieties for groups without discrete series.

\begin{remark}[Completed versus overconvergent cohomology]
    A natural question is whether our results can be applied to other eigenvarieties constructed in the literature. For example, Hansen's original conjecture about triangulinity is for eigenvarieties coming from overconvergent cohomology. As is written, the only Step for which our argument seems to strongly use Jacquet functors is the ``degree shifting'' in Step 1. For partial progress of a degree-shifting nature in a more overconvergent context, see \cite{Fu22Kostant}. 
    
    While we do not pursue directly comparing eigenvarieties from completed and overconvergent (co)homology, we hope our results highlight the utility of using different features of the various formulations of eigenvarieties. Moreover, we mention ongoing work of Johansson--Tarrach which aims to compare (versions of) these two theories using Tarrach's theory of so-called $p$-arithmetic (co)homology \cite{Tarrach2023}.
\end{remark}

\begin{remark}
    Much of this paper requires doing homological algebra with locally analytic representations on LB or Fr\'{e}chet spaces, which presents annoying issues. Naturally, our saving grace is that eigenvarieties are constructed from \textit{coadmissible} modules over certain Fr\'{e}chet algebras, which form a nice abelian category where objects have canonical topologies and all maps are automatically strict. 
    
    One might wonder or expect that the notion of a ``derived eigenvariety'' should most naturally approached using the recent theory of condensed mathematics, specifically via the solid locally analytic representation theory developed in \cite{SolidLocallyAnalyticI,SolidLocallyAnalyticII}.  The current draft opts to work in more classical language, partially out of ignorance, and also because we feel it is useful to delineate where such theoretical advantages are truly necessary for proofs. As such, we do not use the theory of solid modules to run our arguments.

    Nevertheless, for future applications it might be convenient to recast parts of our work in the solid language. We hope to return to this in the near future.
\end{remark}

\subsection{Related work for CM fields}

We now discuss the relation between our work and the many recent works on proving $p$-adic Hodge theoretic properties (i.e. local-global compatibility at $\ell = p$) of Galois representations attached to (cohomological cuspidal) Hecke eigensystems for $\GL_n$ over a CM field. For any $n > 2$ the only known method for constructing Galois representations associated to Hecke eigenclasses for $\GL_n/F$ comes by passing to the $2n$-variable unitary group $\wt{G}$. This strategy was first realised by \cite{HLTT16} for characteristic $0$ eigenclasses, and then also by Scholze \cite{Sch15} for torsion classes. In fact, the construction in \cite{HLTT16} realises $R_{\pi} \oplus R_{\pi}^{c, \vee}$ (twisted) as a limit of \textit{finite slope} cohomology classes on the ordinary locus of the Shimura variety for $\wt{G}$.\footnote{In fact, it seems that Skinner was the first to envision such a strategy, and had in mind to use eigenvarieties for these constructions.} Although congruences are not sufficient for proving $p$-adic Hodge theoretic properties, this fact perhaps makes our work seem more plausible. 

Since then (due to motivations from the Calegari--Geraghty method for proving automorphy results) there has been much recent work on proving such properties. With the crucial input from Caraiani--Scholze's work on torsion vanishing \cite{CS19}, \cite[\textsection 4, \textsection 5]{10AuthorPaper} developed a robust strategy from local-global compatibility via so-called ``degree--shifting arguments.'' We briefly mention works which pursue this strategy: all under various technical assumptions, \cite[Theorem 4.5.1, Theorem 5.5.1]{10AuthorPaper} handle the Fontaine--Laffaille and ordinary cases, \cite[Theorem 4.3.3]{AC24} handles a rather general case in characteristic $0$ (via ideas of Caraiani--Newton), \cite{CN2023} proves the crystalline torsion case, and finally \cite{Hevesi23} proves the general potentially semistable torsion case. We note that these last two works must assume $F$ is not imaginary quadratic, whereas \cite[Theorem 5.5.1]{10AuthorPaper} and our work need no such assumption.
\subsection{What our work does}
Our work uses \cite{CS19} in an essential way (for example, to get \eqref{eq: FundamentalExactSequenceIntro}), and is the reason for the ``decomposed generic'' assumption in \cref{theorem: MainTheorem1}. We also use \cite{Sch15} to get the Galois representation $\rho$ on $\mc{E}^i(K^p)_{\mf{m}}$, although we likely could use our method to independently reconstruct $\rho$. Otherwise, our work does not rely on the further developments mentioned above.

As previously mentioned, our argument most closely mirrors the ordinary case of \cite[\textsection 5]{10AuthorPaper}. The key new inputs in our case are a degree shifting argument in the locally analytic context (where functions on Bruhat strata are more complicated) (Step 1), and the analytic continuation result for $\mc{E}_c^d(\wt{K}^p)_{\wt{\mf{m}}}$ (Step 3). While many of the arguments come via standard methods in theory of eigenvarieties, the overall strategy to combine these methods seems to not have been exploited before. 

We note another technical difference concerning the unitary group $\wt{G}$. The authors in \textit{loc. cit.} need to choose certain dominant weights for $\wt{G}$ very carefully, and then appeal to a trick with the centre to handle all possible degrees of cohomology for $\GL_n/F$. Our general analytic continuation result is much softer, and does not require explicit care in choosing weights only contributing to the interior.

\subsection{Contents}
We now review the contents of the article. \cref{Section2: Cohomology} reviews preliminaries on the (co)homology of locally symmetric spaces, recalls the derived Jacquet functors of \cite{FuDerived}, and shows two key properties: the sequence \eqref{eq: FundamentalExactSequenceIntro} and \cref{theorem: SmallSupportIntro}. \cref{Section3: ParabolicInduction} discusses some constructions related to parabolic inductions and filtrations by Bruhat strata. Then \cref{Section4: DegreeShifting} uses Lie algebra cohomology to relate the derived Jacquet functors for $\GL_n$ (of varying degrees) to a derived Jacquet module for $\wt{G}$ (in middle degree). This relation proves \cref{theorem: Step1}. \cref{Section5: UnitaryGroup} then establishes the necessary results about unitary group in middle degree, proving \cref{theorem: Step 3}. Lastly, \cref{Section6: Trianguline} combines the previous work to prove \cref{theorem: MainTheorem1}, and then discusses some further questions about trianguline deformations.

\subsubsection*{Acknowledgements.} 
We are immensely grateful to Richard Taylor for suggesting these lines of research, and for his continuous advice and support while working on this paper. We thank Lie Qian for inspiring discussions in the early stages of this paper and making us appreciate Lie algebra cohomology, as well as for various conversations afterwards. We thank Weibo Fu for laying the foundations that make this work possible, and for a discussion early on about his impressions on eigenvarieties. We thank Christian Johansson for some very helpful discussions about comparing eigenvarieties, overconvergent homology, and especially helping with \cref{lemma: FundamentalLongExactSequence}. We thank Johansson and also Brian Conrad for pointers regarding \cref{lemma: LocalZariskiDensity}. Lastly, we thank Eugen Hellmann for many discussions and his continued interest in our work. We also thank Sean Cotner, Matt Emerton, Dongryul Kim, and James Newton for helpful discussions. This research was conducted while the author was an NSF Graduate Fellow, and we warmly acknowledge their support.

\subsection{Notation and conventions}
For $F/\Q$ be a number field let $S_p(F)$ denote the set of $p$-adic places in $F$. For any finite place $\nu \in S_p(F)$, we let $F_{\nu}$ denote the associated complete local field, a finite extension of $\Q_p$. Let $\mc{O}_{F_{\nu}}\subset F_{\nu}$ its ring of integers, $\mf{m}_{\nu}\subset \mc{O}_{F_{\nu}}$ is maximal ideal, and $k_{\nu}:= \mc{O}_{F_{\nu}}/\mf{m}_{\nu}$ the residue field. We will often let $\varpi_{\nu}\in \mf{m}_{F_{\nu}}$ denote a uniformizer. We denote $q_{\nu}:= \#k_{\nu}$. Let $F_p:= F\otimes_{\Q}\Q_p$. Note we have a decomposition $F_p \simeq \prod_{\nu \in S_p(F)}F_{\nu}$. Similarly, define $\mc{O}_{F,p}:= \mc{O}_F\otimes_{\Z}\Z_p$.

If $R$ is a ring, let $\mathsf{D}(R)$ denote the derived category of $R$-modules, and $\mathsf{K}(R)$ the homotopy category of complexes of $R$-modules.

Let $L/\Q_p$ be a finite extension. If $V$ is a locally convex $L$-vector space, we let $V'$ denote its strong dual.
Recall that a locally convex topological $L$-vector space $V$ is of \textit{compact type} if it can be written as an inductive limit $V \simeq \varinjlim_n V_n$ for $\{V_n\}$ a countable inductive system of $L$-Banach spaces with compact, injective transition maps, where $V$ is endowed with the inductive limit topology.
These spaces are naturally reflexive, and $V \mapsto V'$ provides an anti-equivalence between compact type spaces and so-called nuclear Fr\'{e}chet spaces over $L$ (see \cite[Theorem 1.3]{STJAMS}).

Given a complex $C^{\bullet}$ of locally convex topological $L$-vector spaces with continuous differentials, we often topologize $\HH^{\ast}(C^{\bullet})$ with the induced subquotient topology. Note these spaces need not be Hausdorff.


Let $H$ be a compact $p$-adic Lie group. Then we denote $\mO_L[[H]]:= \varinjlim_{m \ge 0,H' \lhd H}\mO_L/p^m[H/H']$, which is naturally a noetherian ring. We also let $L[[H]]:= \mO_L[[H]]\otimes_{\mO_L}L$. 
Let $G$ be a general locally $\Q_p$-analytic group. Similarly, we let $\mc{C}(G, L)$ denote the space of continuous functions from $G \to L$, and $\mc{C}^{\la}(G, L)$ the subspace of locally analytic functions. For $H$ compact, $\mc{C}^{\la}(H, L)$ is naturally a space of compact type. We let $\mathsf{Ban}_L^{\on{ad}}(G)$ denote the category of admissible Banach representations of $G$, and $\Rep_{\rm{la.c}}(G)$ the category of locally representations of $G$ on compact type vector spaces (see \cite[\textsection 3.1]{Emerton_Jacquet_I}). Let be $H_1\supset H_2$ $p$-adic Lie groups such that $H_1/H_2$ is compact. Then if $\pi$ is  Banach representation of $H_2$ we let $\on{ct}-\Ind_{H_2}^{H_1}\pi$ be the continuous induction, the set of continuous functions $H_1 \to \pi$ such that $f(h_2g) = \pi(h_2)f(g)$ for $h_2\in H_2$. Similarly if $\pi\in \Rep_{\rm{la.c}}(H_2),$ we denote $\Ind_{H_2}^{H_1}\pi$ the locally analytic induction defined via the same equivariance.

Similarly, let $D(G, L):= (\mc{C}^{\la}(G, L))'$ be the distribution algebra associated to $G$. When $G$ is compact, this is naturally a nuclear $L$-Fr\'{e}chet algebra. Suppose also that $G$ is a compact, uniform pro-$p$ group. For any $1/p< r < 1$, we let $D_r(G, L)$ and $D_{<r}(G,L)$ be the $L$-Banach spaces defined in \cite[\textsection 4]{ST03}. For any $1/p\le r'< r <1$ we have embeddings $D_{r}(G,L) \hookrightarrow D_{<r}(G,L) \hookrightarrow D_{r'}(G,L)$. Moreover if $r\in p^{\Q}$ then the maps $L[[G]]\hookrightarrow D_{r}(G,L) \hookrightarrow D_{<r}(G,L)$ are flat, and all three algebras are (left and right) noetherian integral domains by \cite[\textsection 4]{ST03}. Lastly, the distribution algebra is a limit $D(G,L) = \varprojlim_rD_r(G,L) = \varprojlim_rD_{<r}(G,L)$. 
If $H$ is in fact an abelian p-adic Lie group, we let $\wh{H}$ (the space of \textit{characters of }$H$) be the rigid space over $\Q_p$ representing the functor $\Sp(A) \mapsto \Hom_{\on{cont}}(H, A^{\times})$. See \cite[\textsection 6.4]{Emerton_la_reps} for more details.

If $G$ is a split connected reductive group with maximal torus $T\subset G$, denote by $W_{G}$ the associated Weyl group. Let $w_0^G$ denote the long element. Let $X^{\ast}(T)$ be the character lattice. If we also choose a Borel $B\supset T$, we can define a notion of positive roots, and then form $\rho^G\in X^{\ast}(T)$, the half sum of positive roots. For any $w \in W_G$, we may define the dot action $w\cdot \lambda := w(\lambda + \rho^G) - \rho^G$.

Let $K/\Q_p$ be a finite extension. We take the geometric normalisation of local class field theory by taking the Artin map $\on{Art}_{K}: K^{\times} \xrightarrow{\sim} W_{K}^{\rm{ab}}$ to send uniformizers to geometric Frobenius. Let $\rec_K$ be the local Langlands correspondence as normalised in \cite{HT01}. We denote the cyclotomic character by $\varepsilon_{\on{cyc}}:\Gal_K  \to \ol{\Q}_p^{\times}$. Let $\chi: K^{\times} \to \ol{\Q}_p^{\times}$ be a continuous (and thus locally $\Q_p$-analytic) character. We then define its \textit{weight} to be $\on{wt}(\chi) = (\on{wt}_{\tau}(\chi))_{\tau \in \Hom(K, \ol{\Q}_p)}\in K\otimes_{\Q_p}\ol{\Q}_p$ by $\on{wt}(\chi):= -\frac{d}{dt}\chi(\exp(tx))|_{t=0}\in \Hom(K,\Q_p)\otimes_{\Q_p}\ol{\Q}_p\simeq K\otimes_{\Q_p}\ol{\Q}_p$ where the identification $\Hom(K,\Q_p) \simeq K$ is via the trace pairing. For any $\ul{k} = (k_{\tau})_{\tau} \in \Z^{\Hom(K,\ol{\Q}_p)}$, we define the character $z^{\ul{k}}: K^{\times}\to \ol{\Q}_p^{\times}$ via the formula $x \mapsto \prod_{\tau\in \Hom(K, \ol{\Q}_p)}(\tau(x))^{k_{\tau}}$. By construction, $\weight(x^{\ul{k}}) = -\ul{k}$. We call such characters \textit{algebraic}. We normalise Hodge-Tate(-Sen) weights so that $\on{HT}(\varepsilon_{\on{cyc}}) = -1$.
We let $\mc{R}_K$ be the Robba ring for $K$ and we denote $R_{L,K}:= L\otimes_{\Q_p}\mc{R}_K$ (see \cite[\textsection 2.2]{KPX14}).

\section{Preliminaries on (co)homology}
\label{Section2: Cohomology}

In this section we collect preliminaries of various sorts about (co)homology, Jacquet functors, and Hecke algebras. Part of this section builds on the foundations from \cite{FuDerived} by studying derived Jacquet functors on the homology side. The key results we will need in the sequel are \cref{lemma: FundamentalLongExactSequence} (sequence \eqref{eq: FundamentalExactSequenceIntro}) and \cref{corollary: SmallSupport} (\cref{theorem: SmallSupportIntro}).

\subsection{Locally symmetric spaces and completed (co)homology}
\label{subsection: BeginningGeneralitiesOnSymmetricSpaces}
We follow the setup of \cite[Section 2]{10AuthorPaper}.
Suppose $F$ is a number field and $G$ is (an $\mc{O}_F$-model of) a connected linear algebraic group over $F$. Let $X^G$ be the symmetric space for $\on{Res}_{F/\Q}G$, a homogeneous space for $G(F\otimes_{\Q}\R)$ (see \cite[Section 2]{BorelSerre} or \cite[3.1]{NT16}). 

Recall a compact open subgroup $K \subset G(\A_{F}^{\infty})$ is \textit{good} if it is neat and of the form $\prod_{\nu}K_{\nu}$ where $\nu$ runs over finite places of $F$. Let $K^p = \prod_{\nu \nmid p}K_{\nu}$. We then consider the quotients
\[
X_{K}^{G} := G(F)\backslash( X^{G}\times G(\A_F^{\infty})/K), \quad X_{K^p}^{G} := G(F)\backslash (X^G \times G(\A_F^{\infty})/K^p),
\]
where for the latter space we give $G(\A_F^{\infty})$ the discrete topology. These spaces then both have the structure of smooth manifolds, and the projection map $\pi_{K_p}:X_{K^p}^{G} \to X_{K}^{G}$ induces an identification $X_{K}^G = X_{K^p}^{G}/K_p$.

We likewise let $\overline{X}^G$ be the partial Borel--Serre compactification of $X^G$ as in \cite[7.1]{BorelSerre}. Defining
\[
\overline{X}_{K}^{G} := G(F)\backslash (\overline{X}^{G}\times G(\A_F^{\infty})/K), \quad \overline{X}_{K^p}^{G} := G(F)\backslash (\overline{X}^G \times G(\A_F^{\infty})/K^p),
\]
we again have an identification $\overline{X}_K^G = \overline{X}_{K^p}^G/K_p$, and in fact $\overline{X}_K^G$ is a compact smooth manifold with corners with interior $X_{K}^G$. Additionally, the inclusion $X_{K}^G \hookrightarrow \overline{X}_{K}^G$ is a homotopy equivalence. Such an equivalence is induced by a homotopy inverse $p: \ol{X}^G \to X^G$. In particular, this map $p$ gives us homotopy inverses $p_{K^p}: \ol{X}_{K^p}^G \to X_{K^p}^{G}$ and $p_{K}: \ol{X}_{K}^G \to X_{K}^{G}$ so that $p_{K^p}$ is $G_p:=G(F_p)$-equivariant and upon quotienting by $K_p$ induces $p_K$. 

We also define $\partial X^G:= \overline{X}^G - X^G$  and using this space we define $\partial X_{K}^G$ and $\partial X_{K^p}^G$ via analogous double quotients. For $K'\subset K \subset G(\A_F^{\infty})$ good compact open subgroups we then have diagrams 
\begin{equation}
\label{eq: HomotopyEquivalencesCompatibleWithLevel}
\begin{tikzcd}
    X_{K'}^G \ar[r, hook]\ar[d]& \overline{X}_{K'}^G\ar[d]\\
    X_{K}^G \ar[r, hook]& \overline{X}_{K}^G
\end{tikzcd} \quad\begin{tikzcd}
    X_{K'}^G \ar[d]& \overline{X}_{K'}^G\ar[d]\ar[l, "{p_{K'}}"]\\
    X_{K}^G & \overline{X}_{K}^G\ar[l, "{p_{K}}"]
\end{tikzcd}.
\end{equation}
 We also have analogous diagrams for $X_{K^p}^G$ and the boundary spaces, and these diagrams are also compatible with the projection maps $\pi_{K_p}$. 

We are interested in the (co)homology of $X_K^G, \partial X_K^G$, in various guises. For any topological space $M$ we let $C_{\bullet}(M)$ denote the complex of singular chains. Note then that $C_{\bullet}(X_{K^p}^{G})$ carries a right action of $G_p:= G(F_p)$ by right translation. Moreover, if $K_p\subset G_p$ is a compact open subgroup such that $K^pK_p$ is good, then $C_{\bullet}(X_{K^p}^{G})$ is in fact a complex of free $\Z[K_p]$-modules. For any left $\Z[K_p]$-module $M$, we let $C_{\bullet}(K^pK_p, M) := C_{\bullet}(X_{K^p}^G)\otimes_{\Z[K_p]} M$, the homology complex with coefficients $M$.

For the purposes of p-adic automorphic forms, it is important to represent $C_{\bullet}(X_{K^p}^G)$ by complexes with better finiteness properties. Not surprisingly, we essentially use simplicial homology. 
Fix $K^p$, and let $K_p\subset G_p$ such that $K = K^pK_p\subset G(\A_F^{\infty})$ is a good subgroup. 
Now fix a finite triangulation $\tau$ of $\overline{X}_{K^pK_p}^G$, chosen so that $\partial \overline{X}_{K^pK_p}^G$ occurs as a subtriangulation. Let $T_{n}^{\circ}(X_K^G)$ be the set of simplices of dimension $n$ occurring in $\tau$. Let $T_{n}^{\circ}(X_{K^p}^G)$ be the set of simplices lying above those in $T_{n}^{\circ}(X_{K}^G)$. Further fixing a simplex $\Delta \in T_n^{\circ}(X_K^G)$, let $T_n^{\circ}(X_{K^p}^G)/\Delta$ be the set of simplices in $\overline{X}_{K^p}$ lying over $\Delta$.

We then define $C_{\bullet}^{\on{BS}}(K^p):= \bigoplus_{\Delta\in T_{\bullet}^{\circ}(X_{K^p}^G)}\Z\cdot \Delta$. Since the projection map $\pi_{K_p}: \ol{X}_{K^p}^G \to \ol{X}_{K^pK_p}^G$ is a $K_p$-torsor (and thus a covering map since we are endowing $K_p$ with the discrete topology), by the homotopy lifting lemma we have that any $\Delta \in T_{n}^{\circ}(X_{K^pK_p}^G)$ has a lift to $\ol{X}_{K^p}^G$. Choosing such lifts, we may rewrite 
$C_{\bullet}^{\on{BS}}(K^p) = \bigoplus_{\Delta \in T_{\bullet}^{\circ}(X_{K}^G)}\bigoplus_{\Delta':\pi_{K_p}(\Delta') = \Delta} \Z \simeq \bigoplus_{\Delta \in T_{\bullet}^{\circ}(X_{K}^G)}\Z[K_p]$, so that $C_{\bullet}^{\on{BS}}(K^p)$ is in fact a bounded complex of free $\Z[K_p]$-modules.

By the equivalence of simplicial and singular homology, the natural $\Z[K_p]$-linear map 
\[
\iota_{\tau}: C_{\bullet}^{\on{BS}}(K^p) \to C_{\bullet}(K^p)
\]
is in fact a homotopy equivalence of complexes of $\Z$-modules. But in fact, \cite[Corollary 10.4.7]{WeibelHomAlg} (using that $C_{\bullet}(K^p)$ is a bounded \textbf{above} complex of free $\Z[K_p]$-modules), we may choose a $\Z[K_p]$-linear homotopy inverse $p_{\tau}: C_{\bullet}(K^p) \to C^{\on{BS}}(K)$. Likewise, let $C_{\bullet}^{\partial}(K^p) :=C_{\bullet}(\partial X_{K^p}^G)$ be the complex of singular chains for the boundary of the Borel--Serre compactification. We can then define $C_{\bullet}^{\on{BM}}(K^p)$ to be the cone of the natural inclusion
\[
0 \to C_{\bullet}^{\partial}(K^p) \to C_{\bullet}(K^p) \to C_{\bullet}^{\on{BM}}(K^p) \to 0.
\]
Using our choice $\tau$ of finite triangulation of $\ol{X}_{K^pK_p}^G$ which has $\partial X_{K^pK_p}^G$ as a finite subtriangulation, we can define an analogous chain complex $C_{\bullet}^{\partial,\on{BS}}(K^p)$ of finite free $\Z[K_p]$-modules with a $\Z[K_p]$-linear homotopy equivalence $C_{\bullet}^{\partial,\on{BS}}(K^p) \xrightarrow{\iota_{\partial}}C_{\bullet}^{\partial}(K^p)$. We may likewise define $C_{\bullet}^{\on{BM}, \on{BS}}(K)$ to be the cone of the map $C_{\bullet}^{\on{BS}}(K) \to C_{\bullet}^{\on{BS}}(K)$. For any set of decorations $\ast = \emptyset, \partial, \on{BS}, \on{BM}, (\on{BM},\on{BS}), (\partial, \on{BS})$ and any left $K_p$-module $M$, we define $C_{\bullet}^{\ast}(K^pK_p, M):= C_{\bullet}^{\ast}(K^p)\otimes_{\Z[K_p]}M$. We then have a commuting diagram of complexes
\[
\begin{tikzcd}
C_{\bullet}^{\partial}(K^pK_p, M)\ar[r, "f"]&C_{\bullet}(K^pK_p, M)\ar[r, "g"]& C_{\bullet}(K^pK_p, M)
\\
 C_{\bullet}^{\partial, \on{BS}}(K, M)\ar[u, "\iota_{\tau, \partial}"]\ar[r, "f^{\on{BS}}"]& C_{\bullet}^{\on{BS}}(K^pK_p, M) \ar[r, "g^{\on{BS}}"]\ar[u, "\iota_{\tau}"]& \on{C}_{\bullet}^{\on{BM},\on{BS}}(K, M)\ar[u, "{\iota_{\tau, \on{BM}}}"]
\end{tikzcd}
\]
where $\iota_{\tau, \partial}, \iota_{\tau}, \iota_{\tau,\on{BM}}$ are each quasi-isomorphisms. Again, we can use \cite[Corollary 10.4.7]{WeibelHomAlg} to form homotopy inverses $p_{\tau, \partial}, p_{\tau, \on{BM}}$. In the sequel will usually suppress the triangulation $\tau$ from the notation, even though these homotopy equivalences depend on such a choice.
\begin{remark}
    Unfortunately, it is not clear that $p_{\tau, \partial}, p_{\tau}$ in general can be chosen to be compatible on the nose, i.e. that $f^{\on{BS}} \circ p_{\tau, \partial} = p_{\tau} \circ f$. Of course, these compositions induce the same map on homology.
\end{remark}

\subsubsection{Completed (co)homology}
Let $L/\Q_p$ be a finite extension. Recall completed homology and cohomology are defined by 
\[
\wt{\HH}_{\ast}(K^p,\mO_L):= \varprojlim_{K_p \to 1}\HH_{\ast}(X_{K^pK_p}^{G}, \mO_L),\quad \wt{\HH}^{\ast}(K^p, \mO_L):= \varprojlim_{n}\varinjlim_{K_p \to 1}\HH^{\ast}(X_{K^pK_p}^{G}, \mO_L/p^n).
\]
Likewise, we let $\wt{\HH}_{\ast}(K^p,L):=\wt{\HH}_{\ast}(K^p,\mO_L)\otimes_{\mO_L}L, \wt{\HH}^{\ast}(K^p,L):=\wt{\HH}^{\ast}(K^p,\mO_L)\otimes_{\mO_L}L$.
The former groups $\wt{\HH}_{\ast}(K^p,\mO_L)$ are finitely generated modules over $\mO_L[[K_p]]$, while the latter $\wt{\HH}^{\ast}(K^p, L)$ are admissible Banach space representations of $G(F_p)$.  Recall that \cite[Theorem 1.1 (3)]{CalEmSurvey} implies $\wt{\HH}^{\ast}(K^p,L)\simeq \Hom_{\on{cont}}(\wt{\HH}_{\ast}(K^p,L), L)$ are dual under the natural anti-equivalence between finitely generated $L[[K_p]]$-modules and admissible Banach $K_p$-representations (see \cite[Theorem 3.5]{ST02}).
Connecting to our previous definitions, a basic fact is that $\HH_{\ast}(C(K^pK_p, \mO_L[[K_p]]))\simeq \wt{\HH}_{\ast}(K_p, \mO_L)$ (see \cite[Proposition 6.3.5]{JNWE24}), and thus $C^{\on{BS}}(K, \mO_L[[K_p]])$ is a complex of finite free $\mO_L[[K_p]]$-modules whose homology computes $\wt{\HH}_{\ast}(K^p, \mO_L)$. Analogous statements hold true for boundary and Borel--Moore homology.

We may also consider the space $\wt{\HH}^{\ast}(K^p, L)^{\la}$ of locally $\Q_p$-analytic vectors. For any locally $\Q_p$-analytic group $H$, write for short  $D(H) = D(H,L) = (\mc{C}^{\la}(H, L))'$. Recall then by \cite[Proof of Prop 7.1]{ST03} that if $V$ is an admissible Banach representation of $H$ then $(V^{\la})' = D(H)\otimes_{L[[H]]}V'$, and moreover $D(H)$ is faithfully flat over $L[[H]]$. These results immediately imply that the complex $C^{\on{BS}}_{\bullet}(K, D(K_p))$ computes the dual of locally analytic completed cohomology $\left(\wt{\HH}^{\ast}(K^p, L)^{\la}\right)'$.

\subsubsection{Complexes for completed cohomology} 
\label{subsubsection: ComplexesDefininfCompletedCohomology}
We also needs nice complexes computing cohomology. While one might expect that nice complexes (at least rationally) will just follow from taking the dual of the picture just described, there are potentially issues in dualizing the larger complex $C(K^p, L[[K_p]])$, which would make it harder to define an action of $G(F_p)$ up to homotopy. So instead we will recall the setup from \cite{FuDerived}.

Continue to take $K^pK_p\subset G(\A_F^{\infty})$ to be good.
Fix a countable basis of open normal subgroups of $K_p$, denoted by $(K_p^r)_{r = 1}^{\infty}$, and set $X_r:= X_{K^pK_p^r}^G, \overline{X}_r:= \overline{X}_{K^pK_p^r}^G$. Then we have a tower of maps 
\[
\begin{tikzcd}
    \dots \ar[r]&X_r\ar[r]\ar[d, hook]&\dots \ar[r]&X_1 \ar[r]\ar[d, hook]& X_0\ar[d, hook]\\
    \dots \ar[r]&\overline{X}_r\ar[r]&\dots \ar[r]&\overline{X}_1 \ar[r]& \overline{X}_0.
\end{tikzcd}
\]

Fixing again a triangulation $\tau$ of $\ol{X}_0$, let $T_{n}(X_0)$ be the set of all singular $n$-simplices of $\overline{X}_0$, we let $T_{n}^{\circ}(X_0)$ and the set of $n$-simplices occurring in $\tau$. As in \cref{subsection: BeginningGeneralitiesOnSymmetricSpaces}, we likewise define $T_{n}(X_r)$, $T_{n}^{\circ}(X_r)$, $T_n^{\circ}(X_r)/\Delta$. Then look at the singular (co)chain complexes 
\begin{align*}
    A_{r,s}^n := \prod_{\Delta \in T_{n}(X_r)}\Hom\left(\Z \Delta, \mc{O}_L/p^s\right) = \Hom(\bigoplus_{\Delta \in T_n(X_r)}\Z \Delta, \mc{O}_L/p^s)
\end{align*}
\begin{align*}
    \Pi_{r,s}^n := \bigoplus_{\Delta \in T_n^{\circ}(X_r)}\Hom(\Z \Delta, \mc{O}_L/p^s),
\end{align*}
As before, we can choose a $\Z[K_p]$-equivariant homotopy inverse $p$ of the inclusion $i:C_{\bullet}^{\on{BS}}(K^p) = \bigoplus_{T_{\bullet}^{\circ}(X_{K^p}^G)} \Z \to \bigoplus_{T_{\bullet}(X_{K^p}^G)}\Z = C(K^p)$. Such a map induces $\Z[K_p/K_p^r]$-linear homotopy inverses to the inclusions $\bigoplus_{T_{\bullet}^{\circ}(X_r)} \Z \to \bigoplus_{T_{\bullet}(X_r)} \Z$. 
Such inverses give us compatible $K_p/K_p^r$-linear homotopy inverses to the $K_p/K_p^r$-equivariant quasi-isomorphisms, $A_{r,s}^{\bullet} \to \Pi_{r,s}^{\bullet}$. Then upon setting $A^{\bullet}:= \varprojlim_s\varinjlim_rA_{r,s}, \Pi^{\bullet}:= \varprojlim_s\varinjlim_{r}\Pi_{r,s}^{\bullet}$, in the limit we get a $K_p$-equivariant homotopy equivalence $\begin{tikzcd}A^{\bullet}\ar[r, "p", shift left]  &\Pi^{\bullet}\ar[l, "i", shift left]\end{tikzcd}$.

By \cite[Proposition 1.2.12]{EmertonInterpolation}, we have $\HH^{\ast}(A^{\bullet}) \simeq \wt{\HH}^{\ast}(K^p, \mO_L)$ computes completed cohomology. It is also evident by the above torsor description that as $K_p$-representation, we have continuous isomorphisms $\Pi^{n} \simeq \mc{C}(K_p, \mc{O}_L)^{\oplus m_n}$ for some $m_n \ge 0$, and so $\Pi^{\bullet}\otimes_{\mc{O}_L}L$ is in fact a complex of \textbf{injective} admissible Banach representations of $K_p$. Thus, in the derived category of admissible Banach $K_p$-representations, $\Pi^{\bullet}$ computes completed cohomology. Likewise, by \cite[Proposition 5.3]{FuDerived}, $A\otimes_{\mc{O}_L}L$ is a Banach space with a continuous action of $G(F_{p})$. 

We can also make these same definitions for compactly supported cohomology and boundary cohomology and get complexes and $K_p$-equivariant homotopy equivalences $\begin{tikzcd}A_c^{\bullet}\ar[r, "p", shift left]  &\Pi_c^{\bullet}\ar[l, "i", shift left]\end{tikzcd}$, $\begin{tikzcd}A_{\partial}^{\bullet}\ar[r, "p", shift left]  &\Pi_{\partial}^{\bullet}\ar[l, "i", shift left]\end{tikzcd}$. Then there is a short exact sequence $0 \to \Pi_c^{\bullet} \to \Pi^{\bullet} \to \Pi_{\partial}^{\bullet} \to 0$ of complexes of injective admissible $K_p$-representations and a short exact sequence of complexes $0 \to A_c^{\bullet} \to A^{\bullet} \to A_{\partial} \to 0$ which are continuous maps of Banach spaces, which are compatible with the quasi-isomorphisms $\Pi_c^{\bullet} \to A_c^{\bullet}, \Pi^{\bullet} \to A^{\bullet}, \Pi_{\partial}^\bullet \to A^{\bullet}$. Again, we stress that there is no obvious reason the homotopy inverses $i$ should be compatible on the nose with these sequences. In the sequel, we will typically write $\Pi= \Pi(K^p)$ for the complexes constructed in this subsection.

\subsection{Review of $G$-extensions and derived Jacquet functors}

We review the theory of $G$-extensions that Fu uses to define his derived Jacquet functors of completed cohomology.
\begin{definition}
\label{def: Gextension}
    Let $\mathbf{H}/\mO_F$ be an algebraic group. Let $H = \mathbf{H}(F_p)$ and $H_0\subset H$ a compact open subgroup. An \textit{$H$-extension} is the data $(C, C', i, p)$ where $C$ is a bounded complex of admissible Banach representation of $H_0$, $C'$ is a bounded complex of Banach representations of $H$, and $i,p$ are $H_0$-equivariant, continuous homotopy equivalences
   $\begin{tikzcd}C\ar[r, "i"]&
    \ar[l, "p", shift left]C'\end{tikzcd}$.
\end{definition}

\begin{remark}
    Later on in \cref{Section3: ParabolicInduction} it will be convenient for us to call certain homotopy equivalences between complexes ``$B$-extensions'' even though they do not take the form above. In these cases, we will simply be referring to a $B_0$-equivariant homotopy equivalence to a complex with an action of $B \supset B_0$, where $B_0$ is a compact open subgroup. However, in our cases these $B$-extensions will be induced by an $H$-extensions of the form mentioned above.
\end{remark}

The motivation for this of course is that $(\Pi(K^p)^{\bullet}, A^{\bullet}, i, p)$ with a choice of homotopy equivalences form a $G(F_p)$-extension in the sense of \cref{def: Gextension}.

Suppose now that our algebraic group $G/F$ is in fact a connected reductive group such that $G$ is split over $F_{\nu}$ for all $\nu \mid p$. Let $(\B,\N)$ be a Borel and maximal torus of $G$ with Levi decomposition $\B = \T \N$ for $\T$ a maximal torus. Write $G_p = G(F_p), B = \B(F_p), N = \N(F_p), T = \T(F_p)$. Fix $K_p$ a compact open subgroup of $G_p$ and for any subgroup $H \subset G_p$ set $H_0:= H \cap K_p$. Then set $T^{+}:= \{t \in T: tN_0t^{-1} \subseteq N_0\}$.

Let $C$ be a complex of injective admissible Banach $K_p$-representations, equipped with a $G_p$-extension. We are interested in its locally analytic vectors $C^{\la, i}:= (C^{i})^{K_p-\la}$. The functor of locally analytic vectors is exact on admissible representations by a well-known result of Schneider--Teitelbaum \cite[Theorem 7.1]{ST03}. Likewise, \cite[Lemma 3.8, Corollary 3.9]{FuDerived} show that from a derived category perspective it is reasonable to consider the complex of $N_0$-invariants $C^{\la, N_0}$. 
Now let $z \in T^+$ be an element such that $\bigcap_{n=1}^\infty z^nN_0z^{-n} = 1$. Using the $G_p$-extension, we may define an action of $z$ on $C^{\bullet}$ via the formula $\wt{z} \cdot v:= (p \circ z \circ i)v$. This action commutes with the action of other elements of $T^+$ given by a similar formula, up to homotopy.
On $C^{\la, N_0}$ we may then define an action of $z$ via the formula
\[
U_zv:= \sum_{n \in N_0/zN_0z^{-1}}n\cdot\wt{z}\cdot v.
\]
\begin{remark}
\label{remark: UzOperatorNormalisation}
    We note that throughout this paper, our actions of $U_z$ differ from Emerton's normalisation in \cite{Emerton_Jacquet_I}, where he multiplies by an averaging factor $\delta_B(z) = [N_0: zN_0z^{-1}]^{-1}$. Thus, our action of $U_z$ is Emerton's $U_z$ action twisted by the modulus character $\delta_B^{-1}$. We use this normalisation to make some aspects of \cref{Section4: DegreeShifting} less cumbersome to write down.
\end{remark}

Now recall that if $H$ is a (topologically finitely generated) abelian locally $\Q_p$-analytic group, we say a representation $V\in \Rep_{\rm{la.c}}(H)$ is \textit{essentially admissible} if its strong dual $V'$ is coadmissible over the Fr\'{e}chet--Stein algebra $C^{\on{an}}(\widehat{H},L)\simeq \mO_{\wh{H}}(\wh{H})$. Equivalently by \cite[Proposition 2.3.2]{EmertonInterpolation} $V'$ is a global sections of a coherent sheaf on $\wh{H}$.
Letting $Y^+\subset T$ be the closed submonoid generated by $T_0,z$ and $Y$ the closed subgroup generated by $T_0, z, z^{-1}$, the above formula defines an action of $Y^+$ on $C^{\la, N_0}$. We can then take the \textbf{finite slope part} (as in \cite[Section 3]{Emerton_Jacquet_I}) using the formula
\[
(C^{i,\la, N_0})_{\rm{fs}} = \mc{L}_{b,Y^+}(\mc{C}^{\on{an}}(\widehat{Y},L), C^{i,\la, N_0}).
\]
We define the \textit{derived Jacquet functors} via the formula $(J_{B}(C^{\la}))^i:= (C^{\bullet, \la, N_0})_{\rm{fs}}$.

One main result of \cite{FuDerived} is that with respect to this natural definition, we have $J_{B}(C^{\la})$ is a complex of essentially admissible representations of $Y$ with $Y$-equivariant differentials (which are thus automatically strict). The spaces we are principally interested in are the cohomology groups $\HH^\ast(J_{B}(C^{\la}))$, which are in fact essentially admissible $T$-representations by \cite[Lemma 3.19]{FuDerived}. \cite[Proposition 4.9]{FuDerived} shows that we may compute $\HH^{i}(J_B(C^{\la}))\simeq (\HH_{\on{haus}}^i(C^{\la, N_0}))_{\on{fs}}$, where for a complex of topological vector spaces $(M^{\bullet}, d^{\bullet})$, $\HH_{\on{haus}}^i(M^{\bullet}):= \ker(d^i)/\ol{\on{im}(d^{i-1})}$, denotes the \textit{Hausdorff} cohomology (that is, we quotient by the \textit{closure} of the image of $d^{i-1}$). This property implies that $\HH^{i}(J_B(C^{\la}))$ is independent of the choice of $i$ (with $p$ fixed) or of the choice of $p$ (with $i$ fixed) by \cite[Corollary 4.10]{FuDerived}. Since the derived category formalism here is a little lacking, it is not a priori obvious that this definition is independent of the choice of representing complex $C^{\bullet}$. We content ourselves with the following lemma:
\begin{lemma}
\label{lemma: IndependenceOfHomotopyForJacquetFunctors}
    Suppose we have a $K_p$-equivariant homotopy equivalence $f:S_1 \to S_2$, $g: S_2\to S_1$ between complexes $S_1,S_2$ of injective admissible $K_p$-representations. Suppose also $S_2$ is a equipped with a $G$-extension. Then we have a topological isomorphism of essentially admissible representations of $T$
    \[
    \HH^i(J_{B}(S_1^{\la})) \simeq \HH^i(J_{B}(S_2^{\la})),
    \]
    where $J_{B}(S_1^{\la})$ is defined by composing the $G$-extension of $S_2$ with $f,g$.
\end{lemma}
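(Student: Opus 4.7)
The strategy is to propagate the homotopy equivalence through each stage of the construction of $J_B$ and then invoke Fu's identification of $\HH^i(J_B(-))$ as the finite slope part of Hausdorff cohomology. First, apply the locally analytic vectors functor, which by \cite[Theorem 7.1]{ST03} is exact on admissible Banach $K_p$-representations; this yields $K_p$-equivariant chain homotopy equivalences $f^{\la}\colon S_1^{\la} \to S_2^{\la}$ and $g^{\la}\colon S_2^{\la} \to S_1^{\la}$ with chain homotopies inherited from those between $fg, gf$ and the corresponding identities. Taking $N_0$-invariants (a functorial operation that preserves chain homotopies) gives continuous chain homotopy equivalences $f^{\la, N_0}, g^{\la, N_0}$ between $S_1^{\la, N_0}$ and $S_2^{\la, N_0}$.

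Next, unpacking the hypothesis ``$J_B(S_1^{\la})$ is defined by composing the $G$-extension of $S_2$ with $f, g$'' means the $G$-extension on $S_1$ is $(S_1, S_2', i \circ f, g \circ p)$, so $\wt{z}_{S_1} = (g \circ p) \circ z \circ (i \circ f) = g \circ \wt{z}_{S_2} \circ f$ as chain maps on $S_1$. Since $f, g$ are $K_p$-equivariant, summing over coset representatives of $N_0/z N_0 z^{-1}$ gives the exact identity $U_z^{S_1} = g^{\la, N_0} \circ U_z^{S_2} \circ f^{\la, N_0}$ of operators on $S_1^{\la, N_0}$. Combining this with $fg \simeq \id_{S_2}$ via a $K_p$-equivariant chain homotopy $h$, and the fact that $U_z^{S_2}$ commutes with the differential on $N_0$-invariants (because $i, p, z$ are chain maps), one obtains $f \circ U_z^{S_1} \simeq U_z^{S_2} \circ f$ and analogously $g \circ U_z^{S_2} \simeq U_z^{S_1} \circ g$. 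The same argument applies to every $z' \in T^+$, so $f^{\la, N_0}, g^{\la, N_0}$ are $T^+$-equivariant up to chain homotopy.

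Finally, passing to cohomology turns these ``up to homotopy'' intertwinings into honest $T^+$-equivariant topological isomorphisms $\HH^i_{\on{haus}}(S_1^{\la, N_0}) \simeq \HH^i_{\on{haus}}(S_2^{\la, N_0})$ where both sides carry the subquotient topology. Applying the functor $\mc{L}_{b, Y^+}(\mc{C}^{\on{an}}(\wh{Y}, L), -)$ and invoking \cite[Proposition 4.9]{FuDerived} to identify $\HH^i(J_B(S^{\la})) \simeq (\HH^i_{\on{haus}}(S^{\la, N_0}))_{\on{fs}}$ yields the required topological isomorphism of essentially admissible $T$-representations. The main obstacle is the comparison of $U_z$ actions, which requires the chain homotopies between $fg$ and $\id_{S_2}$ (respectively $gf$ and $\id_{S_1}$) to be $K_p$-equivariant so that they restrict correctly to $N_0$-invariants of locally analytic vectors; this is automatic since $f, g$ are $K_p$-equivariant homotopy equivalences, and the only remaining care is to note that the intertwiner $U_z^{S_1} = g U_z^{S_2} f$ is defined from an \emph{exact} equality at the chain level, so no subtlety arises beyond the standard chain-homotopy manipulation.
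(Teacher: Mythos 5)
Your proposal is correct and follows essentially the same route as the paper's proof: pass locally analytic vectors and $N_0$-invariants through the homotopy equivalence, observe that on cohomology the induced isomorphisms intertwine the $U_z$-actions (because $fg$ and $gf$ become the identity on cohomology), and then take Hausdorff quotients and finite slope parts. The paper's version is more terse and does not spell out the intermediate chain-level identity $U_z^{S_1} = g^{\la,N_0}\circ U_z^{S_2}\circ f^{\la,N_0}$ or the homotopy-intertwining $f\circ U_z^{S_1} \simeq U_z^{S_2}\circ f$, but these are exactly the details implicit in the phrase ``which moreover intertwines $g\circ p \circ z \circ i \circ f$ and $p \circ z \circ i$, since $g\circ f = f \circ g = \on{id}$ on cohomology.''
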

\begin{proof}
   Let $(A,i,p)$ denote the $G$-extension on $S_2$. The homotopy equivalence between $S_1$ and $S_2$ induces one between $S_1^{\la, N_0}$ and $S_2^{\la,N_0}$. In particular the natural induced maps on cohomology $\HH^{\bullet}(S_1^{\la, N_0}) \to \HH^{\bullet}(S_2^{\la,N_0})$ are continuous isomorphisms, which moreover intertwines $g\circ p \circ z \circ i \circ f$ and $p \circ z \circ i$, since $g\circ f = f \circ g = \on{id}$ on cohomology. Thus, we may pass to Hausdorff quotients and finite slope parts.
\end{proof}

\subsection{Jacquet functors in terms of homology}

It is sometimes more convenient for us to work with Jacquet functors in terms of homology. While the above use of $G_p$-extensions did not make explicit reference to locally symmetric spaces, the homology approach will. Continue to assume $G/F$ is split at all places $\nu \mid p$. Then let $I := \ol{N}^1\cdot T_0 \cdot N_0$ be the Iwahori subgroup, i.e. the preimage of $B(\mO_{F}\otimes_{\Z}\F_p)$ under the reduction map $G(\mO_{F,p}) \to G(\mO_{F}\otimes_{\Z}\F_p)$. Likewise, for any $n\ge 1$ let $\ol{N}^{n} = \ol{N}^1 \cap I^{n} \subset \ol{N}^1$ be the intersection of $\ol{N}^1$ with the kernel of the reduction mod $p^n$ map $G(\mO_{F,p}) \to G(\mO_F\otimes_{\Z}\Z/p^n)$.

Now suppose that the tame level $K^p$ is sufficiently small, so that $K^pI$ is neat (and thus good). After fixing a finite triangulation of the Borel--Serre compactification $\ol{X}_{K^pI}^G$, we have a complex $C_{\bullet}^{\on{BS}}(K^p)$ and a $\Z[I]$-linear homotopy equivalence to $C(K^p)$, which induces a $D(I)$-linear homotopy equivalence
\[
\begin{tikzcd}
C^{\on{BS}}(K^pI, D(I)) \ar[r, "i"] & \ar[l, shift left, "p"] C(K^pI, D(I))= C(K^p)\otimes_{\Z[I]}D(I)\simeq C(K^p)\otimes_{\Z[G]}D(G)
\end{tikzcd}.
\]
Choosing $z \in T^{++}$, we get an induced action of an operator $\wt{z}:= p \circ z \circ i$ on $C_{\bullet}^{\on{BS}}(K^pI, D(I))$. Letting $C_{\bullet}^{\on{BS}}(K^pI, D(I))_{N_0}$ denote the Hausdorff coinvariants under the right $N_0$-action, we have an isomorphism $C_{\bullet}^{\on{BS}}(K^pI, D(I))_{N_0}\simeq C_{\bullet}^{\on{BS}}(K^pI, D(I)_{N_0})$, since for all $i$ we have $C_{i}^{\on{BS}}(K^pI, D(I))\simeq D(I)^{\oplus m_i}$ for some $m_i\ge 0$. We can then define an Atkin-Lehner-like operator via the formula $c\cdot U_1z:= \sum_{n' \in N_0/zN_0z^{-1}}cn'\wt{z}$. With respect to this action, we may pass to the (dual) \textit{finite slope part} $(C_{\bullet}^{\on{BS}}(K^pI, D(I)_{N_0}))_{\on{fs}}$ and define the dual Jacquet functor
\[
J_B^{\vee}(C_{\bullet}^{\on{BS}}(K^pI, D(I))) := C_{\bullet}^{\on{BS}}(K^pI, D(I)_{N_0})\widehat{\otimes}_{L[z]}L\{\{z^{\pm 1}\}\}.
\]
We can make analogous definitions for complexes involving $C^{\partial,\on{BS}}(K^p)$ or $C^{\on{BM},\on{BS}}(K^p)$.

Again, the proof of \cite[Proposition 3.20]{FuDerived} implies that $J_B^{\vee}(C_{\bullet}^{\on{BS}}(K^pI, D(I)))$ is a complex of coadmissible $\mc{O}_{\widehat{Y}}(\widehat{Y})$-modules. 
In fact, this proof (in particular, the arguments contained in \cite[Lemma 4.2.11, Lemma 4.2.19]{Emerton_Jacquet_I}) only needs the following setup: let $V_{\bullet}$ be a bounded complex of finite free $D(I)$-modules, and choose $z \in T^{++}$ such that $z^{-1}\ol{N}^nz \subset \ol{N}^{n+1}$. Suppose moreover $V_{\bullet}$ is equipped with a $T_0$-linear operator $\wt{z}$ such that for all $n \in N_0$ we have $\wt{z}n = znz^{-1}\wt{z}$ and for all $\ol{n}\in \ol{N}^1$, we have $\ol{n}\wt{z} = \wt{z}\cdot(z^{-1}\ol{n}z)$. Then the same proof as in \cite[Proposition 4.2.28]{Emerton_Jacquet_I} gives that with respect to the same formula for $U_1z$, the complex $(V_{\bullet, N_0})\otimes_{L[z]}L\{\{z^{\pm 1}\}\}$ is a complex of coadmissible $\mc{O}_{\widehat{Y}}(\wh{Y})$-modules.

This first lemma then shows that we can also take the finite slope part after taking homology.

\begin{lemma}
    \label{lemma: HausdorffHomology}
    Let $V_{\bullet}$ be a bounded complex of finite free $D(I)$-modules, and choose $z \in T^{++}$ such that $z\ol{N}^nz^{-1} \subset \ol{N}^{n+1}$. Suppose moreover $V_{\bullet}$ is equipped with a $T_0$-linear operator $\wt{z}$ such that for all $n \in N_0$ we have $\wt{z}n = znz^{-1}\wt{z}$ and for all $\ol{n}\in \ol{N}^1$, we have $\ol{n}\wt{z} = \wt{z}\cdot(z\ol{n}z^{-1})$. Then we have an isomorphism of coadmissible $\mc{O}_{\widehat{Y}}(\wh{Y})$-modules $\HH_{i}((V_{\bullet, N_0})_{\on{fs}})\simeq \HH_{i,\on{haus}}((V_{\bullet, N_0}))_{\on{fs}}$.
\end{lemma}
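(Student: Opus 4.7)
The plan is to adapt Fu's argument from Proposition 4.9 of \cite{FuDerived} to this more general setting. Write $d_i \colon V_i \to V_{i-1}$ for the differentials of the complex $V_{\bullet,N_0}$, and set $Z_i := \ker d_i$ and $B_i := \on{im}\, d_{i+1}$, so that the ordinary homology of $V_{\bullet,N_0}$ is $Z_i/B_i$ while the Hausdorff homology is $Z_i/\ol{B_i}$. Denote the finite slope functor by $F(-) := -\wh{\otimes}_{L[z]}L\{\{z^{\pm 1}\}\}$. The goal is the identification
\[
\HH_i((V_{\bullet,N_0})_{\on{fs}}) = F(Z_i)/\on{im}(F(d_{i+1})) \;\simeq\; F(Z_i/\ol{B_i}) = \HH_{i,\on{haus}}(V_{\bullet,N_0})_{\on{fs}}.
\]

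First I would invoke the discussion preceding the lemma to record that $(V_{\bullet,N_0})_{\on{fs}}$ is a complex of coadmissible $\mc{O}_{\wh{Y}}(\wh{Y})$-modules. Since coadmissible modules over a Fr\'echet--Stein algebra form an abelian category in which every morphism is automatically strict, the image $\on{im}(F(d_{i+1})) \subset F(V_i)$ is a \emph{closed} coadmissible submodule. Next, I would verify the following two exactness properties of $F$ applied to the relevant continuous maps. (a) $F$ is left exact on the maps appearing in $V_{\bullet,N_0}$, so $F(Z_i) = \ker(F(d_i))$; this reduces to flatness of $L\{\{z^{\pm 1}\}\}$ over $L[z]$ together with the finite-freeness hypothesis on $V_\bullet$, following the formalism of \cite[\textsection 4.2]{Emerton_Jacquet_I} that underlies the proof of Proposition 3.20 of \cite{FuDerived}. (b) The natural map $F(\ol{B_i}) \to F(V_i)$ is injective and its image equals $\on{im}(F(d_{i+1}))$: the map $F(V_{i+1}) \to F(V_i)$ factors through $F(\ol{B_i})$ and has dense image there (by right exactness of the underlying algebraic tensor product followed by completion), while its image is already closed by coadmissibility, so density forces equality.

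Granting (a) and (b), the chain of isomorphisms above is immediate, using also that $F$ is exact on the sequence $0 \to \ol{B_i} \to Z_i \to Z_i/\ol{B_i} \to 0$ of continuous maps between coadmissibly-completing quotients (here both $\ol{B_i}$ and $Z_i/\ol{B_i}$ become coadmissible after applying $F$, by the same Proposition 3.20 argument). The main obstacle I anticipate is making (b) fully rigorous: one needs to confirm that the image of a continuous map of complexes passes to its closure after applying the finite slope functor, which is the essential content of the fact that finite slope parts and Hausdorff quotients commute. The proof of \cite[Proposition 4.9]{FuDerived}, together with \cref{lemma: IndependenceOfHomotopyForJacquetFunctors}, already packages precisely this kind of argument, and the hypotheses on $V_\bullet$ (bounded, finite free over $D(I)$, equipped with a compatible $\wt{z}$-action satisfying the displayed commutation relations) are exactly what is needed to invoke the framework of Emerton's \cite[Proposition 4.2.28]{Emerton_Jacquet_I} in our setting.
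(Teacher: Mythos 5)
Your proposal follows the same overall skeleton as the paper's proof: reduce to showing that the finite slope functor $F = (-)\wh{\otimes}_{L[z]}L\{\{z^{\pm 1}\}\}$ commutes with taking kernels and with taking closures of images, the former via flatness of $L\{\{z^{\pm 1}\}\}$ over $L[z]$ plus strictness in the coadmissible category. Where you diverge is in step (b), the surjectivity of $F(V_{i+1})\to F(\ol{B_i})$: you argue by density (from the pre-completion algebraic right exactness, using that $B_i$ is dense in $\ol{B_i}$) combined with closedness of the image (from the abelian-category strictness of coadmissible morphisms). The paper instead dualizes: it reduces surjectivity to injectivity of $(\ol{B_i}')_{\on{fs}}\to (V_i')_{\on{fs}}$, where injectivity of $\ol{B_i}'\to V_i'$ follows from Hahn--Banach (as $\ol{B_i}$ is a closed subspace) and the injectivity is preserved by the left-exactness of Emerton's finite slope functor \cite[Proposition 3.2.6(iii)]{Emerton_Jacquet_I}. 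Both routes are legitimate; yours is more direct and avoids passing through strong duals, while the paper's offloads the delicate point (that density is preserved under $F$ on coadmissibly-completing quotients, which you flag as the main obstacle) to a citation of Emerton's already-established result. If you pursue the density route, the point to pin down carefully is that $B_i\otimes_{L[z]}L\{\{z^{\pm 1}\}\}$ remains dense in $\ol{B_i}\wh{\otimes}_{L[z]}L\{\{z^{\pm 1}\}\}$ with the topology induced from coadmissibility, which is exactly where the duality argument sidesteps the issue by converting it into an injectivity statement that Hahn--Banach handles for free.
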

\begin{proof}
    
     Setting $d_n: V_{n} \to V_{n-1}$ for any $n$, we just need to show that $\ker(d_{n})\widehat{\otimes}_{L[z]} L\{\{z,z^{-1}\}\} = \ker(d_n \widehat{\otimes}_{L[z]} L\{\{z,z^{-1}\}\}$ and $\overline{\on{im}(d_n)} \widehat{\otimes}_{L[z]}L\{\{z,z^{-1}\}\} = \on{im}(d_n\widehat{\otimes}_{L[z]}L\{\{z,z^{-1}\}\}$.
    For the first isomorphism, we wish to show the map 
    \[
    \ker(d_n)\widehat{\otimes}_{L[z]} L\{\{z,z^{-1}\}\} \to V_n\widehat{\otimes}_{L[z]} L\{\{z,z^{-1}\}\}
    \]
    is injective. Note that this map is strict since both modules are coadmissible over $\mc{O}_{\widehat{Y}}$. The map $\ker(d_n) \to V_n$ is injective and $L\{\{z,z^{-1}\}\}$ is flat over $L[z]$, so the induced map before completing remains injective. Then \cite[Proposition 4.8]{FuDerived} gives the desired injectivity.

    The second isomorphism follows a similar strategy: the map $d_n$ factors through $\overline{\on{im}(d_n)}$, and it suffices to show $V_n\widehat{\otimes}_{L[z]} L\{\{z,z^{-1}\}\} \to \overline{\on{im}(d_n)}\widehat{\otimes}_{L[z]} L\{\{z,z^{-1}\}\}$ is surjective. Note this map is automatically strict since both are coadmissible over $\mc{O}_{\widehat{Y}}(\wh{Y})$. Now by the same duality argument (but going from Frechet spaces to compact type spaces instead of the other way around) as in \cite[Proposition 4.9]{FuDerived}, it suffices to show $(\overline{\on{im}(d_n)}')_{\on{fs}} \to ((V_n)')_{\on{fs}}$ is injective. $\overline{\on{im}(d_n)}' \to (V_n)'$ is already injective by Hahn--Banach, and so the desired isomorphism amounts to the left-exactness of the finite slope functor from \cite[Proposition 3.2.6 (iii)]{Emerton_Jacquet_I}.
\end{proof}
We can use this lemma to justify working on the homology side to study the derived Jacquet functors of \cite{FuDerived}.
\begin{corollary}
\label{corollary: DualCBSComplexesComputeJacquet}
    Let $\Pi(K^p)$ be the complex of injective admissible $I$-representations computing $\wt{\HH}^{\ast}(K^p, L)$ as defined in \cref{subsubsection: ComplexesDefininfCompletedCohomology}, equipped with a $G_p$-extension. Then the strong dual $\HH^{\ast}(J_B(\Pi(K^p)^{\la}))'$ is isomorphic to $\HH_{\ast}(J_B^{\vee}(C^{\on{BS}}(K^p,D(I))))$. Analogous isomorphisms hold for boundary and compactly supported cohomology.
\end{corollary}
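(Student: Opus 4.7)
The plan is to construct a termwise isomorphism of $D(I)$-complexes $(\Pi(K^p)^{\la})' \simeq C^{\on{BS}}_\bullet(K^p, D(I))$, and then verify that each operation defining $J_B$ (take $N_0$-invariants, pass to Hausdorff cohomology, take finite slope) dualizes to the corresponding operation in $J_B^\vee$ (take Hausdorff $N_0$-coinvariants, tensor with $L\{\{z^{\pm 1}\}\}$, take homology).

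First I would identify the complex of strong duals. By the construction in \cref{subsubsection: ComplexesDefininfCompletedCohomology}, $\Pi(K^p)^n \simeq \mc{C}(I, \mO_L)^{\oplus m_n}$ as $I$-representations, where $m_n = \#T_n^\circ(X_{K^pI}^G)$. Tensoring with $L$, taking locally analytic vectors, and passing to strong duals gives $(\Pi(K^p)^{n, \la})' \simeq D(I)^{\oplus m_n}$, which matches $C^{\on{BS}}_n(K^p, D(I)) = C^{\on{BS}}_n(K^p) \otimes_{\Z[I]} D(I)$ since $C^{\on{BS}}_n(K^p)$ is $\Z[I]$-free of rank $m_n$ indexed by the same set of simplices. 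The cohomological differential on $\Pi$ dualizes to the simplicial boundary, producing the required isomorphism of complexes. Choosing the homotopy equivalences underlying the $G_p$-extension on $\Pi$ compatibly with the $\Z[I]$-linear homotopy equivalence $C^{\on{BS}}(K^p) \to C(K^p)$ of \cref{subsection: BeginningGeneralitiesOnSymmetricSpaces}, the operator $\wt{z} = p \circ z \circ i$ used on either side to define $J_B$ and $J_B^\vee$ agrees up to homotopy.

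Then I would dualize each step. For a compact type $L$-vector space $V$ with continuous $N_0$-action, reflexivity and Hahn--Banach give $(V^{N_0})' \simeq V'/\overline{\langle (n-1)V' : n \in N_0\rangle}$, the Hausdorff $N_0$-coinvariants of $V'$; applied termwise,
\[
(\Pi(K^p)^{\la, N_0})' \simeq C^{\on{BS}}_\bullet(K^p, D(I))_{N_0} \simeq C^{\on{BS}}_\bullet(K^p, D(I)_{N_0}).
\]
The same Hahn--Banach argument dualizes Hausdorff cohomology to Hausdorff homology of the dual complex, so $(\HH^i_{\on{haus}}(\Pi(K^p)^{\la, N_0}))' \simeq \HH_{i, \on{haus}}(C^{\on{BS}}_\bullet(K^p, D(I))_{N_0})$. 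By \cite[Proposition 4.9]{FuDerived}, $\HH^i(J_B(\Pi^\la)) = (\HH^i_{\on{haus}}(\Pi^{\la, N_0}))_{\on{fs}}$, and the finite slope functor on essentially admissible $Y$-representations dualizes on coadmissible $\mc{O}_{\wh{Y}}(\wh{Y})$-modules to $M \mapsto M \widehat{\otimes}_{L[z]} L\{\{z^{\pm 1}\}\}$ (cf.\ \cite[Proposition 3.2.27]{Emerton_Jacquet_I}). Combining with \cref{lemma: HausdorffHomology} to swap the finite slope tensor past Hausdorff homology then yields
\[
\HH^i(J_B(\Pi(K^p)^{\la}))' \simeq \HH_i\bigl(C^{\on{BS}}_\bullet(K^p, D(I))_{N_0} \widehat{\otimes}_{L[z]} L\{\{z^{\pm 1}\}\}\bigr) = \HH_i(J_B^\vee(C^{\on{BS}}(K^p, D(I)))).
\]

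The hard part will be bookkeeping: at each step one must verify that the relevant continuous maps are strict (or become so after passing to coadmissible modules over $\mc{O}_{\wh{Y}}(\wh{Y})$), that the identifications commute with the differentials and with the $\wt{z}$-operator, and that the $G_p$-extension really can be chosen to match the simplicial model used for $C^{\on{BS}}$. Coadmissibility after the finite slope (as in \cite[Proposition 3.20]{FuDerived}) delivers automatic strictness for the final comparison and smooths over most topological subtleties. The boundary and compactly supported analogues follow by the identical argument applied to $(\Pi_\partial, C^{\partial, \on{BS}})$ and $(\Pi_c, C^{\on{BM}, \on{BS}})$, using the compatibility of the homotopy equivalences with the short exact sequences of \cref{subsubsection: ComplexesDefininfCompletedCohomology}.
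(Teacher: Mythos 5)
Your proof is correct and follows the same overall skeleton as the paper's: identify the dual complexes as $D(I)$-complexes, match the $\wt{z}$-operators up to homotopy, then dualize the three operations (invariants/coinvariants, Hausdorff (co)homology, finite slope) one at a time using \cref{lemma: HausdorffHomology} and [FuDerived, Prop.\ 4.9].

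The one place where you diverge is the identification step. You claim a termwise isomorphism $(\Pi(K^p)^{\la})' \simeq C^{\on{BS}}_\bullet(K^pI, D(I))$, which you obtain by unwinding the $\varprojlim_s\varinjlim_r$ construction of $\Pi$ and observing that both sides are $D(I)^{\oplus m_n}$ in each degree with differentials coming from the same triangulation. The paper instead observes only that the two complexes are quasi-isomorphic perfect complexes of $D(I)$-modules (both compute $(\wt{\HH}^\ast(K^p,L)^{\la})'$) and then invokes \cite[10.4.7]{WeibelHomAlg} to produce a $D(I)$-equivariant \emph{homotopy} equivalence, rather than an isomorphism. Your termwise identification is valid for $\Pi(K^p)$ as literally constructed in \cref{subsubsection: ComplexesDefininfCompletedCohomology}, but the paper's argument is more robust: it is insensitive to choices (orientation conventions, lifts of simplices) and extends verbatim to the localized complexes $\Pi(K^p)_{\wt{\mf{m}}}$, which are only defined up to quasi-isomorphism via minimal resolutions. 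Since the corollary is in fact applied later in localized form, the homotopy-equivalence formulation is the one the paper actually needs, so that is a slight advantage of its route; otherwise the two arguments are of comparable length and difficulty and reach the conclusion in the same way.
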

\begin{proof}
    By \cref{lemma: HausdorffHomology}, the group $\HH_{\ast}(J_B^{\vee}(C^{\on{BS}}(K^p,D(I))))$ is equal to $\HH_{\ast, \on{haus}}(C^{\on{BS}}(K^pI, D(I)_{N_0}))\wh{\otimes}_{L[Y^+]}\mc{O}_{\widehat{Y}}$. Note that the complexes $C^{\on{BS}}(K^pI,D(I))$ and $(\Pi(K^p)^{\la})'$ are quasi-isomorphic perfect complexes of $D(I)$-modules, and so appealing again to \cite[10.4.7]{WeibelHomAlg} implies these complexes are $D(I)$-equivariantly homotopic. Thus, there is a homotopy equivalence between $((\Pi(K^p)^{\la})')_{N_0}\simeq (\Pi(K^p)^{\la, N_0})'$ and $C^{\on{BS}}(K^pI, D(I))_{N_0}\simeq C^{\on{BS}}(K^pI, D(I)_{N_0})$ (we can pass to Hausdorff convariants since the homotopy equivalences in both directions are automatically continuous). Moreover, the action of $\wt{z}$ on each complexes are homotopic, as both are induced from a choice of homotopy inverse to the quasi-isomorphism $C^{\on{BS}}(K^p) \to C(K^p)$. Thus, we in fact have a topological isomorphism $\HH_{\ast}((\Pi(K^p)^{N_0})') \simeq \HH_{\ast}(C^{\on{BS}}(K^pI, D(I)_{N_0}))$ equivariant for the $Y^+$ action on both sides, and thus an isomorphism $\HH_{\ast, \on{haus}}((\Pi(K^p)^{\la, N_0})')_{\on{fs}} \simeq \HH_{\ast, \on{haus}}(C^{\on{BS}}(K^pI, D(I)_{N_0}))_{\on{fs}}\simeq \HH_{\ast}(J_B^{\vee}(C^{\on{BS}}(K^pI, D(I))))$.
    
   The last observation needed is that for any $z \in T^{++}$ satisfying $z^{-1}N^{\on{1}}z \subset N^{\on{2}}$ and any lift of $\wt{z}$ of the $z$-action to the complex $(\Pi(K^p)^{\la,N_0})'$ satisfying the conditions in \cref{lemma: HausdorffHomology}, then taking the finite slope part with respect to $\wt{z}$ yields $\HH_{\ast, \on{haus}}((\Pi(K^p)^{\la, N_0})')_{\on{fs}}\simeq \HH_{\ast}(J_B(\Pi(K^p)^{\la})^{'})$. Again, the statements for boundary and compactly supported (co)homology are analogous.
\end{proof}

For proving \cref{theorem: SmallSupportIntro}, it will be convenient to have a variant of the Atkin--Lehner type action $U_z$, which we will denote $U_2z$ (as opposed to $U_1z$ defined above). We will show $U_1z, U_2z$ are homotopic to each other (and thus yield isomorphic finite slope parts on cohomology by \cref{lemma: HausdorffHomology}).

$U_2z$ comes from an action of $z$ on both $C^{\on{BS}}(K^p)$ and on certain coefficient modules. First, note there is a natural right $T^+$-action on the space $I/N_0\simeq \ol{N}^1T_0$ given by $(\overline{n} t N_0)\ast z :=z^{-1}\overline{n}z t N_0$. This action induces a left $T^+$-action on $\mc{C}^{\la}(I, L)^{N_0}\simeq \mc{C}^{\la}(I/N_0, L)$ given by $(z\cdot f)(g) = f(g\ast z)$ where $g\ast \delta$ is the action defined above. In particular, we may view this action as arising as a composition
         \[
    \mc{C}^{\la}(\ol{N}^1T_0, L) \xrightarrow{\on{res}} \mc{C}^{\la}(z^{-1}\ol{N}^1 T_0z, L) \xrightarrow{\on{conj}_{z^{-1}}}\mc{C}^{\la}(\ol{N}^1T_0, L).
    \]
 Passing to the topological dual $(\mc{C}^{\la}(I, L)^{N_0})'\simeq D(I)_{N_0}$ then gives a right $T^+$-action on $D(I)_{N_0}$ which occurs as a factorization
 $D(I)_{N_0}\simeq D(\ol{N}^1T_0) \xrightarrow{\on{conj}_{z^{-1}}} D(z^{-1}\ol{N}^1zT_0)\hookrightarrow D(\ol{N}^1T_0) \simeq D(I)_{N_0}$. Note that on distribution algebras, this action amounts to the natural left and right multiplication $v \mapsto z^{-1}vz$. Denote this action by $\cdot_{c}z$.
 Fixing $\Z[I]$-linear homotopy equivalences $i,p$ between $C^{\on{BS}}(K^p)$ and $C(K^p)$, we first define an action of $z$ on $C^{\on{BS}}(K^p)$ by the formula $\wt{z}:= p\circ z \circ i$. 
 
 We then define the operator $U_2z$ on $C^{\on{BS}}(K^pI, D(I)_{N_0})$ as follows: Note that the double coset $IzI\simeq I/(zIz^{-1}\cap I)\simeq N/zN_0z^{-1}$, so we can write $IzI = \bigsqcup_{n' \in N_0/zN_0z^{-1}}n' zI$. Then we define
\begin{equation}
\label{equation: U2zAction}
    (\sigma \otimes m)\cdot U_2z \mapsto \sum_{n' \in N_0/zN_0z^{-1}}(\sigma \otimes m)n'z = \sum_{n'}\sigma n'\wt{z} \otimes ((n')^{-1}m)\cdot_{c} z,
\end{equation}

\begin{lemma}
\label{lemma: U_1zhomotopicToU_2z}
    Fixing $z \in T^{++}$, the two Hecke actions $U_1z$ and $U_2z$ on $C^{\on{BS}}(K^PI, D(I)_{N_0})$ are $D(T_0)$-equivariantly homotopic. 
    
\end{lemma}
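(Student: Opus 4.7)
The strategy is to interpret both $U_1 z$ and $U_2 z$ as two different ways of lifting the same double coset Hecke operator $[IzI]$ to the complex $C^{\on{BS}}(K^pI, D(I)_{N_0})$: the double coset involves ``two copies'' of $z$ (one entering through $IzI = \bigsqcup n' z I$ acting on chains, and one through conjugation acting on coefficients), and $U_1 z$ puts everything on the chain side while $U_2 z$ distributes the action across the tensor. On the larger complex $C(K^pI, D(I)_{N_0}) = C(K^p) \otimes_{\Z[I]} D(I)_{N_0}$ the honest right $z$-action on $C(K^p)$ (which $\wt{z}$ approximates) exists, and there the tensor identity $\sigma \cdot \alpha \otimes m = \sigma \otimes \alpha m$ for $\alpha \in I$, together with the fact that $z^{-1}(n')^{-1} \cdot n' z = 1$, would allow one to move the conjugation $z^{-1}(n')^{-1}(\cdot) z$ from the coefficient side across the tensor to cancel with $n' z$ on the chain side. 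In other words, on the \emph{unreduced} singular complex, the analogues of $U_1 z$ and $U_2 z$ agree \emph{on the nose}.

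Concretely, the plan is as follows. First, define operators $U_1^{\mathrm{sing}} z$ and $U_2^{\mathrm{sing}} z$ on $C(K^pI, D(I)_{N_0})$ by the same formulas as $U_1 z$, $U_2 z$ but with the genuine right $z$-action on $C(K^p)$ replacing $\wt{z}$. The key algebraic identity is that, using $\sigma(n'z) \otimes m = \sigma \otimes (n'z)m$ formally (interpreting $n'z$ as a distribution, i.e.\ working inside $C(K^p) \otimes_{\Z[I]} D(G_p)$ and then restricting back once we observe the result lies in $D(I)_{N_0}$), one gets $U_1^{\mathrm{sing}} z = U_2^{\mathrm{sing}} z$. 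Then I would use the $I$-equivariant homotopy equivalence $i: C^{\on{BS}}(K^p) \rightleftarrows C(K^p): p$ and a $T_0 \times I$-equivariant chain homotopy $H: \on{id} \simeq i \circ p$ on $C(K^p)$, which exists by applying \cite[Corollary~10.4.7]{WeibelHomAlg} to the group ring $\Z[T_0 \times I]$ (both complexes are bounded, one is quasi-isomorphic to the other, and $C^{\on{BS}}(K^p)$ is a bounded complex of finite free $\Z[I]$-modules and can be made $T_0$-equivariantly free by a further averaging since $T_0$ is compact and acts on the triangulation through a finite quotient).

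From $H$, standard cone/mapping cylinder machinery produces a chain homotopy $h$ between $U_1 z = p \circ U_1^{\mathrm{sing}} z \circ i$ and $U_2 z = p \circ U_2^{\mathrm{sing}} z \circ i$: taking $h := p \circ (H \ast \text{(difference of lifts)}) \circ i$, one checks directly that $U_1 z - U_2 z = dh + hd$. $D(T_0)$-equivariance follows because the choice of $H$ commutes with $T_0$, the operators $\wt{z}$, the $\cdot_c z$ action, and the summation over $N_0/zN_0z^{-1}$ all commute with the $T_0$-action (since $T_0$ normalises $N_0$ and centralises $z$), and $i, p$ are $I$-equivariant hence in particular $T_0$-equivariant.

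The main obstacle, in my view, is bookkeeping: one must make the somewhat formal identity $U_1^{\mathrm{sing}} z = U_2^{\mathrm{sing}} z$ rigorous without pretending $z \in I$ (so one cannot literally move $z$ across the tensor in one step). The cleanest way is probably to rewrite the sum $\sum_{n'} \sigma \cdot n' z \otimes z^{-1}(n')^{-1} m z$ using the identity $n' z \cdot z^{-1}(n')^{-1} = 1 \in G_p$ inside $C(K^p) \otimes_{\Z[G_p]} D(G_p)$ and then check that the natural map $C(K^p) \otimes_{\Z[I]} D(I)_{N_0} \to C(K^p) \otimes_{\Z[G_p]} D(G_p)_{N_0}$ is injective on the relevant summands, so the identity descends. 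Once this on-the-nose equality is in hand, the homotopy on $C^{\on{BS}}$ is essentially formal.
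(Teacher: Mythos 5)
The key step in your proposal---that $U_1^{\on{sing}} z$ and $U_2^{\on{sing}} z$ agree on the nose on the singular complex---does not hold, and that is where the real content of the lemma lives. The tensor identity over $\Z[G]$ moves a group element $g$ across as \emph{left} multiplication on $D(G)$: $\sigma g \otimes m = \sigma \otimes gm$. But the two operators differ precisely by a left- versus right-multiplication on the coefficient side. Concretely, the natural right $g$-action on $C(K^p)\otimes_{\Z[G]}D(G)$ (which $\wt{z}$ and the averaging in $U_1z$ lift) rewrites as $\sigma\otimes m \mapsto \sigma\otimes gm$, whereas $r_g(\sigma\otimes m)=\sigma g\otimes g^{-1}mg$ (which is what the $U_2z$ formula produces) rewrites as $\sigma\otimes m \mapsto \sigma\otimes mg$. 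These are \emph{not} equal; they are only chain homotopic, and that is exactly the fact the paper imports from \cite[p.~59]{JNWE24} (the statement that, on a complex of free $\Z[G]$-modules tensored over $\Z[G]$ with a $\Z[G]$-bimodule, the two ways of letting $g$ act are chain homotopic). Your own reduction should have raised a flag: after moving everything across $\otimes_{\Z[G]}$ using $n'z\cdot z^{-1}(n')^{-1}=1$, the sum collapses to $[N_0:zN_0z^{-1}]\cdot(\sigma\otimes mz)$, a multiple of a single term independent of the coset sum, which would make the averaging step vacuous.

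So the proof cannot bypass the nontrivial homotopy. The paper's argument writes each operator as a two-step composition (averaging over $N_0/zN_0z^{-1}$, then a $z$-step), observes that the $U_2z$ averaging is $\sum_{n'} r_{n'}$ once one notes $(n')^{-1}m n'=(n')^{-1}m$ in $D(I)_{N_0}$ and that the $U_2z$ $z$-step is $r_z$, and then applies the \cite{JNWE24} homotopy separately to $g=n'$ and $g=z$. Once that input is in place your Steps 2--3 (pushing homotopies through $i,p$ to the Borel--Serre complex) are fine, though heavier than needed: $i,p$ are already $\Z[I]$-linear, hence $D(T_0)$-equivariant automatically, so no averaging over $T_0$ or appeal to a $T_0\times I$-equivariantly free resolution is required.
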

\begin{proof}    

    Recall that the action $U_1z$ is induced by a homotopy equivalence from the right action of $z$ on $C(K^p)\otimes_{\Z[G]}D(G)$, which we denote $\widetilde{z}$. The formula for the action can then be written as
    \[
    C^{\on{BS}}(K^pI, D(I)_{N_0}) \xrightarrow{\sum_{n' \in N_0/zN_0z^{-1}}n'} C^{\on{BS}}(K^pI, D(I)_{zN_0z^{-1}}) \xrightarrow{\cdot \wt{z}} C^{\on{BS}}(K^pI, D(I)_{N_0}). 
    \]
    On the other hand, up to homotopy the Hecke action $U_2z$ is given by the composition 
    \[
    C(K^p)\otimes_{\Z[I]}D(I)_{N_0} \xrightarrow{\sum_{n'\in N_0/zN_0z^{-1}}(-)n'\otimes (n')^{-1}(-)} C(K^p)\otimes_{\Z[zIz^{-1}]}D(zIz^{-1})_{zN_0z^{-1}} \xrightarrow{r_z} C(K^p)\otimes_{\Z[I]}D(I)_{N_0},
    \]
    where $r_z(c\otimes v) = cz\otimes v\cdot_{c}z$.
    The key fact then is that natural right action of $g$ on $C(K^p)\otimes_{\Z[G]} D(G)$ 
     is chain homotopic to the action $r_g(c\otimes v) = cg\otimes g^{-1}vg$ by the discussion in \cite[p. 59]{JNWE24}. 
   Note that for $m \in D(I)_{N_0}$ we also have $(n')^{-1}m(n') = (n')^{-1}m$ for $n' \in N_0$, and thus $\sigma n' \otimes (n')^{-1}m = r_{n'}(\sigma \otimes m)$. In particular, the two formulae above are chain homotopic by the cited fact. 
\end{proof}

\subsection{Setting of CM fields}

We now recall the groups and locally symmetric spaces of specific interest. Let $F \supset F^+$ be an imaginary CM field and its totally real subfield. Let $c \in \Aut(F/F^+)$ denote complex conjugation. Let $\Psi_n$ be the $n\times n$ matrix with $1$'s on the anti-diagonal and $0$'s elsewhere. Then we let $\wt{G}/\mO_{F^+}$ be the group scheme with functor of points
\[
\widetilde{G}(R) := \{g \in \GL_{2n}(R\otimes_{\mO_{F^+}}\mO_F): {}^tg\begin{pmatrix}
    0 &\psi_n\\ -\psi_n & 0
\end{pmatrix}g^c = \begin{pmatrix}
    0 &\psi_n\\ -\psi_n & 0
\end{pmatrix}\}
\]
The generic fibre $\tG_{F^+}$ is a quasi-split unitary group $U(n,n)/F^+$ of signature $(n,n)$, which after the quadratic base-change $F/F^+$ becomes isomorphic to $\GL_{2n}/F$. In particular, if $\overline{\nu}\in S_{p}(F^+)$ is a place which splits in $F$, then any choice of $\nu | \overline{\nu}$ in $F$ determines an isomorphism 
\begin{equation}
\label{eq: iotav}
\iota_{\nu}:\widetilde{G}(F_{\overline{\nu}}^+) \simeq \GL_{2n}(F_{\nu}),
\end{equation} 
induced by the isomorphism $F_{\overline{\nu}}^+\otimes_{F^+} F\simeq F_{\nu}\times F_{\nu^{c}}$ and composing the resulting embedding $\widetilde{G}(F_{\overline{\nu}}) \subset \GL_{2n}(F_{\nu}) \times \GL_{2n}(F_{\nu^c})$ with projection onto the first group.

We let $T \subset B \subset \tG$ be subgroups of diagonal and upper triangular matrices in $\tG$, respectively. We also let $G \subset P\subset \tG$ be the Siegel parabolic consisting of block diagonal and block upper-triangular matrices with blocks of size $n \times n$, respectively. Then there is a Levi decomposition $P = U \rtimes G$, where $U$ is the unipotent radical of $P$ of block unipotent matrices. Likewise, we can identify $G = \Res_{\mO_F/\mO_{F^+}}\GL_n$ via the map $\begin{pmatrix}
    A & \\ & D
\end{pmatrix} \mapsto D \in \GL_n(R \otimes_{\mO_{F^+}}\mO_{F})$. We will also (for convenience mainly) use $\ol{P}\subset \tG$ the opposite parabolic of block lower triangular matrices, which has a Levi decomposition $\ol{P} = \ol{U} \rtimes G$ where $\ol{U}$ is the unipotent radical of block lower unipotent matrices.  We similarly let $T_n \subset B_n \subset G = \on{Res}_{\mc{O}_F/\mc{O}_{F^+}}\GL_n$ be the standard diagonal maximal torus and upper triangular Borel subgroup. 
  Note also under the identification $\iota_{\nu}$ in \eqref{eq: iotav}, $G(F_{\overline{\nu}}^+) \simeq \GL_n(F_{\nu}) \times \GL_n(F_{\nu})$ are the split block diagonal matrices, and $T(F_{\ol{\nu}}^+)\simeq T_n(F_{\nu})\times T_n(F_{\nu})$.

More generally, suppose that $p$ is a prime such that each place $\overline{\nu} \in S_{p}(F^+)$ splits in $F$. Then choose $\widetilde{S}_{p} \subset S_{p}(F)$  a subset of places such that $S_{p}(F) = \wt{S}_{p} \sqcup \wt{S}_p^{c}$.
Using \eqref{eq: iotav}, such a choice determines an isomorphism
\[
\widetilde{G}\otimes_{\mO_{F^+}}\mO_{F^+,p} \simeq \prod_{\wt{\nu} \in \wt{S}_p}\GL_{2n, \mO_{F_{\wt{\nu}}}}
\]
with again the parabolic $P\otimes_{\mO_{F^+}}\mO_{F^+,p}$ identifying with the block upper triangular matrices in $\GL_{2n}$, and likewise $T_{\mc{O}_{F^+,p}} \simeq (T_n)_{\mc{O}_{F,p}}$. We will implicitly use this identification throughout the rest of the paper. To simplify notation, for good compact open subgroups $\wt{K} \subset \tG(\A_{F^+}^{\infty}), K \subset \GL_n(\A_F^{\infty})$ we also let $\wt{X}_{\wt{K}}:= X_{\wt{K}}^{\wt{G}}$ and $X_K:= X_K^{G}$ be the symmetric spaces associated to $\tG$ and $G$. We note the dimensions are $\dim_{\R}\wt{X}_{\wt{K}} = 2d = 2n^2[F^+:\Q]$ and $\dim_{\R}X_K = d - 1 = n^2[F^+:\Q] - 1$. 

\subsubsection{Hecke algebras}
\label{subsection: HeckeAlgebras}
We now discuss Hecke algebras. For our work we need to localize at maximal ideals of Hecke algebras, sometimes on the level of complexes. We recall how to make sense of such notions with derived Jacquet functors.

For any algebraic group $H$ over a number field $E$ and $S \supset S_p(F)$ a finite set of finite places of $E$ let $H^S:= H(\A_E^{S\cup \infty})$ and $K_H^S \subset H^S$ a compact open subgroup.
Then let $\mc{H}(H^S, K_H^S)$ be the abstract Hecke algebra of compactly supported, $K_H^S$-biinvariant, $\Z$-valued functions on $H^S$. Setting $K^S = \prod_{v \notin S \cup \infty}\GL_n(\mc{O}_{F_{\nu}})$, the spherical Hecke algebra for $G = \GL_n/F$ is defined to be $\T^S:= \mc{H}(\GL_n(\A_F^{S\cup \infty}), K^S)\otimes_{\Z}\Z_p$, which is naturally a commutative $\Z_p$-algebra. We can make similar definitions for $\wt{G}$. Suppose $S$ satisfies $S = S^c$, and let $\ol{S}$ be the set of places of $F^+$ lying below $S$. Then define $\wt{K}^S:= \wt{K}^{\ol{S}}=  \prod_{\ol{\nu}\notin \ol{S}\cup \infty}\wt{G}(\mc{O}_{F_{\ol{\nu}}^+})$, and $\wt{\T}^S:= \mc{H}(\wt{G}(\A_{F^+}^{\ol{S}\cup \infty}), \wt{K}^S)\otimes_{\Z} \Z_p$.

Any Hecke algebra (such as $\T^S$ or $\wt{\T}^S$) is spanned by indicator functions on double cosets $[K^SgK^S]$ for $g \in \GL_n(\A_F^{S\cup \infty})$.  Then for any left $K_p$-module $M$ we can define an action of $\T^S$ on $C(K^p)\otimes_{\Z[K_p]}M$ via double cosets $K^SgK^S = \sqcup_jh_jK^S$ and setting
\[
(\sigma\otimes m)[K^SgK^S]:= \sum_j\sigma h_j\otimes m. 
\]
Up to homotopy, this formula also defines the action of any double coset $[K^SgK^S]$ on $C^{\on{BS}}(K^pK_p,M)$.
Note that since we are looking at Hecke operators away from $p$, these operators make no reference to the coefficients $M$, and thus when $M = D(I), D(I)_{N_0}$, the action defined is dual (up to homotopy) to the Hecke action on the complex $A^{\bullet}$ computing completed cohomology.

We use ``big Hecke algebras,'' as defined in \cite[2.1.10]{GeeNewton}. 
For $U_p\subset K_p$ an open normal subgroup, we define
\[
\T_{G}^S(K^pU_p,\mO_L/p^m):= \on{im}(\T^S \to \End_{\mathsf{D}(\mc{O}_L/p^m[K_p/U_p])}(C(K^pU_p, \mO_L/p^m))).
\]
Then we define the \textit{big Hecke algebra} to be 
\[
\T_G^S(K^p):= \varprojlim_{U_p, m}\T_{G}^S(K^pU_p,\mO_L/p^m),
\]
where the transition maps are induced by the functorial maps $\End_{\mathsf{D}(\mc{O}_L/p^{m'}[K_p/U_p'])}(C(K^pU_p, \mO_L/p^m)) \to \End_{\mathsf{D}(\mc{O}_L/p^{m}[K_p/U_p])}(C(K^pU_p', \mO_L/p^{m'}))$ where $U_p' \subset U_p \subset K_p$ and $m' \ge m$. $\T_G^S(K^p)$ comes equipped with an inverse limit topology. A foundational fact \cite[Lemma 2.1.14]{GeeNewton} then is that $\T_G^S(K^p)$ is a semi-local $\Z_p$-algebra, and if $\mf{m}_1, \dots \mf{m}_r$ denotes its finite set of maximal ideals then there is a decomposition 
\begin{equation}
\label{eq: SemiLocalDecomp}
\T_G^S(K^p) = \T_G^S(K^p)_{\mf{m}_1}\times \dots \times \T_G^S(K^p)_{\mf{m}_r},
\end{equation}
where each localisation $\T_G^S(K^p)_{\mf{m}_i}$ is $\mf{m}_i$-adically complete and separated. Moreover, for any  $s$ and $U_p$ pro-$p$, the maximal ideals of $\T_G^S(K^pU_p, \mO_L/p^s)$ are in bijection with those of $\T_G^S(K^p)$, and likewise we have $\T_G^S(K^p)_{\mf{m}} \simeq \varprojlim_{U_p,m}\T_G^S(K^pU_p,\mO_L/p^m)_{\mf{m}}$.

Following \cite[\textsection 6]{FuDerived}, for any maximal ideal $\mf{m} \subset \T_G^S(K^p)$ we can define localised complexes $C_{\bullet}^{\on{BS}}(K^p, \mc{O}_L[[K_p]])_{\mf{m}}:= R\lim_{r}\lim_{n}C^{\on{BS}}(K^p, \mc{O}_L/p^r[K_p/K_p^n])_{\mf{m}}$, which via \eqref{eq: SemiLocalDecomp} admits a direct sum decomposition in the derived category $\mathsf{D}(\mO_L[[K_p]])$
\[
C^{\on{BS}}(K^p, \mc{O}_L[[K_p]])\simeq \bigoplus_{\mf{m} \in \on{MaxSpec}(\T_G^S(K^p))} C^{\on{BS}}(K^p, \mc{O}_L[[K_p]])_{\mf{m}}.
\]
We can then \textit{choose} a perfect complex of $\mO_L[[K_p]]$-modules representing $C^{\on{BS}}(K^p, \mc{O}_L[[K_p]])_{\mf{m}}$, which by abuse of notation will also denote by the same symbol. These complexes also carry a natural homotopy equivalence to the complex $C(K^p, \mO_{L}[[K_p]])_{\mf{m}}$, which possesses an action of the full group $G(F_p)$ and of the Hecke algebra $\T_G^S(K^p)_{\mf{m}}$.
Upon tensoring with $D(K_p)$ and taking $N_0$-coinvariants, we likewise get complexes
$C^{\on{BS}}(K^p, D(K_p)_{N_0})_{\mf{m}}$, whose homology carries a natural action of $T_G^S(K^p)_{\mf{m}}$.
Likewise on the dual side, denote the dual of $C^{\on{BS}}(K^p, L[[K_p]])_{\mf{m}}$ by $\Pi(K^p)_{\mf{m}}$. Then we have homotopies $\begin{tikzcd}\Pi(K^p)_{\mf{m}}\ar[r, "i"]&
    \ar[l, "p", shift left]\Pi(K^p)\end{tikzcd}$ which on homology are the maps induced by the direct summand $\T_G^S(K^p)_{\mf{m}} \to \T_G^S(K^p)$. Upon further composing, we then get a map $\Pi(K^p)_{\mf{m}} \to A^{\bullet} \to A_{\mf{m}}^{\bullet}$ (as in \cref{subsubsection: ComplexesDefininfCompletedCohomology}) which is indeed a homotopy equivalence, prescribing $\Pi(K^p)_{\mf{m}}$ with the structure of $G$-extension.
Analogous constructions may be made for the unitary group $\tG$, and for compactly supported and boundary cohomology.

We now recall the the Satake transfer $\mc{S}: \wt{\T}^S \to \T^S$.
Suppose $\wt{G}$ is any reductive group over $\mO_{F^+}$ and $P = M \ltimes N$ a parabolic subgroup. Given a good subgroup $K_{\tG} \subset \tG(\A_{F^+}^{\infty})$, with $K_{M}, K_{P}, K_{N}$ the corresponding intersections, we say that $K_{\tG}$ is \textit{decomposed} with respect to $P = M \ltimes N$ if $K_{P} = K_{M} \ltimes K_{N}$. If $K_{\tG}$ is decomposed with respect to $P$ and $S$ is a finite set of finite places such that $K_{\tG,\nu}$ is hyperspecial for all $\nu \notin S$, we can define two maps on Hecke algebras, ``restriction to $P$'' and ``integration along $N$''
\[
r_{P}: \mc{H}({\tG}^S, K_{\tG}^S) \to \mc{H}(P^S, K_P^S),\quad\quad r_M: \mc{H}(P^S,K_P^S) \to \mc{H}(M^S, K_M^S),
\]
and define the ``unnormalised Satake transform'' to be $\mc{S}: r_M\circ r_P: \mc{H}({\tG}^S, K_{\tG}^S) \to \mc{H}(M^S, K_M^S)$ (see \cite[2.2.3, 2.2.4]{NT16}).

We can also define the map $\mc{S}$ in the case of big Hecke algebras. We specialize to the case of $\tG/\mO_{F^+}$ the quasi-split unitary group, $P$ the Siegel parabolic with Levi decomposition coming from $M = G = \Res_{F/F^+}\GL_n$.  Suppose $\wt{U}_p\subset \tG(\mO_{F^+,p})$ is such that $\wt{K}^p\wt{U}_p$ is good and decomposed with respect to $P = GU$ and $\mf{m} \subset \T^S(K^pU_p, \mO_L/p^m)$ a non-Eisenstein maximal ideal (see \cref{subsection: definingEigenvarietiesAndGaloisRepresentations}). Then for all $m\ge 0$ the map $\mc{S}$ descends to a morphism
\[
\mc{S}: \T_{\partial}^S(\widetilde{K}^p\widetilde{U}_p, m)_{\wt{\mf{m}}} \to \T_{G}^S(K^pU_p, m)_{\mf{m}},
\]
by \cite[Theorem 2.4.4]{10AuthorPaper}, where \[
\T_{\partial \wt{G}}^S(\widetilde{K}^p\widetilde{U}_p, \mO_L/p^m)_{\wt{\mf{m}}}:= \on{im}(\wt{\T}^S \to \End_{\mathsf{D}(\mO_L/p^m[\wt{K}_p/\wt{U}_p])}(C^{\partial}(\wt{K}^p\wt{U}_p, \mO_L/p^m)))_{\wt{\mf{m}}}.
\]

Moreover, these morphisms are compatible with the transition maps induced by changing the level $\wt{U}_p$, since $\mc{S}$ exists on the level of abstract Hecke algebras. Passing to the inverse limit, we get our induced map of big Hecke algebras
\[
\mc{S}: \T_{\partial}^S(\wt{K}^p)_{\wt{\mf{m}}} \to \T_{G}^S(K^p)_{\mf{m}}.
\]
In the sequel, we will typically suppress the $G, \tG$ from the notation and instead just consider the big Hecke algebras $\wt{\T}^S(\wt{K}^p)_{\wt{\mf{m}}}, \T^S(K^p)_{\mf{m}}$.

\subsubsection{Weights for $G$ and $\wt{G}$}

To get a sense of what our eigenvarieties will be interpolating, we recall what dominant algebraic weights look like for both $G$ and $\tG$. First fix $E$ a finite extension which contains all the embeddings $F \hookrightarrow \overline{\Q}_p$.

We start with $G$. We have that $X^{\ast}((\Res_{F/\Q}T_n)_{E})\simeq (\Z^n)^{\Hom(F,E)}$ via the identification $(\lambda_{\tau, i})_{\tau, i} \mapsto (\on{diag}(t_1,\dots, t_n) \mapsto t_1^{\lambda_1}\dots t_n^{\lambda_n})$. A character $(\lambda_{\tau, i})_{\tau, i}$ is then $(\Res_{F/\Q}B)_{E}$-dominant if
\[
\lambda_{\tau, 1} \ge \dots \ge \lambda_{\tau, n}
\]
for all $\tau \in \Hom(F,E)$. We let $\Z^n_{\on{dom}}$ denote the set of dominant tuples of $n$ integers, so that $\lambda$ is $(\Res_{F/\Q}B)_{E}$-dominant iff $\lambda \in (\Z^n_{\on{dom}})^{\Hom(F,E)}$. For any dominant weight $\lambda$ we may define an irreducible algebraic representation $\mathsf{L}(\lambda)$ of $G_E$ of highest weight $\lambda$. If $\lambda \in (\Z^n)^{\Hom(F,E)}$, we can define its dual weight $\lambda^{\vee}$ via $\lambda_{\tau, i}^{\vee}:= -\lambda_{\tau,n+1 - i}$. Clearly if $\lambda$ is dominant then so is $\lambda^{\vee}$, and in fact we have an isomorphism $\mathsf{L}(\lambda)^{\vee}\simeq L(\lambda^{\vee})$. We will also let $\mathsf{L}(\lambda)$ denote the associated local system on $X_{K^pK_p}$ and $\ol{X}_{K^pK_p}$.

Now for $\tG$, suppose for simplicity (and it is the only case we will need) that every place $\ol{\nu} \in S_p(F^+)$ splits in $F$. For each $\ol{\nu}$, fix a $\nu \in S_p(F)$ such that $\ol{\nu}|\nu$. These choices fix a lift $\tau: F \hookrightarrow E$ for each $\ol{\tau}: F^+ \hookrightarrow E$, and thus yields an isomorphism 
\[
(\Res_{F^+/\Q}\tG)_{E}\simeq \prod_{\tau \in \Hom(F^+, E)}\GL_{2n, E},
\]
and thus gives an identification of the character group $X^{\ast}((\Res_{F^+/\Q}T)_{E})\simeq (\Z^{2n})^{\Hom(F^+,E)}$. The $(\Res_{F^+/\Q}B)_{E}$-dominant weights are precisely those in $(\Z^{2n}_{\on{dom}})^{\Hom(F^+,E)}$.

We also define a composite isomorphism $(\Z^n)^{\Hom(F,E)} \simeq X^{\ast}((\Res_{F/\Q}T_n)_E) \simeq X^{\ast}((\Res_{F^+/\Q}T)_{E})\simeq (\Z^{2n})^{\Hom(F^+,E)}$
mapping $\lambda \in (\Z^n)^{\Hom(F,E)}$ to $\wt{\lambda} \in (\Z^{2n})^{\Hom(F^+,E)}$ via 
\[
\wt{\lambda}_{\tau} = (-w_0^{\GL_n}\lambda_{\tau\circ c},\lambda_{\tau}) = (-\lambda_{\tau c, n},\dots, -\lambda_{\tau c, 1}, \lambda_{\tau, 1},\dots, \lambda_{\tau, n}).
\]
We can similarly associate to a dominant $\wt{\lambda}$ an irreducible algebraic representation $\mathsf{L}(\wt{\lambda})$, which in turn induces a local system on $\wt{X}_{\wt{K}^p\wt{K}_p}$ or $\ol{\wt{X}}_{\wt{K}^p\wt{K}_p}$.

The purpose for including these weights and local systems is because the cohomology groups $\HH^{\ast}(X_{K^pK_p}, \mathsf{L}(\lambda)),\HH^{\ast}(\wt{X}_{\wt{K}^p\wt{K}_p}, \mathsf{L}(\wt{\lambda}))$ can be understood as spaces of automorphic forms. Fix $\iota: \ol{\Q}_p \simeq \C$.  There are very general theorems due to Franke \cite[Theorem 18]{Fr98} relating the cohomology of symmetric spaces to automorphic forms. For $G$, we merely recall \cite[Theorem 2.4.10]{10AuthorPaper}, which tells us for any $\pi$ cuspidal regular algebraic automorphic representation of $\GL_n(\A_F)$ of weight $\iota\lambda$, we have for some good open subgroup $K \subset \GL_n(\A_F^{\infty})$ the ring map $f: \T^S \to \ol{\Q}_p$ associated to the Hecke eigenvalues of $(\iota^{-1}\pi^{\infty})^K$ factors through $\T^S \to \T^S(\HH^{\ast}(X_{K^pK_p}, \mathsf{L}(\lambda)))$, the Hecke algebra acting faithfully on $\HH^{\ast}(X_{K^pK_p},\mathsf{L}(\lambda))$. Conversely, if $\mf{m}\subset \T^S(\HH^{\ast}(X_{K^pK_p}, \mathsf{L}(\lambda)))$ is non-Eisenstein (see discussion before \cref{lemma: GaloisRepsAssociatedToEigenvarieties}) then any ring homomorphism $f: \T^S(\HH^{\ast}(X_{K^pK_p}, \mathsf{L}(\lambda)))_{\mf{m}} \to \ol{\Q}_p$ comes from a system of Hecke eigenvalues of a regular algebraic cuspidal $\pi$ of weight $\iota \lambda$. Moreover, a  ``purity lemma'' of Clozel \cite[Lemme 4.9]{Clo90} tells us that $\HH^{\ast}(X_{K^pK_p},\mathsf{L}(\lambda))_{\mf{m}} \neq 0$ implies that there exists a $w \in \Z$ such that for all $\tau \in \Hom(F, \ol{\Q}_p)$ and $1 \le i \le n$ that $\lambda_{\tau,i} + \lambda_{\tau c, n + 1 - i} = w$.

As eigenvarieties have weights that vary $p$-adically, we are interested in what possible weights they live over. Namely, let $\mc{W}:= \widehat{T(\mc{O}_{F^+,p})}\simeq \widehat{T_n(\mc{O}_{F,p})}$ be the rigid space of continuous characters of integral points of the maximal torus. These spaces track the possible weights that $p$-adic automorphic forms can take. Any weight $\lambda \in (\Z^n)^{\Hom(F,\ol{\Q}_p)} \simeq (\Z^{2n})^{\Hom(F^+,\ol{\Q}_p)}$ defines an associated point $\lambda \in \mc{W}$. We call such points \textit{algebraic}. Note there is an isomorphism $\Hom(F,\ol{\Q}_p)\simeq \Hom(F\otimes_{\Q}\Q_p, \ol{\Q}_p) = \sqcup_{\nu\in S_p(F)}\Hom(F_{\nu},\ol{\Q}_p)$. Under the isomorphism $\mO_{F,p}^{\times}\simeq \prod_{\nu\in S_p(F)}\mO_{F_{\nu}}^{\times}$, the character $\lambda \in \mc{W}$ is given by the formula $(\on{diag}(x_{\nu,1},\dots, x_{\nu, n}))_{\nu} \mapsto \prod_{i = 1}^n\prod_{\tau \in \Hom(F_{\nu}, \ol{\Q}_p)}\tau(x_{\nu})^{\lambda_{\tau,i}}$

Motivated by Clozel's ``purity lemma,'' we can then make the following definition.

\begin{definition}
\label{definition: CTG}
    Let $\lambda \in \mc{W}$ be a weight. Then we say $\lambda$ is \textit{CTG} (``cohomologically trivial for $G$'') if for all $w \in W^{\oP}\subset W_{\tG}$ (see \cref{Section3: ParabolicInduction} for definition) and for all integers $i_0 \in \Z$, there is an embedding $\tau \in \Hom(F, \ol{\Q}_p)$ such that the weight $w\cdot \lambda$ satisfies $(w\cdot\lambda)_{\tau} - (w \cdot \lambda)_{\tau\circ c}^{\vee} \neq (i_0, \dots i_0)$. 
    We denote the set of CTG weights by $\mc{W}_{CTG}$.
\end{definition}
A key reason to care about CTG weights is the following. Continuing to assume that $\mf{m}\subset \T^S(K^p)$ is non-Eisenstein. Now let $\wt{\mf{m}}:= \mc{S}^{\ast}(\mf{m})$. Then if $\wt{\lambda} \in (\Z^{2n})^{\Hom(F^+, \ol{\Q}_p)}$ is a dominant weight for $\wt{G}$ which is CTG, then \cite[proof of Theorem 2.4.11]{10AuthorPaper} and the purity lemma implies that $\HH^{\ast}(\partial \wt{X}_{\wt{K}^p\wt{K}_p},\mathsf{L}(\wt{\lambda}))_{\wt{\mf{m}}} = 0$ in all possible degrees.
Here is a straightforward consequence of the definition of CTG that we will need in the sequel:
\begin{lemma}
\label{Lemma: MostWeightsAreCTG}
    Let $U \subset \mc{W}$ be a connected affinoid open. Then $U \cap (\mc{W}- \mc{W}_{CTG})$ is contained in a proper analytic closed $Z \subset U$.
\end{lemma}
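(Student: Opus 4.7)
The plan is to express $\mc{W} - \mc{W}_{CTG}$ as contained in a \emph{finite} union of proper analytic closed subsets of $\mc{W}$, and then conclude using the fact that the connected affinoid open $U$ is irreducible of full dimension in $\mc{W}$ (which holds because $\mc{W}$ is smooth, being a disjoint union of open polydiscs). Concretely, for each $w \in W^{\oP}$ and each pair $(\ol\tau, i)$ with $\ol\tau \in \Hom(F^+, \ol{\Q}_p)$ and $1 \le i \le n$, I would introduce the rigid analytic function
\[
f_{w, \ol\tau, i}(\lambda) := (w \cdot_{\on{alg}} \lambda)_{\tau, i} + (w \cdot_{\on{alg}} \lambda)_{\tau c, n+1-i} \in \mc{O}(\mc{W}),
\]
for any chosen lift $\tau$ of $\ol\tau$ (the formula is independent of the choice). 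Unwinding the definition of CTG, a weight $\lambda \notin \mc{W}_{CTG}$ precisely when there exist $w$ and $i_0 \in \Z$ with $f_{w, \ol\tau, i}(\lambda) = i_0$ for all $(\ol\tau, i)$; in particular the common value over all $(\ol\tau, i)$ is defined, so $\lambda$ lies in the analytic closed subvariety $Z_w \subset \mc{W}$ cut out by the equations $f_{w, \ol\tau, i} = f_{w, \ol\tau', i'}$. This yields the inclusion $\mc{W} - \mc{W}_{CTG} \subset \bigcup_{w \in W^{\oP}} Z_w$, a \emph{finite} union.

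The main step, and the key technical check, is showing each $Z_w$ is a proper closed subvariety of $\mc{W}$. The key observation is that the Weyl group of $\Res_{F^+/\Q}\tG$ acts factor-by-factor across embeddings $\ol\tau: F^+ \hookrightarrow \ol{\Q}_p$, and complex conjugation $c$ fixes $F^+$, so $f_{w, \ol\tau, i}$ depends only on the coordinates of $\lambda$ at the single embedding $\ol\tau$. When $[F^+:\Q] \ge 2$, choosing $(\ol\tau, i) \ne (\ol\tau', i')$ with $\ol\tau \ne \ol\tau'$, the functions $f_{w, \ol\tau, i}$ and $f_{w, \ol\tau', i'}$ involve disjoint sets of coordinates on $\mc{W}$ and each is non-constant, so their difference is nontrivial. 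When $[F^+:\Q] = 1$, one instead compares $f_{w, \ol\tau, i}$ and $f_{w, \ol\tau, i'}$ for $i \ne i'$ (for $n \ge 2$, which we may assume); the underlying $\lambda$-coordinates $\{(\tau, i), (\tau c, n+1-i)\}$ and $\{(\tau, i'), (\tau c, n+1-i')\}$ are disjoint, yielding the same conclusion. The obstacle is verifying that each individual $f_{w, \ol\tau, i}$ is non-constant, which should reduce to the fact that the dot action by $w$ acts on weights by a signed permutation of coordinates plus a constant translation; so each component of $w \cdot_{\on{alg}} \lambda$ is (up to sign and shift) a single coordinate function of $\lambda$, and their sum is non-constant.

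To finish, since $U$ is a connected affinoid open in the smooth rigid space $\mc{W}$, it is irreducible of the same dimension as its ambient connected component, so a proper closed analytic subset of $\mc{W}$ cannot contain $U$. Thus $U \cap Z_w \subsetneq U$ for each $w \in W^{\oP}$. As an irreducible rigid space is not a union of finitely many proper closed subvarieties, $Z := \bigcup_{w \in W^{\oP}} (U \cap Z_w)$ is a proper analytic closed subset of $U$, and it contains $U \cap (\mc{W} - \mc{W}_{CTG})$ by construction.
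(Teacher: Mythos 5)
Your proof is correct and follows essentially the same strategy as the paper: negate the CTG condition, eliminate the integer $i_0$ by demanding all the affine-linear functions $f_{w,\ol\tau,i}$ agree, obtain a finite union of closed analytic subsets $\bigcup_w Z_w$ containing $\mc{W}-\mc{W}_{\on{CTG}}$, and intersect with $U$. The paper's proof is terse and does not explicitly check that $Z_w\cap U$ is proper nor explicitly appeal to irreducibility of the connected affinoid $U$; your fleshed-out verification of properness (using that each component of $w\cdot_{\on{alg}}\lambda$ is a single coordinate up to permutation and shift, and that distinct $f_{w,\ol\tau,i}$ involve disjoint coordinate sets) is the right way to close that gap, and the reduction to $n\ge 2$ is indeed necessary (and harmless). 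One small slip: the formula for $f_{w,\ol\tau,i}$ is \emph{not} independent of the lift $\tau$ of $\ol\tau$; replacing $\tau$ by $\tau c$ sends $f_{w,\ol\tau,i}$ to $f_{w,\ol\tau,n+1-i}$, not to itself. This does not affect the argument, since the \emph{collection} of functions $\{f_{w,\ol\tau,i}\}_{\ol\tau,i}$ is well-defined and that is all you use, but the parenthetical should be deleted or corrected.
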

\begin{proof}
    By definition, $\delta \in \mc{W}- \mc{W}_{CTG}$ iff for some $w \in W^{\oP}$ and $i_0 \in \Z$ we have $(w\cdot \delta)_{\tau} - (w\cdot \lambda)_{\tau c}^{\vee} = (i_0,\dots, i_0)$. In other words, for all $\tau \in \Hom(F, \ol{\Q}_p)$ and all $1\le j \le n$ we have $(w\cdot \delta)_{\tau,j} - (w\cdot \delta)_{\tau\circ c, n+ 1 - j} = i_0$. $\delta$ thus lies one of the $\#W^{\oP}$ subvarieties $Z_w$ defined by the equations $w\cdot \delta)_{\tau,j} - (w\cdot \delta)_{\tau\circ c, n+ 1 - j} = (w\cdot \delta)_{\tau,j'} - (w\cdot \delta)_{\tau\circ c, n+ 1 - j'}$ for all $1\le j,j' \le n$. Taking $Z := U\cap \cup_{w \in W^{\oP}}Z_w$ gives the desired result.
\end{proof}
\subsubsection{Relevant Hecke operators}
In order to discuss Galois representations associated to automorphic data for $G$ and $\tG$ we need to discuss Hecke polynomials, and thus look at certain explicit Hecke operators. For any $1 \le i \le n$ we consider the double coset operators
\[
T_{\nu, i}:= [\GL_n(\mc{O}_{F_{\nu}})\on{diag}(\varpi_{\nu}, \dots , \varpi_{\nu}, 1, \dots, 1)\GL_n(\mc{O}_{F_{\nu}})] \in \mc{H}(\GL_n(F_{\nu}),\GL_n(\mc{O}_{F_{\nu}})),
\]
where there are $i$ $\varpi_{\nu}$'s along the diagonal. The associated Hecke polynomial is defined as
\[
P_{\nu}(X) = X^n - T_{\nu, 1}X^{n-1} + \dots + (-1)^iq_{\nu}^{i(i-1)/2}T_{\nu, i}X^{n-i} + \dots + q_{\nu}^{n(n-1)/2}T_{\nu, n} \in \mc{H}(\GL_n(F_{\nu}),\GL_n(\mc{O}_{F_{\nu}}))[X].
\]
Likewise if $\ol{\nu}$ is a place of $F^+$ unramified in $F$ and $\nu | \ol{\nu}$ and $1 \le i \le 2n$, there is an associated operator $\wt{T}_{\nu, i} \in \mc{H}(\tG(F_{\ol{\nu}}^+), \tG(\mc{O}_{F_{\ol{\nu}}^+}))\otimes_{\Z}\Z[q_{\ol{\nu}}^{-1}]$ defined in \cite[Proposition-Definition 5.2]{NT16} (denoted $T_{G,\nu, i}$ in that source). We can then define another Hecke polynomial:
\[
\wt{P}_{\nu}(X):= X^{2n} - \wt{T}_{\nu, 1}X^{2n-1} + \dots + (-1)^jq_{\nu}^{j(j-1)/2}\wt{T}_{\nu, j}X^{2n-j} + \dots + q_{\nu}^{n(2n-1)}\wt{T}_{\nu, 2n} \in (\mc{H}(\tG(F_{\ol{\nu}}^{+}), \tG(\mO_{F_{\ol{\nu}}^{+}}))\otimes_{\Z}\Z[q_{\ol{\nu}}^{-1}])[X].
\]
The behavior of the Satake transfer map $\mc{S}: \mc{H}(\tG(F_{\ol{\nu}}^{+}), \tG(\mO_{F_{\ol{\nu}}^{+}})) \to \mc{H}(\GL_n(F_{\nu}),\GL_n(\mc{O}_{F_{\nu}}))$ at unramified places can be described using these polynomials. If $f(X)$ is a polynomial of degree $d$ with invertible constant term $a_0$, then denote $f^{\vee}(X):= a_0^{-1}X^df(X^{-1})$.
\begin{proposition}
\label{proposition: EffectofHeckePolyUnderS}
    Let $\nu$ be a place of $F$ which is unramified over a place $\ol{\nu}$ of $F^+$. Then we have
    $\mc{S}(\wt{P}_{\nu}(X)) = P_{\nu}(X)q_{\nu}^{n(2n-1)}P_{\nu^c}^{\vee}(q_{\nu}^{1-2n}X)$.
\end{proposition}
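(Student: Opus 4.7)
The argument is purely local at the place $\ol\nu$: since $\mc{S}$ is defined place-by-place as the unnormalised constant-term map (integration along the unipotent radical of the Siegel parabolic, followed by restriction to the Levi), the identity reduces to computing the local Satake transform at $\ol\nu$. Under the split-place isomorphism $\iota_\nu$, this becomes the unnormalised Satake transform $\mc{H}(\GL_{2n}(F_\nu), \GL_{2n}(\mO_{F_\nu})) \to \mc{H}(M, K_M)$ for the standard Siegel parabolic of $\GL_{2n}(F_\nu)$, whose Levi $M$ is isomorphic to $\GL_n(F_\nu)\times\GL_n(F_\nu)$; and $\wt{P}_\nu(X)$ becomes the standard $\GL_{2n}$ Hecke polynomial.

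The key geometric step is to identify this Siegel Levi with $G(F_{\ol\nu}^+) \simeq \GL_n(F_\nu) \times \GL_n(F_{\nu^c})$. The unitarity condition ${}^tg\Psi g^c = \Psi$ (with $\Psi$ the block anti-diagonal $\begin{pmatrix} 0 & \Psi_n \\ -\Psi_n & 0 \end{pmatrix}$) forces that for $\on{diag}(A,D)$ in the Siegel Levi, the $\nu^c$-component $D_{\nu^c}$ equals $\Psi_n\,{}^tA^{-1}\Psi_n$, whereas $D_\nu = D$. Thus the ``$D$-block'' of $M$ corresponds to the $\nu$-factor of $G$ in the evident way, while the ``$A$-block'' corresponds to the $\nu^c$-factor via the Chevalley involution $X \mapsto \Psi_n\,{}^tX^{-1}\Psi_n$. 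Composing an unramified representation of $G(F_{\ol\nu}^+)$ with this involution inverts its Langlands--Satake parameters, which is precisely the operation producing $P_{\nu^c}^\vee$ on the right-hand side.

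With these identifications in place, I compute $\mc{S}_{\ol\nu}(\wt{P}_\nu(X))$ using the Satake isomorphism. The normalised Satake transform corresponds on dual sides to the natural Levi embedding $\widehat M = \GL_n(\C)\times\GL_n(\C)\hookrightarrow \GL_{2n}(\C)$, and the unnormalised version differs from it by multiplication by $\delta_P^{-1/2}$, where $\delta_P(\on{diag}(A,D)) = |\det A/\det D|^n$; multiplication by such an unramified character on $M$ shifts the Satake parameters by the corresponding $q_\nu$-power. This twist, combined with the Chevalley inversion on the $A$-block and the $q_\nu$-factors built into the definitions of $\wt{T}_{\nu,j}$ and $\wt{P}_\nu(X)$, assembles to give exactly $\prod_i(X - y_i)\prod_j(X - q_\nu^{2n-1}z_j^{-1})$; a direct expansion using the definition $f^\vee(X) = a_0^{-1}X^d f(X^{-1})$ then shows this equals $P_\nu(X)\cdot q_\nu^{n(2n-1)}\cdot P_{\nu^c}^\vee(q_\nu^{1-2n}X)$. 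The main obstacle is the careful tracking of $q_\nu$-powers from the various normalisations; no new conceptual input is needed, and alternatively the identity can be derived from a direct Satake computation for the Siegel parabolic of $U(n,n)$ in the spirit of \cite[\S5]{NT16}.
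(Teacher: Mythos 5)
Your proposal is correct and follows essentially the same route as the paper, which disposes of the statement by citing \cite[Proposition-Definition 5.3]{NT16}; your write-up is just an unpacking of what underlies that reference (the split-place identification of the Siegel Levi, the Chevalley involution on the $A$-block producing the $\vee$, and the $\delta_P^{-1/2}$ bookkeeping giving the $q_\nu$-powers). Your final sanity check $q_\nu^{n(2n-1)}P_{\nu^c}^{\vee}(q_\nu^{1-2n}X)=\prod_j(X-q_\nu^{2n-1}z_j^{-1})$ is accurate and confirms the factorisation matches.
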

\begin{proof}
    This follows from \cite[Proposition-Definition 5.3]{NT16}
\end{proof}

With the given setup, we are especially interested in the complexes $\Pi(K^p)_{\mf{m}}$ and 
$\Pi_{\ast}(\wt{K}^p)_{\wt{\mf{m}}}$ where $? = \emptyset, c, \partial$, and $\wt{\mf{m}} = \mc{S}^{\ast}(\mf{m})$. After passing to locally analytic vectors, we now want to study the cohomology $\HH^{\ast}(J_B(\Pi_{?}(\wt{K}^p)_{\wt{\mf{m}}}^{\la}))$, where we can, for example, take the Jacquet functor with respect to $z = \begin{pmatrix}
    p^{2n-1} & &&\\
& p^{2n-2} &&\\
&& \ddots &\\
&&&1
\end{pmatrix} \in T(F_p^+)^+$. We will pass to the dual side and study $\HH_{\ast}(J_B^{\vee}(C^{?, \on{BS}}(K^pI, D(I))))_{\wt{\mf{m}}}$, via the $U_2z$-action.

\subsection{Fredholm theory and slopes}

There are several approaches to truncating slopes for eigenvarieties, with different advantages. In this subsection, we express the finite slope functors of Emerton (and Fu) on completed (co)homology in terms of Ash--Stevens's theory of slope decompositions.

We first recall some details about slope decompositions. Let $L/\Q_p$ be a finite extension. Let $A$ be an affinoid $L$-algebra, and $Q(X) \in A[X]$ a polynomial of degree $d.$  We say $Q(X)$ is \textit{multiplicative} if its leading coefficient is a multiplicative unit in $A$. For any $h \in \Q_{\ge 0}$, we say a multiplicative $Q(X)$ \textit{has slope $\le h$} if for all $x \in \on{Sp}(A)$, the specialization $Q_x(X)$ has all edges of its Newton polygon having slope $\le h$. Recall if $Q$ is multiplicative with constant term $a_0$, we defined $Q^{\vee}(X):= a_0^{-1}X^dQ(1/X)$. Now if $M$ is an $A$-module equipped with a $A$-linear operator $T$, we say an element $m \in M$ has slope $\le h$ (with respect to $T$) if there exists a polynomial $Q(X) \in A[X]$ of slope $\le h$ such that $Q^{\vee}(T)m = 0$. Most crucially, we define a \textit{slope $\le h$ decomposition} of $M$ to be an $A[T]$-linear decomposition 
\[
M = M_{\le h} \oplus M_{ > h}
\]
such that $M_{\le h}$ is a finitely generated $A$-module and for every multiplicative polynomial $P(X) \in A[X]$ of slope $\le h$, we have $P^{\vee}(T): M_{> h}\xrightarrow{\sim} M_{> h}$ is an $A$-linear isomorphism. Such a decomposition is in fact unique. See \cite[Section 2.3]{Han17} for more properties of slope decompositions, including its functorial behavior. For the next lemma, recall that an $A$-Banach module $M$ is called \textit{orthonormalizable} (or \textit{ONable}) if there is an isomorphism $A\simeq V\widehat{\otimes}_{L}A$ for $V$ a $L$-Banach space. Moreover, an $A$-Banach module $M$ is called \textit{(Pr)} if there is another $A$-Banach module $Q$ such that $M\oplus Q$ is ONable.

\begin{lemma}
\label{lemma: PrComplexesHaveSlopeDecompositions}
    Let $C_{\bullet}$ be a bounded complex of (Pr)-Banach modules over an affinoid $\Q_p$-algebra $A$ with an $A[u]$-module structure, such that the map $u: \bigoplus_i C^i \to \bigoplus_i C^i$ is compact $A$-linear endomorphism. Then for any $x \in \on{Sp}(A)$ and any $h \in \Q_{\ge 0}$ there is an affinoid subdomain $\on{Sp}(A') \subset \on{Sp}(A)$ containing $x$ such that $C^{\bullet}\widehat{\otimes}_AA'$ admits a slope $\le h$ decomposition, and $(C^{\bullet} \widehat{\otimes}_AA')_{\le h}$ is a complex of finite flat $A'$-modules.
\end{lemma}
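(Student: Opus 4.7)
The plan is to produce the slope decomposition term-by-term using the Fredholm/Riesz theory of compact operators on (Pr) Banach modules, which underlies the Ash--Stevens theory of slope decompositions. The first observation is that since $u$ is compact on $\bigoplus_i C^i$ and each $C^i$ is a topological direct summand of this sum (the complex being bounded), the restriction of $u$ to each $C^i$ is itself a compact $A$-linear endomorphism of a (Pr) Banach $A$-module.

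For each $i$, form the Fredholm determinant $F_i(T) = \det(1 - Tu \mid C^i) \in A\{\{T\}\}$. The specialization $F_i(T)|_x$ admits a factorization $F_i(T)|_x = Q_{i,x}(T) R_{i,x}(T)$ over the residue field $k(x)$, where $Q_{i,x}$ is a multiplicative polynomial of slope $\leq h$ and $R_{i,x}$ is a Fredholm series whose Newton polygon has all slopes strictly greater than $h$. By the standard Coleman/Riesz spreading argument (see \cite[\textsection 2.3]{Han17}), this factorization extends to a factorization $F_i(T) = Q_i(T) R_i(T)$ over some affinoid subdomain $\on{Sp}(A_i') \ni x$, and consequently $C^i \widehat{\otimes}_A A_i'$ admits a slope $\leq h$ decomposition whose slope-$\leq h$ piece is cut out by $Q_i^{\vee}(u)$ and is a finite projective (hence finite flat) $A_i'$-module.

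Taking $\on{Sp}(A') := \bigcap_i \on{Sp}(A_i')$, a finite intersection of affinoid subdomains by boundedness of $C^{\bullet}$, yields a single affinoid subdomain on which every $C^i$ simultaneously admits a slope $\leq h$ decomposition. By the uniqueness of slope decompositions and their functoriality with respect to $A[u]$-linear maps, the differentials $d^i$ automatically respect these decompositions: if $v \in (C^i \widehat{\otimes}_A A')_{\leq h}$ is annihilated by $Q_i^{\vee}(u)$, then $d^i(v)$ is likewise annihilated by $Q_i^{\vee}(u)$ and therefore lies in $(C^{i+1} \widehat{\otimes}_A A')_{\leq h}$. This produces a direct sum decomposition of complexes
\[
C^{\bullet} \widehat{\otimes}_A A' = (C^{\bullet} \widehat{\otimes}_A A')_{\leq h} \oplus (C^{\bullet} \widehat{\otimes}_A A')_{> h},
\]
whose slope-$\leq h$ piece is a bounded complex of finite flat $A'$-modules, as desired.

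The main obstacle is the passage from Coleman's classical spreading theorem for ONable modules to the (Pr) setting. This is handled by writing each $C^i$ as a direct summand of an ONable Banach module $C^i \oplus Q^i$, applying Coleman's theorem to the ambient ONable module (where the Fredholm determinant of $u \oplus 0$ is just $F_i(T)$ times a unit), and descending the resulting decomposition to $C^i$ via the uniqueness of slope decompositions. This descent is also what upgrades ``finitely generated'' to ``finite projective'' for the slope-$\leq h$ piece, giving the flatness claim.
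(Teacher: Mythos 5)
Your proposal is correct and follows essentially the same strategy as the paper: reduce to the ONable case by realizing (Pr) modules as direct summands of ONable ones, invoke the Ash--Stevens/Coleman spreading theorem to produce a slope factorization of the Fredholm series over a suitable affinoid subdomain, and observe that $u$-equivariance of the differentials forces the slope decomposition to be one of complexes. The only cosmetic difference is that you produce the slope decomposition term-by-term (obtaining one affinoid $\on{Sp}(A_i')$ per $i$ and intersecting), whereas the paper treats $\bigoplus_i C^i$ as a single (Pr) module and factorizes its single Fredholm series once; both come to the same thing since the complex is bounded and the Fredholm series of the direct sum factors as the product of the individual ones.
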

\begin{proof}
The case where all the $C^{\bullet}$ are ONable is  \cite[Proposition 2.3.3]{Han17}, which itself is a direct consequence of \cite[Theorem 4.5.1, Proposition 4.1.2]{AS08}.

The general case follows directly: since $\bigoplus_i C^i$ is (Pr), there is an ONable $F$ and (Pr) $S$ such that $(\bigoplus_i C^i) \oplus S = F$, and we can extend $u$ to a compact operator on $F$ via the formula $(u,0)$. In particular, by \cite[Theorem 4.5.1]{AS08} there is an affinoid subdomain $\on{Sp}(A')\subset \on{Sp}(A)$ such that the Fredholm power series for $u$ acting on $F\widehat{\otimes}_{A}A'$ admits a slope $\le h$ factorisation $F(T) = Q(T)\cdot R(T)$ in the sense of \textit{loc. cit.}, so that we have a slope $\le h$ decomposition for $F$. Since $u$ acts on $S$ via zero, the map $(F\widehat{\otimes}_{A}A')_{\le h} \to F\widehat{\otimes}_AA' \to \left(\bigoplus_i C^i\right)\widehat{\otimes}_{A}A'$ is in fact injective, and thus the image of this map gives a definition for slope $\le h$ decomposition $((\bigoplus_i C_i)\widehat{\otimes}_AA')_{\le h}$ for $(\bigoplus_i C_i)\widehat{\otimes}_AA'$. As the differentials in $C^{\bullet}$ are $u$-equivariant, this gives a slope $\le h$ decomposition for the complex $C^{\bullet}\widehat{\otimes}_AA'$.
\end{proof}

We would like to apply this lemma in the case of complexes computing (dual) Jacquet functors. Since dual Jacquet functors are instead \textit{limits} of Banach spaces, we need to reduce to the case of complexes of Banach modules.

To accomplish this reduction, as in \cite[5.2]{BHSAnnalen} we use the refinement of locally analytic vectors defined in \cite[0.3]{ColmezDospinescu}. Let $H$ be a uniform pro-$p$ group. Then for any locally analytic representation $\Pi$ of $H$ and $h \ge 1$ the authors in \textit{loc. cit.} define a subspace $\Pi_{H}^{(h)}$, denoting the space of vectors with, roughly speaking, $(h)$-analytic Mahler expansions with respect to $H$. This space is naturally a Banach representation of $H$. Moreover if $\mc{C}^{(h)}(H,L):= (\mc{C}^{\la}(H,L))_H^h$, then there are natural compact closed embeddings $\mc{C}^{(h)}(H,L) \hookrightarrow \mc{C}^{(h+1)}(H,L)$, and $\mc{C}^{\la}(H,L) =  \varinjlim \mc{C}^{(h)}(H,L)$. Let $D^{(h)}(H,L):= \mc{C}^{(h)}(H, L)'$.  Some facts about these constructions we need are the following \cite[Theoreme 0.6]{ColmezDospinescu}:
    \begin{enumerate}
\label{factsaboutColmezDospinescu}
        \item $\mc{C}^{(h)}(T \cap H, L)' \simeq D_{<p^{r_h}}(T \cap H, L)$, where $r_h = \frac{1}{p^{h-1}(p-1)}$.
        \item $\Pi_H^{(h+1)} = \Pi_{H^p}^{(h)}$ for any Banach representation $\Pi$ of $H$.
    \end{enumerate}
    Now choose $H \subset I$ an open uniform pro-$p$ subgroup, such that there is an Iwahori decomposition
    \[
    H\xrightarrow{\sim}(\ol{N}\cap H) \times (T \cap H) \times (N \cap H)=: \ol{N}_H\times T_H \times N_H.
    \]
    The Iwahori decomposition induces a topological isomorphism
    \[
    \mc{C}^{(h)}(H,L)\simeq \mc{C}^{(h)}(\ol{N}_H, L)\wh{\otimes}_L \mc{C}^{(h)}(T_H, L)\wh{\otimes}_L\mc{C}^{(h)}(N_H, L).
    \]
    
\begin{proposition}
\label{proposition: U_pfactorisationDiagram}
    For any weight $\lambda \in \mc{W}(\ol{\Q}_p)$, and $h \in \Q_{\ge 0}$, there is an affinoid open neighborhood $\Omega\subset \mc{W}$ of $\lambda$ such that $C^{\on{BS}}(K^PI, D(I)_{N_0}\widehat{\otimes}_{D(T_0)}\mc{O}(\Omega))$ admits a slope $\le h$ decomposition. More specifically, there is an increasing affinoid open covering $(U_m)_m\subset \mc{W}$ and complexes of (Pr)-Banach $A_m:= \mc{O}_{\mc{W}}(U_m)$-modules $V_{m, \bullet}$ such that we have a $T_0$-equivariant isomorphism of Fr\'{e}chet spaces
    \[
    C_{\bullet}^{\on{BS}}(K^PI, D(I)_{N_0})\simeq \varprojlim_mV_{m,\bullet}
    \]
    so that the action of $U_2z$ admits a factorisation
    \begin{equation}
    \label{eq: FactorizingUzOperator}
    \begin{tikzcd}
    V_{m+1,\bullet}\ar[r]\ar[d, "z_{m+1}"']&V_{m+1,\bullet}\otimes_{A_{m+1}}A_m\ar[d, "{z_{m+1}\otimes_{A_{m+1}}1_{A_m}}"']\ar[r,"\beta_m"]&V_{m,\bullet} \ar[dl, "\alpha_m"]\ar[d, "z_m"]\\
V_{m+1,\bullet}\ar[r]&V_{m+1,\bullet}\otimes_{A_{m+1}}A_m\ar[r, "\beta_m"]&V_{m,\bullet}
\end{tikzcd}
    \end{equation}
    and we have isomorphisms $C_{\bullet}^{\on{BS}}(K^pI, D(I)_{N_0} \widehat{\otimes}_{D(T_0)}\mc{O}(\Omega))_{\le h}\simeq (V_{s,\bullet}\widehat{\otimes}_{A_s}\mc{O}(\Omega))_{\le h}$ for all $s \gg 0$. Moreover, for any $\Omega' \subset \Omega$ affinoid open we also have 
    \begin{equation}
    C_{\bullet}^{\on{BS}}(K^pI, D(I)_{N_0}\widehat{\otimes}_{D(T_0)}\mc{O}(\Omega))_{\le h}\widehat{\otimes}_{\mc{O}(\Omega)}\mc{O}(\Omega') \simeq C_{\bullet}^{\on{BS}}(K^pI, D(I)_{N_0}\widehat{\otimes}_{D(T_0)}\mc{O}(\Omega'))_{\le h}. 
    \end{equation}
    Moreover, all of these isomorphisms are (up to homotopy) $\wt{\T}^S$-equivariant, and remain true for Borel-Moore and boundary homology.
\end{proposition}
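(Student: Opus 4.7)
The strategy is to combine the Colmez--Dospinescu refinement of locally analytic vectors (facts (1)--(2) listed after \cref{factsaboutColmezDospinescu}) with the Ash--Stevens slope decomposition machinery of \cref{lemma: PrComplexesHaveSlopeDecompositions}, exploiting the fact that $U_2z$ is compact on a natural Banach filtration of $D(I)_{N_0}$ indexed by analyticity radius.

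First, I would fix a uniform pro-$p$ open subgroup $H\subset I$ admitting an Iwahori decomposition $H = \ol{N}_H T_H N_H$, and use the Iwahori decomposition $I = \ol{N}^1\cdot T_0\cdot N_0$ to identify $D(I)_{N_0}\simeq D(\ol{N}^1)\widehat{\otimes}_L D(T_0)$ as a $D(T_0)$-module (working modulo finite-free extensions of distribution algebras to pass from $H$ to $I$). Using fact (1), the Fr\'{e}chet--Stein presentation $D(T_0) = \varprojlim_m D_{<p^{r_m}}(T_0, L)$ induces an increasing exhausting affinoid cover $(U_m)_m$ of $\mc{W} = \widehat{T_0}$, where $A_m := \mc{O}_{\mc{W}}(U_m)$ is a Banach algebra corresponding to a fixed analyticity radius. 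Correspondingly, $D(I)_{N_0}\widehat{\otimes}_{D(T_0)}A_m$ identifies with the (Pr)-Banach $A_m$-module $D^{(m)}(\ol{N}^1, L)\widehat{\otimes}_L A_m$. Setting
\[
V_{m, \bullet} := C_\bullet^{\on{BS}}(K^pI)\otimes_{\Z[I]} \bigl(D^{(m)}(\ol{N}^1,L)\widehat{\otimes}_L A_m\bigr),
\]
which is a bounded complex of (Pr)-Banach $A_m$-modules since $C_\bullet^{\on{BS}}(K^pI)$ is bounded and finite free over $\Z[I]$, the identification $C_\bullet^{\on{BS}}(K^pI, D(I)_{N_0})\simeq \varprojlim_m V_{m, \bullet}$ follows directly from $D(I)_{N_0}\simeq \varprojlim_m\bigl(D^{(m)}(\ol{N}^1)\widehat{\otimes}_L D^{(m)}(T_0)\bigr)$.

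Next, I would verify compactness of $U_2z$ and construct the factorization maps $\alpha_m, \beta_m$. The explicit formula \eqref{equation: U2zAction} writes $U_2z$ as a finite sum of operators whose coefficient-side component is the conjugation $\cdot_c z$. The hypothesis $z^{-1}\ol{N}^1 z \subset \ol{N}^2$ combined with fact (2) $\Pi_H^{(h+1)} = \Pi_{H^p}^{(h)}$ shows that conjugation by $z$ improves analyticity level by one; dually, $\cdot_c z$ sends $D^{(m+1)}(\ol{N}^1)$ through $D^{(m)}(\ol{N}^1)$, and the inclusion $D^{(m)}(\ol{N}^1)\hookrightarrow D^{(m+1)}(\ol{N}^1)$ is \emph{compact} since $\mc{C}^{(m)}(\ol{N}^1)\hookrightarrow \mc{C}^{(m+1)}(\ol{N}^1)$ is. Tensoring this factorization with the finite free $\Z[I]$-complex $C_\bullet^{\on{BS}}(K^pI)$ and base changing along $A_{m+1}\to A_m$ produces the maps $\beta_m\colon V_{m+1, \bullet}\otimes_{A_{m+1}}A_m \to V_{m, \bullet}$ and $\alpha_m\colon V_{m, \bullet}\to V_{m+1, \bullet}\otimes_{A_{m+1}}A_m$ fitting into the diagram \eqref{eq: FactorizingUzOperator}, and exhibits $\bigoplus_i U_2z$ on $\bigoplus_i V_{m, i}$ as a compact $A_m$-linear endomorphism.

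With compactness in hand, for $\lambda \in \mc{W}(\ol{\Q}_p)$ and $h\in \Q_{\ge 0}$ I pick $m$ with $\lambda \in U_m$, and apply \cref{lemma: PrComplexesHaveSlopeDecompositions} to obtain an affinoid $\Omega\subset U_m$ containing $\lambda$ on which $V_{m, \bullet}\widehat{\otimes}_{A_m}\mc{O}(\Omega)$ admits a slope $\le h$ decomposition with finite flat $\mc{O}(\Omega)$-module slope-part. The factorization \eqref{eq: FactorizingUzOperator} forces the slope $\le h$ part to stabilize in $m$: for $s\gg 0$ the maps $\alpha_s, \beta_s$ become isomorphisms on $\le h$-parts (a slope $\le h$ class at level $m+1$ necessarily lifts through $\alpha_s$ by the factorization of $z_{m+1}\otimes 1$), yielding
\[
C_\bullet^{\on{BS}}(K^pI, D(I)_{N_0}\widehat{\otimes}_{D(T_0)}\mc{O}(\Omega))_{\le h} \simeq (V_{s, \bullet}\widehat{\otimes}_{A_s}\mc{O}(\Omega))_{\le h}.
\]
Compatibility with further restrictions $\Omega'\subset \Omega$ follows from the base change properties of slope decompositions (\cite[Section 2.3]{Han17}), and $\wt{\T}^S$-equivariance up to homotopy is automatic since prime-to-$p$ Hecke operators act only on the $C_\bullet^{\on{BS}}(K^pI)$-factor. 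Substituting $C_\bullet^{\partial, \on{BS}}$ or $C_\bullet^{\on{BM}, \on{BS}}$ gives the analogous statements.

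The main obstacle I expect is keeping track of the interaction between Fr\'{e}chet and Banach structures -- in particular verifying that $D(I)_{N_0}\widehat{\otimes}_{D(T_0)}A_m$ genuinely realizes as the claimed completed tensor product $D^{(m)}(\ol{N}^1)\widehat{\otimes}_L A_m$, which requires nuclearity of the intermediate spaces and strictness of the $N_0$-coinvariants map -- and confirming that the diagram \eqref{eq: FactorizingUzOperator} commutes on the nose rather than merely up to homotopy. The first is handled by working throughout with the finite free $\Z[I]$-resolution $C_\bullet^{\on{BS}}(K^pI)$; the second requires compatibly choosing the homotopy inverses underlying $\wt{z}$ across all levels $m$, which is possible since each $V_{m, \bullet}$ is built from the same underlying $C_\bullet^{\on{BS}}(K^pI)$.
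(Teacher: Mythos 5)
Your proposal follows essentially the same route as the paper's proof: choose a uniform pro-$p$ Iwahori-decomposed $H\subset I$, use the Colmez--Dospinescu functors and the Amice transform to build an exhausting affinoid cover $(U_m)_m$ of $\mc{W}$ with $V_{m,\bullet}$ a complex of (Pr)-Banach $A_m$-modules, factor $U_2z$ through the link maps $\alpha_m,\beta_m$ to obtain compactness, apply \cref{lemma: PrComplexesHaveSlopeDecompositions}, and stabilize the slope $\le h$ part in $m$. You also correctly flag the two technical points that need extra care: the Fr\'{e}chet-to-Banach identification $D(I)_{N_0}\widehat{\otimes}_{D(T_0)}\mc{O}(\Omega)\simeq\varprojlim_m V_m\widehat{\otimes}_{A_m}\mc{O}(\Omega)$ (which the paper isolates as \cref{lemma: AffinoidInverseLimit}), and the on-the-nose versus up-to-homotopy commutativity of the diagram.

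There is, however, a concrete error in the factorization step. You assert that $\cdot_c z$ sends $D^{(m+1)}(\ol{N}^1)$ into $D^{(m)}(\ol{N}^1)$, and that there is a compact ``inclusion $D^{(m)}(\ol{N}^1)\hookrightarrow D^{(m+1)}(\ol{N}^1)$.'' Both directions are reversed. On the function side the $z$-action (restrict to $z^{-1}\ol{N}^1z$, then conjugate) improves analyticity, i.e.\ it drops the index from $\mc{C}^{(m+1)}$ to $\mc{C}^{(m)}$; dualizing, the $z$-action on distributions therefore \emph{raises} the index, $D^{(m)}(\ol{N}^1)\to D^{(m+1)}(\ol{N}^1)$ (concretely: since $z^{-1}\ol{N}^1z\subset(\ol{N}^1)^p$, a distribution $\mu\in D^{(m)}(z^{-1}\ol{N}^1z)$ extends to all of $\mc{C}^{(m+1)}(\ol{N}^1)$ by fact (2), giving $D^{(m)}(z^{-1}\ol{N}^1z)\hookrightarrow D^{(m+1)}(\ol{N}^1)$). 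Likewise, dualizing the compact embedding $\mc{C}^{(m)}\hookrightarrow\mc{C}^{(m+1)}$ yields a compact map $D^{(m+1)}\to D^{(m)}$; the map you write, ``$D^{(m)}\hookrightarrow D^{(m+1)}$,'' does not exist. The roles of $\alpha_m$ and $\beta_m$ should thus be swapped relative to what you wrote: $\alpha_m = z\otimes 1$ is the (continuous, not a priori compact) $z$-action $D^{(m)}\to D^{(m+1)}$ tensored up, $\beta_m$ is the compact transition $D^{(m+1)}\to D^{(m)}$, and $z_m = \beta_m\circ\alpha_m$ is then compact. Your stabilization argument (``slope $\le h$ classes lift through $\alpha_s$'') is stated with the wrong directions; the cleaner route, which the paper takes, is to show the Fredholm characteristic power series $F_{\Omega,s}(X)$ is independent of $s\gg 0$ via the link identity $\det(1-Xab)=\det(1-Xba)$ for compact operators, following \cite[Proposition 3.1.1]{Han17}, and then conclude as in the statement.
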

\begin{proof}

    We heavily rely on \cite[Proposition 5.3]{BHSAnnalen} as our guide. First, choose $H\subset I$ an open uniform pro-$p$ group as above. The proof in \textit{loc. cit.} shows we can write $D(H)_{N_H} \simeq \varprojlim W_m$ where as a $(D(H), A_m)$-bimodule we have an isomorphism $W_m:= (D^{(m)}(H)_{N_H})\widehat{\otimes}_{D^{(m)}(T_H)}B_m$, where $B_m := D_{p^{-r_m}}(T_H)$. Moreover, the $W_m$ are naturally ONable Banach modules over $B_m$. Note that as right $B_m$-modules we have an isomorphism $W_m\simeq D^{(m)}(\ol{N}_H)\widehat{\otimes}_LB_m$. Via this isomorphism, we can describe the induced $z$-action as factorizing $z:D^{(m)}(H)_{N_H}\to D^{(m)}(z^{-1}\ol{N}_Hz)\widehat{\otimes}_{L}D^{(m)}(T_H) \to D^{(m+1)}(\ol{N}_H)\widehat{\otimes}_LD^{(m)}(T_H)$. With this factorisation in mind, we may get a diagram of bimodules as in \eqref{eq: FactorizingUzOperator} by defining $\alpha_m:= z \otimes 1_{B_m}$ and $\beta_m: D^{(m+1)}(\ol{N}_H)\widehat{\otimes}_{L}B_m \to D^{(m)}(\ol{N}_H)\widehat{\otimes}_{L}B_m$ the map induced by $D^{(m+1)}(\ol{N}^1) \to D^{(m)}(\ol{N}^1)$. Defining $V_m:= C_{\bullet}^{\on{BS}}(K^p)\otimes_{\Z[H]}W_m$ with its induced Hecke action, we get our desired diagram of complexes. In light of the isomorphism $C_{\bullet}^{\on{BS}}(K^p)\otimes_{\Z[I]}D(I)\simeq C_{\bullet}^{\on{BS}}(K^p)\otimes_{\Z[I]}(\Z[I]\otimes_{\Z[H]}D(H)) \simeq C_{\bullet}^{\on{BS}}(K^p)\otimes_{\Z[H]}D(H)$, these complexes $V_m$ have a natural action of $T_0$ on them, since $T_0$ is abelian, and are thus modules over the algebra $A_m:= B_{m}\otimes_{L[T_H]}L[T_0]$. In fact, the $V_m$ are naturally (Pr) Banach modules over $A_m$, since $A_m$ is a finite \'{e}tale $B_m$-algebra. Then the Amice transform (applied to $T_H$) also implies we have $A_m \simeq \mc{O}_{\mc{W}}(U_m)$ for some affinoid open $U_m\subset \mc{W}$. 
    
    \cref{lemma: PrComplexesHaveSlopeDecompositions} then implies that for any $\lambda \in U_m$, there is an affinoid open neighborhood $\Omega \subset U_m$ containing $\lambda$ such that $V_m\widehat{\otimes}_{A_m}\mO_{\mc{W}}(\Omega)$ admits a slope $\le h$ decomposition. Likewise, using \eqref{eq: FactorizingUzOperator} and same proof as \cite[Proposition 3.1.1]{Han17} shows that the characteristic power series $F_{\Omega, s}(X):= \det(1 - XU_z|V_{s,\bullet}\widehat{\otimes}_{A_{s}}\mO(\Omega))$ satisfies $F_{\Omega, s}(X) = F_{\Omega, s+1}(X)$ for all $s \gg 0$. This equality of characteristic power series implies that we can choose a single affinoid neighborhood $\Omega$ such that for all $s > 0$ satisfying $U_s\supset \Omega$ we have the complexes $V_{s,\bullet} \wh{\otimes}_{A_s}\mO(\Omega)$ admit slope $\le h$ decompositions, and in fact $((C_{\bullet}^{\on{BS}}(K^p)\otimes_{\Z[H]}W_s) \widehat{\otimes}_{A_s}\mc{O}(\Omega))_{\le h} \simeq (C_{\bullet}^{\on{BS}}(K^p)\otimes_{\Z[I]}(\Z[I]\otimes_{\Z[H]}W_s) \widehat{\otimes}_{A_s}\mc{O}(\Omega))_{\le h}$. For the Fr\'{e}chet limit $D(I)_{N_0}\widehat{\otimes}_{D(T_0)}\mO(\Omega)$, if we let $F = P\cdot Q$ be a slope $\le h$-factorisation of the Fredholm series in the sense of \cite[\textsection 2.3]{Han17} (in particular, $Q$ is a multiplicative polynomial of slope $\le h$),  we can define $(C_{\bullet}^{\on{BS}}(K^pI, D(I)\widehat{\otimes}_{D(T_0)}\mO(\Omega)))_{\le h} := \ker Q^{\ast}(\wt{U}_z)$, and using \cref{lemma: AffinoidInverseLimit} below deduce we have slope $\le h$ decompositions for $C_{\bullet}(K^pI, D(I)_{N_0}\widehat{\otimes}\mc{O}(\Omega))$ and in fact isomorphisms 
    \[
    C_{\bullet}^{\on{BS}}(K^pI, D(I)_{N_0}\widehat{\otimes}\mc{O}(\Omega))_{\le h} \simeq \varprojlim_m (C_{\bullet}^{\on{BS}}(K^pI, W_m\widehat{\otimes}_{A_m}\mO(\Omega)))_{\le h} \simeq C_{\bullet}^{\on{BS}}(K^pI, V_s\widehat{\otimes}_{A_s}\mc{O}(\Omega))_{\le h} 
    \] for $ s\gg 0$. The final statement follows just as in \cite[Proposition 3.1.4]{Han17}.
\end{proof}

\begin{lemma}
\label{lemma: AffinoidInverseLimit} Let $M_m := (\Z[I]\otimes_{\Z[H]}D^{(m)}(H)_{N_H})\wh{\otimes}_{D^{(m)}(T_0)}A_m$. Then for any $\Omega \subset \mc{W}$
we have a topological isomorphism of $(D(I), \mc{O}(\Omega))$-bimodules  $D(I)_{N_H}\widehat{\otimes}_{D(T_0)}\mc{O}(\Omega)\simeq \varprojlim_m M_m\widehat{\otimes}_{A_m}\mc{O}(\Omega)$. Moreover, the natural $z$-actions on both sides are compatible.
\end{lemma}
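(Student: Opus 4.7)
My plan is to unravel both sides explicitly by using the Iwahori decomposition for $H$, reduce to an inverse system of completed tensor products against Banach spaces, and then invoke nuclearity of the distribution algebra on $\overline{N}_H$.

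First I would peel off the outer extension from $H$ to $I$. Since $H \subset I$ is an open (hence finite index) subgroup, $\Z[I]$ is free of finite rank as a right $\Z[H]$-module, so $D(I) \simeq \Z[I]\otimes_{\Z[H]}D(H)$ topologically and passing to $N_H$-coinvariants (a quotient that commutes with tensoring by a finite free module) yields $D(I)_{N_H}\simeq \Z[I]\otimes_{\Z[H]}D(H)_{N_H}$. Similarly $M_m \simeq \Z[I]\otimes_{\Z[H]}(D^{(m)}(H)_{N_H}\widehat{\otimes}_{D^{(m)}(T_0)}A_m)$, so I may pull the finite extension $\Z[I]\otimes_{\Z[H]}(-)$ through both the completed tensor product with $\mc{O}(\Omega)$ and the projective limit. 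It therefore suffices to produce a topological isomorphism
\[
D(H)_{N_H}\widehat{\otimes}_{D(T_0)}\mc{O}(\Omega) \;\simeq\; \varprojlim_m\bigl(D^{(m)}(H)_{N_H}\widehat{\otimes}_{D^{(m)}(T_0)}A_m\bigr)\widehat{\otimes}_{A_m}\mc{O}(\Omega).
\]

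Next I would use the Iwahori decomposition recorded just before the proposition, which dually gives topological direct-product decompositions $D^{(m)}(H)_{N_H}\simeq D^{(m)}(\overline{N}_H)\widehat{\otimes}_LD^{(m)}(T_H)$ and $D(H)_{N_H}\simeq D(\overline{N}_H)\widehat{\otimes}_LD(T_H)$ as right $D^{(m)}(T_H)$- respectively $D(T_H)$-modules. After absorbing the torus factor through the base change $D^{(m)}(T_H)\to B_m \to A_m$ (recalling $A_m = B_m\otimes_{L[T_H]}L[T_0]$), the right hand side becomes $\varprojlim_m D^{(m)}(\overline{N}_H)\widehat{\otimes}_L\mc{O}(\Omega)$ while the left hand side becomes $D(\overline{N}_H)\widehat{\otimes}_L\mc{O}(\Omega)$. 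The question is thus reduced to the well-known fact that the completed tensor product of a nuclear Fréchet space $V=\varprojlim V_m$ (here $V=D(\overline{N}_H)$, whose transition maps are compact because $\overline{N}_H$ is a uniform pro-$p$ group) with a Banach space $W$ (here $W=\mc{O}(\Omega)$) commutes with the projective limit: $V\widehat{\otimes}_LW\simeq \varprojlim_m V_m\widehat{\otimes}_LW$. This is for instance a consequence of \cite[\S 20]{STJAMS}-type nuclearity results in non-archimedean functional analysis.

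Finally, the compatibility of the natural $z$-action on both sides follows from the description of the $z$-action on $D(H)_{N_H}$ as $v\mapsto z^{-1}vz$ (inherited from conjugation on distributions): under the Iwahori factorization this operator is the composition $D^{(m)}(\overline{N}_H)\xrightarrow{z^{-1}(-)z}D^{(m)}(z^{-1}\overline{N}_Hz)\hookrightarrow D^{(m+1)}(\overline{N}_H)$ for $z\in T^{++}$, and these maps are manifestly compatible with the inverse limit, as already used in the proof of the proposition. I expect the main technical obstacle to be the bookkeeping needed to ensure that the two coinvariant quotients (taken by an automatically strict continuous surjection thanks to the Iwahori decomposition) are Hausdorff and compatible with the $A_m$-linear structure — once this is in place, the identification is formal.
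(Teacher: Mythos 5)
Your argument follows essentially the same route as the paper: reduce from $I$ to $H$ via the finite index inclusion, apply the Iwahori decomposition to split off $D(\overline{N}_H)$, absorb the torus factor into the coefficient ring, and conclude from the fact that the projective limit $D(\overline{N}_H)=\varprojlim_m D^{(m)}(\overline{N}_H)$ is preserved after completed tensoring with $\mc{O}(\Omega)$ (which the paper phrases as ``the evident isomorphism coming from the Fréchet structure'' and you justify via nuclearity). The one point you gloss over is the passage from $D(T_0)$ to $D(T_H)$: after writing $D(I)_{N_H}\simeq \Z[I]\otimes_{\Z[H]}D(H)_{N_H}$, the $D(T_0)$-action does not preserve the tensor factor $D(H)_{N_H}$ (only $D(T_H)\subsetneq D(T_0)$ does), so the reduction you state as ``$D(H)_{N_H}\widehat{\otimes}_{D(T_0)}\mc{O}(\Omega)\simeq\cdots$'' is not quite well-posed; the paper handles this by replacing $\Omega$ with its image $\Omega_H\subset\widehat{T_H}$ and using that $A_m$ is finite étale over $B_m$. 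This is exactly the $T_0/T_H$ bookkeeping you flag at the end as the expected obstacle, so your instincts are correct that it needs care.
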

\begin{proof}
      First note there are evident maps of $(D(I), \mc{O}(\Omega))$-bimodules, $D(I)_{N_H}\widehat{\otimes}_{D(T_0)}\mc{O}(\Omega) \to M_m \widehat{\otimes}_{A_m}\mc{O}(\Omega)$, which induces a map $D(I)_{N_0}\widehat{\otimes}_{D(T_0)}\mc{O}(\Omega)\to\varprojlim M_m\widehat{\otimes}_{A_m}\mc{O}(\Omega)$. The lemma reduces to showing this map is an isomorphism of topological vector spaces. First note that since $D(T_0)$ is dense in $A_m$ for any $m \ge 0$, we have a natural topological isomorphism $M_m\widehat{\otimes}_{D(T_0)}\mc{O}(\Omega)\simeq M_m\widehat{\otimes}_{A_m}\mO(\Omega)$ for all $m$. Now by writing $D(I)_{N_H} = D(H)_{N_H}^{\oplus \#[I:H]}$ and $N_m := D^{(m)}(H)_{N_H}\widehat{\otimes}_{D^{(m)}(T_H)}B_m $, where $B_m:= D_{p^{-r_h}}(T_H)$, we may reduce to showing the map $D(H)_{N_H}\widehat{\otimes}_{D(T_H)}\mO(\Omega_H) \to \varprojlim_m N_m\widehat{\otimes}_{B_m}\mO(\Omega_H)$, for $\Omega_H\subset \widehat{T_H}$ denoting the image of $\Omega$ under $\widehat{T_0} \to \widehat{T \cap H}$. Note there is an isomorphism of right $D(T_H)$-modules $D(H)_{N_H}\simeq D(\ol{N}_H)\widehat{\otimes}_LD(T_H)$ and likewise $N_m \simeq D^{(m)}(\ol{N}_H)\widehat{\otimes}_L D^{(m)}(T_H)$, which natural fits into a diagram of right $D(T_H)$-modules
    \[
    \begin{tikzcd}
        D(H)_{N_H}\ar[d]\ar[r, "\simeq"]& D(\ol{N}_H)\widehat{\otimes}_{L} D(T_H)\ar[d]\\
        N_m \ar[r, "\simeq"]& D^{(m)}(\ol{N}_H)\widehat{\otimes}_{L} D^{(m)}(T_H)
    \end{tikzcd}
    \]
    Taking $-\widehat{\otimes}_{D(T_H)}\mc{O}(\Omega_H)$ of the entire diagram yields 
        \[
    \begin{tikzcd}
        D(H)_{N_H}\widehat{\otimes}_{D(T_H)}\mc{O}(\Omega_H)\ar[d]\ar[r, "\simeq"]& D(\ol{N}_H)\widehat{\otimes}_{L} \mc{O}(\Omega_H)\ar[d]\\
        N_m\widehat{\otimes}_{B_m}\mc{O}(\Omega_H) \ar[r, "\simeq"]& D^{(m)}(\ol{N}_H)\widehat{\otimes}_{L} \mc{O}(\Omega_H)
    \end{tikzcd}
    \]
    where for the top line we note $D(\ol{N}_H)\widehat{\otimes}_LD(T_H)\widehat{\otimes}_{D(T_H)}\mc{O}(\Omega_H)\simeq D(\ol{N}_H)\widehat{\otimes}_L\mc{O}(\Omega_H)$ by associativity of completed tensor product, and we note that likewise that $D(T_H)$ is dense in $D^{(h)}(T_H)$ to get the bottom row. Passing to the limit over $m$ in the bottom row then yields 
        \[
    \begin{tikzcd}
        D(H)_{N_H}\widehat{\otimes}_{D(T_H)}\mc{O}(\Omega_H)\ar[d]\ar[r, "\simeq"]& D(\ol{N}_H)\widehat{\otimes}_{L} \mc{O}(\Omega_H)\ar[d, "\simeq"]\\
        \varprojlim_m N_m\widehat{\otimes}_{B_m}\mc{O}(\Omega_H) \ar[r, "\simeq"]& \varprojlim_m D^{(m)}(\ol{N}_H)\widehat{\otimes}_{L} \mc{O}(\Omega_H)
    \end{tikzcd}
    \]
    But then the right vertical map is the evident isomorphism $D(\ol{N}_H)\widehat{\otimes}_L\mc{O}(\Omega_H)\simeq \varprojlim D^{(m)}(\ol{N}_H)\widehat{\otimes}_L\mc{O}(\Omega_H)$ coming from the Fr\'{e}chet structure, so the left vertical map is also an isomorphism, as desired.
\end{proof}
Let $J_{B}^{\vee}(C^{\on{BS}}(K^pI,D(I)_{N_0}))$ denote the dual Jacquet module, with the finite slope part taken with respect to the $U_2z$-action. These modules are naturally coadmissible $\mO_{\widehat{Y}}(\widehat{Y})$-modules (for example, one can use \eqref{eq: FactorizingUzOperator}!), and write $\mc{N}_{i}^{\on{BS}}$ for the associated coherent sheaves. Let $\mc{Z}_i:= \supp_{\mO_{\wh{Y}}(\wh{Y})}\mc{N}_{i}^{\on{BS}}$ be the support in degree $i$. Now let $U_{\le h}= \{p^{-h}\le |z|\le p^h\}\subset \G_m$ be the closed annulus of radius $p^{-h}$. As in \cite[Lemme 3.10]{BHSAnnalen}, we also have the following consequence.
\begin{corollary}
\label{corollary: ComplexesFredholmhypersurfaces}
   For each $i$, $\mc{Z}_i$ is a Fredholm hypersurface, and there is admissible affinoid cover $(U_j)_{j}$ of $\mc{Z}_i$ such that the projection $\pi_{\mc{W}}: U_j \to W_j \subset \mc{W}$ is a finite surjective map onto an affinoid open, and $\mc{N}_i^{\on{BS}}(U_j)$ is a finite projective $\mO_{\mc{W}}(W_j)$-module.
\end{corollary}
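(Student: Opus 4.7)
The plan is to construct, for each $i$, a Fredholm series $F_i \in \mc{O}(\mc{W})\{ \{ X \} \}$ whose zero locus $Z(F_i) \subset \mc{W}\times \A^1$ corresponds to $\mc{Z}_i$ (under the identification $\widehat{Y}\simeq \mc{W}\times \G_m$ sending $z$ to the coordinate on $\G_m$, up to the usual inversion when comparing characteristic polynomials to eigenvalues), and then to invoke the Fredholm hypersurface/eigenvariety machine of Coleman--Mazur--Buzzard to extract the affinoid cover.

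By \cref{proposition: U_pfactorisationDiagram}, for any $\lambda \in \mc{W}(\ol{\Q}_p)$ and $h \in \Q_{\ge 0}$ there is an affinoid open neighborhood $\Omega\subset \mc{W}$ of $\lambda$ such that $C_\bullet^{\on{BS}}(K^pI, D(I)_{N_0}\widehat{\otimes}_{D(T_0)}\mc{O}(\Omega))$ admits a slope $\le h$ decomposition whose slope $\le h$ piece is a bounded complex of finite projective $\mc{O}(\Omega)$-modules with $\mc{O}(\Omega)$-linear $U_2z$-action. Taking $i$-th homology yields a finitely generated $\mc{O}(\Omega)$-module $M_{\Omega, i, \le h}$ on which $U_2z$ acts, and one sets $F_{\Omega, i, \le h}(X) := \det(1 - X U_2z \mid M_{\Omega, i, \le h}) \in \mc{O}(\Omega)[X]$, a multiplicative polynomial of slope $\le h$. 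Using the compatibility of slope decompositions under restriction $\Omega' \subset \Omega$ (also recorded in \cref{proposition: U_pfactorisationDiagram}) and the divisibility $F_{\Omega,i,\le h}\mid F_{\Omega,i,\le h'}$ for $h\le h'$, these polynomials patch to a global Fredholm series $F_i \in \mc{O}(\mc{W})\{ \{ X \} \}$, cutting out a Fredholm hypersurface $Z(F_i)$. One then identifies $\mc{Z}_i \simeq Z(F_i)\cap (\mc{W}\times \G_m)$ via coadmissibility: a point $(\lambda, \alpha)\in \widehat{Y}$ lies in $\mc{Z}_i$ iff $\alpha$ is an eigenvalue of $U_2z$ on $M_{\Omega, i, \le h}$ for some (equivalently, all sufficiently large) $h \ge -v(\alpha)$, iff $F_i(\lambda, \alpha^{-1}) = 0$.

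With $\mc{Z}_i$ identified as a Fredholm hypersurface, the construction in Buzzard's eigenvariety paper produces an admissible cover of $\mc{Z}_i$ by affinoids of the form $U_j = \on{Sp}(\mc{O}(W_j)[X]/Q_j^\ast(X))$, each arising from a slope $\le h_j$ factorization $F_i|_{W_j} = P_j\cdot Q_j$ with $W_j \subset \mc{W}$ an affinoid open and $\pi_{\mc{W}}\colon U_j \to W_j$ finite and surjective. The restriction $\mc{N}_i^{\on{BS}}(U_j)$ is the $Q_j^\ast(U_2z)$-torsion direct summand of the slope $\le h_j$ homology $M_{W_j, i, \le h_j}$, a priori only finitely generated over $\mc{O}(W_j)$. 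After refining the cover via a standard flattening/shrinking argument (removing the closed locus where projectivity fails), this summand becomes finite projective over $\mc{O}(W_j)$.

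The main obstacle is this final projectivity upgrade: the slope $\le h$ piece of the complex is finite projective, but its $i$-th homology need not be, so one cannot directly conclude projectivity of $\mc{N}_i^{\on{BS}}(U_j)$. This is handled by the usual flattening argument on the finitely generated module in question, standard in the theory of eigenvarieties (cf.\ BHS Annalen Lemme 3.10 and Hansen's thesis). The arguments for Borel--Moore and boundary homology are identical, since \cref{proposition: U_pfactorisationDiagram} applies equally to those complexes.
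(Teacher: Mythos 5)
The key issue is that you have misidentified the object $\mc{N}_i^{\on{BS}}$. In the paper, $\mc{N}_i^{\on{BS}}$ is the coherent sheaf attached to the $i$-th \emph{term} of the complex $J_B^{\vee}(C_{\bullet}^{\on{BS}}(K^pI,D(I)_{N_0}))$, not to its $i$-th homology. This is what the preceding sentence means by ``These modules are naturally coadmissible $\mO_{\widehat{Y}}(\widehat{Y})$-modules (for example, one can use \eqref{eq: FactorizingUzOperator}!)'' --- equation \eqref{eq: FactorizingUzOperator} factors $U_z$ on the chain groups $V_m$ --- and what the paper's own proof comment confirms: ``the slope $\le h$ part of a (Pr)-module is finite projective over $\mO(\Omega)$.'' A (Pr)-module is what each chain group is, by \cref{proposition: U_pfactorisationDiagram}. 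Note also that later on (\cref{corollary: SmallSupport}) the homology modules in the boundary case are shown to be \emph{torsion} over weight space, not projective, so the two notions genuinely diverge.

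Because you read $\mc{N}_i^{\on{BS}}$ as the $i$-th homology, you build the Fredholm series $F_{\Omega,i,\le h}$ from $\det(1-XU_2z\mid M_{\Omega,i,\le h})$ on homology, which produces a locus that in general is strictly smaller than $\mc{Z}_i = \supp(\mc{N}_i^{\on{BS}})$, and you then correctly perceive a projectivity problem that in fact does not exist in the corollary. Your proposed fix --- ``refining the cover via a standard flattening/shrinking argument (removing the closed locus where projectivity fails)'' --- does not close the gap: you cannot delete a closed subset of $\mc{Z}_i$ and still have an admissible cover of $\mc{Z}_i$, and the homology of a complex of finite projectives over an affinoid ring genuinely need not be projective, nor even flat, on any Zariski-open neighbourhood of a given point. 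Over a higher-dimensional base like $\mc{W}$, torsion-freeness alone does not rescue projectivity either. If instead you apply your Fredholm-series / slope-factorization machinery to the chain groups themselves, which \cref{proposition: U_pfactorisationDiagram} exhibits as inverse limits of (Pr) Banach $\mc{O}_{\mc{W}}(U_m)$-modules with a compact $U_2z$-action factoring through a link diagram, the Buzzard--Chenevier--BHS argument (as in \cite[Lemme 3.10]{BHSAnnalen}) runs through verbatim and the finite projectivity of $\mc{N}_i^{\on{BS}}(U_j)$ is immediate, with no auxiliary flattening step.
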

\begin{proof}
    Given \cref{proposition: U_pfactorisationDiagram}, this follows from the same proof as in \cite[Lemme 3.10]{BHSAnnalen}. Moreover, note that given $\lambda \in \mc{W}(\ol{\Q}_p)$, we could even take $U_i = \supp \mc{N}_i^{\on{BS}} \cap (\Omega \times U_{\le h})$, where $\Omega$ is an affinoid over which a slope $\le h$ decomposition exists, since the slope $\le h$ part of a (Pr)-module is finite projective over $\mO(\Omega)$.
\end{proof}

We now show how Emerton's finite slope part relates to slope decompositions.

\begin{lemma}[Comparing finite slope parts]
\label{lemma: RelateCompletedToOverconvergent}
  Suppose we are in the setting of \cref{proposition: U_pfactorisationDiagram}, so that $\Omega\subset \mc{W}$ is an affinoid open and $h \in \Q_{>0}$ is such that $C_{\bullet}^{\on{BS}}(K^pI, D(I)_{N_0}\widehat{\otimes}_{D(T_0)}\mO_{\mc{W}}(\Omega))$ admits a slope $\le h$ decomposition. Then we have a $T(F_p^+)$ and $\wt{\T}^S$-equivariant isomorphism
    \[
    \HH_{\ast}(J_B^{\vee}(C^{BS}(K^pI,D(I))))\otimes_{\mO_{\widehat{Y}}(\mc{W}\times \G_m)}(\mc{O}(\Omega)\widehat{\otimes}_L\mc{O}(U_{\le h}))\simeq \HH_{\ast}(C^{BS}(K^pI, D(I)_{N_0}\widehat{\otimes}_{D(T_0)}\mc{O}(\Omega))_{\le h}).
    \]
\end{lemma}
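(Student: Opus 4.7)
The plan is to reduce both sides to the concrete (Pr)-Banach complexes $V_{s,\bullet}$ produced by \cref{proposition: U_pfactorisationDiagram} and then use exactness of the slope $\le h$ decomposition. First, I rewrite the LHS: by \cref{corollary: DualCBSComplexesComputeJacquet} together with \cref{lemma: HausdorffHomology},
$$\HH_{\ast}(J_B^{\vee}(C^{BS}(K^pI,D(I)))) \simeq \HH_{\ast,\on{haus}}(C^{BS}(K^pI,D(I)_{N_0}))_{\on{fs}},$$
a coadmissible $\mc{O}_{\wh{Y}}(\mc{W}\times\G_m)$-module, and tensoring over $\mc{O}_{\wh{Y}}(\mc{W}\times\G_m)$ with $\mc{O}(\Omega)\widehat{\otimes}_L\mc{O}(U_{\le h})$ amounts to restricting the associated coherent sheaf to the affinoid open $\Omega\times U_{\le h}\subset\wh{Y}$. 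For the RHS, because the slope $\le h$ decomposition is a direct summand decomposition of $\mc{O}(\Omega)[U_z]$-modules, it is exact and commutes with taking homology, so
$$\HH_{\ast}\bigl(C^{BS}(K^pI,D(I)_{N_0}\widehat{\otimes}_{D(T_0)}\mc{O}(\Omega))_{\le h}\bigr) \simeq \HH_{\ast}\bigl(V_{s,\bullet}\widehat{\otimes}_{A_s}\mc{O}(\Omega)\bigr)_{\le h}$$
for $s\gg 0$, and the right-hand side is finitely generated over $\mc{O}(\Omega)$, hence automatically Hausdorff.

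The heart of the argument is the identification of finite slope restricted to the annulus with the slope $\le h$ part. Concretely, I would show that for any coadmissible $\mc{O}(\wh{Y})$-module $N$ arising as $N_{\on{fs}}$ from a module $N$ on which $U_z$ admits a slope $\le h$ decomposition after base change to $\mc{O}(\Omega)$, there is a natural isomorphism $N_{\on{fs}}\widehat{\otimes}_{\mc{O}(\wh{Y})}(\mc{O}(\Omega)\widehat{\otimes}_L\mc{O}(U_{\le h}))\simeq (N\widehat{\otimes}_{D(T_0)}\mc{O}(\Omega))_{\le h}$: on the slope $> h$ summand the polynomials $Q^{\vee}(U_z)$ for $Q$ of slope $\le h$ act invertibly, and these polynomials generate the ideal cutting out the complement of $U_{\le h}$ inside $\G_m$, so the summand is killed by the completed tensor; on the slope $\le h$ summand, which is already a finitely generated $\mc{O}(\Omega)$-module, the completed tensor is the identity. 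Combined with the compatibility of the $U_1z$- and $U_2z$-actions from \cref{lemma: U_1zhomotopicToU_2z} and the factorization \eqref{eq: FactorizingUzOperator}, this identifies the two sides, exactly as in \cite[Proposition 3.1.4]{Han17}. The $\wt{\T}^S$- and $T(F_p^+)$-equivariance is automatic: prime-to-$p$ Hecke operators act on the complex $C^{BS}(K^p)$ commuting with everything, while the $T(F_p^+)$-action is by interpolation of the $Y$-action on $\wh{Y}$.

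The main obstacle I foresee is bookkeeping for the Fr\'{e}chet inverse limit: the finite slope construction is intrinsically a limit $C^{BS}(K^pI,D(I)_{N_0})\widehat{\otimes}_{D(T_0)}\mc{O}(\Omega)=\varprojlim_m M_m$, whereas the slope $\le h$ decomposition is a priori only defined at finite level on $V_{s,\bullet}\widehat{\otimes}_{A_s}\mc{O}(\Omega)$. The crucial input is that the Fredholm characteristic power series $F_{\Omega,s}(X)$ stabilize for $s\gg 0$, as established in the proof of \cref{proposition: U_pfactorisationDiagram}; this lets one define $(C^{BS}\widehat{\otimes}\mc{O}(\Omega))_{\le h}$ intrinsically as $\ker Q^{\ast}(U_z)$ from the slope $\le h$ factorization of the Fredholm series, and the stabilization guarantees this coincides at finite level for $s\gg 0$. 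The rest is a clean application of \cref{lemma: HausdorffHomology} (to pass freely between ordinary homology of the finite-slope-ified complex and Hausdorff homology) on the slope $\le h$ part, where everything is strict and finitely generated.
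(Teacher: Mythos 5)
Your proposal follows essentially the same route as the paper's proof: both reduce to restricting the coadmissible module to the affinoid $\Omega\times U_{\le h}\subset\wh{Y}$ and then identify that restriction with the slope $\le h$ part via the slope $\le h$ factorization $F_{\Omega}=QR$ of the Fredholm series, the paper carrying this out at the level of complexes through an explicit chain of completed-tensor manipulations (relating $\mO_{\wh{Y}}$, $D(T_0)$, and $L\langle p^hz,p^hz^{-1}\rangle$) and a citation to \cite[Lemme 3.10]{BHSAnnalen}, whereas you formulate it at the homology level after invoking \cref{corollary: DualCBSComplexesComputeJacquet} and \cref{lemma: HausdorffHomology}. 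One imprecision: your claim that the $Q^{\vee}$ ``generate the ideal cutting out the complement of $U_{\le h}$ inside $\G_m$'' does not quite make sense since that complement is not a closed analytic subset and carries no ideal, but the intended point---that the slope $>h$ summand is killed by the completed tensor with $\mO(U_{\le h})$---is precisely what the cited BHS lemma (and the paper's appeal to it) makes rigorous, so there is no genuine gap.
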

\begin{proof}
    We need to more explicitly compute $C^{\on{BS}}(K^pI, D(I)_{N_0})_{\fs}\otimes_{\mc{O}(\mc{W}\times \widehat{\G}_m)}(\mc{O}(\Omega)\widehat{\otimes} \mc{O}_{\G_m}(U_{\le h}))$. Noting the Amice transform isomorphism $D(T_0)\simeq \mc{O}_{\mc{W}}(\mc{W})$, we also then have $\mO_{\wh{Y}}(\mc{W}\times \wh{\G}_m)\simeq D(T_0)\wh{\otimes}_LL\{\{z^{\pm 1}\}\}$. Then we can write
    \begin{align*}
    &J_B^{\vee}(C^{\on{BS}}(K^pI, D(I)))\otimes_{\mO_{\wh{Y}}}(\mc{O}(\Omega)\widehat{\otimes}_L \mc{O}(U_{\le h}))\\ &\simeq (C^{\on{BS}}(K^pI,D(I)_{N_0})\widehat{\otimes}_{L[z]}L\{\{z^{\pm 1}\}\}) \widehat{\otimes}_{\mc{O}(\mc{W}\times \wh{\G}_m)}(\mc{O}(\Omega) \wh{\otimes}L\ip{p^hz, p^hz^{-1}})\\
    &\simeq (C^{\on{BS}}(K^pI, D(I)_{N_0})\widehat{\otimes}_{D(T_0)}\mc{O}(\Omega))\widehat{\otimes}_{L[z]}(L\{\{z^{\pm 1}\}\}\widehat{\otimes}_{L\{\{z^{\pm 1}\}\}}L\ip{p^{h}z, p^{h}z^{-1}})\\
    &\simeq C^{\on{BS}}(K^pI, D(I)_{N_0}\wh{\otimes}_{D(T_0)}\mO(\Omega))\widehat{\otimes}_{L[z]}L\ip{p^hz,p^hz^{-1}}.
    \end{align*}
    Now on $\Omega$ we have that by \cite[Proposition 4.5.1]{AS08} (as is used in  \cref{lemma: PrComplexesHaveSlopeDecompositions}) the Fredholm power series admits a slope $\le h$ factorisation $F_{\Omega}(X) = QR$, where $Q$ is a multiplicative polynomial of of slope $\le h$. Then in fact the proof of \cite[Lemme 3.10]{BHSAnnalen} implies the space $C^{\on{BS}}(K^pI, D(I)_{N_0}\wh{\otimes}_{D(T_0)}\mO_{\mc{W}}(\Omega))\widehat{\otimes}_{L[z]}L\ip{p^hz,p^hz^{-1}}$ is a projective module over $\mc{O}_{\mc{W}}(\Omega)$ which is precisely the locus on which $Q^{\ast}(U_z) = 0$. Recalling from \cite[2.2]{Han17} that a factorization $P = QR$ of the Fredholm series yields a description $M_{\le h}:= \{m \in M: Q^{\vee}(U_z) = 0\}$, we have our desired isomorphism.
\end{proof}

\subsection{An exact sequence on Jacquet functors}
We can now apply this formalism to prove the main useful tools we need for the Jacquet functors in the CM fields setting. The first goal is to use slope decompositions to get a long exact sequence on Jacquet functors.

\begin{lemma}
\label{lemma: CheckingExactnessOnACover}
    Let $X$ be quasi-Stein rigid space equipped with an admissible affinoid open cover $X = \cup_{i}U_i$. Then a sequence $\mc{M} \to \mc{N} \to \mc{P}$ of coherent sheaves on $X$ is exact if and only for all $i$ the sequence $\Gamma(U_i, \mc{M}) \to \Gamma(U_i, \mc{N}) \to \Gamma(U_i, \mc{P})$ is exact.
\end{lemma}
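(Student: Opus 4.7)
The plan is to reduce the question to the affinoid case, where exactness of a sequence of coherent sheaves is equivalent to exactness of the corresponding sequence of finitely generated modules over the affinoid algebra.

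First I would handle the $(\Rightarrow)$ direction. Assume $\mc{M} \to \mc{N} \to \mc{P}$ is exact as a sequence of coherent sheaves on $X$. For each $i$, restrict to the affinoid $U_i$ to get an exact sequence of coherent sheaves on $U_i$ (restriction of sheaves being exact). Since $U_i$ is affinoid, by Tate's acyclicity (or Kiehl's theorem) the global sections functor $\Gamma(U_i, -)$ is exact on coherent sheaves, and the sequence of $\mc{O}(U_i)$-modules $\Gamma(U_i, \mc{M}) \to \Gamma(U_i, \mc{N}) \to \Gamma(U_i, \mc{P})$ is exact.

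For the $(\Leftarrow)$ direction, exactness of a sequence of sheaves is a local property and hence can be checked on any admissible cover, so it suffices to show that the restrictions $\mc{M}|_{U_i} \to \mc{N}|_{U_i} \to \mc{P}|_{U_i}$ are exact for each $i$. On the affinoid $U_i$, the functor sending a coherent sheaf to its module of global sections is an equivalence of categories (between coherent $\mc{O}_{U_i}$-modules and finitely generated $\mc{O}(U_i)$-modules) that preserves and reflects exactness. Thus exactness of $\Gamma(U_i, \mc{M}) \to \Gamma(U_i, \mc{N}) \to \Gamma(U_i, \mc{P})$ implies exactness of the restricted sheaf sequence on $U_i$, and gluing over $i$ we conclude.

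There is no real obstacle here: the content is entirely in the standard affinoid case together with the fact that exactness of sheaves is local on an admissible cover. The quasi-Stein hypothesis on $X$ plays no essential role in the argument itself; it is presumably included because the coherent sheaves of interest (derived Jacquet modules over $\wh{Y}$ and its subspaces like $\mc{W} \times \G_m$) arise naturally as quasi-Stein spaces, so it is simply the setting in which the lemma will be applied.
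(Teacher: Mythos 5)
Your proof is correct and follows essentially the same argument as the paper: exactness of sheaves is local, and on an affinoid the correspondence between coherent sheaves and finitely generated modules over the affinoid algebra is an exact equivalence. The paper's proof additionally records (but never uses) the equivalence with exactness of global sections over the quasi-Stein $X$, which supports your observation that the quasi-Stein hypothesis is not needed for the lemma itself.
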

\begin{proof}
    Since $X$ is quasi-Stein, $\mc{M} \to \mc{N} \to \mc{P}$ is exact iff $\Gamma(X,\mc{M}) \to \Gamma(X, \mc{N}) \to \Gamma(X, \mc{P})$ is exact. On the other hand, exactness of sheaves can be checked locally, so $\mc{M} \to \mc{N} \to \mc{P}$ is exact iff $\mc{M}|_{U_i} \to \mc{N}|_{U_i} \to \mc{P}|_{U_i}$ is exact as coherent sheaves on $U_i$. Since $U_i$ is affinoid, $\mc{M}|_{U_i} \to \mc{N}|_{U_i} \to \mc{P}|_{U_i}$ is exact iff $\Gamma(U_i, \mc{M}) \to \Gamma(U_i, \mc{N})\to \Gamma(U_i, \mc{P})$ is exact, proving the lemma.
\end{proof}
Specializing the previous formalism to the case of quasi-split unitary group yields the following result.
\begin{lemma}
\label{lemma: FundamentalLongExactSequence}
    Let $G = \tG/\mO_{F^+}$ be the quasi-split unitary group. Let $\wt{\mf{m}} \subset \wt{\T}^S$ be maximal ideal in the support of $\HH^{\ast}(\wt{X}_{\wt{K}^p\wt{K}_p}, \ol{\F}_p)$ for some good level $\wt{K}^p\wt{K}_p$, such that $\wt{\mf{m}}$ is decomposed generic in the sense of \cref{definition: DecomposedGeneric}.
    Then we have a $\wt{\T}^S$-equivariant short exact sequence of essentially admissible $T(F_p^+)$-representations
    \[
    0 \to \HH^{d-1}(J_{B}(\Pi_{\partial}(\wt{K}^p)^{\la}))_{\wt{\mf{m}}} \to \HH^{d}(J_{B}(\Pi_c(\wt{K}^p)^{\la}))_{\wt{\mf{m}}} \to \HH^{d}(J_{B}(\Pi(\wt{K}^p)^{\la}))_{\wt{\mf{m}}} \to \HH^{d}(J_{B}(\Pi_{\partial}(\wt{K}^p)^{\la}))_{\wt{\mf{m}}} \to 0.
    \]
\end{lemma}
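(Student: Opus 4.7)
The plan is to derive the claimed four-term exact sequence from the short exact sequence of complexes $0 \to \Pi_c(\wt{K}^p)_{\wt{\mf{m}}} \to \Pi(\wt{K}^p)_{\wt{\mf{m}}} \to \Pi_{\partial}(\wt{K}^p)_{\wt{\mf{m}}} \to 0$ of \cref{subsubsection: ComplexesDefininfCompletedCohomology}, by first applying the derived Jacquet functor to obtain a long exact sequence and then truncating it using Caraiani--Scholze torsion vanishing. The semi-local decomposition of the big Hecke algebra ensures that $\wt{\mf{m}}$-localisation preserves this short exact sequence.

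To ensure that $J_B(\cdot^{\la})$ produces a short exact sequence of complexes (rather than only a spectral sequence), I would work on the dual side via \cref{corollary: DualCBSComplexesComputeJacquet}. There the relevant complexes $C^{\partial,\on{BS}}, C^{\on{BS}}, C^{\on{BM},\on{BS}}$ with $D(I)$-coefficients (localised at $\wt{\mf{m}}$) form a short exact sequence of bounded complexes of finite free $D(I)_{\wt{\mf{m}}}$-modules, so that $N_0$-coinvariants is exact term-wise. Passing to the finite-slope part is also exact: by \cref{lemma: CheckingExactnessOnACover} exactness of coadmissible sheaves on $\mc{W}\times \wh{\G}_m$ may be checked on an admissible affinoid cover, and by \cref{proposition: U_pfactorisationDiagram} together with \cref{lemma: RelateCompletedToOverconvergent} such a cover can be chosen so that the finite-slope part is computed by the slope-$\le h$ truncation of complexes of (Pr) Banach modules, which is a direct-summand (hence exact) operation. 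Combining, I obtain a short exact sequence of complexes of coadmissible $\mc{O}_{\wh{Y}}$-modules, yielding a long exact sequence in homology; dualising via \cref{corollary: DualCBSComplexesComputeJacquet} gives
\[
\cdots \to \HH^{i-1}(J_B(\Pi_{\partial}^{\la}))_{\wt{\mf{m}}} \to \HH^{i}(J_B(\Pi_c^{\la}))_{\wt{\mf{m}}} \to \HH^{i}(J_B(\Pi^{\la}))_{\wt{\mf{m}}} \to \HH^{i}(J_B(\Pi_{\partial}^{\la}))_{\wt{\mf{m}}} \to \cdots.
\]

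The final step is to show $\HH^{i}(J_B(\Pi^{\la}))_{\wt{\mf{m}}} = 0$ and $\HH^{i}(J_B(\Pi_c^{\la}))_{\wt{\mf{m}}} = 0$ for $i \neq d$, which truncates the long exact sequence to the desired four terms. Caraiani--Scholze torsion vanishing \cite{CS19} applied to the Shimura variety for $\wt{G} = U(n,n)$ at the decomposed generic $\wt{\mf{m}}$ gives vanishing of $\HH^i(\wt{X}_{\wt{K}^p\wt{K}_p}, \ol{\F}_p)_{\wt{\mf{m}}}$ and $\HH^i_c(\wt{X}_{\wt{K}^p\wt{K}_p}, \ol{\F}_p)_{\wt{\mf{m}}}$ outside the middle degree $d$ at every good level. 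Taking the inverse limit over $\wt{K}_p$ and tensoring with $L$ transfers these vanishings to completed (co)homology, and the identification $\HH^i(J_B(C^{\la})) \simeq (\HH^i_{\on{haus}}(C^{\la, N_0}))_{\fs}$ from \cite[Proposition 4.9]{FuDerived}, combined with exactness of locally analytic vectors on admissible Banach representations and of $N_0$-invariants on admissible locally analytic ones, propagates them to the derived Jacquet side. The main technical obstacle is the exactness of the finite-slope functor on the Fr\'{e}chet-limit complexes that compute $J_B^{\vee}$, which is why I reduce everything to the slope-$\le h$ direct-summand picture of \cref{proposition: U_pfactorisationDiagram} locally on weight space; with this in hand the truncation is immediate.
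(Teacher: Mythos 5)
Your high-level plan — apply the derived Jacquet functor to the short exact sequence of cohomology complexes to get a long exact sequence, then truncate via Caraiani–Scholze vanishing — matches the paper's, and your truncation step is fine. But the way you propose to produce the long exact sequence has a genuine gap at precisely the point the paper's own Remark after \cref{subsection: BeginningGeneralitiesOnSymmetricSpaces} flags: the operator $\wt{z} = p \circ z \circ i$ on each Borel--Serre complex $C^{\partial,\on{BS}}, C^{\on{BS}}, C^{\on{BM},\on{BS}}$ is only defined via a \emph{separately chosen} homotopy inverse $p$ to the inclusion into the singular complex, and there is no reason these choices are compatible with the maps $C^{\partial,\on{BS}} \to C^{\on{BS}} \to C^{\on{BM},\on{BS}}$ on the nose (the paper states exactly this: ``it is not clear that $p_{\tau,\partial}, p_{\tau}$ in general can be chosen to be compatible''). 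Since the slope-$\le h$ projector is a polynomial in $U_{\wt{z}}$, a short exact sequence of complexes whose terms carry incompatible $\wt{z}$-actions does not stay exact after applying slope decompositions term-wise; your claim that ``the finite-slope part is computed by the slope-$\le h$ truncation of complexes of (Pr) Banach modules, which is a direct-summand (hence exact) operation'' therefore does not produce a short exact sequence of complexes, and the long exact sequence you want does not follow from your argument as written.

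The paper circumvents this by never taking slope decompositions at the level of chain complexes. Instead it uses the short exact sequence of \emph{singular} chain complexes $0 \to C_{\bullet}^{\partial}(K^pI, D(\Omega)) \to C_{\bullet}(K^pI, D(\Omega)) \to C_{\bullet}^{\on{BM}}(K^pI, D(\Omega)) \to 0$, on which the $z$-action is an honest group action (not a homotopy-theoretic surrogate) and genuinely commutes with the sequence maps. This yields a long exact sequence in \emph{homology} with $T^+$-equivariant connecting maps. Only then — after passing to homology, where the groups are identified with the finitely generated, slope-decomposable homology of the Borel--Serre complexes via \cref{corollary: DualCBSComplexesComputeJacquet} and \cref{lemma: RelateCompletedToOverconvergent} — does the paper apply the slope-$\le h$ projector, which is a functorial direct summand on $T^+$-equivariant modules and hence preserves exactness of the long exact sequence. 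This order of operations (credited in the paper to a remark of Johansson) is what makes the argument work; your proposal reaches for all the right ingredients (\cref{lemma: CheckingExactnessOnACover}, \cref{proposition: U_pfactorisationDiagram}, \cref{lemma: RelateCompletedToOverconvergent}) but applies the slope truncation one step too early.
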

\begin{proof}
   First we claim there is a $\wt{\T}^S$-equivariant exact sequence of maps of essentially admissible $T(F_p^+)$-representations
      \[\begin{tikzcd}
\dots\ar[r]& \HH^{d-1}(J_B(\Pi(\wt{K}^p)_{\wt{\mf{m}}}^{\la})) \ar[d, phantom, ""{coordinate, name=Y}] \ar[r] &\HH^{d-1}(J_{B}(\Pi_{\partial}(\wt{K}^p)_{\wt{\mf{m}}}^{\la}))  \arrow[dll,
rounded corners, 
to path={ -- ([xshift=2ex]\tikztostart.east)
|- (Y) [near end]\tikztonodes
-| ([xshift=-2ex]\tikztotarget.west)
-- (\tikztotarget)}] \\
\HH^{d}(J_{B}(\Pi_c(\wt{K}^p)_{\wt{\mf{m}}}^{\la})) \ar[r] & \HH^{d}(J_{B}(\Pi(\wt{K}^p)_{\wt{\mf{m}}}^{\la}))  \ar[r] \arrow[d, phantom, ""{coordinate, name=Z}] &  \HH^{d}(J_{B_p}(\Pi_{\partial}(\wt{K}^p)_{\wt{\mf{m}}}^{\la})) \arrow[dll,
rounded corners,
to path={ -- ([xshift=2ex]\tikztostart.east)
|- (Z) [near end]\tikztonodes
-| ([xshift=-2ex]\tikztotarget.west)
-- (\tikztotarget)}, label = "...."]  \\
\HH^{d+1}(J_B(\Pi(\wt{K}^p)_{\wt{\mf{m}}}^{\la}))\ar[r] & \dots 
\end{tikzcd}
\] 

Given such a sequence, the result would follow by showing $\HH^{d-1}(J_B(\Pi(\wt{K}^p)_{\wt{\mf{m}}}^{\la})), \HH^{d+1}(J_B(\Pi_c(\wt{K}^p)_{\wt{\mf{m}}}^{\la})) = 0$. Indeed, both claims follow from, for example \cite[Lemma 5.4]{FuDerived} and the torsion vanishing statements in \cref{theorem: CaraianiScholzeTorsionVanishing}, due to Caraiani--Scholze and Koshikawa. The point is that $\HH^{d-1}(\wt{X}_{\wt{K}^p\wt{K}_p}, \F_p)_{\wt{\mf{m}}} = 0$ and $\HH_c^{d+1}(\wt{X}_{\wt{K}^p\wt{K}_p},\F_p)_{\wt{\mf{m}}} = 0$ imply a similar vanishing statement for any coefficient system admitting a suitable $\Z_p$-lattice. We now construct the above maps, and show the sequence is exact.

By dualizing it suffices to show we have a short exact sequence of coherent sheaves on $\widehat{T(F_p^+)}$
\[
\dots  \to M_{\partial}^d \to M^d \to M_c^d \to M_{\partial}^{d-1} \to \dots .
\]
The maps $M_{\partial}^d \to M^d \to M_c^d$ are naturally induced by maps of complexes, but we have not yet defined the boundary maps $M_c^i \to M_{\partial}^{i-1}$. Let $\mc{Z}$ be the union of the Fredholm hypersurfaces $\mc{Z}_i^{\on{BS}}, \mc{Z}_i^{\partial, \on{BS}}, \mc{Z}_i^{\on{BM},\on{BS}}$ from \cref{corollary: ComplexesFredholmhypersurfaces}, which is again a Fredholm hypersurface. Note that all the modules in the sequence are coadmissible over $\mc{Z}\subset \widehat{Y}$, a quasi-Stein space. Thus to define such a map $M_c^d \to M_{\partial}^{d-1}$, it suffices to define maps compatibly on an admissible open covering of $\mc{Z}$. Moreover, \cref{lemma: CheckingExactnessOnACover} tells us it suffices to check exactness on an admissible open covering of $\mc{Z}$. But taking any $x \in \mc{Z}$, and any $h \ge 0$, we can choose an affinoid open neighborhood $\Omega \subset \mc{W}$ such that all modules involved admit slope $\le h$ decompositions over $U_{h, \Omega}:= \mc{Z}\cap (\Omega\times U_{\le h})$. In particular, using \cref{corollary: DualCBSComplexesComputeJacquet} and \cref{lemma: RelateCompletedToOverconvergent} over $U_{h, \Omega}$, our sequence amounts to showing there is a sequence of the form
   \begin{equation}
   \label{eq: ExactSeqWithslopes}
   \HH_{d}(C_{\bullet}^{\partial}(K^pI, D(\Omega)))_{\le h} \to \HH_{d}(C_{\bullet}(K^pI, D(\Omega)))_{\le h} \to \HH_{d}(C_{\bullet}^{\on{BM}}(K^pI, D(\Omega)))_{\le h} \to \HH_{d-1}(C_{\bullet}^{\partial}(K^pI, D(\Omega)))_{\le h} 
   \end{equation}
   is exact.
However, note that there is a sequence of maps
\begin{equation}
\label{eq: ExactnessOverconvWithoutSlopes}
\dots \to \HH_{d}(C_{\bullet}^{\partial}(K^pI, D(\Omega))) \to \HH_{d}(C_{\bullet}(K^pI, D(\Omega))) \to \HH_{d}(C_{\bullet}^{\on{BM}}(K^pI, D(\Omega))) \to \HH_{d-1}(C_{\bullet}^{\partial}(K^pI, D(\Omega))) \to \dots
\end{equation}
arising from the distinguished triangle 
\[
0 \to C_{\bullet}^{\partial}(K^pI, D(\Omega)) \to C_{\bullet}(K^pI, D(\Omega)) \to C_{\bullet}^{\on{BM}}(K^pI, D(\Omega)) \to 0,
\]
and is thus exact\footnote{We thank Christian Johansson for noting we can apply slope decompositions \textit{after} passing to cohomology.}. The maps on cohomology are equivariant for the action of $T^+$, and slope $\le h$ decompositions are functorial direct summands, and are thus exact. So both the existence of the maps $M_{c}^{d} \to M_{\partial}^{d-1}$ and the exactness of \eqref{eq: ExactSeqWithslopes} follow from \eqref{eq: ExactnessOverconvWithoutSlopes}. The vanishing of the degrees beyond $d$ and below $d+1$ follow after localising at $\wt{\mf{m}}$.

\end{proof}

\subsection{Small slope classicality and torsionness of the boundary}
We can now prove \cref{theorem: SmallSupportIntro}. For this entire section we work with $\tG/\mO_{F^+}$, even though many of the intermediate lemmas work more generally.

For any continuous character $\delta: T_0 \to L^{\times}$, we denote standard action of $z$ on $\Ind_{B_0}^I\lambda$ by $z_{\on{std}}$, i.e. the action induced by conjugation $\mc{C}^{\la}(\ol{N}^1, \lambda) \xrightarrow{\on{res}}\mc{C}^{\la}(z^{-1}\ol{N}^1z, \lambda) \xrightarrow{\on{conj}_{z}}\mc{C}^{\la}(\ol{N}^1, \lambda)$. For any $w \in W_{\tG}$ we equip $\Ind_{B_0}^Iw_0(w\cdot \lambda)$ with the left $T^+$ action given by $z \ast_wf:= (w_0\lambda)(z)\cdot (w_0(w \cdot \lambda))^{-1}(z) \cdot (z_{\on{std}}f)$. Now let $\lambda\in \mc{W}$ be a dominant algebraic weight, so that $w_0 \lambda$ is anti-dominant. Then let $\Ind_{B_0}^{I,\on{alg}}w_0\lambda\simeq \mathsf{L}(\lambda)$ be the algebraic induction of $w_0\lambda$, which is isomorphic to the irreducible algebraic representation of highest weight $\lambda$. $\mathsf{L}(\lambda)$ may also be viewed as an irreducible algebraic representation of $I_m$ for any open subgroup $I_m\subset I$. Likewise, we let $L_{\on{lalg}}(\lambda) = (\Ind_{B_0}^Iw_0\lambda)^{\on{lalg}}$ denote the locally algebraic induction, which naturally has an inclusion $\mathsf{L}(\lambda) \hookrightarrow L_{\on{lalg}}(\lambda)$.

\begin{theorem}[Jones {\cite[Theorem 26]{Jon11}}]
\label{theorem: JonesBGGResolution}
    We have an exact sequence of admissible locally analytic $I$-representations, equivariant for $\ast_w$-actions of $T^+$
    \[
    \begin{tikzcd}
       0\to L_{\on{lalg}}(\lambda)\hookrightarrow \Ind_{B_0}^Iw_0\lambda \to \bigoplus_{\ell(w) = 1}\Ind_{B_0}^I(w_0(w\cdot \lambda)) \to \bigoplus_{\ell(w) = 2} \Ind_{B_0}^I(w_0(w\cdot \lambda)) \to \dots. 
    \end{tikzcd}
    \]
\end{theorem}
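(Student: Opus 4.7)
The plan is to derive this as a locally analytic analogue of the classical Bernstein--Gelfand--Gelfand (BGG) resolution in category $\mc{O}$, applied contravariantly. More precisely, let $\mf{g} = \Lie(\tG_{F_p^+}) \otimes_{\Q_p} L$ with Borel subalgebra $\mf{b}$ corresponding to $B$. For $\lambda$ dominant integral, the classical BGG resolution provides an exact sequence in $\mc{O}$
\[
0 \to M(w_0 \cdot \lambda) \to \cdots \to \bigoplus_{\ell(w) = k} M(w \cdot \lambda) \to \cdots \to \bigoplus_{\ell(w) = 1} M(w \cdot \lambda) \to M(\lambda) \to \mathsf{L}(\lambda) \to 0,
\]
where $M(\mu) = U(\mf{g}) \otimes_{U(\mf{b})} \mu$ is the Verma module. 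My first step would be to record this together with the explicit description of the differentials $M(w \cdot \lambda) \to M(w' \cdot \lambda)$ (for $w' \lessdot w$) coming from factorising through a rank-one Verma and inserting an appropriate Dynkin root vector.

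The second step is to transport this resolution through a contravariant functor landing in admissible locally analytic $I$-representations. Using the Iwahori decomposition $I \simeq \ol{N}^1 \cdot T_0 \cdot N_0$ and restricting to $\ol{N}^1$, we have $\Ind_{B_0}^I \mu \simeq \mc{C}^{\la}(\ol{N}^1, L)$, whose strong dual is $D(\ol{N}^1, L)$; the latter contains $U(\ol{\mf{n}})$ as a dense subalgebra, and via the PBW identification $M(\mu) \simeq U(\ol{\mf{n}}) \otimes_L L_{\mu}$ one obtains a $(\mf{g}, B_0)$-equivariant injection $M(w_0\mu)' \hookrightarrow \Ind_{B_0}^I \mu$, where duals are taken with respect to an invariant pairing. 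This is exactly the Orlik--Strauch functor $\mc{F}_{\ol{B}}^I$ applied to $M(w_0\mu)$, which is known to be exact on the subcategory of $\mc{O}$ generated by Vermas with antidominant highest weight; applying it to the classical BGG resolution gives the desired complex, and exactness follows by dualising termwise. The kernel at the leftmost end is precisely $L_{\lalg}(\lambda)$: indeed $\Ind_{B_0}^{I,\on{alg}}w_0\lambda = \mathsf{L}(\lambda)$, and the locally algebraic vectors are the sum of these algebraic inductions twisted by unramified characters, which matches the classical statement that $\mathsf{L}(\lambda)$ is the cokernel of the classical resolution.

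The third step is to verify $T^+$-equivariance with respect to the twisted action $\ast_w$. The unadorned action $z_{\on{std}}$ on $\Ind_{B_0}^I (w_0(w\cdot \lambda))$ corresponds under duality to the natural action of $z$ on $M(w \cdot \lambda)$, which carries the central character $w \cdot \lambda$ and thus differs from that of $M(\lambda)$ by the scalar $(w_0(w \cdot \lambda))(z) (w_0 \lambda)(z)^{-1}$. Twisting by the inverse of this scalar is exactly the definition of $\ast_w$, so the BGG differentials—which are $U(\mf{g})$-linear, hence intertwine the central characters of Vermas on the nose—automatically become $\ast_w$-equivariant after dualising.

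The main obstacle I would expect is not the combinatorics of the resolution (which is entirely formal from classical BGG) but rather the analytic input: showing that the differentials, defined a priori by Lie algebra operators, extend continuously to locally analytic functions and that the resulting maps are strict with closed image, so that the sequence is genuinely exact in the category of admissible locally analytic representations. This is where one must invoke the relevant finiteness/nuclearity properties of $D(\ol{N}^1, L)$ and the exactness of the Orlik--Strauch functor on antidominant objects, rather than trying to prove exactness by a direct Koszul-type calculation.
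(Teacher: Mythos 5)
This statement is not proved in the paper at all: it is cited directly as Jones's Theorem~26 (with the $\ast_w$-equivariance being a normalisation choice the paper builds into the statement). So there is no proof in the paper to compare against, only the attribution; any review must measure your sketch against what Jones actually does.

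Your basic plan -- start from the classical BGG resolution of $\mathsf{L}(\lambda)$ by Verma modules, dualise via $D(\ol{N}^1,L)\supset U(\ol{\mf{n}})$ into locally analytic inductions, and then sweat the analytic details of strictness -- is indeed the right skeleton and is consistent with Jones's argument. Framing the transfer through the Orlik--Strauch functor $\mc{F}_{\ol{B}}^I$ is a perfectly correct modern way to package the exactness, though it is anachronistic: Jones (2011) predates Orlik--Strauch (2014/15) and argues more directly with the Fr\'echet/compact-type duality and an explicit analysis of the transposed BGG differentials. Your instinct that the genuinely hard content is the strictness/closed-image question, not the BGG combinatorics, is correct.

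Two points in your write-up are wrong or incomplete as stated. First, your argument for $\ast_w$-equivariance invokes ``the central character'' of $M(w\cdot\lambda)$ as if it distinguishes Vermas in the same block; it does not -- the Vermas $M(w\cdot\lambda)$ all have the \emph{same} infinitesimal character (that is the whole point of linkage). What actually varies is the $T_0$-weight of the highest weight vector, and it is the conjugation-by-$z$ in the definition of $z_{\on{std}}$ hitting the image of that vector that produces the scalar $(w_0(w\cdot\lambda))(z)(w_0(w'\cdot\lambda))(z)^{-1}$ relating the differential to the standard action. The BGG differentials do \emph{not} commute with the raw $z_{\on{std}}$-action; one must actually check that they intertwine the twisted actions, and the correct bookkeeping goes through the weight, not the central character. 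Second, identifying the kernel of the first differential as $L_{\lalg}(\lambda)$ rather than merely $\mathsf{L}(\lambda)$ is not ``the dual of the classical statement'': $L_{\lalg}(\lambda)=\varinjlim_m \Ind_{I_m}^I\mathsf{L}(\lambda)$ is strictly larger than $\mathsf{L}(\lambda)$. The reason the kernel is all of $L_{\lalg}(\lambda)$ is that the differentials are $U(\mf{g})$-module maps, so the kernel is characterised by the Lie-algebraic highest-weight condition, which inside $\Ind_{B_0}^I w_0\lambda$ cuts out precisely the locally algebraic vectors. This step is genuinely part of what Jones proves and should be flagged as such rather than elided.
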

Passing to the dual side yields an exact sequence of $D(I)$-modules.
\[
\dots \to \bigoplus_{\ell(w) = 1}D(I)\otimes_{D(B_0)}w_0(w\cdot \lambda) \to D(I)\otimes_{D(B_0)}w_0\lambda \to (L_{\on{lalg}}(\lambda))^{\vee} \to 0.
\]
Note that we have a surjective map $(L_{\on{lalg}}(\lambda))^{\vee} \to \mathsf{L}(\lambda)^{\vee}$, where $\mathsf{L}(\lambda)^{\vee}$ is isomorphic to the irreducible algebraic representation of highest weight $-w_0\lambda$.
The (expected) punchline is that the homology of $(L_{\on{lalg}}(\lambda))^{\vee} $ is related to cohomology of algebraic representations, and thus classical automorphic forms.
\begin{lemma}
\label{lemma: ComputingClassicalCohomology}
We have an Hecke-equivariant isomorphism
    \[
    \HH_{\ast}(C^{\on{BS}}(\wt{K}^pI,  L_{\on{lalg}}(\lambda)^{\vee}))^{\vee}\simeq \varinjlim_{\wt{K}_p \supset B_0}\HH^{\ast}(\wt{X}_{\wt{K}^p\wt{K}_p},\mathsf{L}(\lambda)).
    \]
    Moreover, for any $h \in \Q_{\ge 0}$ we also have 
    \[
     \HH_{\ast}(C^{\on{BS}}(\wt{K}^pI,  L_{\on{lalg}}(\lambda)^{\vee})_{\le h})^{\vee}\simeq \HH^{\ast}(\wt{X}_{\wt{K}^pI},\mathsf{L}(\lambda))_{\le h}.
    \]
    Analogous results hold for compactly supported and boundary cohomology.
\end{lemma}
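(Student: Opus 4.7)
The strategy is to write $L_{\on{lalg}}(\lambda)$ as a filtered colimit of finite-dimensional $I$-subrepresentations and use a Shapiro-type identification at each finite level. Concretely, using the Iwasawa factorisation $I = \ol{N}^1\cdot B_0$ and the fact that locally algebraic functions on $\ol{N}^1$ are locally polynomial, one has a natural $I$-equivariant identification
\[
L_{\on{lalg}}(\lambda) \simeq \varinjlim_{\wt{K}_p\supset B_0} V_{\wt{K}_p},
\]
where $V_{\wt{K}_p}\subset L_{\on{lalg}}(\lambda)$ is a finite-dimensional $I$-subrepresentation (consisting of vectors algebraic with respect to $\wt{K}_p$) naturally isomorphic to $\on{Ind}_{\wt{K}_p}^{I}\mathsf{L}(\lambda) = \Z[I]\otimes_{\Z[\wt{K}_p]}\mathsf{L}(\lambda)$; this isomorphism comes from Frobenius reciprocity applied to the canonical inclusion $\mathsf{L}(\lambda)\hookrightarrow L_{\on{lalg}}(\lambda)$ realising $\mathsf{L}(\lambda)$ as the $I$-algebraic vectors.

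Since $C^{\on{BS}}(\wt{K}^p)$ is a bounded complex of finite free $\Z[I]$-modules, the functor $\on{Hom}_{\Z[I]}(C^{\on{BS}}(\wt{K}^p),-)$ commutes with filtered colimits, giving
\[
\on{Hom}_{\Z[I]}(C^{\on{BS}}(\wt{K}^p), L_{\on{lalg}}(\lambda)) \simeq \varinjlim_{\wt{K}_p\supset B_0}\on{Hom}_{\Z[I]}(C^{\on{BS}}(\wt{K}^p), \on{Ind}_{\wt{K}_p}^I\mathsf{L}(\lambda)).
\]
By Shapiro (tensor--Hom adjunction between $\Z[I]$ and $\Z[\wt{K}_p]$), each term on the right computes $\HH^*(\wt{X}_{\wt{K}^p\wt{K}_p},\mathsf{L}(\lambda))$ after taking cohomology; since cohomology commutes with filtered colimits, $\HH^*$ of the left-hand side equals the direct limit in the lemma. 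The duality between this cochain complex and $C^{\on{BS}}(\wt{K}^pI, L_{\on{lalg}}(\lambda)^{\vee})$ (which follows from tensor--Hom adjunction on the finite free $\Z[I]$-complex together with strong duality on coefficients) identifies the cohomology with the dual of the stated homology, proving the first isomorphism; Hecke equivariance is automatic by functoriality.

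For the slope $\le h$ statement I would apply slope decomposition throughout: it is preserved under filtered colimits and duality wherever it exists (cf.~\cite[\S 2.3]{Han17}), so the slope $\le h$ part of the direct limit is $\varinjlim_{\wt{K}_p}\HH^{\ast}(\wt{X}_{\wt{K}^p\wt{K}_p},\mathsf{L}(\lambda))_{\le h}$. The remaining task is a classicality statement: the pullback $\HH^{\ast}(\wt{X}_{\wt{K}^pI},\mathsf{L}(\lambda))_{\le h}\hookrightarrow \HH^{\ast}(\wt{X}_{\wt{K}^p\wt{K}_p},\mathsf{L}(\lambda))_{\le h}$ is an isomorphism for each $\wt{K}_p\supset B_0$, since the new contributions at levels strictly deeper than Iwahori come from oldforms whose $U_z$-eigenvalues have strictly larger $p$-adic valuation than their Iwahori-level avatars. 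The colimit thus collapses to the Iwahori-level cohomology. The boundary and compactly supported variants are treated identically, replacing $C^{\on{BS}}$ by $C^{\partial,\on{BS}}$ or $C^{\on{BM},\on{BS}}$.

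The main technical obstacle is the topological control of the identification $L_{\on{lalg}}(\lambda)\simeq\varinjlim V_{\wt{K}_p}$ with $V_{\wt{K}_p}\simeq \on{Ind}_{\wt{K}_p}^I\mathsf{L}(\lambda)$ compatibly with the $T^+$-action (in particular with the Atkin--Lehner operator $U_z$ coming from the $\tG$-extension), so that passing to slope decompositions through the colimit is justified and strong duality intertwines the direct limit of induced pieces with the coefficient system $L_{\on{lalg}}(\lambda)^{\vee}$.
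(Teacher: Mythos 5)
Your overall strategy is the same as the paper's: write $L_{\on{lalg}}(\lambda)$ as a filtered colimit of finite-dimensional induced representations $\Ind_{\wt{K}_p}^I\mathsf{L}(\lambda)$, apply Shapiro's lemma to each piece, pass to the colimit (the paper uses a cofinal system $I_m \supset B_0$, but this is the same colimit), and then reduce the slope-$\le h$ statement to showing the transition maps stabilize.

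The one genuine gap is in the last step. Your justification that $\HH^{\ast}(\wt{X}_{\wt{K}^pI},\mathsf{L}(\lambda))_{\le h}\to \HH^{\ast}(\wt{X}_{\wt{K}^p\wt{K}_p},\mathsf{L}(\lambda))_{\le h}$ is an isomorphism --- that ``new contributions at levels strictly deeper than Iwahori come from oldforms whose $U_z$-eigenvalues have strictly larger $p$-adic valuation'' --- is a heuristic that doesn't quite constitute a proof. The correct argument, which is the paper's (cited from \cite[Lemma 4.3.6]{Urb11} and a variant of \cref{proposition: U_pfactorisationDiagram}), is that the $U_z$-operator on the deeper-level complex factors through the Iwahori-level complex. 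Once $U_z = \beta\circ\alpha$ with $\alpha: C^{\on{BS}}(\wt{K}^pI_{m+1},-) \to C^{\on{BS}}(\wt{K}^pI_m,-)$ and $\beta$ going the other way, the slope-$\le h$ parts (on which $U_z$ is invertible) are forced to agree, and no eigenvalue computation for oldforms is needed. That compactness/factorization argument is also what makes ``slope decomposition commutes with the filtered colimit'' precise --- one cannot simply invoke this as a formal property of slope decompositions, since the colimit itself is infinite-dimensional and the existence of a slope $\le h$ decomposition on it has to come from the stabilization of the $U_z$-factorization. A related minor imprecision: your identification $V_{\wt{K}_p}\simeq \Ind_{\wt{K}_p}^{I}\mathsf{L}(\lambda)$ ``from Frobenius reciprocity applied to the inclusion $\mathsf{L}(\lambda)\hookrightarrow L_{\on{lalg}}(\lambda)$'' should be spelled out: the inclusion is only $\wt{K}_p$-equivariant when $\mathsf{L}(\lambda)$ is viewed as the $\wt{K}_p$-algebraic vectors, and one needs to observe that the adjoint $I$-map $\Ind_{\wt{K}_p}^{I}\mathsf{L}(\lambda)\to L_{\on{lalg}}(\lambda)$ is injective with image precisely the $\wt{K}_p$-algebraic vectors. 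These are both repairable, but as stated they leave the hardest part of the lemma --- the $U_z$-factorization --- unaddressed.
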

\begin{proof} Essentially by definition
$\HH_{\ast}(C_{\partial}^{\on{BS}}(\wt{K}^pI,  L_{\on{alg}}(\lambda)^{\vee}))^{\vee}$ computes $\HH^{\ast}(\wt{K}^pI, L_{\lalg}(\lambda)).$
Writing $L_{\on{lalg}}(\lambda) = \varinjlim_{m}L_{\on{m}-\on{alg}}(\lambda)$, where $L_{\on{m}-\on{alg}}(\lambda) \simeq \Ind_{I_m}^I\mathsf{L}(\lambda)$, and $I_m\subset I$ is a cofinal system of open subgroups containing $B_0$. Then Shapiro's lemma (see, for example, \cite[Corollary 2.21]{NT16}) implies $\HH^{\ast}(\wt{X}_{\wt{K}^pI}, \Ind_{I_m}^I\mathsf{L}(\lambda))\simeq \HH^{\ast}(\wt{X}_{\wt{K}^pI_m}, \mathsf{L}(\lambda))$, and thus

\[
\HH^{\ast}(\wt{X}_{\wt{K}^pI},L_{\on{lalg}}(\lambda)) = \varinjlim \HH^{\ast}(\wt{X}_{\wt{K}^pI}, L_{m-\on{alg}}(\lambda))\simeq \varinjlim \HH^{\ast}(\wt{X}_{\wt{K}^pI_m}, \mathsf{L}(\lambda)).
\]

By the first isomorphism, the second isomorphism reduces to showing that the natural maps $\HH^{\ast}(\wt{X}_{\wt{K}^pI_m}, \mathsf{L}(\lambda))_{\le h} \to \HH^{\ast}(\wt{X}_{\wt{K}^pI_{m+1}}, \mathsf{L}(\lambda))_{\le h}$ are isomorphisms. This claim follows from a variant of \cref{proposition: U_pfactorisationDiagram} (see e.g. \cite[Lemma 4.3.6]{Urb11}).
\end{proof}

We then get to our desired torsion-ness statement via two spectral sequences. For $y\in \mc{W}(\ol{\Q}_p)$, let $D(\lambda):= D(I)\otimes_{D(B_0)}y$.
\begin{lemma}
\label{lemma: UniversalCoeffSpecSeq}
    We have a Hecke-equivariant, $z$-equivariant spectral sequence of $\mc{O}(\Omega)$-modules:
    \[
    E_{p,q}^2 = \Tor_{p}^{\mc{O}(\Omega)}(\HH_q(C_{\bullet}^{\on{BS}}(\wt{K}^pI, D(\Omega))_{\le h}), \lambda)\implies \HH_{p+q}(C_{\bullet}^{\on{BS}}(\wt{K}^pI, D(\lambda)))_{\le h},
    \]
    and analogous sequences for Borel-Moore and boundary homology.
\end{lemma}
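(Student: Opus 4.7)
The plan is to prove this as the standard universal-coefficients / hyperTor spectral sequence for a double complex, after reducing everything to finite projective modules via the slope $\le h$ truncation. The crucial preliminary observation is that by \cref{proposition: U_pfactorisationDiagram} and \cref{lemma: PrComplexesHaveSlopeDecompositions}, $C_\bullet^{\on{BS}}(\wt{K}^pI, D(\Omega))_{\le h}$ is a bounded complex of finite projective (in particular flat) $A := \mc{O}(\Omega)$-modules, so any issues about topologies and completed tensor products disappear once we pass to the slope-$\le h$ part.

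The next step would be to identify the base change of the coefficient module. Unwinding the definitions, we have $D(\Omega) = D(I)\wh{\otimes}_{D(B_0)}\mc{O}(\Omega)$, where $B_0 \twoheadrightarrow T_0$ acts on $\mc{O}(\Omega)$ through the universal character $T_0 \to \mc{O}(\Omega)^\times$. Specialising along $A \twoheadrightarrow L_\lambda := A/\mf{m}_\lambda$ recovers exactly the character $\lambda$, so $D(\Omega)\otimes_A L_\lambda \simeq D(\lambda)$. Combined with the base change compatibility of slope decompositions from \cref{proposition: U_pfactorisationDiagram} (applied to the closed immersion $\{\lambda\}\hookrightarrow \Omega$), this yields a $\wt{\T}^S$- and $z$-equivariant isomorphism of complexes
\[
C_\bullet^{\on{BS}}(\wt{K}^pI, D(\Omega))_{\le h}\otimes_A L_\lambda \;\simeq\; C_\bullet^{\on{BS}}(\wt{K}^pI, D(\lambda))_{\le h}.
\]

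With these pieces in hand I would pick a bounded-above free resolution $L_\bullet \to L_\lambda$ of $A$-modules and consider the double complex $X_{p,q} := C_q^{\on{BS}}(\wt{K}^pI, D(\Omega))_{\le h}\otimes_A L_p$. Since both directions consist of flat $A$-modules, both spectral sequences of $X_{\bullet,\bullet}$ converge to the homology of the total complex. Filtering by rows (vertical homology first) gives $E^2_{p,q} = \Tor_p^A(\HH_q(C_\bullet^{\on{BS}}(\wt{K}^pI, D(\Omega))_{\le h}), L_\lambda)$, which is the desired $E^2$ page. Filtering by columns (horizontal homology first): since $C_q$ is flat, $C_q\otimes_A L_\bullet$ is still a resolution of $C_q\otimes_A L_\lambda$, so the spectral sequence collapses onto the $p=0$ row, giving
\[
\HH_n(\on{Tot}(X_{\bullet,\bullet})) \;\simeq\; \HH_n(C_\bullet^{\on{BS}}(\wt{K}^pI, D(\Omega))_{\le h}\otimes_A L_\lambda) \;\simeq\; \HH_n(C_\bullet^{\on{BS}}(\wt{K}^pI, D(\lambda))_{\le h}),
\]
which is the desired abutment.

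The main obstacle, such as it is, is bookkeeping equivariance: the $U_2z$-action, the $T_0$-action, and the prime-to-$p$ Hecke action all act on the coefficient modules $D(\Omega)$ and $D(\lambda)$ compatibly with the base change map $A\to L_\lambda$, so they descend to $A$-linear actions on each spectral sequence and all maps are $\wt{\T}^S$- and $z$-equivariant. The argument is identical with $C_\bullet^{\on{BS}}$ replaced by $C_\bullet^{\partial,\on{BS}}$ or $C_\bullet^{\on{BM},\on{BS}}$, since the slope decomposition and base change assertions of \cref{proposition: U_pfactorisationDiagram} apply verbatim in those cases, giving the analogous spectral sequences for boundary and Borel--Moore homology.
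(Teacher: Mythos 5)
Your proposal is correct and takes essentially the same route as the paper: both proofs invoke the hypertor (hyperhomology) spectral sequence for the functor $(-)\otimes_{\mc{O}(\Omega)}\lambda$, with the key reduction being that $C_\bullet^{\on{BS}}(\wt{K}^pI, D(\Omega))_{\le h}$ is a bounded complex of finite projective $\mc{O}(\Omega)$-modules and that its specialisation at $\lambda$ computes $C_\bullet^{\on{BS}}(\wt{K}^pI, D(\lambda))_{\le h}$. Your double-complex construction simply spells out what the paper outsources to \cite[Theorem 3.3.1]{Han17}.
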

\begin{proof}
    The proof is essentially identical to \cite[Theorem 3.3.1]{Han17}, but with slightly different notation: this spectral sequence arises from the hyperhomology for the right exact functor $(-)\otimes_{\mc{O}(\Omega)}\lambda,$ but with several additional observations to get the slope-$\le h$ decompositions and Hecke and $T^+$-equivariance.
    Consider the hypertor group $\mathbf{Tor}_{p+q}^{\mc{O}(\Omega)}(C_{\bullet}(\wt{K}^pI, D(\Omega)), \lambda))$. Since $(C_{\bullet}(\wt{K}^pI, D(\Omega))$ is a complex of $\wt{\T}^S$-modules with $T^+$-action, we have a natural $\wt{\T}^S$ and $T^+$-action on this hyperhomology group. This group also admits a slope $\le h$ decomposition, which can also be computed as
    $\HH_{p+q}(C_{\bullet}^{\on{BS}}(K^pI, D(\Omega))_{\le h}\otimes_{\mc{O}(\Omega)}\lambda) = \HH_{p+q}(C_{\bullet}^{\on{BS}}(K^pI, D(\lambda))_{\le h})$, where we are using that $C_{\bullet}^{\on{BS}}(K^pI, D(\Omega))_{\le h}$ is complex of finite projective $\mc{O}(\Omega)$-modules. The hyperhomology spectral sequence gives a sequence of $\mc{O}(\Omega)$-modules
    \[
    E_{pq}^2 = \Tor_p^{\mc{O}(\Omega)}(\HH_q(C_{\bullet}(\wt{K}^pI, D(\Omega)))_{\le h},\lambda)\implies \HH_{p+q}(C_{\bullet}^{\on{BS}}(\wt{K}^pI, D(\lambda))_{\le h}),
    \]
    which is naturally equivariant for the $\wt{\T}^S$ and $T^+$-actions.
    
\end{proof}
For the next lemma, for any $w \in W_{\tG}$ and $h \in \Q_{>0}$, set $h(\lambda, w):= h - v_p((w\cdot \lambda)(w_0z)) + v_p(\lambda(w_0z))$.
\begin{lemma}
\label{lemma: BGGSpectralSequence}
    For any $w\in W_{\tG}$ and $h\in \Q_{>0}$ we have a Hecke and $T^+$-equivariant spectral sequence
    \[
    E_{1}^{p,q} = \bigoplus_{\ell(w) = q}\HH_{p}(C_{\bullet}^{\on{BS}}(\wt{K}^pI, D(w_0(w\cdot \lambda))))_{\le h(\lambda,w)} \implies \HH_{p+q}(C_{\bullet}^{\on{BS}}(\wt{K}^pI, \mathsf{L}(\lambda)^{\vee}))_{\le h},
    \]
    and analogous sequences for Borel--Moore and boundary homology.
\end{lemma}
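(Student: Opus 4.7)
The plan is to dualize the Jones BGG resolution (\cref{theorem: JonesBGGResolution}) and apply the chain functor $C_{\bullet}^{\on{BS}}(\wt{K}^pI, -)$ termwise. Dualization produces an exact sequence of $D(I)$-modules
\[
\cdots \to \bigoplus_{\ell(w)=1} D(w_0(w\cdot\lambda)) \to D(w_0\lambda) \to L_{\on{lalg}}(\lambda)^{\vee} \to 0,
\]
which is $\wt{\T}^S$-equivariant and $T^+$-equivariant once we equip the summand indexed by $w$ with the dual of the $\ast_w$-action from \cref{theorem: JonesBGGResolution}. Applying $C_{\bullet}^{\on{BS}}(\wt{K}^pI,-)$ yields a first-quadrant double complex whose total complex is quasi-isomorphic, compatibly with $\wt{\T}^S$ and $T^+$, to $C_{\bullet}^{\on{BS}}(\wt{K}^pI, L_{\on{lalg}}(\lambda)^{\vee})$. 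The standard column-filtration hyperhomology spectral sequence of this double complex then has $E_1$-page
\[
E_1^{p,q} = \bigoplus_{\ell(w) = q} \HH_p\bigl(C_{\bullet}^{\on{BS}}(\wt{K}^pI, D(w_0(w\cdot\lambda)))\bigr)
\]
converging to $\HH_{p+q}(C_{\bullet}^{\on{BS}}(\wt{K}^pI, L_{\on{lalg}}(\lambda)^{\vee}))$, which by (the proof of) \cref{lemma: ComputingClassicalCohomology} is what appears in the statement.

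Next I would pass everything to the slope $\le h$ part. The key subtlety is that on the $w$-th column the $\ast_w$-action of $z$ differs from the ``intrinsic'' $U_2z$-action on $C_{\bullet}^{\on{BS}}(\wt{K}^pI, D(w_0(w\cdot\lambda)))$ by the scalar $(w_0\lambda)(z)\cdot (w_0(w\cdot\lambda))^{-1}(z)$. Consequently, a slope $\le h$ condition imposed on the abutment (via the action on the terminal term $D(w_0\lambda)$) translates, on the $w$-column, to a slope $\le h - v_p((w\cdot\lambda)(w_0 z)) + v_p(\lambda(w_0 z)) = h(\lambda, w)$ condition with respect to the intrinsic $U_2z$-action. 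Shrinking to a common affinoid $\Omega \subset \mc{W}$ over which \cref{proposition: U_pfactorisationDiagram} produces simultaneous slope $\le \max_w h(\lambda,w)$ decompositions on every column, the slope $\le h$ part of the double complex becomes a bounded complex of finite projective $\mc{O}(\Omega)$-modules. Slope truncation is then an exact $A[T^+]$-linear direct-summand functor, so it commutes with the spectral sequence and leaves its abutment intact.

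The main obstacle will be the bookkeeping of the twisted actions: verifying that each differential in the spectral sequence is $\wt{\T}^S$- and twisted-$T^+$-equivariant, and confirming the precise slope shift $h(\lambda, w)$ by comparing $\ast_w$ with the intrinsic $U_2z$-action on the coefficient module $D(I)\otimes_{D(B_0)}w_0(w\cdot\lambda)$. Finiteness of the BGG resolution and boundedness of the chain complex mean the spectral sequence is first-quadrant and bounded, so convergence is automatic. Once the equivariance and slope computations are in place, running the identical argument with $C_{\bullet}^{\on{BM},\on{BS}}(\wt{K}^pI,-)$ and $C_{\bullet}^{\partial,\on{BS}}(\wt{K}^pI,-)$ in place of $C_{\bullet}^{\on{BS}}(\wt{K}^pI,-)$ yields the analogous spectral sequences for Borel--Moore and boundary homology.
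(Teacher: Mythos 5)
Your proof is correct and follows the same route as the paper (which cites \cite[Theorem 4.4.1]{Urb11}): dualize the Jones BGG resolution, tensor termwise with the finite free $\Z[I]$-complex $C_{\bullet}^{\on{BS}}(\wt{K}^pI)$ to form a double complex, identify the abutment with $C_{\bullet}^{\on{BS}}(\wt{K}^pI, L_{\on{lalg}}(\lambda)^{\vee})$ and hence with $\mathsf{L}(\lambda)^{\vee}$-homology via \cref{lemma: ComputingClassicalCohomology}, and then pass to slope $\le h$ parts, observing that the $\ast_w$-twist shifts the intrinsic slope bound to $h(\lambda,w)$ on the $w$-column and that slope truncation is an exact, equivariant direct-summand functor compatible with the column filtration. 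The paper's proof is exactly this argument, only stated more tersely.
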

\begin{proof}
This is essentially \cite[Theorem 4.4.1]{Urb11} in our context. We recall the construction for completeness. Since $C_{\bullet}^{\on{BS}}(\wt{K}^pI)$ is a complex of finite free $\Z[I]$-modules, tensoring the BGG resolution from \cref{theorem: JonesBGGResolution} with $C_{\bullet}^{\on{BS}}(\wt{K}^pI)$ yields a double complex with exact rows
    \[
   0 \to  C_{\bullet}^{\on{BS}}(\wt{K}^p)\otimes_{\Z[I]}D(w_0(w_0\cdot \lambda)) \to \dots \to  C_{\bullet}^{\on{BS}}(\wt{K}^p)\otimes_{\Z[I]}D(w_0(\lambda)) \to  C_{\bullet}^{\on{BS}}(\wt{K}^p)\otimes_{\Z[I]}L_{\lalg}(\lambda)^{\vee}\to 0.
    \]
    Equipping each term with its $U_2z$-action, this double complex is in fact $U_2z$ equivariant. The entire complex has slope $\le h$-decompositions. Unraveling the slope conditions for the $\ast_w$-actions gives a double complex with exact rows
    \[
   \dots \to \bigoplus_{\ell(w) = 1}C_{\bullet}^{\on{BS}}(\wt{K}^pI,D(w_0(w\cdot \lambda)))_{\le h(\lambda, w)}\to (C_{\bullet}^{\on{BS}}(\wt{K}^pI,D(w_0\lambda))_{\le h} \to (C_{\bullet}^{\on{BS}}(\wt{K}^pI),L_{\lalg}(\lambda)^{\vee})_{\le h} \to 0.
    \]
    In particular, the total homology in degree $i$ of the double complex 
    \[
       0 \to  C_{\bullet}^{\on{BS}}(\wt{K}^pI,D(w_0(w_0\cdot\lambda)))_{\le h(\lambda, w_0)} \to \dots \to  C_{\bullet}^{\on{BS}}(\wt{K}^pI,D(\lambda))_{\le h}
    \]
    is isomorphic to $\HH_{\bullet}(C^{{\on{BS}}}(\wt{K}^pI, \mathsf{L}(\lambda)^{\vee}))_{\le h}$ by \cref{lemma: ComputingClassicalCohomology}, since taking first homology of the columns degenerates by the BGG resolution and passing to slope $\le h$ parts is also exact. Passing first to homology of the columns yields the first page of a spectral sequence with $E_{p,q}^1 \simeq \bigoplus_{\ell(w) = q}H_p(C_{\bullet}^{\on{BS}}(\wt{K}^pI, D(w\cdot \lambda))_{\le h(\lambda, w)}$, as desired.
\end{proof}
We say $h$ is a \textit{small slope for} $\lambda$ if for all $w \in W_{\tG} \setminus \{1\}$ we have $h <v_p((w \cdot \lambda)(w_0z)) - v_p(\lambda(w_0z))$. Equivalently, $h(\lambda, w) < 0$ for all $w \neq 1$.
\begin{corollary}[Small slope classicality]
\label{corollary: SmallSlopeClassicality}
    If $h$ is a small slope for $\lambda$ then we have a Hecke-equivariant isomorphism
    \[
    \HH_{p}(C^{BS}(\wt{K}^pI, D(w_0\lambda)))_{\le h} \simeq \HH_p(C^{BS}(\wt{K}^pI, \mathsf{L}(\lambda)^{\vee}))_{\le h},
    \]
    as well as analogous statements for Borel-Moore and boundary homology.
\end{corollary}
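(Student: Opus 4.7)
The plan is to apply the BGG spectral sequence of Lemma~\ref{lemma: BGGSpectralSequence} and observe that the small slope hypothesis forces the $E_1$ page to collapse onto a single row. Recall that the small slope assumption means $h(\lambda, w) = h - v_p((w\cdot\lambda)(w_0 z)) + v_p(\lambda(w_0 z)) < 0$ for every $w \in W_{\tG}\setminus\{1\}$, while by definition $h(\lambda,1) = h$.

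The first step is to note that for any complex of Banach modules with a compact operator action, the slope $\le h'$ part is zero whenever $h' < 0$, since slope decompositions only produce non-negative slopes (the characteristic power series is a Fredholm series starting with $1$, whose Newton polygon has only non-negative slopes). Applied term-by-term, this gives $\HH_p(C^{\on{BS}}(\wt{K}^pI, D(w_0(w\cdot\lambda))))_{\le h(\lambda, w)} = 0$ for every $w \neq 1$, so that the $E_1$ page of the spectral sequence from Lemma~\ref{lemma: BGGSpectralSequence} is concentrated in the column $q = 0$, where it equals $\HH_p(C^{\on{BS}}(\wt{K}^pI, D(w_0\lambda)))_{\le h}$.

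The spectral sequence therefore degenerates at $E_1$, yielding a Hecke-equivariant isomorphism
\[
\HH_p(C^{\on{BS}}(\wt{K}^pI, D(w_0\lambda)))_{\le h} \xrightarrow{\sim} \HH_p(C^{\on{BS}}(\wt{K}^pI, \mathsf{L}(\lambda)^{\vee}))_{\le h},
\]
which is exactly the claim. The same argument works verbatim for the Borel--Moore and boundary variants, since Lemma~\ref{lemma: BGGSpectralSequence} produces analogous spectral sequences in those cases (tensoring the BGG resolution against $C_{\bullet}^{\on{BM},\on{BS}}$ or $C_{\bullet}^{\partial, \on{BS}}$), and the vanishing of negative slope parts is a formal property that holds regardless of which chain complex is used.

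I expect no real obstacle: the entire content is packaged in Lemma~\ref{lemma: BGGSpectralSequence}, and the only new input is the elementary observation that slope $\le h'$ parts vanish for $h' < 0$. The one point to spell out carefully is the correct identification of $h(\lambda, 1) = h$, so that the surviving column in the $E_1$ page really computes the object we want, and that the isomorphism respects the Hecke action (which it does because the spectral sequence is Hecke-equivariant by construction).
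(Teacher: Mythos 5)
Your overall structure matches the paper's: apply the BGG spectral sequence of Lemma~\ref{lemma: BGGSpectralSequence}, observe that the small slope hypothesis means $h(\lambda,w) < 0$ for all $w \neq 1$, conclude that all columns but $q=0$ vanish, and read off the isomorphism.

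However, the key step---the vanishing of $\HH_p(C^{\on{BS}}(\wt{K}^pI, D(w_0(w\cdot\lambda))))_{\le h(\lambda,w)}$ for $h(\lambda,w) < 0$---is not the formal triviality you claim. Your justification, that ``the characteristic power series is a Fredholm series starting with $1$, whose Newton polygon has only non-negative slopes,'' is false in general. A compact operator on a Banach module need not preserve a lattice: e.g. multiplication by $p^{-1}$ on a one-dimensional space is compact (finite rank), has Fredholm series $1 - p^{-1}T$, and this factor has a root $T = p$ of valuation $1$, i.e. an eigenvalue $p^{-1}$ of \emph{negative} slope. Whether a given compact operator has only non-negative slopes is precisely the question of whether it stabilizes a $\Z_p$-lattice, and that has to be argued, not assumed.

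This is exactly what the paper's proof supplies, and it is the real content of the corollary. The paper shows that the $U_2z$-operator stabilizes a lattice by combining two observations: (i) the homotopy equivalence between $C^{\on{BS}}(\wt{K}^p)$ and $C(\wt{K}^p)$ is defined integrally over $\Z[I]$, so the induced $\wt{z}$-action on $C^{\on{BS}}$ preserves integral chains; (ii) the $z$-action on the coefficient Banach module $D^{(s)}(I)\widehat{\otimes}_{D^{(s)}(B_0)}\lambda$ preserves the unit ball, because on the dual side it is induced by the restriction map $\mc{C}^{(s)}(\ol{N}^1T_0, L) \to \mc{C}^{(s)}(z^{-1}\ol{N}^1zT_0, L)$, which is norm non-increasing. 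Tensoring these two gives a $U_2z$-stable $\Z_p$-lattice in $C^{\on{BS}}(\wt{K}^p)\otimes_{\Z[I]}(D^{(s)}(I)\widehat{\otimes}_{D^{(s)}(B_0)}w_0(w\cdot\lambda))$, which forces all $U_2z$-slopes on its homology to be $\ge 0$. Only then does $h(\lambda,w) < 0$ imply the slope $\le h(\lambda,w)$ part is zero. You need to include this lattice-stability argument for the proof to be complete.
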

\begin{proof}
    By \cref{lemma: BGGSpectralSequence} it suffices to show that for $w \neq 1$ that $\HH_p(C_{\bullet}^{\on{BS}}(\wt{K}^pI, D(w_0(w\cdot \lambda)))_{\le h(\lambda, w)} = 0$. All that is needed is that the action of $z$ stabilizes a $\Z_p$-lattice of $V_p:=\HH_p(C_{\bullet}^{\on{BS}}(\wt{K}^pI, D(w_0(w\cdot \lambda)))$. Indeed, such a fact would imply that the $U_2z$-operator must only have nonnegative slopes, and if $h$ is small slope then $h(\lambda ,w) < 0$ for all $w \neq 1$, so that $(V_{p})_{\le h(\lambda ,w)} = 0$. 
    
    For showing the stability of a lattice, note that the homotopy equivalence on $C^{\on{BS}}(\wt{K}^p)$ $\begin{tikzcd}C^{\on{BS}}(\wt{K}^p)\ar[r, "i"]&
    \ar[l, "p", shift left]C(\wt{K}^p)\end{tikzcd}$ is defined integrally, and thus the right $z$-action is also defined integrally. Likewise, it is clear the $z$-action on the Banach module $D^{(s)}(I)\widehat{\otimes}_{D^{(s)}(B_0)}\lambda$ preserves the unit ball of this Banach space, since on the dual side the map is induced by a restriction map $C^{(s)}(\ol{N}^1T_0, L) \to C^{(s)}(z^{-1}\ol{N}^1zT_0, L)$, which does not increase the norm. In particular, $C^{\on{BS}}(\wt{K}^p)\otimes_{\Z[I]} (D^{(s)}(I)\widehat{\otimes}_{D^{(s)}(B_0)} \lambda)^{\on{Norm} \le 1}$ is stable under the $U_2z$-action, and is a $\Z_p$-lattice of $C^{(s)}:= C^{\on{BS}}(\wt{K}^p)\otimes_{\Z[I]} (D^{(s)}(I)\widehat{\otimes}_{D^{(s)}(B_0)} w_0(w\cdot \lambda))$, and so $H_p(C^{(s)})_{\le h(\lambda, w)} \simeq \HH_p(\wt{K}^pI, D(w_0(w\cdot \lambda)))_{\le h(\lambda, w) } = 0$ for $s\gg 0$ and $h$ a small slope for $\lambda$. 
\end{proof}
\begin{corollary}
\label{corollary: SmallSupport}
    Let $\mc{Z}= \supp_{\mc{O}_{\widehat{Y}}(\wh{Y})}(J_B^{\vee}(C_{\bullet}^{\partial,\on{BS}}(\wt{K}^pI, D(I)_{N_0})))$. For any $i$, let $V_i:= (\HH^i(J_B(\Pi_{\partial}(\wt{K}^p)_{\wt{\mf{m}}}^{\la})))'$. Then let $U_{h, \Omega}:= \mc{Z} \cap(\Omega\times U_{\le h})$, where $\Omega$ is chosen so that over $\Omega$, $C_{\bullet}^{\partial,\on{BS}}(\wt{K}^p, D(I)_{N_0}\widehat{\otimes}_{D(T_0)}\mO(\Omega))$ admits a slope $\le h$ decomposition. Then $V_i(U_{h,\Omega})$ is a finitely generated \textit{torsion} module over $\mc{O}(\Omega)$.
\end{corollary}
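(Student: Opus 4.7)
The plan is a $p$-adic interpolation of Clozel's purity lemma via small slope classicality, following the overconvergent strategy of \cite[\textsection 4]{Han17}. By \cref{corollary: DualCBSComplexesComputeJacquet} and \cref{lemma: RelateCompletedToOverconvergent}, the space $V_i(U_{h,\Omega})$ is identified with
\[
M_i \;:=\; \HH_i\!\left(C^{\partial,\on{BS}}_{\bullet}(\wt{K}^pI, D(I)_{N_0}\widehat{\otimes}_{D(T_0)}\mc{O}(\Omega))_{\le h}\right)_{\wt{\mf{m}}}.
\]
By \cref{proposition: U_pfactorisationDiagram}, the complex $C := C^{\partial,\on{BS}}_{\bullet}(\wt{K}^pI, D(I)_{N_0}\widehat{\otimes}_{D(T_0)}\mc{O}(\Omega))_{\le h}$ is a bounded complex of finite projective $\mc{O}(\Omega)$-modules, and via the semi-local decomposition \eqref{eq: SemiLocalDecomp} its Hecke direct summand $C_{\wt{\mf{m}}}$ is still such a complex; this yields finite generation of $M_i$ over $\mc{O}(\Omega)$.

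For torsion-ness, I plan to exhibit a Zariski-dense subset $S \subset \Omega(\ol{\Q}_p)$ on which the fiber complex $C_{\wt{\mf{m}}} \otimes_{\mc{O}(\Omega)} \kappa(\lambda)$ is acyclic. The candidate is $S = \{w_0\mu\}$ where $\mu$ ranges over dominant algebraic weights that are simultaneously small slope for $h$ and CTG. For such $\lambda = w_0\mu$, the boundary version of \cref{corollary: SmallSlopeClassicality} combined with \cref{lemma: ComputingClassicalCohomology} identifies $\HH_\ast(C_{\wt{\mf{m}}} \otimes \kappa(\lambda))$ with the dual of the $\wt{\mf{m}}$-localized classical boundary cohomology $\HH^\ast(\partial \wt{X}_{\wt{K}^pI}, \mathsf{L}(\mu))_{\le h}$, which vanishes in every degree by Clozel's purity lemma together with the CTG hypothesis on $\mu$ and the non-Eisenstein hypothesis on $\wt{\mf{m}}$ (as recalled after \cref{definition: CTG}). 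Granted Zariski-density of $S$, Grauert's upper semicontinuity applied to the complex $C_{\wt{\mf{m}}}$ of finite projective $\mc{O}(\Omega)$-modules then implies $\HH_i(C_{\wt{\mf{m}}} \otimes \kappa(\lambda)) = 0$ on a Zariski-open subset containing the generic point $\eta$ of each irreducible component of $\Omega$ (which is reduced, since weight space is smooth). Flatness of $C_{\wt{\mf{m}}}$ and of localization at $\eta$ give
\[
M_i \otimes_{\mc{O}(\Omega)} \kappa(\eta) \;\simeq\; \HH_i\!\left(C_{\wt{\mf{m}}} \otimes_{\mc{O}(\Omega)} \kappa(\eta)\right) \;=\; 0,
\]
so $M_i$ is torsion.

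The main obstacle is the Zariski-density of $S$. Dominant algebraic weights form a Zariski-dense sub-lattice of each component of $\mc{W}$, and \cref{Lemma: MostWeightsAreCTG} says the complement of the CTG locus is cut out by a proper Zariski closed subvariety of $\Omega$. However, the small-slope condition $h < v_p((w\cdot\mu)(w_0 z)) - v_p(\mu(w_0 z))$ for $w \in W_{\tG} \setminus \{1\}$ forces $\mu$ to be sufficiently regular, and can fail identically on a small affinoid $\Omega$. The remedy is to first fix a dominant algebraic $\mu_0$ that is CTG and small slope for $h$ (existing for $\mu_0$ sufficiently regular), and take $\Omega$ to be a sufficiently small affinoid neighborhood of $w_0\mu_0$; then for $k \gg 0$ the $p$-adic translates $\mu_0 + p^k X^{\ast}(\Res_{F^+/\Q}T)$ remain dominant, CTG, and small slope, and their $w_0$-images form a Zariski-dense sub-lattice of $\Omega$. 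Choosing the affinoids of this form compatibly with an admissible cover of $\supp J_B^{\vee}(C^{\partial,\on{BS}}_{\bullet}(\wt{K}^pI, D(I)))$ in $\mc{W}\times\G_m$ also yields \cref{theorem: SmallSupportIntro}.
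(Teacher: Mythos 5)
Your strategy matches the paper's essentially step for step: reduce to the slope-$\le h$ truncated complex of finite projective $\mathcal{O}(\Omega)$-modules via \cref{corollary: DualCBSComplexesComputeJacquet}, \cref{lemma: RelateCompletedToOverconvergent}, and \cref{proposition: U_pfactorisationDiagram}; identify the fibre complex at a dominant algebraic weight $w_0\mu$ with $C^{\partial,\mathrm{BS}}(\wt K^p I, D(w_0\mu))_{\wt{\mathfrak m},\le h}$; kill it via the boundary analogue of \cref{corollary: SmallSlopeClassicality}, \cref{lemma: ComputingClassicalCohomology}, CTG, and the Clozel purity lemma; and then propagate the vanishing to conclude torsion-ness. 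The one genuine difference in mechanism is the propagation step. The paper applies the universal-coefficient Tor spectral sequence (\cref{lemma: UniversalCoeffSpecSeq}) at a \emph{single} algebraic CTG small-slope point $w_0\lambda$, together with a Nakayama induction on the smallest degree in the support, to conclude that $\HH_j$ is not supported at $\mathfrak{p}_{w_0\lambda}$, and hence (after implicitly using irreducibility and the Jacobson property of the affinoid) that $\HH_j$ is torsion. You instead invoke Grauert upper semicontinuity to pass from acyclicity of the fibre complex at a Zariski-dense set of weights to acyclicity at the generic point $\eta$; this is correct (semicontinuity holds at all primes, not only rigid points, for a bounded complex of finite projectives, so $\eta$ lies in the acyclic open locus, and localisation at $\eta$ is flat). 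Your route is slightly less economical because it genuinely needs a dense family of good weights rather than a single one, but it has the advantage of making the passage to each irreducible component of $\Omega$ explicit.

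On the density issue, you have correctly identified the real subtlety: even when $\Omega$ meets the algebraic locus, the small-slope-for-$h$ constraint may eliminate all algebraic weights of $\Omega$. Your proposed remedy of shrinking $\Omega$ around a chosen $w_0\mu_0$ proves \cref{theorem: SmallSupportIntro} (and is strong enough for the downstream use in \cref{corollary: EcdClassicalPointsDense}), but it does not prove \cref{corollary: SmallSupport} as stated, which asks for torsion-ness over an \emph{arbitrary} $\Omega$ admitting a slope-$\le h$ decomposition. The paper closes exactly this gap by citing Step~2 of Hansen's Theorem~4.4.1: one runs the vanishing-and-propagation argument over the entire quasi-Stein Fredholm hypersurface $\mathcal{Z}$ (where small-slope CTG algebraic weights \emph{are} accumulating), shows the support of the boundary module is a proper closed analytic subset of $\mathcal{Z}$, and then restricts to each $U_{h,\Omega}$. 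If you want a self-contained proof of the Corollary over arbitrary $\Omega$, you need that global argument; your local shrinking cannot reach, say, an $\Omega$ entirely contained in a non-dominant region of weight space. Also, a small imprecision: the $p^k$-translates of $\mu_0$ are not all dominant (dominance is an open but not lattice-invariant condition), and not all of them are CTG either (the non-CTG locus is a proper closed set, not empty); the correct statement is only that the dominant CTG small-slope translates still form a Zariski-dense subset of the relevant residue disc.
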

\begin{proof}
By the above comparison theorems we have $V_i(U_{h, \Omega})= \HH_i(C^{\partial, \on{BS}}(\wt{K}^pI, D(I)_{N_0}\wh{\otimes}_{D(T_0)}\mO(\Omega)))_{\wt{\mf{m}},\le h}=:H_i$. We first prove torsion-ness in the case $\Omega$ contains a an algebraic weight. Thus by \cref{Lemma: MostWeightsAreCTG} there is an accumulating set of dominant algebraic weights which are CTG in the sense of \cref{definition: CTG}, and by the same argument we can in fact choose $w_0\lambda\in \Omega$ for $\lambda$ a dominant CTG algebraic weight. 
Applying
\cref{lemma: UniversalCoeffSpecSeq} gives a spectral sequence
\[
E_{i,j}^2 = \on{Tor}_{i}^{\mc{O}(\Omega)}(\HH_j,w_0\lambda) \implies  \HH_{i+j}(C^{\partial,\on{BS}}(\wt{K}^pI, D(w_0\lambda)))_{\wt{\mf{m}}, \le h}.
\]
When $h$ is a small slope for $\lambda$, \cref{corollary: SmallSlopeClassicality} gives an isomorphism
\[
\HH_{i+j}(C^{\partial,\on{BS}}(\wt{K}^pI, D(w_0\lambda)))_{\wt{\mf{m}}, \le h}\simeq \HH_{i+j}(\partial{\wt{X}}_{\wt{K}^pI}, \mathsf{L}(\lambda)^{\vee})_{\wt{\mf{m}}, \le h}.
\]
But then by \cref{lemma: ComputingClassicalCohomology} and the previously cited \cite[Proof of 2.4.11]{10AuthorPaper} that $\HH_{i+j}(\partial{\wt{X}}_{\wt{K}^pI}, \mathsf{L}(\lambda)^{\vee})_{\wt{\mf{m}}} =0$ if $\lambda$ is CTG.

We claim that all the $\HH_i$ are not supported at $w_0\lambda$, and are thus torsion. Let $\mf{p}_{w_0\lambda} \subset \mO(\Omega)$ be the prime ideal corresponding to the point $w_0\lambda$. For contradiction, let $j$ be the smallest degree such that $\on{supp}_{\mc{O}(\Omega)}\HH_j \ni \mf{p}_{w_0\lambda}$, i.e. $\HH_{j,\mf{p}_{w_0\lambda}} \neq 0$. Then by Nakayama's lemma we have $\HH_j\otimes_{\mc{O}(\Omega)}w_0\lambda \neq 0$, and for all $k < j$ and $d \ge 0$, so that that $\on{Tor}_d^{\mc{O}(\Omega)}(\HH_k, w_0\lambda)\simeq (\on{Tor}_d^{\mc{O}(\Omega)}(\HH_k, \lambda))_{\mf{p}_{w_0\lambda}} \simeq \on{Tor}_d^{\mc{O}(\Omega)}((\HH_k)_{\mf{p}_{w_0\lambda}}, w_0\lambda) = 0$. Thus, the term $\HH_j\otimes_{\mc{O}(\Omega)}w_0\lambda$ survives to the $E_{\infty}$-page of the spectral sequence, implying $\HH_j\otimes_{\mc{O}(\Omega)}w_0\lambda$ is identified with a submodule of $\HH_{j}(C^{\partial, \on{BS}}(\wt{K}^pI, D(w_0\lambda)))_{\wt{\mf{m}}, \le h} = 0$ by the previous discussion, giving a contradiction. So no such $j$ exists, and all the $\HH_{j}$ are torsion over $\mc{O}(\Omega)$, provided $\Omega$ contains a locally algebraic weight.

The case where $\Omega$ does not contain a locally algebraic weight follows exactly as Step 2 of \cite[proof of Thm 4.4.1]{Han17}.
\end{proof}

\subsection{Eigenvarieties, and their associated Galois representations}
\label{subsection: definingEigenvarietiesAndGaloisRepresentations}

We recall the definitions of the eigenvarieties we need for our purposes. Recall an \textit{eigenvariety datum} (\cite[Definition 6.1]{FuDerived}) is a tuple $D = (T, \mc{M}, \T, \psi)$ where $T$ is a topologically finitely generated abelian locally $\Q_p$-analytic group, $\mc{M}$ a coherent sheaf on the space of characters $\widehat{T}$, $\T$ a commutative $\Z_p$-algebra, and $\psi: \T\otimes_{\Z_p}\Q_p \to \End_{\mc{O}(\widehat{T})}(\mc{M})$ a $\Q_p$-algebra homomorphism. Associated to this data we can form a variety $\mc{E} = \mc{E}(D)$, defined by the relative Spec of $\mc{A}$ over $\widehat{T}$, where $\mc{A}$ is the $\mc{O}(\widehat{T})$-algebra generated by the image of the map $\psi$. Since $\mc{M}$ is a coherent sheaf, the natural projection map $\mc{E} \to \widehat{T}$ (which is a map of rigid analytic spaces) is finite. Very concretely, the points $\mc{E}(\ol{\Q}_p)$ is the collection of points $(x, \delta) \in \Hom(\T, \ol{\Q}_p) \times \widehat{T}(\ol{\Q}_p)$ such that $\Hom_T(\delta, \mc{M}[x]) \neq 0$ (see \cite[Proposition 6.2]{FuDerived}). In practice, $T$ is the points of a maximal torus of a reductive group, $\mc{M}$ is the dual of a derived Jacquet module, $\T$ is a spherical Hecke algebra, and $\psi: \T \to \End_{\mc{O}(\widehat{T})}(M)$ comes from an action on a locally symmetric space. Moreover, suppose that in fact $\T$ is a complete Noetherian local $\Z_p$-algebra. Then we can form an analogous construction of an $\mc{E}$ as a subspace of $(\on{Spf} \T^{\on{red}})^{\on{rig}} \times \widehat{T}$, where $(-)^{\on{rig}}$ denotes the rigid generic fibre (see \cite[\textsection 0.2]{deJong95}), and $\T^{\on{red}}$ denotes the (algebraic) reduction of $\T$. \footnote{Taking the reduced quotient is only needed for the global Galois representation on the eigenvariety for $G$.}

We apply this formalism to $G$ and $\wt{G}$. Namely, we consider the following eigenvariety data
\begin{align*}
    D^n(G)_{\mf{m}} &= (\widehat{T_n(F_p)}, \HH^n(J_B(\Pi(K^p)_{\mf{m}}^{\la}))', \T^S(K^p)_{\mf{m}},\psi_{\mf{m},n})\\
    D^n(\wt{G})_{\wt{\mf{m}}} &= (\widehat{T(F_p^+)}, \HH^n(J_B(\Pi(\wt{K}^p)_{\wt{\mf{m}}}^{\la}))', \wt{\T}^S(\wt{K}^p)_{\wt{\mf{m}}},\psi_{\wt{\mf{m}},n})\\
    D_{c}^n(\wt{G})_{\wt{\mf{m}}} &= (\widehat{T(F_p^+)}, \HH^n(J_B(\Pi_{c}(\wt{K}^p)_{\wt{\mf{m}}}^{\la}))', \wt{\T}^S(\wt{K}^p)_{\wt{\mf{m}}},\psi_{\wt{\mf{m}},n})\\
    D_{\partial}^n(\wt{G})_{\wt{\mf{m}}} &= (\widehat{T(F_p^+)}, \HH^n(J_B(\Pi_{\partial}(\wt{K}^p)_{\wt{\mf{m}}}^{\la}))', \wt{\T}^S(\wt{K}^p)_{\wt{\mf{m}}},\psi_{\wt{\mf{m}},n})
\end{align*}
where the $\psi$'s are the induced continuous Hecke actions on the modules in question, which exist essentially by transport of structure and localizing. From these data we define eigenvarieties $\mc{E}^n(K^p)_{\mf{m}},$ $\mc{E}^n(\wt{K}^p)_{\wt{\mf{m}}}, \mc{E}_c^n(\wt{K}^p)_{\wt{\mf{m}}}, \mc{E}_{\partial}^n(\wt{K}^p)_{\wt{\mf{m}}}$.
\begin{remark}
One might wonder if we could use the abstract Hecke algebras $\wt{\T}^S$ and $\T^S$ instead of the ``big'' Hecke algebras $\wt{\T}^S(\wt{K}^p)$ and $\T^S(K^p)$. Indeed, we claim that this choice is irrelevant and the $\mc{O}(\widehat{T})$-algebras generated by the images of $\wt{\T}^S$ and $\T^S$ coincide with those coming from the big Hecke algebras. For simplicity, consider the case of $\psi_{\mf{m}, n}:\T_{\mf{m}}^S(K^p) \to  \End_{\mO_{\wh{T}}}(\mc{M})$, where $\mc{M} = \HH^n(J_B(\Pi(K^p)_{\mf{m}}^{\la}))'$. Let $\mc{L}$ be the coherent sheaf of endomorphisms generated by the image of $\psi_{\mf{m}, n}$. By construction, the natural map $\T^S \to \T^S(K^p)$ has dense image. It also suffices to work locally and thus consider a small enough neighborhood $(\Omega \times U_{\le h}) \subset \widehat{Y}$, so that $\mc{M}(\Omega \times U_{\le h})$ is finitely generated over $\mc{O}(\Omega)$ by \cref{proposition: U_pfactorisationDiagram}. By definition, $\mc{L}$ is the sheaf constructed by the image of the map $\wt{\T}^S(\wt{K}^p) \times \mc{O}_{\widehat{Y}} \to \End(\mc{M})$, and so the image of $\T^S(\wt{K}^p)\times \mc{O}_{\widehat{Y}}(\Omega \times U_{\le h}) \to \mc{L}(\Omega \times U_{\le h})$ is surjective, and $\mc{L}(\Omega \times U_{\le h})$ is finitely generated over $\mc{O}(\Omega)$. The image of $\T^S\times \mc{O}_{\widehat{Y}}(U_{\le h}\otimes \Omega)$ is an $\mc{O}(\Omega)$-submodule, and thus automatically closed in $\mc{L}(\Omega \times U_{\le h})$, so is equal to the image of its closure, as required.
\end{remark}

However, our eigenvariety data with the big Hecke algebras provides a cleaner connection to Galois deformation spaces. First, recall that given any maximal ideal $\mf{m} \subset \T^S(K^p)$, Scholze \cite{Sch15} has constructed the associated mod $p$ Galois representation $\ol{\rho}_{\mf{m}}: \Gal_F \to \GL_n(\ol{\F}_p)$. In other words, $\ol{\rho}_{\mf{m}}$ is a continuous, semisimple representation unramified outside a finite set of places $S$, such that if $\nu \notin S$, then the characteristic polynomial of $\ol{\rho}_{\mf{m}}(\Frob_{\nu})$ is equal to $P_{\nu}(X)\mod \mf{m}$. We say $\mf{m}$ is \textit{non-Eisenstein} if $\ol{\rho}_{\mf{m}}$ is absolutely irreducible. We then let $R_{\ol{\rho}_{\mf{m}}}$ be the universal deformation ring (which exists when $\ol{\rho}_{\mf{m}}$ is irreducible) and $R_{\ol{\rho}_{\mf{m}}}^{\square}$ the associated framed deformation ring (which always exists). Let $X(\ol{\rho}_{\mf{m}})$ and $X^{\square}(\ol{\rho}_{\mf{m}})$ be the associated rigid spaces.
An essentially immediate consequence of Scholze's construction of Galois representations associated to torsion classes then is the following. 

\begin{lemma}
\label{lemma: GaloisRepsAssociatedToEigenvarieties}
    Suppose $\mf{m} \subset \T^S(K^p)$ is a non-Eisenstein maximal ideal. Then
    there is a continuous $n$-dimensional Galois representation
    \[
    \rho:\Gal_{F} \to \GL_n(\T^S(K^p)_{\mf{m}}/I)
    \]
    for $I \subset\T^S(K^p)_{\mf{m}}$ a nilpotent ideal, which is unramified outside $S$
    such that for all $\nu \notin S$, the characteristic polynomial of $\rho(\Frob_{\nu})$ equals the image of $P_{\nu}(X)$ in $\T^S(K^p)_{\mf{m}}/I[X]$.

    As a result, there is a closed embedding $\mc{E}^i(K^p)_{\mf{m}} \hookrightarrow X^{\square}(\overline{\rho}_{\mf{m}}) \times \widehat{T_n(F_p)}$ (alternatively into $X(\ol{\rho}_{\mf{m}})\times \wh{T_n(F_p)}$).
\end{lemma}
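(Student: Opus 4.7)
The plan is to bootstrap from Scholze's construction of Galois representations attached to torsion Hecke eigenclasses at each finite level, then pass to the inverse limit defining the big Hecke algebra, and finally deduce the geometric consequence from the universal property of the (framed) deformation ring.

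First I would record, for each open normal $U_p \lhd K_p$ with $K^pU_p$ good and each $m \ge 1$, Scholze's theorem applied to $\HH^\ast(X_{K^pU_p}^G, \mO_L/p^m)_{\mf{m}}$: one obtains a continuous representation
\[
\rho_{U_p, m}: \Gal_F \to \GL_n\bigl(\T^S(K^pU_p, \mO_L/p^m)_{\mf{m}}/I_{U_p,m}\bigr),
\]
unramified outside $S$, with $\mathrm{char}(\rho_{U_p, m}(\Frob_\nu)) \equiv P_\nu(X) \pmod{I_{U_p, m}}$, where $I_{U_p, m}$ is a nilpotent ideal \emph{whose order of nilpotence is bounded independently of $U_p$ and $m$} (the bound depends only on $n$ and $[F:\Q]$). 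The non-Eisenstein hypothesis ensures these representations are uniquely determined by their characteristic polynomials (via pseudo-representations, using that $\rhobar_{\mf{m}}$ is absolutely irreducible).

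Next I would pass to the limit. By the uniqueness of pseudo-representation lifts, the $\rho_{U_p, m}$ are compatible under the transition maps defining $\T^S(K^p)_{\mf{m}} = \varprojlim_{U_p,m} \T^S(K^pU_p, \mO_L/p^m)_{\mf{m}}$. Taking $I := \varprojlim_{U_p,m} I_{U_p, m}$ and using the uniform bound on nilpotence degrees, $I \subset \T^S(K^p)_{\mf{m}}$ is itself nilpotent. Since $\T^S(K^p)_{\mf{m}}/I$ is a complete Noetherian semi-local $\mO_L$-algebra (with the induced topology), and the $\rho_{U_p, m}$ are continuous and compatible, the inverse system assembles into the desired continuous
\[
\rho: \Gal_F \to \GL_n\bigl(\T^S(K^p)_{\mf{m}}/I\bigr)
\]
with the required characteristic polynomial property. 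The hard part here is the boundedness of the nilpotence order, but this is exactly what is encoded in Scholze's (and Newton--Thorne's) refinement.

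For the closed embedding, observe that because $I$ is nilpotent, $\rho$ descends modulo the nilradical to $\rho^{\mathrm{red}} : \Gal_F \to \GL_n(\T^S(K^p)_{\mf{m}}^{\mathrm{red}})$, a deformation of $\rhobar_{\mf{m}}$. Non-Eisensteinness gives a unique lift to a genuine representation, and the universal property yields a local $\mO_L$-algebra map $R_{\rhobar_{\mf{m}}}^{\square} \to \T^S(K^p)_{\mf{m}}^{\mathrm{red}}$ (respectively $R_{\rhobar_{\mf{m}}} \to \T^S(K^p)_{\mf{m}}^{\mathrm{red}}$). By the defining construction of the big Hecke algebra, $\T^S(K^p)_{\mf{m}}^{\mathrm{red}}$ is topologically generated by the images of the $T_{\nu, i}$ for $\nu \notin S$, each of which is (up to a unit and sign) a coefficient of $\mathrm{char}(\rho^{\mathrm{red}}(\Frob_\nu))$; hence this map has dense image, and since both sides are complete local Noetherian the map is surjective. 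Passing to rigid generic fibres, a surjection of such rings induces a closed embedding $(\Spf \T^S(K^p)_{\mf{m}}^{\mathrm{red}})^{\mathrm{rig}} \hookrightarrow X^{\square}(\rhobar_{\mf{m}})$ (or $X(\rhobar_{\mf{m}})$), and crossing with $\wh{T_n(F_p)}$ yields the asserted embedding of $\mc{E}^i(K^p)_{\mf{m}}$, which by construction sits inside $(\Spf \T^S(K^p)_{\mf{m}}^{\mathrm{red}})^{\mathrm{rig}} \times \wh{T_n(F_p)}$ as a closed subspace.
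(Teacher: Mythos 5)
Your proposal is correct and follows essentially the same route as the paper: the paper simply cites Gee--Newton, Remark 3.3.4 (which gives $I^4 = 0$) for precisely the finite-level-to-big-Hecke-algebra limiting argument you carry out, and then invokes the universal property of $R_{\ol{\rho}_{\mf{m}}}$. If anything, you are slightly more complete on the closed-embedding assertion --- the paper only records the natural map $\mc{E}^i(K^p)_{\mf{m}} \hookrightarrow (\Spf \T^S(K^p)_{\mf{m}}^{\on{red}})^{\on{rig}}\times \wh{T} \to (\Spf R_{\ol{\rho}_{\mf{m}}})^{\on{rig}} \times \wh{T}$ without explaining why the second arrow is a closed immersion, whereas your argument (the $T_{\nu,i}$ are coefficients of $\mathrm{char}(\rho^{\mathrm{red}}(\Frob_\nu))$, so $R_{\ol{\rho}_{\mf{m}}} \to \T^S(K^p)_{\mf{m}}^{\on{red}}$ has dense image and is hence surjective between complete Noetherian local rings) supplies the missing justification.
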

\begin{proof}
    As mentioned in \cite[Remark 3.3.4]{GeeNewton}, we have a continuous $n$-dimensional representation valued in $\T^{S}(K^p)/I$ with $I$ a nilpotent ideal with $I^4 = 0$. Thus, there also exists a representation in the reduction.

    Now consider the Hecke action $\psi_{n,\mf{m}}: \T^S(K^p)_{\mf{m}} \to \End_{\mc{O}(\widehat{T_n(F_p)})}(\HH^{n}(J_B(\Pi(K^p)_{\mf{m}}^{\la})))'$.
Let $\mc{A}_{i}$ be the reduced $\mc{O}_{\wh{T}}$-algebra generated by $\on{im}(\psi_{i, \mf{m}})$. We thus have an algebra homomorphism $R_{\overline{\rho}_{\mf{m}}} \to (\T^{S}(K^p)_{\mf{m}})^{\on{red}} \to \mc{A}_i$. Passing to spectra, we have a natural map $\mc{E}^i(K^p)_{\mf{m}} \hookrightarrow (\Spf (\T^S(K^p)_{\mf{m}}^{\on{red}}))^{\on{rig}}\times \widehat{T} \to (\Spf R_{\overline{\rho}_m})^{\on{rig}} \times \widehat{T}$.
\end{proof}

We need to shift $\wh{T_n(F_p)}$ to match the weights of Galois representations. Let $j_n: \wh{T_n(F_p)}\to \wh{T_n(F_p)}$ be the map defined by 
\[
j_n(\chi)_{\nu}:= \chi_{\nu}\cdot(1, \eps_{\on{cyc}}^{-1}\circ \Art_{F_{\nu}}, \dots, \eps_{\on{cyc}}^{1-n}\circ \Art_{F_{\nu}})
\]

For notational convenience, in the sequel we will write $\rho_{\cyc}^{G}: T_n(F_p) \to \ol{\Q}_p^{\times}$ to denote the character $(\rho_{\cyc}^{G})_{\nu}:= (1, \varepsilon_{\cyc}^{-1}\circ \Art_{F_{\nu}}, \dots, \varepsilon_{\cyc}^{-1}\circ \Art_{F_{\nu}})$. Similarly, we define $\rho_{\cyc}^{\tG}: T(F_p^+) \to \ol{\Q}_p^{\times}$ defined by the formula $(\rho_{\cyc}^{\tG})_{\ol{\nu}} = (1, \varepsilon_{\cyc}^{-1}\circ \Art_{F_{\nu}}, \dots, \varepsilon_{\cyc}^{1-2n}\circ \Art_{F_{\nu}})$. Moreover, given a character $\delta: T_n(F_p)\to \ol{\Q}_p^{\times}$, we will write the product $\delta \cdot \rho_{\cyc}^{G}$ (respectively, $\delta \cdot \rho_{\cyc}^{\tG}$) as $\delta + \rho_{\cyc}^{G}$ (respectively, $\rho  + \rho_{\cyc}^{\tG}$). In particular, $j_n(\delta):= \delta + \rho_{\cyc}^{\tG}$, and likewise if we view $\delta: T_n(F_p) \to \ol{\Q}_p^{\times}$ as a character of $T(F_p^+)$, we define $j_{2n}(\delta):= \delta + \rho_{\cyc}^{\tG}$.

In essence, the goal of the paper is to constrain the image of these maps to the deformation space
\[
\iota_{\mf{m}}^i:\mc{E}^i(K^p)_{\mf{m}} \hookrightarrow X^{\square}(\overline{\rho}_{\mf{m}}) \times \wh{T_n(F_p)},
 \]
 sending $(x,\delta) \mapsto (\rho_x,j_n(\delta))$.

\subsection{Reduction to a parabolic induction}
The relation between the completed cohomology of $G = \Res_{F/F^+}\GL_n$ and the boundary (completed) cohomology of $\tG$ is packaged nicely as follows: 
\begin{proposition}
\label{prop: InductionAsDirectSummandOfBoundary}
    If $\mf{m}\subset \T^S(K^p)$ is non--Eisenstein and $\wt{K}\subset \tG(\A_{F^+}^{\infty})$ is a good subgroup decomposed with respect to $\oP,$ then in the derived category $D^b(\mathsf{Ban}_L^{\on{ad}}(\wt{G}_0)$ we have
$\on{ct}-\Ind_{\oP_0}^{\wt{G}_0}\Pi(K^p)_{\mf{m}}$ is a $\wt{\T}^S$-equivariant direct summand of $\Pi_{\partial}(\wt{K}^p)_{\mf{m}}$.
\end{proposition}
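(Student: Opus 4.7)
The plan is to exploit the stratification of the Borel--Serre boundary of $\wt{X}_{\wt{K}}$ by rational parabolic subgroups. Recall that $\partial\wt{X}_{\wt{K}}$ admits a stratification whose strata $\partial_{[Q]}\wt{X}_{\wt{K}}$ are indexed by $\tG(F^+)$-conjugacy classes of proper rational parabolics $Q\subset \tG$; up to homotopy, each such stratum is a nilmanifold bundle (coming from the unipotent radical $N_Q$) over a locally symmetric space for the Levi $M_Q = Q/N_Q$.

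First, I would set up at finite level a $\wt{G}_0$-equivariant filtration of the complex $\Pi_{\partial}(\wt{K}^p)$ whose associated graded pieces are indexed by conjugacy classes $[Q]$ of rational parabolics. Because $\wt{K}$ is decomposed with respect to $\oP$, the graded piece attached to the Siegel parabolic can be computed explicitly: passing to the limit over $\wt{K}_p$ and applying a Shapiro-style argument for the unipotent radical $\ol{U}$ of $\oP$, this piece is $\wt{G}_0$-equivariantly identified with $\on{ct}-\Ind_{\oP_0}^{\wt{G}_0}\Pi(K^p)$ as a complex of admissible Banach representations, with the Hecke action of $\wt{\T}^S$ factoring through the Satake transfer $\mc{S}:\wt{\T}^S \to \T^S$ discussed in \cref{subsection: HeckeAlgebras}.

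Next, I would use the non-Eisenstein hypothesis on $\mf{m}$ to localize at $\wt{\mf{m}}=\mc{S}^{\ast}(\mf{m})$. The stratum attached to any other parabolic class $[Q]\ne [\oP]$ gives, after the analogous Shapiro-style identification, a parabolic induction of completed cohomology of $M_Q$; by Scholze's construction, the $2n$-dimensional Galois pseudocharacter associated to such a contribution factors as a sum according to the block structure of $M_Q$, and is thus reducible in a manner incompatible with the absolutely irreducible $\overline{\rho}_{\mf{m}}$ appearing (via $\rbar_{\mf{m}} \oplus \rbar_{\mf{m}}^{c,\vee}(\ast)$) in any $\wt{\mf{m}}$-localized contribution coming through $\mc{S}$. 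Concretely, comparing characteristic polynomials of unramified Frobenii produces Hecke elements in $\wt{\T}^S$ which lie in $\wt{\mf{m}}$ yet act invertibly on the $[Q]$-graded piece, so these strata vanish after localizing at $\wt{\mf{m}}$, and the filtration collapses onto the single $\oP$-graded piece.

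The main obstacle is executing the filtration argument at the level of complexes of admissible Banach $\wt{G}_0$-representations (rather than just on cohomology), since the conclusion is in the derived category $D^b(\mathsf{Ban}_L^{\on{ad}}(\wt{G}_0))$. This requires choosing finite triangulations of $\ol{\wt{X}}_{\wt{K}^p\wt{K}_p}$ compatible with the stratification by rational parabolics at each level $\wt{K}_p$ in a way compatible with change of level, together with $\wt{K}_p$-equivariant chain-level homotopy inverses respecting the filtration, so that the inverse limits over $\wt{K}_p$ (as in \cref{subsubsection: ComplexesDefininfCompletedCohomology}) preserve admissibility and commute with the filtration. Once these compatibilities are in place, the vanishing of the other graded pieces after $\wt{\mf{m}}$-localization promotes the filtration to a direct sum decomposition in $D^b(\mathsf{Ban}_L^{\on{ad}}(\wt{G}_0))$, identifying $\on{ct}-\Ind_{\oP_0}^{\wt{G}_0}\Pi(K^p)_{\mf{m}}$ as a $\wt{\T}^S$-equivariant direct summand of $\Pi_{\partial}(\wt{K}^p)_{\wt{\mf{m}}}$.
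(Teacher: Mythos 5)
Your proposal follows the same strategy as the sources the paper cites: the paper's own ``proof'' is simply a reference to \cite[Proposition 7.1]{FuDerived}, which in turn is deduced from \cite[Theorem 5.4.1]{10AuthorPaper}, and those results are proved precisely by (i) the Borel--Serre boundary stratification by conjugacy classes of rational parabolics, (ii) identifying the Siegel stratum with the continuous induction from $\oP_0$ via the torus-bundle structure of $X^{\oP}_{\wt{K}_{\oP}} \to X^G_{K}$ (what you call the Shapiro-style step), (iii) matching the boundary Hecke action through the unnormalised Satake transfer $\mc{S}$, and (iv) killing the non-Siegel strata after localization at the non-Eisenstein $\wt{\mf{m}}$. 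So the high-level outline is a faithful reconstruction.

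Two small remarks on the execution. First, your phrasing of the vanishing step (``Hecke elements ... lie in $\wt{\mf{m}}$ yet act invertibly'') is really a restatement that $\wt{\mf{m}}$ lies outside the support of the $[Q]$-graded pieces for $[Q]\ne[\oP]$; the actual argument in \cite{10AuthorPaper} shows that the Hecke eigensystems appearing in those strata, transported to $\T^S$-eigensystems via the relevant (non-Siegel) Satake transfers, force further reducibility of $\rbar_{\wt{\mf{m}}} \simeq \rbar_{\mf{m}}\oplus\rbar_{\mf{m}}^{c,\vee}(1-2n)$, which is incompatible with $\rbar_{\mf{m}}$ irreducible; this is most cleanly run at the level of torsion cohomology and pseudocharacters. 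Second, the chain-level compatibility you flag (choosing Borel--Serre triangulations respecting the parabolic stratification, compatibly across levels, so the filtration persists in $D^b(\mathsf{Ban}_L^{\on{ad}}(\wt{G}_0))$) is indeed the technical content of Fu's Proposition 7.1, and your proposal correctly identifies it as the non-formal point; as written it is left as an obstacle rather than carried out, so be aware this is exactly where the cited sources do the work.
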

\begin{proof}
    This is \cite[Proposition 7.1]{FuDerived}, which in turn is deduced from the calculation \cite[Theorem 5.4.1]{10AuthorPaper}.
\end{proof}

For the sake of taking derived functors we want a complex of \textit{injective} $\wt{G}_0$-representations representing $\on{ct}-\Ind_{\oP_0}^{\wt{G}_0}\Pi(K^p)_{\mf{m}}$. Indeed, let $\Pi_{\oP}(\wt{K}^p):= \Pi(\wt{K}^p \cap \oP(\A_{F^+}^{\infty, p}))$ be a complex of injective admissible $\oP_0$-representations computing the completed cohomology for the (opposite) Siegel parabolic subgroup $\oP$. Recall that $X_{\wt{K}_{\oP}}^{\oP} \to X_K^G$ has the structure of a torus bundle. Then (as in \cite[(5.4.2)]{10AuthorPaper} and comments thereafter) there is a $\wt{\T}^S$-equivariant map of complexes of $\oP_0$-representations $\Pi(K^p) \to \Pi_{\oP}(\widetilde{K}^p)$, where the $\wt{\T}^S$-action on the left is induced by the Satake transfer $\mc{S}:\wt{\T}^S \to \T^S$. In fact, letting $A_{\tG}^{\bullet}$ be the complex of Banach spaces which makes $\Pi(K^p)$ a $G$-extension, and making the analogous construction $A_{\oP}^{\bullet}$ for $\oP$, pullback induces a map $\pi_G^{\ast}:A_{\tG}^{\bullet} \to A_{\oP}^{\bullet}$, and the map $\Pi(K^p) \to \Pi_{\oP}(\widetilde{K}^p)$ can be chosen to be compatible with $\pi_{G}^{\ast}$. Since the completed cohomology of a unipotent subgroup $\ol{N}$ is concentrated in degree $0$ (in particular, see the second half of \cite[Proof of Theorem 5.4.1]{10AuthorPaper}), this map is in fact a quasi-isomorphism. Moreover by passing to the dual side and again invoking \cite[Corollary 10.4.7]{WeibelHomAlg}, we have a $G_0$-equivariant homotopy inverse $\Pi_{\oP}(\wt{K}^p)\to \Inf(\Pi(K^p))$. This map is in fact also continuous, since any map $\mO_L[[G_0]]$-linear map $\mO_L[[G_0]] \to \mO_L[[\oP_0]]$ is automatically continuous.

Moreover for any maximal ideal $\wt{\mf{m}}\subset \wt{\T}^{S}(\wt{K}^p)$ we can also run the same procedure as in \cref{subsection: HeckeAlgebras} to get a complex $\Pi_{\oP}(\wt{K}^p)_{\wt{\mf{m}}}$ of injective admission $\oP_0$-representations equipped with a $\oP$-extension.
Thus, $\on{ct}-\Inf_{\oP_0}^{\tG_0}\Pi_{\oP}(\wt{K}^p)_{\wt{\mf{m}}}$ is a complex of injective $G_0$-representations representing $\on{ct}-\Ind_{\oP_0}^{\tG_0}\Pi(\wt{K}^p)_{\mf{m}}$. Moreover, by \cite[Proposition 3.13]{FuDerived} we also have homotopy equivalences 
\[
\on{ct}-\Ind_{\oP_0}^{\widetilde{G}_0}\Pi_{\oP}(\widetilde{K}^p)_{\wt{\mf{m}}} \to \on{ct}-\Ind_{\oP}^{\widetilde{G}_p}A_{\oP,\mf{m}}^{\bullet}
\]

As a result, $\on{ct}-\Ind_{\oP_0}^{\widetilde{G}_0}\Pi_{\oP}(\widetilde{K}^p)_{\wt{\mf{m}}}$ is a complex of injective admissible Banach $\wt{G}_0$-representations, equipped with a $\wt{G}_p$-extension, whose cohomology computes $\on{ct}-\Ind_{\oP}^{\tG}\wt{\HH}^{\ast}(K^p, L)_{\mf{m}}$. Then using \cite[Theorem 3.6]{FuDerived}, along with our previously defined formulae, we may consider the associated Jacquet functors \[
J_B(\Ind_{\oP_0}^{\widetilde{G}_0}\Pi_{\oP}(\widetilde{K}^p)_{\wt{\mf{m}}}^{\la}) = (\Ind_{\oP_0}^{\widetilde{G}_0}\Pi_{\oP}(\widetilde{K}^p)_{\wt{\mf{m}}}^{\la})_{\on{fs}}^{N_0} 
\]
with respect to the $\wt{G}$-extension provided. We can then likewise define an eigenvariety datum $D_{\mf{m}}^{\Ind}(G) = (\widehat{T(F_p^+)}, \HH^n(J_B(\Ind_{\ol{P}_0}^{\wt{G}_0}(\Pi(K^p)_{\mf{m}}^{\la})))', \wt{\T}^S(\wt{K}^p)_{\wt{\mf{m}}}, \psi_{\Ind,\mf{m}})$ and consider an associated eigenvariety $\mc{E}_{\Ind}^n(K^p)_{\wt{\mf{m}}}$.

In the case that $\wt{\mf{m}}\subset \wt{\T}^S(\wt{K}^p)$ is so that $\ol{\rho}_{\wt{\mf{m}}}$ is decomposed generic as defined in \cref{definition: DecomposedGeneric}, by \cref{lemma: FundamentalLongExactSequence} we have an injective (resp. surjective) map of essentially admissible representations
\begin{align}
\HH^{d-1}(J_B(\Ind_{\oP_0}^{\tG_0}\Pi_{\oP}(\wt{K}^p)_{\wt{\mf{m}}}^{\la})) &\hookrightarrow \HH^d(J_B(\Pi_c(\wt{K}^p)_{\wt{\mf{m}}}^{\la}))
    \\
\HH^d(J_B(\Pi(\wt{K}^p)_{\wt{\mf{m}}}^{\la}))&\twoheadrightarrow \HH^{d}(J_B(\Ind_{\oP_0}^{\tG_0}\Pi_{\oP}(\wt{K}^p)_{\wt{\mf{m}}}^{\la})).
\end{align}
Implicitly, here we are using that the direct summand $\HH^{i}(J_B(\Ind_{\oP_0}^{\tG_0}\Pi_{\oP}(\wt{K}^p)_{\mf{m}}^{\la})) \to \HH^{i}(J_B(\Pi_{\partial}(\wt{K}^p)_{\wt{\mf{m}}}^{\la}))$ is $T(F_p^+)$-equivariant, which follows from the fact that \cite[Theorem 5.4.1]{10AuthorPaper} in fact gives the original direct summand is $\tG(F_p^+)$-equivariant (up to homotopy).

So to gain an understanding of Jacquet functors for $\Pi(K^p)_{\mf{m}}^{\la}$, we will start by understanding the Jacquet functors of a parabolic induction.

\section{Locally analytic parabolic inductions}
\label{Section3: ParabolicInduction}

In this section, we make a more detailed study of parabolic inductions. We introduce the following notation.
Let $F/\Q$ be a number field and recall $F_p:= F\otimes_{\Q}\Q_p= \prod_{\nu \in S_{p}(F)}F_{\nu}$.  Let $L/\Q_p$ be a finite extension.

Let $\tG = \tG(F_p)$ be the $F_p$-points of a split connected reductive group over $F_p$, $B = B(F_p) = T(F_p)N(F_p)$ a Borel with choice of split maximal torus $T\subset B$ and $P = P(F_p) = G(F_p)U(F_p) \supset B$ a parabolic subgroup containing $B$ with Levi $G$ and unipotent radical $U$. Let $\oP := \oP(F_p)$ be the corresponding opposite parabolic. Even though we will only need to take $\tG = \GL_{2n}(F_p)$ and $P = G(F_p)U(F_p)$ the (opposite) Siegel parabolic, the following section applies to any split reductive group.

Let $\pi$ be a locally analytic representation of $G(F_p)$ on a compact type vector space over $L$.
We are interested in studying locally analytic parabolic inductions
\begin{equation}
\label{eq: LocallyAnalyticInductionDefinition}
\Ind_{\oP}^{\tG}\pi := \{f \in C^{\rm{la}}(\tG, \pi): f(pg) = \pi(p)f(g) \text{ for all }p \in \overline{P}, g \in \tG\}.
\end{equation}
as a representation of the Borel $B$. Since $\oP\backslash \tG$ is compact, $\Ind_{\oP}^{\tG}\pi$ is again a compact type $L$-vector space.

Let $W_{\oP}^r := W_G\backslash W_{\tG}$ be the relative Weyl set. Then a generalized Bruhat decomposition gives a stratification into locally closed subsets
\[
\tG(F_p) = \bigsqcup_{w \in W_{\oP}^r}\oP(F_p)wB(F_p)  = \bigsqcup_{w \in W_{\oP}^r}\oP(F_p)wN(F_p),\]
so that in particular $W_{\oP}^r:= \oP(F_p)\backslash \tG(F_p)/B(F_p)$, which additionally satisfies that $\oP N \subset G$ is Zariski open and dense, and if $C_w:= \oP(F_p)wN(F_p)$, then the closure $\overline{C}_w = \sqcup_{w': \ell(w') \le \ell(w)}C_{w'}$, where $\ell(w)$ is understood to be the minimal length of a representative element in $W_{\tG}$ of the coset of $w$.

Let $W^{\oP}$ be a collection of minimal length representatives for each class in $W_{\oP}^r$. Now fix an ordering $\{w_i\}_{i = 1}^{\#{W_{\oP}^r}} = W^{\oP}$ such that if $i \le j$ then $\ell(w_i) \le \ell(w_j)$. Then define for all $k \ge 1,$
\[
\tG_k := \bigsqcup_{1\le i \le k}\oP(F_p)w_iN(F_p),
\]
which is naturally an open and dense subset of $\tG(F_p)$.
We first note that we can write coordinates for an open cell. 

\begin{lemma}
\label{lemma: chartopencell}
    We have an inclusion $\ol{P}wN\subset \ol{P}Uw$. Moreover we have an isomorphism of locally $\Q_p$-analytic manifolds 
    \[
    \oP w N \times (w^{-1}Uw\cap \ol{N}) \simeq \oP U w
    \]
    induced by the product $(x,y) \mapsto x \cdot y$.
\end{lemma}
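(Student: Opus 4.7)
The plan is to leverage the root-subgroup decomposition of $N$ and $w^{-1}Uw$ relative to the element $w$, combined with the standard product decomposition $\oP U \simeq \oP \times U$ of the big Bruhat cell. The inclusion $\oP w N \subset \oP U w$ will be a formal consequence of the main isomorphism.

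First I would partition the positive roots of $\tG$ (equivalently, the roots of $N$) according to whether $w\beta$ is a root of $\oP$ or a root of $U$: since the roots of $\oP = G\ol{U}$ and the roots of $U$ give a disjoint partition of all roots of $\tG$, every positive root $\beta$ falls into exactly one of these two cases. This yields a product decomposition $N = (N \cap w^{-1}\oP w) \cdot (N \cap w^{-1}U w)$, with each factor a closed unipotent subgroup (both root subsets being closed under root sums since $\oP$ and $U$ are). Conjugating gives $w(N \cap w^{-1}\oP w)w^{-1} \subset \oP$, hence $\oP w \cdot (N \cap w^{-1}\oP w) = \oP w$, and therefore
\[
\oP w N = \oP \cdot w \cdot (N \cap w^{-1}Uw).
\]
Using $\oP \cap U = \{e\}$ (which follows from $\oP = G\ol{U}$ together with $U \cap G = U \cap \ol{U} = \{e\}$), the multiplication map $\oP \times (N \cap w^{-1}Uw) \to \oP w N$ is an isomorphism of locally $\Q_p$-analytic manifolds.

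Next I would decompose $w^{-1}Uw$ according to root signs in $\tG$: its roots partition into positive ones (contributing $w^{-1}Uw \cap N$) and negative ones (contributing $w^{-1}Uw \cap \ol{N}$), giving $w^{-1}Uw = (w^{-1}Uw \cap N) \cdot (w^{-1}Uw \cap \ol{N})$. Combining,
\[
\oP U w = \oP \cdot w \cdot (w^{-1}Uw) = \oP \cdot w \cdot (w^{-1}Uw \cap N) \cdot (w^{-1}Uw \cap \ol{N}) = (\oP w N) \cdot (w^{-1}Uw \cap \ol{N}),
\]
which instantly gives the inclusion $\oP w N \subset \oP Uw$ and the surjectivity of the multiplication map of the lemma.

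For injectivity, suppose $x_i \in \oP w N$ and $y_i \in w^{-1}Uw \cap \ol{N}$ satisfy $x_1 y_1 = x_2 y_2$. I would write $x_i = \bar{p}_i w m_i$ with $m_i \in N \cap w^{-1}Uw$ (using the first step), note that $m_i y_i \in w^{-1}Uw$, and set $u_i := w m_i y_i w^{-1} \in U$, so that $x_i y_i = \bar{p}_i u_i w$. The isomorphism $\oP \times U \simeq \oP U$ then forces $\bar{p}_1 = \bar{p}_2$ and $u_1 = u_2$, and the unique product decomposition $w^{-1}Uw \simeq (w^{-1}Uw \cap N) \times (w^{-1}Uw \cap \ol{N})$ forces $m_1 = m_2$ and $y_1 = y_2$. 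There is no substantive obstacle here: the only thing to be careful about is the order of multiplication in each product factorization, which is automatic since each component root set is closed under addition, so all of these factorizations upgrade from set-theoretic bijections to isomorphisms of locally analytic manifolds via the usual exponential/product parametrization by root subgroups.
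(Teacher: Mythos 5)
Your proof is correct and uses essentially the same ideas as the paper's: both rest on the Chevalley-style decomposition of unipotent radicals into products of root subgroups (taken in any order), and both hinge on splitting the relevant root set according to where the roots land under the $w$-action. The paper decomposes $wNw^{-1}$ and then $\Phi_U^+$ by sign relative to $w\Phi^\pm$, while you decompose $\Phi^+$ by whether $w\beta\in\Phi_{\oP}$ or $w\beta\in\Phi_U$ and then decompose $w^{-1}\Phi_U$ by sign; these are the same partitions viewed from conjugate sides. The only substantive difference is presentational: you spell out the injectivity check explicitly (reducing to the uniqueness of the $\oP\times U\to\oP U$ and $w^{-1}Uw\cap N\times w^{-1}Uw\cap\ol N\to w^{-1}Uw$ factorizations), whereas the paper stops at the two root-set identities and leaves the reader to assemble them. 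Your argument also correctly flags that each factorization is an isomorphism of varieties (hence of analytic manifolds on $F_p$-points) because the relevant root subsets are closed; that is the right justification.
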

\begin{proof}
    This inclusion is a general fact about reductive groups. Let $\Phi^+$ (resp. $\Phi^{-}$) be the set of positive (resp. negative) roots with respect to $(B,T)$. Moreover let $\Phi_U^+$ be the set of roots in the unipotent radical $U\subset P$. Now first note that it suffices to show that $wNw^{-1} \subset \oP N = \oP U$. The key fact is that as algebraic varieties (or schemes) over $F_p$, we have $N = \prod_{\alpha \in \Phi^{+}}U_{\alpha}$, where $U_{\alpha}$ is the root group  associated with a positive root $\alpha$, and the product can be taken \textit{in any order}. As a result, we may rewrite
    \begin{equation}\label{eq: IdentifyUnipotentInOpenCell}
    wNw^{-1} = \prod_{\alpha \in w(\Phi^+)w^{-1}}U_{\alpha} = \left(\prod_{\alpha \in w(\Phi^+)w^{-1}\cap \Phi^-}U_{\alpha}\right)\cdot \left(\prod_{\beta \in w(\Phi^+)w^{-1}\cap \Phi^+}U_{\beta}\right) \subseteq \oP \cdot N,
    \end{equation}
    proving the first part. For the second part, we note first that $\oP U w \simeq \oP \times Uw$ as algebraic varieties. 
 Likewise, \eqref{eq: IdentifyUnipotentInOpenCell} implies that $\oP wN \simeq \oP \times \left(\prod_{\beta \in w(\Phi^+)w^{-1}\cap \Phi_U^+}U_{\alpha}\right)w$. For the result, it then suffices to show the $U \cap w\overline{N}w^{-1}\simeq \prod_{\alpha \in  w\Phi^-w^{-1}\cap \Phi_U^+}U_{\alpha}$, and that $\Phi_U^+\cap w\Phi^-w^{-1} \sqcup w\Phi^-w^{-1}\cap \Phi_U^+ = \Phi_U^+$, so that all the relevant roots are represented exactly once.
    \end{proof}

\cref{lemma: chartopencell} gives us a convenient way to discuss open neighborhoods of Bruhat strata. Namely, if $U_{\ul{x}} \subset w^{-1}Uw \cap \overline{N}$ is a compact open subset, then the subset $U_{w,\ul{x}}:=\ol{P}wN \times \left(U_{\ul{x}}\right)\subset \ol{P}Uw$ is naturally an open set containing the Bruhat stratum $C_w$.
We recall that any function $f \in \Ind_{\oP}^{\tG}\pi$ has a well-defined support $\supp f \subseteq \oP\backslash G$, given by the locus on which the germ of $f$ is nonzero. By definition, the support is a compact and open subset of the quotient $\oP\backslash \tG$.

Now, for any open subset $\Omega \subset \oP\backslash \tG$, we define $(\Ind_{\oP}^{G}\pi)(\Omega)$ to be the subspace of $f$ such that $\supp f \subseteq \Omega$. For shorthand we also write $I_{\Omega}\pi:= (\Ind_{\oP}^{G}\pi)(\Omega)$.

We now make our key definitions for the Bruhat filtration on locally analytic inductions:
\begin{definition}
 For any $k \ge 0$ and $w \in W_{\oP}^r$, define the functors $I_{\le k}, I_w: \Rep_{\rm{la.c}}(P) \to \Rep(B)$ by
\[
I_{\le k}(\pi) := (\Ind_{\oP}^{\tG}\pi)(\tG_k),\quad
\]
\[
I_w(\pi) = \varinjlim (\Ind_{\oP}^{\tG}\pi)(U_{w, \ul{x}}),
\]
with transition maps $\Ind_{\oP}^{\tG}V(U_{w, \ul{x}}) \to \Ind_{\oP}^{\tG}V(U_{w, \ul{y}})$ is the map $f \mapsto f \cdot \mathbf{1}_{U_{w, \ul{y}}}$.
\end{definition}

Some remarks are in hand. In these definitions,  $I_{\le k}\pi\subset \Ind_{\oP}^{\tG}\pi$ is naturally a closed subspace in $\Rep_{\rm{la}.c}(B)$.
For $I_{w}$, note that the transition maps are well-defined since if $\supp f \subset U_{w, \ul{x}}$ is a compact open subset then $(\supp f) \cap U_{w,\ul{y}}$ remains compact open, since $\left(U_{\ul{y}} \cap w\overline{U}w^{-1}\right) \subset \left(U_{\ul{x}} \cap w\overline{U}w^{-1}\right)$, and so $(\supp f) \cap U_{w, \ul{y}} \subset \supp f$ is closed, thus compact. We then endow  $I_w$ with the locally convex inductive limit topology.

\begin{remark}
    As in \cite{10AuthorPaper}, we could define a certain sub-functor $I_w^{\circ}\pi \subset I_w\pi$ defined by germs of functions $f$ such that some equivalent function satisfies $\supp f_{\ul{x}_0} \cap PwN \subseteq PwN_0$. A key point in \cite[\textsection 5.3]{10AuthorPaper} is that in the smooth mod $p$ case, the ordinary parts of $I_w^{\circ}$ and $I_w$ coincide, and the ordinary parts of $I_w^{\circ}$ is particularly simple and related to $G$. In our setting, we would expect that the Jacquet functors of $I_w^{\circ}$ and $I_w$ agree on $T$-\textit{eigenspaces}, but $I_w^{\circ}$ remains a somewhat complicated object. As our argument does not use this subfunctor, we do not study it.
\end{remark}

A variant of \cite[Lemma 1.4.10]{Emerton_Jacquet_II} implies that $I_w\pi$ is also of compact type if $\pi\in \Rep_{\rm{la}.c}(\oP)$ is of compact type.
It will be convenient to note the following alternative description:

\begin{lemma}
If $\pi\in \Rep_{\rm{la}.c}(\oP)$ then we have a $B$-equivariant isomorphism of topological vector spaces \[
    I_{\le k}\pi \simeq \varinjlim_{\substack{U \subset \oP\backslash \oP\tG_k\\ \text{compact open}}}(\Ind_{\oP}^{\tG}\pi)(U).
    \]
\end{lemma}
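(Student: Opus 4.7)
The plan is to first verify bijectivity on underlying vector spaces (which is essentially tautological) and then to promote this to a topological isomorphism by recognising both sides as compact type spaces and applying an open mapping theorem.

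For the set-level bijection, I would use that a function $f \in I_{\le k}\pi = (\Ind_{\oP}^{\tG}\pi)(\tG_k)$ has, by definition, support contained in the open subset $\oP\backslash \oP\tG_k \subseteq \oP\backslash \tG$. Since $\oP\backslash \tG$ is compact and $\supp f$ is closed, $\supp f$ is itself compact; elementary topology on the totally disconnected quotient $\oP\backslash \tG$ lets one find a compact open neighbourhood $U$ with $\supp f \subseteq U \subseteq \oP\backslash \oP\tG_k$, so $f \in (\Ind_{\oP}^{\tG}\pi)(U)$. The reverse inclusion is immediate from the definition of support.

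Next I would address the topology. Choose a countable cofinal chain $U_1 \subset U_2 \subset \cdots$ of compact open subsets exhausting $\oP\backslash \oP\tG_k$ (possible by covering $\oP\backslash \tG$ by finitely many translates of a small compact open subgroup, intersecting with $\oP\backslash \oP\tG_k$, and shrinking the subgroup). Then the colimit in the statement agrees with $\varinjlim_n (\Ind_{\oP}^{\tG}\pi)(U_n)$. Each term is of compact type: locally it is a finite direct sum of spaces of the form $\mc{C}^{\la}(K', \pi)$ for a small compact open $K' \subset \tG$, and such spaces are of compact type (as in the variants of \cite[Proposition 2.1.28]{Emerton_la_reps} for compact-type-valued locally analytic functions). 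The transition maps are closed embeddings with compact image (realising $(\Ind_{\oP}^{\tG}\pi)(U_n)$ as functions supported in $U_n$ inside $(\Ind_{\oP}^{\tG}\pi)(U_{n+1})$), so the colimit is itself of compact type. On the other side, $I_{\le k}\pi$ is a closed subspace of the compact type space $\Ind_{\oP}^{\tG}\pi$, hence of compact type.

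The natural map from the colimit to $I_{\le k}\pi$ is continuous by the universal property (each inclusion $(\Ind_{\oP}^{\tG}\pi)(U_n) \hookrightarrow \Ind_{\oP}^{\tG}\pi$ factors through the closed subspace $I_{\le k}\pi$) and bijective by the first paragraph. Both sides being compact type are in particular LB (hence webbed and ultrabornological), so de Wilde's open mapping theorem upgrades this continuous bijection to a topological isomorphism. $B$-equivariance is automatic, since right multiplication by $B$ stabilises each cell $\oP w B = \oP w N$ and hence stabilises $\tG_k$, and the structural isomorphism is manifestly compatible with translations. The main obstacle I anticipate is being careful about the topology of $(\Ind_{\oP}^{\tG}\pi)(U_n)$ and verifying that the transition maps are indeed closed embeddings with compact image; this is an unwinding of the definitions together with a partition-of-unity argument on the compact open cover, but must be done carefully to ensure the inductive-limit topology on the left matches the subspace topology on the right before invoking the open mapping theorem.
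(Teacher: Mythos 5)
Your proposal is correct and follows essentially the same route as the paper: establish the set-theoretic bijection from the compactness of supports, observe that both sides are of compact type, and then apply the open mapping theorem to promote the continuous bijection to a topological isomorphism; the paper's proof is the same three steps compressed into two sentences. One small imprecision: your phrase ``closed embeddings with compact image'' for the transition maps is not quite what you want — the image of a closed embedding of infinite-dimensional spaces is a closed (not compact) subspace. What makes the colimit compact type is that each transition map $(\Ind_{\oP}^{\tG}\pi)(U_n) \hookrightarrow (\Ind_{\oP}^{\tG}\pi)(U_{n+1})$ is an injective continuous map out of a compact type space (hence a compact map, since compact type spaces are Montel), and a countable inductive limit of Banach (or compact type) spaces over injective compact transition maps is again of compact type; this is the form in which the paper cites the fact. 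Your anticipated ``main obstacle'' about matching the inductive-limit topology on the left with the subspace topology on the right is exactly what the open mapping theorem dispenses with, so once both sides are known to be of compact type there is nothing further to check there.
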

\begin{proof}
    Since any function $f \in \Ind_{\oP}^{G}\pi$ has compact support we indeed have a continuous bijective map 
    \[
    \varinjlim_{\substack{U \subset \oP\backslash \oP\tG_k\\ \text{compact open}}}(\Ind_{\oP}^{\tG}\pi)(U) \to \Ind_{\oP}^{\tG}\pi(\tG_k) = I_{\le k}\pi.
    \]
    Moreover, both spaces are of compact type (the left side being compact type because for $U \subsetneq V \subset \oP\backslash \tG$ compact open, the inclusion map $\Ind_{\oP}^{\tG}\pi(U) \to \Ind_{\oP}^{\tG}\pi(V)$ is compact, so by the open mapping theorem we have a topological isomorphism.
\end{proof}
\begin{lemma}
\label{lemma: ExactnessLocallyanalyticFunctions}
    If $0 \to U \to V \to W \to 0$ is a strict exact sequence of $L$-vector spaces of compact type and $X$ any compact locally $L$-analytic manifold, the sequence $0 \to \mc{C}^{\rm{la}}(X, U) \to \mc{C}^{\rm{la}}(X, V) \to \mc{C}^{\rm{la}}(X, W) \to 0$ is also strict exact. 
\end{lemma}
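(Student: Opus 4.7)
The plan is to reduce, via a finite analytic chart decomposition, to the case $X = \mathbf{D}^d$ is a polydisk over $L$, and then to exploit the orthonormalizability of spaces of rigid analytic functions on polydisks. Since $X$ is a compact locally $L$-analytic manifold, one can choose a finite clopen analytic partition $X = \bigsqcup_{i=1}^{r} X_i$ with each $X_i \cong \mathbf{D}^{d_i}$. This yields a $V$-natural decomposition $\mc{C}^{\la}(X, V) \cong \bigoplus_i \mc{C}^{\la}(X_i, V)$, compatible with the given SES, so it suffices to handle $X = \mathbf{D}^d$.

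Next, write $\mc{C}^{\la}(\mathbf{D}^d, L) = \varinjlim_h A_h$, where $A_h$ is the Banach space of analytic functions on the closed polydisk of radius $h$, with compact injective transition maps; the monomials $\{T^{\alpha}\}_{\alpha \in \mathbf{N}^d}$ form an orthonormal Schauder basis of $A_h$, yielding an isomorphism $A_h \cong c_0(\mathbf{N}^d)$ of $L$-Banach spaces. Writing $V = \varinjlim V_n$ as a compact inductive limit of Banach spaces, the fact that a compact type LB-space equals the set-theoretic union of its defining Banach spaces gives compatible presentations $U \cong \varinjlim (U \cap V_n)$ and $W \cong \varinjlim V_n/(U \cap V_n)$, producing SES of Banach spaces $0 \to U_n \to V_n \to W_n \to 0$ for each $n$. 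Via the standard identification of locally analytic functions with values in compact type spaces (Féaux de Lacroix, Emerton), one obtains $\mc{C}^{\la}(\mathbf{D}^d, V) \cong \varinjlim_{h,n}(A_h \widehat{\otimes}_L V_n)$, compatibly for the triple $U, V, W$.

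The problem then reduces to strict exactness of the sequence
\[
0 \to A_h \widehat{\otimes}_L U_n \to A_h \widehat{\otimes}_L V_n \to A_h \widehat{\otimes}_L W_n \to 0
\]
for each fixed $h, n$. Under the identification $A_h \cong c_0(\mathbf{N}^d)$, this becomes $0 \to c_0(\mathbf{N}^d, U_n) \to c_0(\mathbf{N}^d, V_n) \to c_0(\mathbf{N}^d, W_n) \to 0$, where $c_0(\mathbf{N}^d, -)$ denotes the Banach space of null sequences valued in the target; strict exactness is a direct check using the strict quotient topology on $W_n$ and a bounded set-theoretic lift of coefficients. Passing to the double inductive limit over $h$ and $n$ preserves strict exactness in the category of compact type $L$-spaces, giving the conclusion.

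The main obstacle is the bookkeeping in the second paragraph: verifying that the induced Banach presentations on $U$ and $W$ recover the topologies as closed subspace and strict quotient, and that the identification $\mc{C}^{\la}(\mathbf{D}^d, V) \cong \varinjlim_{h,n}(A_h \widehat{\otimes}_L V_n)$ is legitimate for arbitrary compact type $V$. Both are standard but rely on careful invocation of results from Féaux de Lacroix's thesis and the Schneider--Teitelbaum theory of locally analytic vectors; an alternative, equivalent route is to dualize once, observe that $D^{\la}(X, L)$ is nuclear Fréchet, and invoke exactness of projective completed tensor products with nuclear Fréchet spaces on strict SES.
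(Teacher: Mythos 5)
Your argument follows essentially the same route as the paper: both reduce, via the Emerton/F\'{e}aux de Lacroix description of $\mc{C}^{\la}(X,-)$ with compact type coefficients, to exactness of a $c_0$-type sequence of Banach modules and verify it there. Two remarks. First, the paper cuts the topological bookkeeping at the start: since all terms of $0 \to \mc{C}^{\la}(X,U) \to \mc{C}^{\la}(X,V) \to \mc{C}^{\la}(X,W) \to 0$ are automatically of compact type and the first map is a closed embedding, the open mapping theorem reduces the claim to exactness of abstract vector spaces; you instead carry strictness through the double inductive limit over $h$ and $n$, which works but requires an additional compatibility argument the paper sidesteps entirely. Second, the identification $\mc{C}^{\la}(\mathbf{D}^d,L) = \varinjlim_h A_h$, with $A_h$ the algebra of functions globally rigid-analytic on a single polydisk of radius $h$, is not correct as stated: a locally analytic function on a ball need not be globally analytic on any shrinking polydisk. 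The correct description --- and the one the paper actually uses --- takes the colimit over all finite analytic \emph{partitions} of $\mathbf{D}^d$, each piece of which contributes a $c_0(\N^{d_i})$. Since your eventual reduction to $c_0(\N^d,-)$-exactness is where the paper lands anyway, the slip does not affect the skeleton of the argument, but it should be corrected before the proof can be taken at face value.
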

\begin{proof}
    The maps in the stipulated sequence are clearly continuous $L$-linear and all the spaces are of compact type by \cite[2.1.28]{Emerton_la_reps}, and the map $\mc{C}^{\rm{la}}(X, U) \to \mc{C}^{\rm{la}}(X, V)$ is a closed embedding by \cite[2.1.29]{Emerton_la_reps}, so by the open mapping theorem it suffices to show that the sequence is exact as abstract $L$-vector spaces. 
    
    Note that when $X$ is compact and $M\simeq \varinjlim_{n}M_n$ is of compact type, we have a topological isomorphism $\mc{C}^{\rm{la}}(X,M) \simeq \varinjlim_{\{X_i\}}\prod_i\varinjlim_{n}\mc{C}^{\rm{an}}(\mathbb{X}_i, L)\widehat{\otimes}_L M_n$ where the $\{X_i\}$ runs over disjoint partitions of $X$ by analytic charts (see \cite[p. 38]{Emerton_la_reps}). Since $X$ is compact, we can take each of these products to be finite. Moreover by \cite[2.1.10]{Emerton_la_reps} that we have a topological isomorphism $\mc{C}^{\rm{an}}(\mathbb{X}_i,L) \simeq c_0(\N, L)$, where $c_0(\N, L)\subseteq \mc{C}(\N \cup \{\infty\}, L)$ is the subspace of continuous functions on the one-point compactification of $\N$ which vanish at $\infty$. Since the $M_n$ are Banach (in particular, Fr\'{e}chet) we have $\mc{C}(\N \cup \{\infty\}, L)\widehat{\otimes}_LM_n \simeq \mc{C}(\N \cup \{\infty\}, M_n)$, and this identification will clearly match up $c_0(\N, L)\widehat{\otimes}_LM_n \simeq c_0(\N , M_n)$.
    Therefore we may rewrite any
    \[
    \mc{C}^{\rm{la}}(X, M) \simeq \varinjlim_{\{X_i\}}\prod_i\varinjlim_{n}c_0(X, M_n).
    \]
    However, by \cite[2.1.6]{Emerton_la_reps} we have a natural continuous bijection $\varinjlim_{n}c_0(X, M_n) \to c_0(X, M)$ (the $M_n$ are envelopes for the so-called $BH$-subspaces of $M$). It is not clear if this map is a homeomorphism, but we are only interested in the vector space structure. In particular, since filtered colimits and finite products are exact on vector spaces, it suffices to show the exactness of the sequence 
    \[
    0 \to c_0(\N, U) \to c_0(\N, V) \to c_0(\N, W) \to 0.
    \]
    But such an exactness is clear.
\end{proof}

The relevance of this result is to show exactness of the functor $I_{\le k}$, as follows. First note that the projection map $\tG \to \oP\backslash\tG$ has a locally analytic section $s: \oP \backslash \tG \to \tG$, and thus as topological vector spaces, we have an isomorphism $\Ind_{\oP}^{\tG}\pi \simeq \mc{C}_{c}^{\rm{la}}(s(\oP \backslash \tG ), \pi)$.
 For an arbitrary open subset $\Omega\subset \oP\backslash \tG$ this isomorphism induces isomorphisms  $I_{\Omega}\pi\simeq \mc{C}_c^{\rm{la}}(\Omega, \pi)$. As a result, $I_k(\pi) = \mc{C}_c^{\rm{la}}(\oP\backslash (\oP \cdot \tG_k), \pi)$. 
We can then prove the following:
 \begin{lemma}
    All the functors $I_{\le k},$ $I_w$ send strict exact sequences in $\Rep_{\rm{la}.c}(G)$ to strict exact sequences in $\Rep_{\rm{la.c}}(B)$. Moreover for all $k$ we have a strict exact sequence 
    \begin{equation}
    \label{eq: InductiveBruhatSequence}
    0 \to I_{\le k}(\pi) \to I_{\le k+1}(\pi) \to I_{w_{k+1}}(\pi) \to 0.
    \end{equation}
\end{lemma}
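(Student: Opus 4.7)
The strategy is to interpret all three spaces as sections or germs of locally analytic functions on open subsets of the compact flag variety $\oP\backslash \tG$, and to reduce each assertion to \cref{lemma: ExactnessLocallyanalyticFunctions}. Using a locally analytic section $s\colon \oP\backslash \tG \to \tG$, we identify $(\Ind_{\oP}^{\tG}\pi)(\Omega) \simeq \mc{C}_c^{\la}(\Omega, \pi)$ for any open $\Omega\subset \oP\backslash\tG$, so that
\[
I_{\le k}\pi \simeq \varinjlim_{U}\mc{C}^{\la}(U, \pi), \qquad I_w\pi \simeq \varinjlim_{\ul{x}}\mc{C}^{\la}(\oP\backslash \oP U_{w,\ul{x}}, \pi),
\]
where $U$ runs over the compact open subsets of $\oP\backslash \oP \tG_k$, and the sets $\oP\backslash \oP U_{w,\ul{x}}$ are compact opens in $\oP\backslash \tG$ sitting inside the compact open $\oP\backslash \oP U w$ arising from the chart of \cref{lemma: chartopencell}.

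Given a strict exact sequence $0 \to \pi_1 \to \pi_2 \to \pi_3 \to 0$ in $\Rep_{\rm{la.c}}(G)$, \cref{lemma: ExactnessLocallyanalyticFunctions} provides a strict exact sequence of $\mc{C}^{\la}(U,-)$ at each compact $U$ above. The transition maps in these colimits, given by multiplication with indicator functions of compact open subsets, are $B$-equivariant compact injections, so the inductive limits are of compact type. Strict exactness is then preserved: abstract exactness passes through filtered colimits of vector spaces, and strictness of the limiting maps follows from the open mapping theorem applied to compact type spaces. This yields the desired exactness of both $I_{\le k}$ and $I_w$.

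For the sequence \eqref{eq: InductiveBruhatSequence}, the closure relations among Bruhat strata make $C_{w_{k+1}}$ a closed locally $\Q_p$-analytic submanifold of $\tG_{k+1}$ with open complement $\tG_k$, so extension by zero identifies $I_{\le k}\pi \hookrightarrow I_{\le k+1}\pi$ as a closed $B$-equivariant embedding. The quotient map $I_{\le k+1}\pi \to I_{w_{k+1}}\pi$ is the natural germ map at the stratum: for $f \in I_{\le k+1}\pi$, the compact set $\supp f \cap \oP\backslash C_{w_{k+1}}$ fits inside $\oP\backslash \oP U_{w_{k+1}, \ul{x}}$ for some $\ul{x}$ by \cref{lemma: chartopencell}, and we send $f \mapsto [f\cdot \mathbf{1}_{\oP\backslash \oP U_{w_{k+1},\ul{x}}}] \in I_{w_{k+1}}\pi$, an element independent of $\ul{x}$ in the colimit. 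Surjectivity follows by extension by zero, and the kernel equals $I_{\le k}\pi$ since a function is killed by the germ map iff its support avoids an open neighborhood of $\oP\backslash C_{w_{k+1}}$, iff $\supp f \subset \oP\backslash\oP\tG_k$. The main care needed is in verifying strictness, i.e., that the quotient topology on $I_{\le k+1}\pi/I_{\le k}\pi$ coincides with the inductive limit topology on $I_{w_{k+1}}\pi$; this is where I expect the main obstacle, and it reduces to checking that the germ construction factors compatibly through the filtered system defining $I_{w_{k+1}}\pi$ (equivalently, that sections over $U_{w_{k+1},\ul{x}}$ lift continuously by extension by zero to sections over $\tG_{k+1}$), after which the open mapping theorem for compact type spaces concludes.
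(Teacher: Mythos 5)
Your overall strategy — identify the spaces with compactly supported locally analytic functions on opens of the flag variety, reduce exactness to \cref{lemma: ExactnessLocallyanalyticFunctions}, argue the Bruhat sequence via support conditions, and invoke the open mapping theorem for strictness — matches the paper's proof. However, there is a geometric error in the first paragraph that leaves a gap in the exactness argument for $I_w$.

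You assert that ``the sets $\oP\backslash\oP U_{w,\ul{x}}$ are compact opens in $\oP\backslash\tG$ sitting inside the compact open $\oP\backslash\oP U w$.'' Neither of these sets is compact: $U_{w,\ul{x}} = \oP w N \times U_{\ul{x}}$ by construction contains the entire Bruhat stratum $C_w = \oP w N$, which is a copy of an affine space, hence non-compact; and $\oP Uw$ is a translate of the big cell, which is open and dense but not closed in the compact flag variety, so $\oP\backslash\oP Uw$ is likewise not compact. Consequently, the displayed identification $I_w\pi \simeq \varinjlim_{\ul{x}}\mc{C}^{\la}(\oP\backslash\oP U_{w,\ul{x}},\pi)$ should read $\mc{C}_c^{\la}$ (compactly supported functions), and \cref{lemma: ExactnessLocallyanalyticFunctions} does not directly apply, since it requires a \emph{compact} locally analytic manifold. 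The gap is easily repaired: write $\mc{C}_c^{\la}(U_{w,\ul{x}},\pi) = \varinjlim_V \mc{C}^{\la}(V,\pi)$ over compact opens $V \subset U_{w,\ul{x}}$, so $I_w\pi$ is a double (hence filtered) colimit of the exact functors $\mc{C}^{\la}(V,\pi)$ over compact opens $V$, and exactness passes through the colimit. This is precisely what the paper does when it says ``$I_w$ is a filtered colimit of $I_V\pi$ for $V$ compact open.'' With that correction, the remainder of your argument — including the germ description of the second map in \eqref{eq: InductiveBruhatSequence}, the support characterisation of the kernel, and the strictness via the open mapping theorem on compact type spaces — is sound and agrees with the paper's reasoning.
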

\begin{proof}
    $I_{\le k}$ is exact by \cref{lemma: ExactnessLocallyanalyticFunctions} combined with the above discussion.
    Likewise, $I_w$ is a filtered colimit of $I_{V}\pi$ for $V$ compact open, which is exact. All these sequences are strict since the spaces in discussion are vector spaces of compact type, and the first maps will then be closed embeddings.

    For showing \eqref{eq: InductiveBruhatSequence}, the first map is clearly a closed embedding and the second map is clearly surjective. For exactness in the middle, suppose $f\in I_{\le k+1}(\pi)$ maps to zero in $I_{w_{k+1}}\pi$. By definition, for some compact open neighborhood $U_{w_{k+1}, \ul{x}} \subseteq \oP\backslash \wt{G}$ we have $f|_{U_{w_{k+1}, \ul{x}}} = 0$. Thus, $\supp f \subset (U_{w_{k+1}, \ul{x}})^c$. Since $U_{w_{k+1}, \ul{x}} \supset \oP w_{k+1}N$, we have $\oP w_{k+1}N\cap \supp f\subseteq  \left(\oP w_{k+1}N\right)\cap (U_{w_{k+1}, \ul{x}})^c = \emptyset$. 
    
    
    This emptyness implies $\supp f \subset \oP\backslash \oP\tG_k$, so the kernel is contained in the image. Conversely if $f \in I_{\le k}(\pi)$ then $\supp f \subset \tG_k$ is a compact open subset, and thus the complement $(\supp f)^{c}$ is also compact open. In particular, there exists an open neighborhood of $\oP w_{k+1} N$ on which $f = 0$, and thus the image is contained in the kernel, proving exactness.
    
    Clearly these maps are continuous. These maps are all of compact type spaces, so open mapping theorem implies these maps are strict.
\end{proof}

We want to make sense of these definitions on the level of complexes. While we defined $I_{\le k}, I_w$ for representations of $\oP$, for complexes it is advantageous to work with compact groups. Thus, let $\tG_0:= \tG(\mO_{F,p})$ as well as $B_0 = B\cap \tG_0 = T_0N_0$, $\oP_0 = \oP \cap \tG_0 = G_0\ol{U}_0$. Now suppose $\pi$ is a locally analytic representation of $\oP_0$. Note since $\oP\backslash \tG \simeq \oP_0\backslash \tG_0$, for any $U \subset \oP\backslash \tG$, we may define $I_{\le k}\pi:= (\Ind_{\oP_0}^{\tG_0}\pi)(\tG_{\le k})$, and similarly $I_w\pi$. Note if $\pi$ is in fact a locally analytic representation of $\oP$, then there is a $\tG_0$-equivariant isomorphism $\Ind_{\oP}^{\tG}\pi \simeq \Ind_{\oP_0}^{\tG_0}\pi$ which induces isomorphisms on $I_{\le k}, I_w$, so these definitions are unambiguous. Now suppose $\Pi$ is a complex of injective admissible Banach representations of $\oP_0$. We can form complexes $I_{\le k}\Pi^{\la}, I_w\Pi^{\la}$. Then we have the following:
\begin{lemma}
\label{lemma: CompatibilityofGextensionsForBruhat}
    If $(\wt{\Pi}, i, p)$ is a $\oP$-extension of $\Pi$, then each functor $\mathsf{F} = I_{\le k}, I_w$ induces $B$-extensions $\begin{tikzcd}\mathsf{F}\Pi^{\la} \ar[r, "i"]&
    \ar[l, "p", shift left]\mathsf{F}\wt{\Pi}^{\la}\end{tikzcd},$ i.e. homotopy equivalences to a complex of compact type spaces with a locally analytic action of $B$. Moreover, these homotopy equivalences fit into a diagram 
    \begin{equation}
    \label{eq: CompatibleHomotopiesForBruhat}
    \begin{tikzcd}
        0 \ar[r]& I_{\le k}\wt{\Pi}^{\la} \ar[r, "\wt{f}"]\ar[d, shift left, "{\wt{p}}"] & I_{\le k + 1}\wt{\Pi}^{\la} \ar[r,"\wt{g}"]\ar[d, shift left, "{\wt{p}}"]& I_{w_{k+1}}\wt{\Pi}^{\la}\ar[r]\ar[d, shift left, "{\wt{p}}"] & 0 \\
        0 \ar[r]& I_{\le k}\Pi^{\la} \ar[r, "f"]\ar[u, shift left, "{\wt{i}}"] & I_{\le k + 1}\Pi^{\la} \ar[r, "g"]\ar[u, shift left, "{\wt{i}}"]& I_{w_{k+1}}\Pi^{\la}\ar[r]\ar[u, shift left, "{\wt{i}}"] & 0 
    \end{tikzcd}
    \end{equation}
    where we have $\wt{i} \circ f = \wt{f} \circ \wt{i}$, $\wt{p} \circ \wt{f} = f \circ \wt{p}$, $\wt{i} \circ g = \wt{g} \circ \wt{i}$, $\wt{p} \circ \wt{g} = g \circ \wt{p}$, respectively.
\end{lemma}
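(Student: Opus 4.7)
The plan is to reduce everything to functoriality, using the exactness of $I_{\le k}, I_w$ established in the previous lemma together with the fact that when these functors are applied to an honest $\oP$-representation their values carry a natural right $B$-action. Since $\wt{\Pi}^{\la}$ is a complex of locally analytic $\oP$-representations, the identification $\Ind_{\oP_0}^{\tG_0}\wt{\Pi}^{\la} \simeq \Ind_{\oP}^{\tG}\wt{\Pi}^{\la}$ equips the ambient induction with a locally analytic right $\tG$-action. A direct check using $wTw^{-1} = T \subset \oP$ shows that each open cell $\oP w N$ (and hence each stratum $\tG_{\le k}$) is right $B$-stable, so $I_{\le k}\wt{\Pi}^{\la}$ inherits a $B$-action directly. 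The basic neighborhoods $U_{w, \ul{x}}$ need not themselves be $B$-stable, but they form a fundamental system of compact open neighborhoods of the $B$-stable cell $C_w$, so for every $b \in B$ and every $\ul{x}$ one can find $\ul{y}$ with $U_{w, \ul{y}} \subset U_{w, \ul{x}} \cdot b^{-1}$, which equips the germ space $I_w\wt{\Pi}^{\la}$ with a well-defined locally analytic right $B$-action.

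Next, the functors $\mathsf{F} = I_{\le k}, I_w$ are additive and exact, so applying $\mathsf{F}(-)^{\la}$ degree by degree to the $\oP_0$-equivariant chain maps $i, p$ and to the chain homotopies realising $p \circ i \sim \id_{\Pi}$ and $i \circ p \sim \id_{\wt{\Pi}}$ yields $B_0$-equivariant continuous chain maps $\wt{i} := \mathsf{F}(i)^{\la}, \wt{p} := \mathsf{F}(p)^{\la}$ together with $B_0$-equivariant chain homotopies exhibiting them as mutual homotopy inverses. Combined with the $B$-action produced above, this equips $\mathsf{F}\wt{\Pi}^{\la}$ with the structure of a $B$-extension of $\mathsf{F}\Pi^{\la}$.

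Finally, the strict exact sequence $0 \to I_{\le k}\pi \to I_{\le k+1}\pi \to I_{w_{k+1}}\pi \to 0$ from the previous lemma is manifestly natural in $\pi$. Applying this naturality to the chain maps $i: \Pi \to \wt{\Pi}$ and $p: \wt{\Pi} \to \Pi$ yields all four commutation relations $\wt{i}\circ f = \wt{f}\circ\wt{i}$, $\wt{p}\circ\wt{f} = f\circ\wt{p}$, $\wt{i}\circ g = \wt{g}\circ\wt{i}$, $\wt{p}\circ\wt{g} = g\circ\wt{p}$ of \eqref{eq: CompatibleHomotopiesForBruhat}. The only point requiring any real care is the $B$-equivariance claim for $I_w$, where the family $\{U_{w, \ul{x}}\}$ is not itself stable under $B$ and one must pass to a cofinal refinement; everything else is bookkeeping built on the exactness from the previous lemma.
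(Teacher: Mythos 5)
Your proof is correct and follows essentially the same route as the paper's: the paper gets the maps by citing the $\tG$-extension on the full induction from Fu's Proposition~3.13 and then checking explicitly that the formulae $\wt{i}(f)(g)=i(f(g))$, $\wt{p}(f')(g)=p(f'(g))$ preserve the relevant support conditions, whereas you phrase the same fact as functoriality of $I_{\le k}, I_w$ applied to the $\oP_0$-equivariant maps and homotopies, together with right $B$-stability of $\tG_{\le k}$ and a cofinality argument for $\{U_{w,\ul{x}}\}$ under $B$-translation. These are the same argument in two dialects, and your derivation of the commutation relations \eqref{eq: CompatibleHomotopiesForBruhat} from naturality of the Bruhat short exact sequence matches the paper's remark that everything is induced from one fixed homotopy equivalence on the full induction.
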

\begin{proof}
This result more or less follows from \cite[Proposition 3.13]{FuDerived}.

Let $\mathsf{F}$ be any of the functors $I_{\le k}, I_w$. Each of these functors are built from spaces of the form $I_K\pi$ for $K \subseteq \oP_0\backslash \wt{G}_0 = \oP\backslash \wt{G}$ open (compact or not).  Then from \cite[Proposition 3.13]{FuDerived} we have a $\wt{G}$-extension 
\[
\begin{tikzcd}
\Ind_{\oP_0}^{\wt{G}_0}\Pi^{\la}\ar[r, "\wt{i}", shift left]& \Ind_{\oP}^{\tG}\wt{\Pi}^{\la}\ar[l, "{\wt{p}}", shift left].
\end{tikzcd}
\]
where concretely, if $f \in\Ind_{P_0}^{\wt{G}_0}\Pi^{\la}$ then $\wt{i}(f)=: f'$ is the unique function such that $f'(g) = i(f(g))$ for $i: \Pi \to \wt{\Pi}$ and $g \in \tG_0$, and $\wt{p}(f') =: f$ satisfies $f(g) = p(f'(g))$ for $p: C' \to C$ and any $g \in \tG_0$. We can now contemplate support conditions in this $\tG$-extension. If $\supp f \subset K \subset \oP_0\backslash \tG_0$ then clearly we have $\supp \wt{i}(f)\subseteq \ol{P}\backslash (\ol{P}\cdot K)\subseteq \oP\backslash \tG$. But we then simply note that under the identification $\ol{P}_0\backslash \tG_0 \simeq \ol{P}\backslash \tG$, we have $K \simeq \ol{P}\backslash(\ol{P}\cdot K)$, so $\wt{i}(I_K\Pi) \subset I_K\wt{\Pi}$. Likewise, if $f' \in \Ind_{\oP}^{\tG}C_{\bullet}'$ has support $K\subseteq \oP\backslash \tG$ then $\wt{p}(f')$ has support contained in $\oP\backslash \oP_0 \cdot K \cap \tG_0 \simeq \oP\backslash \oP\cdot K$ when considered as a subset of the flag variety $\oP_0\backslash \tG_0$. Thus if $U\subseteq \oP\backslash \tG$ is an open invariant under the right action by the Borel $B$, then the $\tG$-extension on the parabolic induction induces $B_0$-equivariant homotopy equivalences
\[
\begin{tikzcd}
\Ind_{\oP_0}^{\wt{G}_0}\Pi^{\la}(U)\ar[r, "\wt{i}", shift left]& \Ind_{\oP}^{\tG}\wt{\Pi}^{\la}(U)\ar[l, "{\wt{p}}", shift left].
\end{tikzcd}
\]
This handles the case of $I_{\le k}$. For the $I_w$ recalling $I_w\pi = \varinjlim_{\ul{x}}I_{U_{w, \ul{x}}}\pi$, the above maps are not necessarily $B$-equivariant for a given $U_{w, \ul{x}}$, but we still have well-defined homotopy equivalences. Then we get an induced homotopy equivalence on the colimits, which a postiori are $B_0$-equivariant, and thus produces the relevant ``$B$-extension.''

The existence and commutativity of the diagram follows directly from the fact that all of the homotopies are induced from the single choice of homotopy equivalence between $\Ind_{\oP_0}^{\tG_0}\Pi^{\la}$ and $\Ind_{\oP}^{\tG}\Pi^{\la}$.
\end{proof}

Crucially, \cref{lemma: CompatibilityofGextensionsForBruhat} allows us to define derived Jacquet functors $J_B(I_{\le k}\Pi^{\la})$ and $J_B(I_{w}\Pi^{\la})$. Moreover, the commutativity of the squares in \eqref{eq: CompatibleHomotopiesForBruhat} \textit{on the nose} (not just up to homotopy!) constructs for us natural maps $J_B(I_{\le k}\Pi^{\la}) \to J_B(I_{\le k+1}\Pi^{\la}) \to J_B(I_{w}\Pi^{\la})$. We would like to say more about these maps (and their effect on cohomology). These features are best understood by first passing to $\mf{n}$-invariants where $\mf{n}:= \Lie N$ is the Lie algebra of the unipotent radical $N\subset B.$


The key is the following lemma:

\begin{lemma}
\label{lemma: LieAlgebraCohomologyDirectSummand}
    For any two compact open subsets $U \subset V\subset \oP\backslash \tG$ and any Lie subalgebra $\mf{h}\subset \mf{g}$ we have that 
    \[
    \HH^i(\mf{h}, I_U(\pi)) \hookrightarrow \HH^i(\mf{h}, I_V(\pi))
    \]
    is an injection.
\end{lemma}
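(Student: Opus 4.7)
The plan is to exhibit $I_U(\pi)$ as a direct $\mathfrak{h}$-stable summand of $I_V(\pi)$, from which the injectivity on cohomology is immediate.

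First I would observe that since $U$ is compact in the Hausdorff space $\oP\backslash\tG$ and $U\subseteq V$ with $V$ open, $U$ is closed in $V$, so $V\setminus U$ is open (and in fact itself compact open, as $V$ is assumed compact). Thus $V = U\sqcup (V\setminus U)$ is a disjoint union of two open sets, and multiplication by the (locally constant!) characteristic functions $\mathbf{1}_U,\mathbf{1}_{V\setminus U}$ preserves local analyticity. Any $f\in I_V(\pi)$ decomposes uniquely as $f = f\cdot \mathbf{1}_U + f\cdot \mathbf{1}_{V\setminus U}$, yielding a topological direct sum decomposition
\[
I_V(\pi) \;\simeq\; I_U(\pi)\oplus I_{V\setminus U}(\pi).
\]

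Next I would check that this decomposition is $\mathfrak{h}$-equivariant. The action of $X\in\mathfrak{h}\subset \mathfrak{g}$ on $\Ind_{\oP}^{\tG}\pi$ is by the right-invariant differential operator associated to $X$, which is a local operator on $\tG$ (equivalently on $\oP\backslash\tG$). Consequently, if $\supp f\subseteq U$ then $\supp(X\cdot f)\subseteq U$, and likewise for $V\setminus U$. So both summands are stable under $\mathfrak{h}$, and the decomposition above is an isomorphism of $\mathfrak{h}$-representations.

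Finally, Lie algebra cohomology (which can be computed via the Chevalley--Eilenberg complex $\Hom_L(\bigwedge^\bullet \mathfrak{h}, -)$) commutes with arbitrary direct sums, so
\[
\HH^i(\mathfrak{h}, I_V(\pi)) \;\simeq\; \HH^i(\mathfrak{h}, I_U(\pi))\oplus \HH^i(\mathfrak{h}, I_{V\setminus U}(\pi)),
\]
and the map in the lemma is the inclusion of the first summand, hence injective. There is essentially no serious obstacle here; the only subtlety worth flagging is ensuring that the support-based splitting is compatible with the locally analytic topology and with the differential $\mathfrak{h}$-action, both of which hold because $U$ and $V\setminus U$ are simultaneously open and closed in $V$, so that cutoff by characteristic functions is a continuous projector commuting with differential operators.
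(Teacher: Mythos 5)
Your proof is correct and follows essentially the same route as the paper: decompose $I_V(\pi)\simeq I_U(\pi)\oplus I_{V\setminus U}(\pi)$ using that $U$ and $V\setminus U$ are both compact open, observe that the $\mathfrak{g}$-action (hence the $\mathfrak{h}$-action) preserves supports, and then pass to Lie algebra cohomology. No gap.
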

\begin{proof}
    Note that if $U \subset \oP \backslash \tG$ is compact open then the complement $U^c$ is also compact and open, and thus we can write any function $f = f|_{U} + f|_{U^c}$ uniquely as a sum of two functions each support on $U$ and $U^c$, respectively. The Lie algebra action does not affect the support, so we have a $\mf{g}$-equivariant decomposition $I_V\pi \simeq I_{U}\pi\oplus I_{V\setminus U}\pi$. Passing to $\mf{h}$-cohomology gives the desired result.
\end{proof}

\begin{corollary}
\label{corollary: IwInjectivestoAcyclics}
    Let $\pi$ be an injective admissible Banach $\oP_0$-representation. Then for all $k\ge 0$ and all $w\in W_{\oP}^r$ we have $I_{\le k}\pi^{\la}$ and $I_w\pi^{\la}$ are $R\Gamma_{\mf{h}}$-acyclic for any Lie subalgebra $\mf{h}\subset \mf{g}$.
\end{corollary}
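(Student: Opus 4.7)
The plan is to combine the direct-summand argument of \cref{lemma: LieAlgebraCohomologyDirectSummand} with induction on the Bruhat filtration, after first establishing acyclicity of the ``full'' induction $\Ind_{\oP_0}^{\tG_0}\pi^{\la}$. The structural observation underpinning everything is that the $\mf{g}$-action on any $I_U\pi^{\la}$ is by differentiation, hence strictly contracts supports: if $f$ vanishes on an open neighbourhood of a point $h$, so does $X\cdot f$ for any $X \in \mf{g}$. Consequently the decomposition proved in \cref{lemma: LieAlgebraCohomologyDirectSummand} and the short exact sequence \eqref{eq: InductiveBruhatSequence} are not merely $\mf{b}$-equivariant but $U(\mf{g})$-equivariant, so each yields a long exact sequence in Lie algebra cohomology with respect to any subalgebra $\mf{h} \subset \mf{g}$.

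First I would reduce to showing $\Ind_{\oP_0}^{\tG_0}\pi^{\la}$ is $R\Gamma_{\mf{h}}$-acyclic. Since $\pi$ is injective admissible Banach, the Schneider--Teitelbaum anti-equivalence realises $\pi'$ as a finitely generated projective $L[[\oP_0]]$-module, and dualising presents $\pi$ as an $\oP_0$-equivariant direct summand of $\mc{C}(\oP_0, L)^{\oplus n}$ for some $n$. Passing to locally analytic vectors (exact on admissibles) and applying the exact functor $\Ind_{\oP_0}^{\tG_0}(-)$ gives that $\Ind_{\oP_0}^{\tG_0}\pi^{\la}$ is a $\tG_0$-equivariant direct summand of $\mc{C}^{\la}(\tG_0, L)^{\oplus n}$, using transitivity of locally analytic induction from the trivial subgroup. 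It thus suffices to show $\HH^i(\mf{h}, \mc{C}^{\la}(\tG_0, L)) = 0$ for $i>0$. This is a form of the locally analytic Poincar\'{e} lemma: in analytic coordinates adapted to a vector-space splitting $\mf{g} = \mf{h} \oplus \mf{h}'$, the Chevalley--Eilenberg complex $\Lambda^{\bullet}\mf{h}^{*}\otimes \mc{C}^{\la}(\tG_0, L)$ is locally isomorphic to the Koszul complex for partial differentiation along $\mf{h}$-coordinates with coefficients in locally analytic functions, which has no higher cohomology.

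With Step~1 in hand, the case of $I_w\pi^{\la}$ is quick. By the argument of \cref{lemma: LieAlgebraCohomologyDirectSummand}, each $I_{U_{w,\ul{x}}}\pi^{\la}$ is a $\mf{g}$-equivariant direct summand of $\Ind_{\oP_0}^{\tG_0}\pi^{\la}$ (the compact open $U_{w,\ul{x}}$ has compact open complement inside the compact, totally disconnected quotient $\oP\backslash \tG$, yielding a decomposition of locally analytic sections), hence acyclic. Because $\mf{h}$ is finite-dimensional over $L$, the Chevalley--Eilenberg complex $\Hom_L(\Lambda^{\bullet}\mf{h}, -)$ is a bounded complex of exact functors, so its cohomology commutes with the filtered colimit $I_w\pi^{\la} = \varinjlim_{\ul{x}}I_{U_{w,\ul{x}}}\pi^{\la}$, proving acyclicity in this case.

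Finally, for $I_{\le k}\pi^{\la}$ the analogous direct-summand argument fails (because $\tG_k$ is open and dense but not closed in $\tG$), so I would induct on $k$. The base case $k=1$ gives $I_{\le 1}\pi^{\la} \simeq I_{w_1}\pi^{\la}$ from \eqref{eq: InductiveBruhatSequence}, which is acyclic by the previous step. For the inductive step, the long exact sequence obtained by applying $R\Gamma_{\mf{h}}$ to the $\mf{g}$-equivariantly exact sequence \eqref{eq: InductiveBruhatSequence} combines the inductive hypothesis on $I_{\le k}\pi^{\la}$ with the acyclicity of $I_{w_{k+1}}\pi^{\la}$ to yield acyclicity of $I_{\le k+1}\pi^{\la}$. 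The genuinely delicate step is the Poincar\'{e}-type vanishing in Step~1; the rest of the argument is formal manipulation of exact sequences, direct summands, and filtered colimits.
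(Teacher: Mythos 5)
Your proof is correct in outline but takes a somewhat different route from the paper. The paper proceeds by observing that the continuous induction $\on{ct}\text{-}\Ind_{\oP_0}^{\tG_0}\pi$ is itself injective admissible (a direct summand of $\mc{C}(\tG_0,L)^{\oplus d}$), and then \emph{cites} \cite[Lemma 3.8]{FuDerived} as a black box for the statement that $(-)^{\la}$ carries injectives to $R\Gamma_{\mf{h}}$-acyclic objects; it then handles $I_{\le k}$ and $I_w$ uniformly by writing both as filtered colimits of the $I_U\pi^{\la}$ with $U$ compact open and appealing to \cref{lemma: LieAlgebraCohomologyDirectSummand}. Your Step~1 is in effect an attempt to re-prove the cited lemma via a locally analytic Poincar\'{e} argument; the conclusion is correct, but the identification of the Chevalley--Eilenberg complex with a plain Koszul complex for partial differentiation is not literally true because of the bracket term $(-1)^{i+j}[X_i,X_j]f(\dots)$ in $d_{\on{CE}}$, which vanishes only when $\mf{h}$ is abelian --- so one needs either a filtration reducing to the abelian case or a more careful local trivialisation, and this is exactly the content being covered by the citation. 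Your handling of $I_w$ (direct summand plus filtered colimit through the $U_{w,\ul{x}}$) coincides with the paper's argument. For $I_{\le k}$ you induct on $k$ via the $\mf{g}$-equivariance of \eqref{eq: InductiveBruhatSequence}, whereas the paper instead uses the description of $I_{\le k}\pi^{\la}$ as a filtered colimit over compact opens $U\subset\oP\backslash\oP\tG_k$; both work, and the induction is arguably cleaner. Your opening observation --- that the relevant splittings and exact sequences are $\mf{g}$-equivariant (not merely $\mf{b}$-equivariant) because differentiation does not enlarge supports --- is correct, is explicitly used in the paper's proof of \cref{lemma: LieAlgebraCohomologyDirectSummand}, and is the structural fact that makes both arguments go through.
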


\begin{proof}
    Since $\pi$ is injective, $\pi$ is a direct summand of $\mc{C}(\oP_0, L)^{\oplus d}$ for some $d \ge 0$. Thus $\on{ct}-\Ind_{\overline{P}_0}^{\tG_0}\pi$ is a direct summand of $\mc{C}(\tG_0, L)^{\oplus d}$, and so is also injective admissible. Then recall by \cite[Lemma 3.8]{FuDerived} functor $(-)^{\rm{la}}$ sends injectives to $R\Gamma_{\mf{h}}$-acyclic objects. In particular, $\HH^i(\mf{n}, \Ind_{\oP_0}^{\tG_0}\pi^{\la}) =0$ for all $i \ge 1$. But since taking Lie algebra cohomology commutes with colimits in the coefficients, for both $I_{\le k}$ and $I_w$ it suffices to show $I_U(\pi)$ is $R\Gamma_{\mf{h}}$-acyclic for any $U\subset \oP_0\backslash \tG_0$ compact open. But this case then follows from \cref{lemma: LieAlgebraCohomologyDirectSummand}. 
\end{proof}

The following obvious lemma connects $\mf{n}$-cohomology to an object closer to the Jacquet functor.

\begin{lemma}
\label{lemma: GroupVsLiealgebraCohomology}
    For $V$ a complex of locally analytic representations of $B_0$ with a $B$-extension, we have a $T^+$-equivariant topological isomorphism of topological vector spaces
    \[
    \HH^i( V^{N_0})
    \simeq 
        \HH^i( V^{\mf{n}})^{N_0}
    \]
\end{lemma}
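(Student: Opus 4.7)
The plan is to reduce the statement to the pointwise identity $W^{N_0} = W^{\mf{n}}$ for any locally $\Q_p$-analytic representation $W$ of $N_0$ on a compact type $L$-vector space, applied to each term of $V$.

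First, I would verify this pointwise identity. The containment $W^{N_0} \subseteq W^{\mf{n}}$ is immediate by differentiating the $N_0$-action at the identity, using that $W$ is locally analytic. For the reverse inclusion, shrink $N_0$ if necessary to a uniform pro-$p$ subgroup, and let $\mf{n}_0 \subset \mf{n}$ be the corresponding lattice so that $\exp: \mf{n}_0 \to N_0$ is a bijection. If $w \in W^{\mf{n}}$, then for $X$ in a sufficiently small open neighborhood $U \subset \mf{n}_0$ the locally analytic structure gives $\exp(X) \cdot w = \sum_{k\ge 0} X^k w/k! = w$, since $Xw = 0$. Thus the closed stabilizer of $w$ in $N_0$ contains $\exp(U)$. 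Since $N_0$ is topologically generated by any open neighborhood of the identity, this forces $w \in W^{N_0}$.

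Second, applying this termwise to the complex $V$ (each term being locally analytic as a $B_0$-representation and hence as an $N_0$-representation) yields an equality of complexes $V^{\bullet, N_0} = V^{\bullet, \mf{n}}$ as closed subspaces of $V^{\bullet}$. In particular, the induced $N_0$-action on $V^{\bullet, \mf{n}}$ (which is well-defined since $\Ad(n): \mf{n}\to \mf{n}$ is a linear isomorphism, so $N_0$ preserves $\mf{n}$-invariants) is trivial. Taking cohomology then gives $\HH^i(V^{\mf{n}})^{N_0} = \HH^i(V^{\mf{n}}) = \HH^i(V^{N_0})$ as topological vector spaces, with the subquotient topologies agreeing tautologically.

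Finally, the $T^+$-equivariance is automatic: the $T^+$-action on both $\HH^i(V^{N_0})$ and $\HH^i(V^{\mf{n}})^{N_0}$ is induced from the same $U_z$-type operator constructed from the $B$-extension of $V$, and the identification $V^{\bullet, N_0} = V^{\bullet, \mf{n}}$ is $T^+$-equivariant by construction. The only subtle point is ensuring the argument for the pointwise identity is correctly applied to locally analytic vectors (rather than only strictly analytic ones), but this is handled by the density/continuity argument in the first step.
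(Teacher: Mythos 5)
Your proof contains a genuine error that invalidates the whole argument. The key claim in your first step — that $W^{N_0} = W^{\mf{n}}$ for a locally $\Q_p$-analytic representation $W$ of $N_0$ — is false. What your exponential argument actually proves is that the stabilizer of any $w \in W^{\mf{n}}$ contains an \emph{open} subgroup of $N_0$, i.e.\ that $W^{\mf{n}}$ is exactly the subspace $W^{\on{sm}}$ of \emph{smooth} vectors. But a compact $p$-adic Lie group is profinite, hence totally disconnected: an open neighborhood of the identity such as $\exp(U)$ generates a proper open (and automatically closed) subgroup, not all of $N_0$. So "topologically generated by any open neighborhood of the identity" is exactly the statement that fails for $N_0$, in contrast to a connected real Lie group. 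Concretely, take $N_0 = \Z_p$ acting on $W = \mc{C}^{\la}(\Z_p, L)$ by right translation and $w = \mathbf{1}_{p\Z_p}$: then $w \in W^{\mf{n}}$ but $w \notin W^{N_0}$.

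This matters because the true content of the lemma is precisely the gap between $W^{\on{sm}}$ and $W^{N_0}$. The paper's argument instead observes that $V^{\mf{n}}$ is a complex of \emph{smooth} $N_0$-representations (by the identification above), that tautologically $V^{N_0} = (V^{\mf{n}})^{N_0}$, and then invokes the exactness of $(-)^{N_0}$ on smooth $N_0$-representations over a characteristic-$0$ field (since $N_0$ is compact, this is a limit of finite-group averaging). Exactness is what lets one commute $N_0$-invariants with cohomology and obtain the bijection $\HH^i(V^{N_0}) \to \HH^i(V^{\mf{n}})^{N_0}$; your argument has no analogue of this step, because it attempts to trivialize the statement to an equality of complexes, which does not hold. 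The topological and $T^+$-equivariance part of the paper's proof — that the closed embeddings $V^{i,N_0}\hookrightarrow V^{i,\mf{n}}$ make the inverse continuous for the subquotient topologies — you would also need, but it is secondary to the missing exactness ingredient.
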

\begin{proof}
    Since $N_0$-invariants is abstractly exact on smooth $N_0$-representations over characteristic $0$ fields, the natural map 
    $\HH^i( V^{N_0})
    \to  \HH^i( V^{\mf{n}})^{N_0}$ is a $T^+$-equivariant continuous bijection. Moreover, since the maps $V^{i, N_0} \hookrightarrow V^{i, \mf{n}}$ are in fact closed embeddings, by the definition of the quotient topology, we have that the image of a closed set is automatically closed, and thus the inverse is also continuous.
\end{proof}

\begin{lemma}
\label{lemma: BruhatSES}
    Let $\Pi$ be a bounded complex of injective admissible $\oP_0$-representations, equipped with a $\oP$-extension $(\wt{\Pi}^{\bullet}, i, p)$. Then $J_B(I_{\le k}\Pi^{\la}), J_B(I_{w}\Pi^{\la})$ are complexes of essentially admissible representations of $Y = \ip{z^{\pm 1}, T_0}$, and the cohomology groups $\HH^{\ast}(J_B(I_{\le k}\Pi^{\la})), \HH^{\ast}(J_B(I_{w}\Pi^{\la}))$ are essentially admissible representations of $T(F_p^+)$ which are independent of the choice of $\oP$-extension. Moreover, for all $m$ we have a short exact sequence of essentially admissible $T(F_p^+)$-representations
    \[
    0 \to \HH^m(J_{B}(I_{\le k}\Pi^{\la})) \to \HH^m(J_B(I_{\le k+1}\Pi^{\la})) \to \HH^m(J_B(I_{w_{k+1}}\Pi^{\la})) \to 0.
    \]
\end{lemma}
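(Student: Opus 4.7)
The plan is to handle the essential admissibility and independence-of-$\oP$-extension claims first by direct appeal to Fu's general framework (\cite[\textsection 3]{FuDerived}) applied to the $B$-extensions on $I_{\le k}\Pi^{\la}$ and $I_w\Pi^{\la}$ furnished by \cref{lemma: CompatibilityofGextensionsForBruhat}. The $B$-extensions provide the $\wt z$-action needed to define $U_z$, and essential admissibility of the cohomology groups is established as in \cite[Lemma 3.19]{FuDerived}; a degreewise version of \cref{lemma: IndependenceOfHomotopyForJacquetFunctors} yields independence from the choice of $\oP$-extension.

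For the short exact sequence, the starting point is the strict short exact sequence \eqref{eq: InductiveBruhatSequence}
\[
0 \to I_{\le k}\Pi^{\la} \to I_{\le k+1}\Pi^{\la} \to I_{w_{k+1}}\Pi^{\la} \to 0
\]
of complexes of compact type $B_0$-representations. The key input is \cref{corollary: IwInjectivestoAcyclics}, which makes each term termwise $R\Gamma_{\mf n}$-acyclic, so that $(-)^{\mf n}$ preserves exactness. Combining this with the exactness of $N_0$-invariants on smooth $N_0$-representations, the exactness of the finite-slope functor (\cite[Proposition 3.2.6]{Emerton_Jacquet_I}), and \cref{lemma: GroupVsLiealgebraCohomology} identifying $N_0$-invariants with group invariants after $\mf n$-cohomology, I expect to obtain a short exact sequence of complexes of essentially admissible $T$-representations
\[
0 \to J_B(I_{\le k}\Pi^{\la}) \to J_B(I_{\le k+1}\Pi^{\la}) \to J_B(I_{w_{k+1}}\Pi^{\la}) \to 0.
\]
Taking cohomology at each degree $m$ then yields a long exact sequence, and the remaining task is to show that the connecting homomorphism vanishes.

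For the vanishing, the plan is to exploit the $\mf g$-equivariant support decomposition of \cref{lemma: LieAlgebraCohomologyDirectSummand} over compact open neighborhoods $U_{w_{k+1},\ul x}$ of $C_{w_{k+1}}$. Although $C_{w_{k+1}}$ is only locally closed in $\tG_{k+1}$ (so one cannot literally split the parabolic induction on the nose), the colimit structure of $I_{w_{k+1}}$, together with the compatibility of the Jacquet-functor machinery with such colimits, should allow one to assemble the $\mf g$-equivariant splittings on compact opens into a splitting of the above exact sequence of complexes of Jacquet modules in the homotopy category, forcing the connecting map on cohomology to vanish.

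The hard part will be making this splitting argument precise: the $\mf g$-decomposition on compact opens is not $B$-equivariant (the neighborhoods $U_{w_{k+1},\ul x}$ are not stable under right translation by $N$), so one must track carefully how the splitting interacts with $N_0$-invariants and the $U_z$-operator after passing to finite-slope parts. A complementary, possibly cleaner, route is a weight-theoretic argument: the $T$-weights appearing on $\HH^m(J_B(I_w\Pi^{\la}))$ are governed by conjugation of root groups by a Bruhat representative of $w$, so the possible weights for different $w$ live in disjoint Weyl-orbits, and a $T$-equivariant connecting map between cohomology groups tied to different Bruhat strata must vanish on weight grounds.
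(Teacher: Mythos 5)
Your plan through the short exact sequence of complexes of Jacquet modules matches the paper exactly: strictness of \eqref{eq: InductiveBruhatSequence}, termwise $R\Gamma_{\mf n}$-acyclicity from \cref{corollary: IwInjectivestoAcyclics}, exactness of $N_0$-invariants on smooth representations, and exactness of the finite-slope functor give
\[
0 \to J_B(I_{\le k}\Pi^{\la}) \to J_B(I_{\le k+1}\Pi^{\la}) \to J_B(I_{w_{k+1}}\Pi^{\la}) \to 0,
\]
and essential admissibility follows since the total induction $J_B(\Ind_{\oP_0}^{\tG_0}\Pi^{\la})$ is already known to be essentially admissible. You also correctly identify the remaining obstacle: one must show the boundary maps in the resulting long exact sequence vanish, equivalently that $\HH^m(J_B(I_{\le k}\Pi^{\la})) \to \HH^m(J_B(I_{\le k+1}\Pi^{\la}))$ is injective for every $m$.

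However, both of your proposed routes to that injectivity have genuine problems. For the first: you want to assemble the $\mf g$-equivariant support decompositions of \cref{lemma: LieAlgebraCohomologyDirectSummand} over neighborhoods $U_{w_{k+1},\ul x}$ of the open cell, and you correctly flag that these neighborhoods are not right-$N_0$-stable, so the splitting does not pass through $N_0$-invariants. That concern is fatal as stated: there is no obvious way to make a direct-limit splitting indexed by non-$N_0$-stable opens interact correctly with $(-)^{N_0}$, the $U_z$-operator, and the Hausdorff quotient you must take before applying $(-)_{\on{fs}}$. For the second: the weight-separation heuristic does not survive the passage from algebraic BGG to the locally analytic setting. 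The groups $\HH^m(J_B(I_w\Pi^{\la}))$ are coadmissible sheaves over the character \emph{variety} $\wh{T}$, not single weight spaces; the supports for different $w$ differ by a dot-action twist $j_w$ but still sweep out full-dimensional pieces of $\wh{T}$, and they can and do intersect. A $T$-equivariant map between two such modules need not vanish just because the Bruhat cells differ.

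The paper's actual argument turns your difficulty inside out. Rather than decompose using neighborhoods of the locally closed cell $C_{w_{k+1}}$, one decomposes along compact opens of the \emph{smaller} stratum $\tG_k$, exploiting the fact that $\tG_k$ is itself right-$B$-invariant. Concretely, for $f \in I_{\le k}\Pi^{\la,m}$ the set $(\supp f) N_0$ is still compact, open, and contained in $\tG_k$ (this uses the $B$-invariance of $\tG_k$, which your $U_{w_{k+1},\ul x}$ lack), so $I_{\le k}\Pi^{\la,m}$ is a filtered colimit of $I_{UN_0}\Pi^{\la,m}$ over $N_0$-\emph{stable} compact opens $UN_0 \subset \tG_k$. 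For each such set, the indicator function is locally constant, so the splitting $I_{\le k+1}\Pi^{\la,m} \simeq I_{UN_0}\Pi^{\la,m} \oplus I_{\text{comp}}\Pi^{\la,m}$ is now genuinely $N_0$-equivariant. Moreover, the $U_z$-operator for $z \in T^{++}$ \emph{shrinks} support, so $(I_{UN_0}\Pi^{\la,m})^{N_0}$ is $U_z$-stable. Passing to the colimit over $U$ yields a continuous, closed, $U_z$-equivariant injection $(I_{\le k}\Pi^{\la,m})^{N_0} \hookrightarrow (I_{\le k+1}\Pi^{\la,m})^{N_0}$; injectivity survives Hausdorff quotients and finite-slope parts, and the desired injectivity on $\HH^m$ follows. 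So the idea you need is not to find a $B$-equivariant neighborhood of the open cell but to exploit $B$-invariance of the closed union of small cells to produce $N_0$-stable supports.
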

\begin{proof}
From \eqref{eq: InductiveBruhatSequence} we have a short exact sequence of complexes,
    \begin{equation}
    \label{eq: DistTriBruhat}
  0 \to   I_{\le k}\Pi^{\la}\to I_{\le k + 1}\Pi^{\la} \to I_{w_{k+1}}\Pi^{\la} \to 0
    \end{equation}
    which by \eqref{eq: CompatibleHomotopiesForBruhat} respects the $B$-extensions on each term, and thus is equivariant for the action of $\wt{z}:= \wt{p}\circ z \circ \wt{i}$.
    
    Since each of these functors send injectives to $R\Gamma_{\mf{n}}$-acyclic objects, exactness of $N_0$-invariants on smooth representations gives in each degree short exact sequences
    \begin{equation}
    \label{eq: BruhatSESN0invariants}
    0 \to (I_{\le k}\Pi^{\la})^{N_0} \to (I_{\le k + 1}\Pi^{\la})^{N_0} \to (I_{w_{k+1}}\Pi^{\la})^{N_0} \to 0
    \end{equation}
    We can then take the finite slope part of each term with respect to $\wt{z}$, which is an exact operation by \cite[Theorem 1.1]{FuDerived}, so we get a short exact sequence of complexes of $Y$-representations
    \[
    0 \to J_{B}(I_{\le k}\Pi^{\la})\to J_B(I_{\le k + 1}\Pi^{\la}) \to J_B(I_{w_{k+1}}\Pi^{\la}) \to 0.
    \]
    Since we already know that $J_B(\Ind_{\oP_0}^{\tG_0}\Pi^{\la})$ is essentially admissible over $Y$, all the other $J_B(I_{\le k}\Pi^{\la})$ and $J_B(I_{w_{k+1}}\Pi^{\la})$ are as well. The statement about cohomology and independence of choice follows from same proof of \cite[Theorem 4.10]{FuDerived}. For the last statement, by passing to the long exact sequence on cohomology it suffices to show $\HH^m(J_B(I_{\le k}\Pi^{\la})) \to \HH^m(J_B(I_{\le k+1}\Pi^{\la}))$ is injective for all $m \ge 0$. 
    
    Choosing a representative $f \in I_{\le k}\Pi^{\la, m}$, we contemplate its support $\on{supp} f\subseteq \tG_k \subset \overline{P}_0\backslash \tG_0$, which is by definition a compact open subset. Now consider the map $\overline{P}_0 \backslash \tG_0 \times N_0 \to \overline{P}_0\backslash \tG_0$ sending $(\overline{P}_0r,n) \mapsto \overline{P}_0rn$. This map is visibly continuous and thus sends compact sets to compact sets.
    
    In particular, $(\on{supp} f)N_0$ is a compact open subset of $\overline{P}_0\backslash \tG_0$, and moreover since $\tG_k$ is left $P$-invariant and right $B$-invariant, we have $U_f:= (\on{supp} f) N_0\subset \tG_k$. In particular, this implies \[
    I_{\le k}\Pi^{\la,m} = \varinjlim_{\substack{U\subset \tG_{k} \\\text{ compact open}}} I_{U}(\Pi^{\la,m}) = \varinjlim_{\substack{UN_0\subset \tG_k\\ \text{compact open}}}I_{UN_0}(\Pi^{\la,m}).
    \]
    Moreover for any such $UN_0$ an open compact which is left $\overline{P}$-invariant and right $N_0$-invariant, we have the indicator function $\mathbf{1}_U: G \to L$ is locally constant (and thus locally analytic). Thus we have a decomposition as $N_0$-representations $I_{\le k+1}\Pi^{\la,m} \simeq I_U\Pi^{\la, m} \oplus I_{\tG_k- U}\Pi^{\la, m}$. Then passing to the colimit above yields a continuous injection $\HH^i((I_{\le k}\Pi^{\la})^{N_0})\hookrightarrow \HH^i((I_{\le k+1}\Pi^{\la})^{N_0})$. Moreover, the action of the operator $U_z$ for $z \in T^{++}$ \textit{shrinks} the support, and thus $(I_{U}\Pi^{\la, m})^{N_0}$ is stable under the $U_z$-operator. Since $(I_{\le k}\Pi^{\la,m})^{N_0} \hookrightarrow (I_{\le k+1}\Pi^{\la, m})^{N_0}$ is a closed embedding of spaces of compact type, the induced injection on cohomology $\HH^i((I_{\le k}\Pi^{\la})^{N_0})\hookrightarrow \HH^i((I_{\le k+1}\Pi^{\la})^{N_0})$ is continuous, closed, and equivariant for the action of the operator $U_z$! Thus, we can pass to a Hausdorff quotients and take the finite slope parts (both which will preserve this continuous closed injection) to get
    \[
    \HH^m(J_B(I_{\le k}\Pi^{\la})) \hookrightarrow\HH^m(J_B(I_{\le k+1}\Pi^{\la})),
    \]
    yielding the desired result.
\end{proof}

\section{Degree shifting}
\label{Section4: DegreeShifting}

For this section, we specialize to the setup $\tG = \tG(F_p^+)\simeq \GL_{2n}(F_p)$, $G = G(F_p^+) \simeq \GL_n(F_p)\times \GL_n(F_p)$ and $G \subset \oP$ the (opposite) Siegel parabolic. 
Then we let $\Pi = \Pi_{\oP}(\wt{K}^p)_{\wt{\mf{m}}}$ be a bounded complex of injective admissible $\oP_0$-representations equipped with $\oP$-extension, computing the completed cohomology of $\oP$ localized at $\wt{\mf{m}} = S^{\ast}(\mf{m})$. Moreover, $\Pi$ has a $\oP$-equivariant quasi-isomorphism from $\on{Inf}_{G_0}^{\oP_0}\Pi_G(K^p)_{\mf{m}} \to \Pi$, where $\Pi_G(K^p)$ is a bounded complex of injective admissible $G_0$-representations, and this quasi-isomorphism is compatible with homotopies from complexes with actions of $\oP$.

In light of \cref{lemma: BruhatSES}, the goal now is to isolate out a subquotient of $\HH^i(J_B(I_w\Pi^{\la}))$, which resembles a ($w$-twisted) derived Jacquet module of $\Pi_G(K^p)^{\la}$ in a different degree. The main result of this section is \cref{theorem: JacquetSubQuotient}, from which we get consequences about Hecke algebras, and thus eigenvarieties.

\subsection{Lie algebra cohomology}

We first perform many arguments on the level of Lie algebra cohomology. Recall by \cref{corollary: IwInjectivestoAcyclics} we have for any $\tG_0$-representation $V$ such that $V|_H \simeq \mc{C}^{\la}(H,L)^{\oplus m_i}$ for $H \subset \wt{G}_0$ a compact open subgroup that $I_wV$ is $R\Gamma_{\mf{n}}$-acyclic, so the complex 

\[
(I_w\Pi^{\rm{la}})^{\mf{n}}  =  R\Gamma_{\mf{n}}(I_w\Pi^{\la}),
\]
equals the derived $\mf{n}$-invariants,
at least in the derived category of abstract $L[T_0]$-modules. Moreover, the auxiliary choice of $\oP$-extension on $\Pi$ equips these cohomology groups with an action of the entire torus $T$. We equip the cohomology groups $\HH^{\ast}((I_w\Pi^{\la})^{\mf{n}})$ with the (sub)quotient topologies induced from the modules $(I_w\Pi^{\la})^{\mf{n}}$. 

In particular, the cohomology groups $\HH^{\ast}((I_w\Pi^{\la})^{\mf{n}})$ are independent (even topologically) of the choice of representing complex $\Pi$ of injective admissible Banach $\oP_0$-representations by \cref{corollary: IwInjectivestoAcyclics}. Although we will manage to avoid working systematically with derived categories of topological modules, it will be crucial for us to compute this Lie algebra cohomology in another way, using Chevalley--Eilenberg complexes.

Recall that given a Lie algebra $\mf{g}$ over $L$ and an $L[\mf{g}]$-module $M$, that the associated \textit{Chevalley--Eilenberg complex}
\[
\on{CE}_{\mf{g}}(M):= \{M \to \bigwedge^{1}\mf{g}^{\ast} \otimes_L M \to \bigwedge^2\mf{g}^{\ast}\otimes_L M \to \dots  \}
\]
is a finite complex computing $\HH^{\bullet}(\mf{g}, M)$. Recalling $\bigwedge^{k}\mf{g}^{\ast} \otimes_LM\simeq \Hom_L(\bigwedge^k\mf{g}, M)$ can be thought of as alternating multilinear functions on $\mf{g}$, the differentials $d_{\on{CE}}^k:\bigwedge^{k}\mf{g}^{\ast} \otimes_LM \to \bigwedge^{k+1}\mf{g}^{\ast} \otimes_LM$ are given by the formula \[
(d_{\on{CE}}^kf)(X_1,\dots, X_{k+1}):= \sum_{i = 1}^{k+1}X_i\cdot f(\dots, \widehat{X_i},\dots) + \sum_{i,j}(-1)^{i+j}[X_i,X_j]\cdot f([X_i,X_j], X_1,\dots , \wh{X_i}, \dots \wh{X_j},\dots).
\]Note that this complex essentially arises from a version of the Koszul resolution of the trivial $\mf{g}$-module $\mathbf{1}$, and applying $\Hom_{\mf{g}}(- , M)$ to this resolution.

In particular for \textit{any} complex of $\mf{g}$-modules $C^{\bullet}$, the total complex of the double complex $\on{CE}_{\mf{g}}^{\bullet_1}(C^{\bullet_2}):= \bigwedge^{\bullet_1}\mf{g}^{\ast} \otimes_L C^{\bullet_2}$ computes the derived functors $R\Gamma_{\mf{g}}(C^{\bullet})$. We may also equip such cohomology groups with the subquotient topologies arising from this total complex. In the case of a complex of $R\Gamma_{\mf{g}}$-acyclic objects, we note the following lemma about topological cohomology:

\begin{lemma}
Let $V^{\bullet}$ be either: a) a complex of admissible locally analytic $\tG_0$-representations of the form $\mc{C}^{\la}(H, L)^{\oplus m_{\bullet}}$ for some compact open subgroup $H\subset \wt{G}_0$, OR  b) a complex of the form $V^{\bullet} \simeq I_{\le k}C^{\bullet,\la}, I_{w}C^{\bullet,\la}$ for $C^{\bullet}$ a complex of admissible $\oP_0$-representations of the form $\mc{C}^{\la}(\ol{P}_H, L)^{\oplus m_\bullet}$ for $\ol{P}_H\subset \ol{P}_0$ a compact open subgroup. Then the natural map of complexes
\[
V^{\mf{n}} \to \on{Tot}(\bigwedge^{\,\,\,\bullet_1}\mf{n}^{\ast}\otimes_LV^{\bullet_2})
\]
coming from the inclusion(s) $V^{\mf{n}} \hookrightarrow V$ induces an isomorphism
\[
\HH^r(V^{\mf{n}}) \simeq \HH^r(\on{Tot}(\bigwedge^{\,\,\,\bullet_1}\mf{n}^{\ast}\otimes_L V^{\bullet_2})),
\]
of topological vector spaces with continuous $T_0$-action,
where we endow both sides with the induced subquotient topologies.

Moreover, if $V^{\bullet}$ is equipped with a $\tG$-extension (or $C^{\bullet}$-equipped with a $\oP$-extension), then these isomorphisms are in fact $T$-equivariant.
\end{lemma}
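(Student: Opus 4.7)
The plan is to compare the two complexes via the spectral sequence of the double complex $\on{CE}_{\mf{n}}^{\bullet_1}(V^{\bullet_2}) = \bigwedge^{\bullet_1}\mf{n}^{\ast}\otimes_L V^{\bullet_2}$ filtered by the column index $\bullet_2$. Abstractly, the $E_1$-page is $\HH^p(\mf{n}, V^q)$, and by the $R\Gamma_{\mf{n}}$-acyclicity of each $V^q$ established in \cref{corollary: IwInjectivestoAcyclics}, this page collapses to the single row $p=0$, which is $V^{q,\mf{n}}$. The total cohomology then coincides with $\HH^r(V^{\mf{n}})$, giving the desired isomorphism of abstract $T_0$-modules. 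The map induced by $V^{\mf{n}}\hookrightarrow V$ is manifestly $T_0$-equivariant, and is $T$-equivariant as soon as a $\tG$- or $\oP$-extension is fixed, since the $T$-action on each side is constructed from exactly this extension.

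To upgrade to a topological isomorphism with the subquotient topologies, I will show that the Chevalley--Eilenberg complex of each individual $V^q$ is in fact a \emph{strict} resolution of $V^{q,\mf{n}}$ (closed images, kernels carrying the subspace topology, and induced bijections being homeomorphisms). For case (a), where $V^q = \mc{C}^{\la}(H,L)^{\oplus m_q}$, I will shrink $H$ if necessary to a compact open subgroup admitting an Iwahori decomposition $H = H_{\ol{N}}H_TH_N$, so that $\mc{C}^{\la}(H,L)$ factors topologically as $\mc{C}^{\la}(H_{\ol{N}}H_T,L)\wh{\otimes}_L\mc{C}^{\la}(H_N,L)$, with only the last factor carrying a nontrivial $\mf{n}$-action. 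The Chevalley--Eilenberg complex then decomposes as the trivial factor $\wh{\otimes}_L \on{CE}_{\mf{n}}(\mc{C}^{\la}(H_N,L))$, and the strict exactness in positive degrees of the second factor is the locally analytic Poincar\'{e} lemma on the unipotent group $H_N$. For case (b), I write $I_{\le k}C^{q,\la}$ and $I_w C^{q,\la}$ as filtered colimits of spaces $I_U C^{q,\la}$ over compact open $U\subseteq \oP_0\backslash\tG_0$; by \cref{lemma: LieAlgebraCohomologyDirectSummand} each $I_UC^{q,\la}$ is a $\mf{g}$-equivariant direct summand of $\Ind_{\oP_0}^{\tG_0}C^{q,\la}$, which is again of the form $\mc{C}^{\la}(H,L)^{\oplus m'}$ for suitable $H\subset\tG_0$, reducing to case (a). Strictness is preserved under direct summands and filtered colimits of compact-type spaces.

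With each column strictly acyclic above degree zero, the spectral sequence argument can be carried out topologically: iterated (sub)quotient topologies on the $E_\infty$-terms and on the abutment coincide with those on $\HH^r(V^{\mf{n}})$, because all spaces in sight are of compact type and the differentials are continuous, so continuous bijections produced along the way are automatically homeomorphisms by open mapping. The main technical obstacle is checking the strictness of $\on{CE}_{\mf{n}}(\mc{C}^{\la}(H_N,L))$: one needs the locally analytic Poincar\'{e} lemma for unipotent $p$-adic groups in a form that gives continuous splittings (or equivalently, closed images) of each differential. This can be done either by an explicit contracting homotopy built from integration against a Haar measure on $H_N$, or by dualising to the distribution-algebra side and using the Koszul resolution of the trivial module over $U(\mf{n})\subset D(H_N,L)$, which is strict in the Fr\'{e}chet--Stein sense.
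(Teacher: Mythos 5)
Your proof takes a genuinely different route from the paper's. The paper never invokes strict exactness of the column Chevalley--Eilenberg complexes: having the abstract isomorphism from $R\Gamma_{\mf{n}}$-acyclicity, it observes that in each degree $r$ the composite $\ker(d^r_{V^{\mf{n}}})\hookrightarrow V^{r,\mf{n}}\hookrightarrow\bigwedge^0\mf{n}^{\ast}\otimes V^r\hookrightarrow\on{Tot}^r(\on{CE}_{\mf{n}}(V))$ is a closed embedding of compact type spaces (because $\mf{n}$-invariance is a closed condition), that it lands inside $\ker(d^r_{\on{CE}})$, and then argues directly that this closed embedding forces the induced continuous bijection on subquotient cohomology to be a homeomorphism. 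You instead propose to show each column is a strict resolution with continuous splittings and then run a topological spectral sequence. If carried out, what that really buys you (and is the cleaner way to phrase your plan) is a continuous chain homotopy equivalence between $V^{\mf{n}}$ and $\on{Tot}(\on{CE}_{\mf{n}}(V))$, which gives the topological isomorphism immediately without any spectral-sequence bookkeeping. But this is asking for strictly more than the paper uses, and leaves more to verify.

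There are three genuine gaps in your outline. First, the proposed contracting homotopy ``by integration against Haar measure on $H_N$'' does not work: Haar integration gives the projection onto $\mf{n}$-invariants, i.e.\ the degree-zero piece, but does not contract the higher terms of the Chevalley--Eilenberg complex; the actual locally analytic Poincar\'{e} lemma uses an integration-along-paths or dual Koszul construction, not averaging. Your second mechanism (the $D(H_N,L)$-linear Koszul complex, with strictness coming for free because morphisms of coadmissible modules over a Fr\'{e}chet--Stein algebra are strict) is more promising, but it requires knowing $D(H_N,L)$ is flat over $U(\mf{n})$ so that exactness of the Koszul complex passes under base change, and requires tracking that strictness transports correctly through the anti-equivalence back to the compact type side. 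Second, the reduction of case (b) to case (a) passes through filtered colimits $I_{\le k}C^{q,\la}=\varinjlim_U I_UC^{q,\la}$; strictness of a colimit of strict resolutions of compact type spaces is not automatic, and you would at minimum need a countable cofinal system with compact injective transitions and an explicit argument that closedness of images survives the limit. Third, your closing appeal to the open mapping theorem presupposes the subquotient cohomology groups are Hausdorff, which is not part of the hypotheses (the paper explicitly warns that such cohomology need not be Hausdorff); the chain-homotopy-equivalence formulation sidesteps this, but then the whole weight of the argument rests on producing genuinely continuous contractions in the Poincar\'{e} lemma step, which is precisely the part your outline leaves underdeveloped.
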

\begin{proof}
In either case $V$ is a complex of $R\Gamma_{\mf{n}}$-acyclic modules by \cref{corollary: IwInjectivestoAcyclics}, so we have an isomorphism of abstract vector spaces as in the lemma statement induced by the described map. It remains to verify topological properties. Now let $d_{V^{\mf{n}}}^r, d_{\on{CE}}^r$ denote the differentials in the $r$th entry of $V^{\mf{n}}$ and $\on{Tot}(\bigwedge^{\bullet_1}\mf{n}^{\ast} \otimes V^{\bullet_2,\mf{n}})$, respectively. These differentials are both naturally continuous and $T_0$-linear. Since the topologies on both cohomology groups are induced subquotient topologies, the bijection $\HH^r(V^{\mf{n}}) \to \HH^r(\on{Tot}(\bigwedge^{\bullet_1}\mf{n}^{\ast}\otimes V^{\bullet_2}))$ is continuous and closed if and only if the maps $\ker(d_{\Pi^{\mf{n}}}^r) \to \ker(d_{\on{CE}}^r)$ are continuous and closed, which is in turn true if and only if the composite map $\ker(d_{V^{\mf{n}}}^r)  \to V^{r,\mf{n}} \to  \bigoplus_{i + j = r}\bigwedge^i\mf{n}^{\ast}\otimes_L V^{j, \mf{n}}$ is continuous and closed. But since invariance is a closed condition and the map $d_{V^{\mf{n}}}^r$ is a continuous map of spaces of compact type, this composite map remains a closed embedding. In particular, it is continuous and also maps closed subsets to closed subsets. Thus the induced bijective map on cohomology is a continuous bijection mapping closed sets to closed sets, and thus is also open. 

The final statement regarding $\tG$-extensions or $\oP$-extensions is clear upon noting that the map $V^{n} \to V$ preserves the action of $T$ (up to homotopy).

\end{proof}

\subsection{Constant term maps}

Now for $w \in W^{\oP}$, let $N_w:= wN(\mc{O}_{F^+,p})w^{-1}\cap \oP(F_p^+)$ and $\mf{n}_w:= \Lie N_w$.
Let  $(A^{\bullet}, i, p)$ be a choice of $\oP$-extension for $\Pi$. Then we may consider the complex $\Pi^{\la,\mf{n}_{w}}$, which using the $\oP$-extension structure and some $z \in T^{++}$ carries an action of the monoid $Y^+= \ip{T_0, z}$ given by the formula
\begin{align}
\label{eq: wTwistedT^+Action}
t\cdot v:= \sum_{n \in N_{w}/t^wN_w(t^w)^{-1}}nt^wv, 
\end{align}
where we notate $t^w:= wtw^{-1}$. As per usual, this action depends on the choice of $\oP$-extension, but action on the cohomology groups does not. Note also locally analytic vectors of an injective Banach object is $\mf{n}_w$-acyclic by \cite[Lemma 3.8]{FuDerived}, so this definition is a reasonable analogue of a topological version of $R\Gamma_{\mf{n}_w}(\Pi^{\bullet, \la})$.


For our purposes we use these complexes to define a natural $Y^+$-equivariant maps of complexes
\[
\begin{tikzcd}
\label{eq: ConstantTermsofInductions}
(I_w\Pi^{\bullet, \la})^{\mf{n}}\ar[d, shift left, "\wt{i}"] \ar[r, "{\on{CT}_w}"] &\Pi^{\bullet, \la, \mf{n}_w}\ar[d, shift left, "i"]\\
(I_wA^{\bullet, \la})^{\mf{n}}\ar[u, "\wt{p}"] \ar[r, "{\on{CT}_w}"]& A^{\bullet, \la, \mf{n}_w}\ar[u, "p"]
\end{tikzcd}, \quad 
\begin{tikzcd}
(I_w\Pi^{\bullet, \la})^{N_0}\ar[d, shift left, "\wt{i}"] \ar[r, "{\on{CT}_w}"] &\Pi^{\bullet, \la, N_w}\ar[d, shift left, "i"]\\
(I_wA^{\bullet, \la})^{N_0}\ar[u, "\wt{p}"] \ar[r, "{\on{CT}_w}"]& A^{\bullet, \la, N_w}\ar[u, "p"]
\end{tikzcd}
\]
induced by $f \mapsto f(w)$. These maps are compatible with the homotopy equivalences $(i, p)$ \textit{on the nose}, since we are simply fixing one choice. The fact that the top horizontal maps intertwine the Atkin--Lehner-like trace operators is proven in \cite[Lemma 5.3.5]{10AuthorPaper}.

\begin{remark}
Here lies an important distinction between smooth ($p$-power torsion) and the locally analytic cases: there is no a priori reason that the maps $\on{CT}_{w}$ should be surjective (unlike \cite[Lemma 5.3.5]{10AuthorPaper}). The simple issue is that invariant germs of locally analytic inductions may acquire additional relations in their constant terms. Nevertheless, we will salvage the situation by sectioning off a certain ``highest weight vector,'' and showing surjectivity to this highest weight.
\end{remark}

We now want to use $\Inf_{G_0}^{\oP_0} \Pi_G$ to compute the $Y^{+}$-representations $\HH^{\ast}((\Pi^{\la})^{\mf{n}_{w}})$. The Hochschild--Serre spectral sequence and a central action gives us a good understanding of these groups. For any $T_n(F_p)$-module $V$ let $\tau_wV$ denote the same underlying space $V$ but with twisted action $\tau_{w}(t)(v) := t^{w^{-1}}\cdot v := (w^{-1}tw)\cdot v$.

\begin{lemma}
\label{lemma: SpecSeqDegen}
We have an abstract isomorphism of $T$-representations
\[
\ol{E}:\HH^m(\Pi^{\la, \mf{n}_w}) \simeq \bigoplus_{j = 0}^{\ell(w)}\HH^{m-j}(R\Gamma_{\mf{n_n}}(\tau_{w}^{-1}\Pi_G^{\la} \otimes \tau_{w}^{-1}\HH^j(\mf{n}_{w,\overline{U}}, \mathbf{1}))).
\]
Moreover, the projection on the final factor induces a continuous $(T^+,N_{w})$-equivariant surjection
\[
\ol{E}_{\ell(w)}: \HH^m(\Pi^{\la, \mf{n}_w}) \twoheadrightarrow \tau_{w}^{-1}\HH^{m-\ell(w)}(\Pi_G^{\la,\mf{n}_n} \otimes_L \HH^{\ell(w)}(\mf{n}_{w,\overline{U}}, \mathbf{1})).
\]

\end{lemma}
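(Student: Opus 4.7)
The plan is to decompose $\mf{n}_w$ compatibly with the Levi structure of $\oP$ and apply a Hochschild--Serre spectral sequence. Since $\oP = \ol{U}\rtimes G$ we have $\mf{p} = \ol{\mf{u}}\oplus \mf{g}$, and setting $\mf{n}_{w,\ol{U}} := \mf{n}_w\cap \ol{\mf{u}}$ produces a short exact sequence of Lie algebras
\[
0\to \mf{n}_{w,\ol{U}} \to \mf{n}_w \to \mf{n}_w/\mf{n}_{w,\ol{U}}\to 0,
\]
in which the ideal $\mf{n}_{w,\ol{U}}$ has dimension $\ell(w)$ and (using that $w\in W^{\oP}$ is a minimal length coset representative) the quotient is isomorphic, after conjugating by $w$, to $\mf{n}_n \subset \mf{g}$.

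I would then feed this into the Hochschild--Serre spectral sequence for Lie algebra cohomology:
\[
E_2^{p,q} = \HH^p\bigl(\mf{n}_n,\tau_w^{-1}\HH^q(\mf{n}_{w,\ol{U}},\Pi^{\la})\bigr) \Longrightarrow \HH^{p+q}(\mf{n}_w,\Pi^{\la}),
\]
the twist $\tau_w^{-1}$ appearing because the residual action of $\mf{n}_n$ on $\HH^\ast(\mf{n}_{w,\ol{U}},-)$ goes through conjugation by $w$. The key simplification is that $\Pi$ is homotopy equivalent to $\Inf_{G_0}^{\oP_0}\Pi_G$, so $\ol{\mf{u}}\supset \mf{n}_{w,\ol{U}}$ acts trivially on $\Pi^{\la}$ and a K\"unneth identification gives
\[
\HH^q(\mf{n}_{w,\ol{U}},\Pi^{\la}) \simeq \Pi_G^{\la}\otimes_L\HH^q(\mf{n}_{w,\ol{U}},\mathbf{1}),
\]
which after applying $\tau_w^{-1}$ is precisely the coefficient system in the lemma statement.

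The main obstacle is degeneration at $E_2$ together with the existence of a $T$-equivariant splitting (needed because the lemma claims an honest direct sum, not merely a filtration). Here I would argue by $T$-weight separation: $\mf{n}_{w,\ol{U}}$ is a sum of root spaces whose roots are distinct, so the weights of $T$ appearing in $\HH^q(\mf{n}_{w,\ol{U}},\mathbf{1})$ are $q$-fold sums of distinct roots in $\mf{n}_{w,\ol{U}}$, and in particular lie in distinct cosets modulo the root lattice generated by $\mf{n}_n$. Since each differential $d_r$ for $r\geq 2$ is $T$-equivariant and shifts $(p,q)$ by $(r,1-r)$, it must change the weight contribution coming from the $\mf{n}_{w,\ol{U}}$-factor in a way that no source weight can match any target weight, forcing $d_r = 0$. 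The same weight separation lets us decompose $\HH^m(\mf{n}_w,\Pi^{\la})$ as a $T$-stable direct sum of generalised weight spaces for an appropriate regular element of $T$, producing the required $T$-equivariant splitting of the spectral sequence filtration.

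For the final statement about $\ol{E}_{\ell(w)}$, the summand $j = \ell(w)$ is the top-$q$ piece $E_\infty^{m-\ell(w),\ell(w)}$, which by the standard convention is a \emph{quotient} of $\HH^m(\mf{n}_w,\Pi^{\la})$; the associated edge map is therefore surjective as soon as degeneration is in hand. Concretely, this edge map is realised at the chain level by projecting a $\mf{n}_w$-cocycle in the Chevalley--Eilenberg complex onto its component in $\bigwedge^{\ell(w)}\mf{n}_{w,\ol{U}}^{\ast}\otimes \Pi^{\la}$, which is continuous, $N_w$-equivariant (as $N_w$ acts on the one-dimensional top exterior power via a character), and $T^+$-equivariant for the action \eqref{eq: wTwistedT^+Action}, giving the asserted surjection.
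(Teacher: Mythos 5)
Your overall strategy is the same as the paper's: use the ideal $\mf{n}_{w,\ol{U}}\lhd\mf{n}_w$ with quotient $\simeq\mf{n}_n$, run Hochschild--Serre, and use the K\"unneth identification $\HH^q(\mf{n}_{w,\ol{U}},\Pi^{\la})\simeq\Pi_G^{\la}\otimes\bigwedge^q\mf{n}_{w,\ol{U}}^{\ast}$ coming from $\ol{\mf{u}}$ acting trivially on $\Inf_{G_0}^{\oP_0}\Pi_G$. But your degeneration argument has a genuine gap. You argue that since the weights of $\HH^q(\mf{n}_{w,\ol{U}},\mathbf{1})$ lie in distinct cosets modulo the $\mf{n}_n$-root lattice, and $d_r$ is $T$-equivariant, $d_r$ must vanish. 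This doesn't follow: the $T$-weights of $E_r^{p,q}$ are sums of a weight from $\bigwedge^q\mf{n}_{w,\ol{U}}^{\ast}$, a weight from $\bigwedge^p\mf{n}_n^{\ast}$, \emph{and an arbitrary continuous character coming from $\Pi_G^{\la}$}. The $\Pi_G^{\la}$-contribution is uncontrolled (the module is essentially admissible, not a finite sum of weight spaces), so a source weight in $E_r^{p,q}$ can absolutely match a target weight in $E_r^{p+r,q+1-r}$ even though the $\bigwedge^{\ast}\mf{n}_{w,\ol{U}}^{\ast}$-contributions lie in different cosets. Your suggestion of a ``regular element of $T$'' acting diagonally has exactly the same problem: its eigenvalues on $\Pi_G^{\la}$-vectors are not constrained.

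The paper circumvents this by constructing idempotents $E_j$ from the \emph{conjugation} action of a central element $z_p\in Z(G_0)$ on the inner Chevalley--Eilenberg complex $\on{CE}(\mf{n}_{w,\ol{U}},\mathbf{1})$ \emph{alone} — an operator that never touches the $\Pi_G^{\la}$ factor. Since $z_p$ is central, these idempotents are automatically $T$- and $N_w$-equivariant for the diagonal action, and tensoring with $\on{id}_{\Pi_G^{\la}}$ produces the splitting. (You could instead observe directly that, after replacing $\Pi^{\la}$ by $\Inf_{G_0}^{\oP_0}\Pi_G^{\la}$, the Chevalley--Eilenberg differential for $\mf{n}_w$ preserves the $\bigwedge^q\mf{n}_{w,\ol{U}}^{\ast}$-grading exactly — because $\mf{n}_{w,\ol{U}}\subset\ol{\mf{u}}$ is abelian and acts trivially — so the total complex is already a direct sum of $q$-graded subcomplexes. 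But you need some input of this kind; the naive $T$-weight argument is insufficient.)

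A second, smaller gap concerns continuity of $\ol{E}_{\ell(w)}$. Your chain-level projection onto $\bigwedge^{\ell(w)}\mf{n}_{w,\ol{U}}^{\ast}\otimes\Pi^{\la}$ is continuous, but the decomposition only exists after replacing $\Pi^{\la}$ by $\Inf\Pi_G^{\la}$, and this replacement has only a $G_0$-equivariant (not $\oP_0$-equivariant) continuous homotopy inverse — so the topological comparison $\HH^i(\on{CE}(\mf{n}_w,\Inf\Pi_G^{\la}))\simeq\HH^i(\Pi^{\la,\mf{n}_w})$ requires an argument. The paper also needs the $R\Gamma_{\mf{n}_n}$-acyclicity of the complex $\Pi_G^{\la}\otimes\HH^{\ell(w)}(\mf{n}_{w,\ol{U}},\mathbf{1})$ (which holds because $N_n$ acts trivially on the top exterior power, a fact that fails for $j<\ell(w)$) to conclude that $\HH^j(\Pi_G^{\la,\mf{n}_n}\otimes\HH^{\ell(w)})$ computes the right thing topologically. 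This acyclicity input is what singles out the top summand, and it doesn't appear in your argument.
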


\begin{proof}
Let $N_w\rtimes_w T^+$ be the the monoid $N_w\times T^+$ equipped with multiplication $(t^wn(t^w)^{-1},1)(1,t) = (1,t)(n,1)$.
To show the decomposition into abstract $L[N_w\rtimes_{w}T^+]$-modules we use the sequence
    \[
    0 \to \mf{n}_{w,\overline{U}} \to \mf{n}_w \to \mf{n}_n  \to 0.
    \]
    Note that $\mf{n}_{n}$ is a quotient of $\mf{n}_{w}$ precisely because $w \in W^{\oP}$ was chosen to be a minimal length representative.
The Hochschild--Serre spectral sequence for Lie algebra cohomology (using that the modules $\Pi^{\rm{la}}$ are acyclic for all the functors involved),
    we have as abstract $L[N_w\rtimes_w Y^+]$-modules that 
    \[
    R\Gamma_{\mf{n}_w}\Pi^{\rm{la}} \simeq \tau_w^{-1}R\Gamma_{\mf{n}_n}(R\Gamma_{\mf{n}_{w,\ol{U}}}\Pi^{\rm{la}}) \simeq \tau_w^{-1}R\Gamma_{\mf{n}_n}(\Pi^{\rm{la}}\otimes R\Gamma_{\mf{n}_{w,\ol{U}}}\mathbf{1}),
    \]
    where the last line follows because the complex $\Pi$ is quasi-isomorphic to a complex $\Pi_G$ inflated from an action of the Levi $G_0$. Recall that $G_0 = G(\mc{O}_{F^+,p}) \simeq \GL_n(\mc{O}_{F_{p}})\times \GL_n(\mc{O}_{F_{p}})$ has centre $\mc{O}_{F}\otimes_{\mc{O}_{F^+}}\mc{O}_{F^+,p}\simeq \mc{O}_{F_{p}}^{\times}\times \mc{O}_{F_{p}}^{\times}$, so we may choose the scalars on each diagonal $n\times n$ component separately. Consider the non-torsion element $1 +p^2 \in \mc{O}_{F_{\nu}}^{\times}$ for all $\nu \mid p$ and the corresponding central element $z_p = \begin{pmatrix}
        ((1 + p^2))_{\nu}\cdot I_n & 0\\
        0                  & I_n
    \end{pmatrix}\in G_0$.
    This element acts on the cohomology groups $\HH^i(\mf{n}_{w,\overline{U}}, \mathbf{1})$ via conjugation by $z_p$ on $N_{w,\overline{U}}$. Viewing $N_{w,\overline{U}}\simeq \Z_p^{\ell(w)}$ as a $\Z_p$-module and considering the Chevalley--Eilenberg complex $\bigwedge^i\mf{n}_{w,\overline{U}}^{\ast}$, $z_p$ acts on $\HH^i(\mf{n}_{w, \overline{U}}, \mathbf{1})$ via multiplication by $(1+p^2)^{i}$. Since these groups are in characteristic zero, all these scalars are distinct, and there is an idempotent $E_j:= \prod_j\frac{z_p-(1+p^2)^j}{(1+p^2)^i-(1+p^2)^j}$ on $R\Gamma_{\mf{n}_{w,\overline{U}}}(\mathbf{1})$ which induces the identity map on $R^i\Gamma_{\mf{n}_{w, \overline{U}}}(\mathbf{1})$ and zero on all other degrees.
    In particular, these maps yield an explicit quasi-isomorphism
    \begin{equation}
    \label{eq: IdempotentSplitting}
    \on{CE}(\mf{n}_{w, \ol{U}}, \mathbf{1}) \xrightarrow{E}\bigoplus \HH^i(\mf{n}_{w, \ol{U}}, \mathbf{1})[-i],
    \end{equation}
    where $E:= \sum_{j = 0}^{\ell(w)}E_j$. This quasi-isomorphism is naturally equivariant for the action of $T$ and $N_w$.
    Now recall that the $\oP_0$-equivariant quasi-isomorphism $\Inf_{G_0}^{\oP_0}\Pi_G \to \Pi$ has a continuous $G_0$-equivariant homotopy inverse $\Pi \to \Inf_{G_0}^{\oP_0}\Pi_G$, which is also equivariant for the $T$-action up to homotopy.
    We can pass to Lie algebra cohomology by considering term-by-term Chevalley--Eilenberg resolutions. In particular, this homotopy equivalence induces another homotopy equivalence 
    \[
    \on{Tot}^m(\bigwedge^{\bullet_1}\mf{n}_{w, \overline{U}}^{\ast}\otimes(\Inf_{G_0}^{\oP_0}\Pi_G^{\la})^{\bullet_2}) \to \on{Tot}^m(\bigwedge^{\bullet_1}\mf{n}_{w, \overline{U}}^{\ast}\otimes\Pi^{\la,\bullet_2}).
    \]
    Note however that the action of $\mf{n}_{w, \overline{U}}$ on $\Pi_G$ is trivial, so the first complex can be rewritten as $\on{CE}(\mf{n}_{w, \ol{U}},\mathbf{1})\otimes \Pi_G^{\la}$, with $T_0$-action given diagonally. Then using \eqref{eq: IdempotentSplitting}, the sum of idempotents $E$ induces an explicit quasi-isomorphism
    \[
    \on{Tot}^m(\on{CE}(\mf{n}_{w, \ol{U}},\mathbf{1})\otimes \Pi_G^{\la})\simeq \bigoplus_{q=0}^{m}\Pi_{G}^{\la,q}\otimes \HH^{m-q}(\mf{n}_{w, \overline{U}}, \mathbf{1}).
    \]

    In particular, the complex $\Pi_G^{\la,\bullet}\otimes \HH^{i}(\mf{n}_{w, \overline{U}}, \mathbf{1})[i]$ is a direct summand of $(\on{CE}(\mf{n}_{w, \ol{U}},\mathbf{1})\otimes \Pi_G^{\la})^{n}$.
    Then as abstract modules we have $N_w\rtimes_w T$-equivariant isomorphisms
    \[
    \ol{E}: R^m\Gamma_{\mf{n}_{w}}\Inf_{G_0}^{\oP_0}\Pi_G^{\la} \simeq \bigoplus_{i = 0}^{\ell(w)}\tau_w^{-1}R^{m-i}\Gamma_{\mf{n}_n}(\Pi_G^{\la} \otimes \HH^i(\mf{n}_{w, \overline{U}}, \mathbf{1})),
    \]
    where the twist $\tau_w^{-1}$ arises from the definition of the action in \eqref{eq: wTwistedT^+Action}.

    Now we prove the map $\ol{E}_{\ell(w)}$ is continuous. First we claim that the map $\tau_w^{-1}\HH^i(\on{CE}(\mf{n}_w,  \Inf_{G_0}^{\oP_0}\Pi_{G}^{\la})) \to \HH^i(\on{CE}(\mf{n}_w, \Pi^{\la}))$ is a topological isomorphism. A priori, the issue is that $\Inf_{G_0}^{\oP_0}\Pi_G \to \Pi$, only admits a $G_0$-equivariant (NOT $\oP_0$-equivariant) homotopy inverse, and so a continuous inverse does not immediately exist. However, we do have a topological isomorphism $\HH^i(\Pi^{\la, \mf{n}_w})\simeq \HH^i(\on{CE}(\mf{n}_w, \Pi^{\la}))$ induced by the inclusion $\Pi^{\la, \mf{n}_w}\hookrightarrow \Pi^{\la}$, and using the \textit{continuous} $G_0$-equivariant homotopy inverse, we in fact get a well-defined map of complexes $\Pi^{\la,\mf{n}_w} \to \on{Tot}^{\bullet}(\on{CE}(\mf{n}_w,\Inf_{G_0}^{\oP_0}\Pi_G^{\la}))$, which provides a quasi-inverse to the natural map in the other direction. The induced map on cohomology provides a \textit{continuous} inverse.
    
    Note since the $\Pi_G^{\bullet}$ is a complex of injective admissible representations of $G_0$, all the $\Pi_G^{\bullet, \la}$ are acyclic for $\mf{n}_n$-cohomology. However, it is not clear whether the $\Pi_G^{\la,\bullet}\otimes \HH^j(\mf{n}_{w, \ol{U}}, \mathbf{1})$ are also since the conjugation action of $\mf{n}_n$ on $\mf{n}_{w, \ol{U}}$ is nontrivial. However, by considering top degree forms in $\bigwedge^{\ell(w)}\mf{n}_{w, \overline{U}}^{\ast}$, a straightforward calculation shows that the $N_n$-action on the $1$-dimensional space $\HH^{\ell(w)}(\mf{n}_{w, \overline{U}}, \mathbf{1})$ is in fact trivial, so $\Pi_G^{\la,\bullet}\otimes \HH^{\ell(w)}(\mf{n}_{w, \ol{U}},\mathbf{1})$ is a complex of $R\Gamma_{\mf{n}_n}$-acyclic objects, and thus for all $j$ we have $\HH^j((\Pi_G^{\la} \otimes \HH^{\ell(w)}(\mf{n}_{w, \overline{U}}, \mathbf{1}))^{\mf{n}_n}) \simeq \HH^j(\Pi_G^{\la, \mf{n}_n}\otimes \HH^{\ell(w)}(\mf{n}_{w,\ol{U}},\mathbf{1}))$. 
        
        In particular, by taking $\mf{n}_n$-invariants we get a continuous map of complexes $\Pi^{\la,\mf{n}_{w}} \to (\Pi^{\la} \otimes \HH^{\ell(w)}(\mf{n}_{w, \ol{U}}, 1))^{\mf{n}_n}[-\ell(w)]$ which on the level of cohomology induces a continuous $N_w\rtimes_w T$-equivariant surjection
    \[
    \ol{E}_{\ell(w)}:\HH^m((\Pi^{\la})^{\mf{n}_w}) \to \tau_w^{-1}\HH^{m-\ell(w)}(\Pi_G^{\la,\mf{n}_n})\otimes \tau_w^{-1}\HH^{\ell(w)}(\mf{n}_{w, \ol{U}}, \mathbf{1}),
    \]
    as required.
    \end{proof}

To clean up the target of this surjection $\ol{E}_{\ell(w)}$, we first define a character $\chi_{w}: T(F_p^+) \to L^{\times}$ via the formula
\begin{equation}
\label{eq: DefinitionOfChi_w}
\chi_w(t) = \varepsilon_{\cyc}^{-1} \circ \Art_{F_p} \circ\det{}_{F_p^+}(\on{Ad}(t^w)|_{\mf{n}_{w,\overline{U}}}) = \varepsilon_{\cyc}^{-1}\circ \Art_{F_p}\circ (w^{-1}w_0^{\tG}\rho^{\tG} - w_0^{\tG}\rho^{\tG}),
\end{equation}
where $\varepsilon_{\cyc}^{-1} \circ \Art_{F_p}: F_p^{\times} \to \Z_p^{\times}$ is the map $x \mapsto \prod_{\nu}\varepsilon_{\cyc}^{-1}\circ \Art_{F_{\nu}}(x_{\nu})$.

The reason for introducing this character is that that we have a $T(F_p^+)$-equivariant (respectively $T(F_p^+)^{+}$-equivariant) isomorphism (see \cite[Proposition 3.1.8]{Hau16})
\[
\label{eq:dettopforms}
\tau_w^{-1}\HH^{\ell(w)}(\mf{n}_{w, \overline{U}}, \mathbf{1}) \simeq L(\chi_w),\quad
\tau_w^{-1}\HH^{\ell(w)}(\mf{n}_{w, \overline{U}}, \mathbf{1})^{N_{w, \overline{U}}}  \simeq L(\chi_w)
\]

This isomorphism, combined with \cref{lemma: SpecSeqDegen} and \cref{eq: ConstantTermsofInductions} gives us a composite continuous map
\begin{equation}
\label{eq: DegreeShiftingLie}
\HH^i((I_w\Pi^{\rm{la}})^{\mf{n}}) \to \HH^i(\Pi^{\la, \mf{n}_w}) \twoheadrightarrow \tau_w^{-1}\HH^{i - \ell(w)}(\Pi_G^{\la,\mf{n}_n}) \otimes \chi_{w}
\end{equation}

We can use the Chevalley--Eilenberg complexes to give a concrete description of this map as follows: let $X_{-\mf{r}_1}^{\vee}, \dots X_{-\mf{r}_{\ell}}^{\vee}$ be a choice of basis for the dual Lie algebra $(w\mf{n}w^{-1} \cap \ol{\mf{u}})^{\ast}$, dual to elements $X_{-\mf{r}_1}, \dots X_{-r_{\ell}}$ generating the Lie algebra, indexed by the (negative) nilpotent root subspaces they generate. Likewise, let $\{Y_j^{\vee}\}_{j = 1}^{n(n-1)[F_p^+: \Q_p]}$ be a chosen basis for $\mf{n}_n^{\vee}$. Lastly, let $\{Z_l\}_{l = 1}^{n^2[F_p^+:\Q_p] - \ell(w)}$ be a choice of basis for $(w\mf{n}w^{-1} \cap \mf{u})^{\ast}$. Note that all together, the $X_{-\mf{r}_i}^{\vee}, Y_j^{\vee}, Z_{l}^{\vee}$ form a basis for the dual Lie algebra $(w\mf{n}w^{-1})^{\ast}$. For convenience, for any subsets of indices $A = \{i_1,\dots, i_{a}\} \subset \{1,\dots, \dim w\mf{n}w^{-1} \cap \ol{\mf{u}} = \ell(w)\}, B = \{j_1, \dots, j_b\}\subset \{1, \dots, \dim \mf{n}_n\}$ and $C =\{k_1,\dots, k_c\}  \subset \{1, \dots, \dim w\mf{n}w^{-1} \cap \mf{u}\}$, we define 
\[
L_{A,B,C}:= Z_C^{\vee}\wedge Y_B^{\vee} \wedge X_A^{\vee}:= Z_{k_1}^{\vee} \wedge \dots \wedge Z_{k_c}^{\vee}\wedge Y_{j_1}^{\vee} \wedge \dots \wedge Y_{j_b}^{\vee}\wedge X_{-\mf{r}_{i_1}}^{\vee} \wedge \dots \wedge X_{-\mf{r}_{i_a}}^{\vee}   \in \bigwedge^{\,\,\,a + b + c}(w\mf{n}w^{-1})^{\ast}.
\]

Using the total complexes from Chevalley--Eilenberg resolutions of each component, the map
\[
\HH^d\left(\bigoplus_{i + j = \ast}\bigwedge \mf{n}^{\ast} \otimes I_w\Pi^{\rm{la}}\right) \to \HH^d\left(\bigoplus_{i + j = \ast}\bigwedge \mf{n}_w^{\ast}\otimes \Pi^{\la}\right)\xrightarrow{\ol{E}_{\ell(w)}} \tau_w^{-1}\HH^{d- \ell(w)}\left(\bigoplus_{i + j = \ast}\bigwedge \mf{n}_n^{\ast} \otimes \Pi_G^{\rm{la}}\right)\otimes \chi_w.
\]
sends a cocycle 
\begin{equation}
\label{eq: ExplicitCocycleFormula}
[\sum_{A,B,C}w^{-1}(L_{A,B,C})\otimes f_{A,B,C}] \mapsto [\sum_{A,B} X_{A}^{\vee}\wedge Y_{B}^{\vee} \otimes f_{A,B,\emptyset}(w)]\mapsto [\sum_B Y_B^{\vee}\otimes (f_{[\ell(w)], B, \emptyset}(w)\otimes X_{[\ell(w)]}^{\vee})) ],
\end{equation}
where $[\ell(w)]:= \{1,\dots, \ell(w)\}$.
This formula for $\ol{E}_{\ell(w)}$ follows directly from its construction in the proof of \cref{lemma: SpecSeqDegen}. In particular, the projection onto this eigenspace kills all terms except those involving the entire basis of $\mf{n}_{w,\ol{U}}$. The fact that such an expression remains a cocycle with respect to this quotient Lie algebra is one consequence of the spectral sequence from the proof of \cref{lemma: SpecSeqDegen} degenerating. The main claim then is as follows:
\begin{proposition}
\label{prop: MainDegreeShiftProp}
The  map
    \[
   \HH^i((I_w\Pi^{\la})^{\mf{n}}) \to \tau_w^{-1}\HH^{i - \ell(w)}(\Pi_G^{\la, \mf{n}_n}) \otimes L(\chi_{w})
    \]
    is a continuous $T$-equivariant surjection. Moreover, the induced map 
    \[
    \HH^i((I_w\Pi^{\la})^{N_0}) \to \tau_w^{-1}\HH^{i - \ell(w)}(\Pi_G^{\la, N_{n,0}})\otimes L(\chi_w)
    \]
    is also continuous, surjective, and equivariant for the action of $T^+$.
\end{proposition}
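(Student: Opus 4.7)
The strategy is direct construction of a section at the level of Chevalley--Eilenberg cocycle representatives, using the explicit formula \eqref{eq: ExplicitCocycleFormula} for the composite map as a guide. Given a class $[\gamma] \in \tau_w^{-1}\HH^{i-\ell(w)}(\Pi_G^{\la,\mf{n}_n}) \otimes L(\chi_w)$ represented by a cocycle $\gamma = \sum_B Y_B^\vee \otimes g_B$ with $g_B \in \Pi_G^{\la,j}$ (so $|B| + j = i - \ell(w)$) forming a cocycle in the total complex $\on{Tot}(\bigwedge^\bullet \mf{n}_n^\vee \otimes \Pi_G^\la)$, I would construct a cocycle $\alpha$ in the appropriate degree of $\on{Tot}(\bigwedge^\bullet \mf{n}^\vee \otimes I_w\Pi^\la)$ whose image under the composition is $\gamma$.

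First I would reduce to the inflated case: by the $\oP_0$-equivariant quasi-isomorphism $\Inf_{G_0}^{\oP_0}\Pi_G \xrightarrow{\sim} \Pi$ (with continuous $G_0$-equivariant inverse) and a variant of \cref{lemma: IndependenceOfHomotopyForJacquetFunctors}, both source and target of the map in the proposition are invariant under such equivalences, so I may replace $\Pi$ by $\Inf_{G_0}^{\oP_0}\Pi_G$. This simplifies the analysis since then $\ol{\mf{u}}$ acts trivially and $\mf{g}$ acts only through its Levi quotient. Next, using the chart $\oP Uw \simeq \oP wN \times (w^{-1}Uw \cap \ol{N})$ from \cref{lemma: chartopencell}, I would pick a small compact open neighborhood $\Omega \subset \oP\backslash \oP Uw$ of $[w]$ contained in a Bruhat neighborhood $U_{w,\ul{x}}$, together with a locally analytic section $s \colon \Omega \to \wt{G}$ with $s([w]) = w$. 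Setting $\varphi := \mathbf{1}_\Omega$ (which is locally constant since $\Omega$ is compact open), I define $F_B(g) := \varphi(\bar g)\cdot (g\,s(\bar g)^{-1})\cdot \tilde g_B$, where $\tilde g_B$ denotes the inflation of $g_B$ to $\Pi^\la$. This is $\oP$-equivariant with $F_B(w) = \tilde g_B$, and I propose the candidate
\[
\alpha := \sum_B (-1)^{\sigma_B}\, w^{-1}(X_{[\ell(w)]}^\vee \wedge Y_B^\vee) \otimes F_B.
\]

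The main work is verifying that $(d_{\on{CE}} + d_\Pi)\alpha = 0$; by \eqref{eq: ExplicitCocycleFormula}, its image will then be $[\gamma]$. I would organize the differential by the three types of basis directions of $w\mf{n}w^{-1}$: in the $X$-directions, the form is already at top, so no new contributions appear; in the $Y$-directions outside $B$, the action of $w^{-1}Y_j w \in \mf{n}_n$ on $\tilde g_B$ factors through the Levi representation on $\Pi_G$, and summing yields precisely the Chevalley--Eilenberg differential of $\gamma$ in the $\mf{n}_n$-complex, which vanishes together with $d_\Pi g_B$ by the total-cocycle hypothesis. The main obstacle --- and the step requiring the most care --- is the analysis of the $Z$-directions: for $Z_l \in w\mf{n}w^{-1} \cap \mf{u}$, the element $w^{-1}Z_l w$ lies in $\mf{n} \cap w^{-1}\mf{u}w$, and its right-translation action on $F_B$ splits into a term differentiating $\varphi$ (which vanishes because $\varphi$ is locally constant near $w$) and a term where the infinitesimal $w^{-1}Z_lw$-displacement of $[w]$ is tangent to the cell $C_w$ (indeed, $\mf{n}/(\mf{n}\cap w^{-1}\ol{\mf{p}}w)$ is exactly the span of $Z$-directions, by \cref{lemma: chartopencell}), so the $\oP$-equivariant extension built from $s$ is locally constant in these directions too. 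The bracket terms in $d_{\on{CE}}$ must also be tracked; because $[\mf{n}, \mf{n}] \subset \mf{n}$ and the nonzero contributions after the degree constraints land in already-handled cases, they either vanish or combine with the $Y$-cocycle relations to cancel.

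Finally, continuity of $[\alpha]$ in $[\gamma]$ is manifest from the construction, since $\varphi, s$ are fixed and $g_B \mapsto F_B$ is continuous; $T$-equivariance follows because $T \subset \oP$ acts diagonally on both sides through the same underlying action on $\Pi$ (twisted by $w$ and by the determinant character $\chi_w$, matching \eqref{eq: DefinitionOfChi_w}). For the $N_0$-invariants variant, I would apply \cref{lemma: GroupVsLiealgebraCohomology} to pass between Lie-algebra and group invariants, observing that by shrinking $\Omega$ I can arrange $\varphi$ to be $N_0$-invariant on the nose and the section $s$ to be compatible with the $N_0$-action, so the same $\alpha$ lies in $(I_w\Pi^\la)^{N_0}$ and the argument adapts verbatim; $T^+$-equivariance then comes from the fact that the Hecke operator $U_z$ preserves the support shrinking condition built into $I_w$.
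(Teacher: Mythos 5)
Your proposal takes a genuinely different route from the paper. The paper constructs the section to $\on{CT}_w$ \emph{iteratively}: it factors $w = s_{\mf{r}_1}\cdots s_{\mf{r}_{\ell}}$, produces for each simple reflection a connecting map $c_{\mf{r}_i}$ coming from a short exact sequence in one-dimensional Lie-algebra cohomology (itself a consequence of the $R\Gamma_{\mf{n}}$-acyclicity established in \cref{corollary: IwInjectivestoAcyclics}), and composes these with the ``constant function'' inclusion $\iota_\ast$; the final verification that the composite is the identity is carried out by tracking cocycle representatives. You instead write down a single candidate cocycle $\alpha = \sum_B \pm\, w^{-1}(X_{[\ell(w)]}^{\vee}\wedge Y_B^{\vee})\otimes F_B$ built from a bump function $\varphi$ and a Bruhat-chart section $s$, and then try to verify directly that $(d_{\on{CE}}+d_{\Pi})\alpha = 0$. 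This is more concrete and avoids the iterated short exact sequences, but transfers all the difficulty into the cocycle check.

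That cocycle check is where there is a genuine gap. When you compute the Koszul (bracket) part of $d_{\on{CE}}\alpha$, the derivation property applied to $X_{[\ell(w)]}^{\vee}$ produces terms from $[Y_j, X_a]\in\mf{n}_{w,\overline{U}}$: explicitly, one finds
\[
d_1\bigl(X_{[\ell(w)]}^{\vee}\bigr) \;=\; \pm\sum_j \on{tr}\!\left(\on{ad}(Y_j)\big|_{\mf{n}_{w,\overline{U}}}\right)\, X_{[\ell(w)]}^{\vee}\wedge Y_j^{\vee},
\]
and these are extra $Y$-type contributions carrying the \emph{same} coefficient $F_B$ (not a differentiated one), so they cannot be absorbed into the $\mf{n}_n$-cocycle condition on $\gamma$. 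They must vanish outright, and they do — but only because each $Y_j\in\mf{n}_n$ is nilpotent, so $\on{ad}(Y_j)|_{\mf{n}_{w,\overline U}}$ is a nilpotent endomorphism and hence traceless. This is exactly the observation (stated in the paper in the proof of \cref{lemma: SpecSeqDegen} as the triviality of the $N_n$-action on $\HH^{\ell(w)}(\mf{n}_{w,\overline{U}},\mathbf{1})$) that makes $\overline{E}_{\ell(w)}$ well-defined and $\mf{n}_n$-equivariant. Your remark that the bracket terms ``either vanish or combine with the $Y$-cocycle relations to cancel'' obscures the fact that they neither combine nor cancel: they vanish for this specific structural reason, which must be named, otherwise the claim that $\alpha$ is a cocycle is unsupported.

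Two secondary gaps. First, the vanishing of the $Z$-coefficient terms $(w^{-1}Z_l w)\cdot F_B$ depends on choosing $s$ to be precisely the Bruhat-chart section adapted to the decomposition $\oP Uw \simeq \oP\times U\times\{w\}$ (so that $g\exp(t\,w^{-1}Z_lw) = \ol p[u\exp(tZ_l)]w$ shifts only the $U$-coordinate and leaves the $\oP$-factor untouched); for a generic locally analytic section the ``$gs(\bar g)^{-1}$'' factor would have a nonzero derivative in the $Z$-direction. You gesture at \cref{lemma: chartopencell} but should fix this choice. Second, for the $N_0$-invariants version you assert that $\Omega$ can be shrunk to be $N_0$-invariant and $s$ taken $N_0$-equivariant on the nose, but the Bruhat-chart section is \emph{not} right-$N_0$-equivariant in general (since $wnw^{-1}$ has a nontrivial $\oP$-component that perturbs the chart coordinates). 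The paper handles the group-level statement by constructing $\iota_0$ and the $N_0$-invariant version of the connecting maps separately after passing through \cref{lemma: GroupVsLiealgebraCohomology}; a direct adaptation of your cocycle would require either an averaging over $N_0$ or a separate argument that the induced map on $N_0$-invariants remains surjective.

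Once the trace-zero observation is inserted and the section $s$ is pinned down, the Lie-algebra half of your argument can be completed; the $N_0$-invariants half requires a different treatment than the one you sketch.
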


\begin{proof}
Since the map is by construction continuous it suffices to show the map is surjective as a map of abstract vector spaces.
We proceed by constructing a section $\tau_w^{-1}\HH^{i-\ell(w)}(\Pi_G^{\la, \mf{n}_n})\otimes \chi_w \to \HH^i((I_{w}\Pi^{\la})^{\mf{n}_w})$. 

For clarity's sake, we start with the easiest case of $w= 1$. Then the statement to prove is that the constant term map $\HH^d((\Ind_{\oP_0}^{\wt{G}_0}\Pi^{\la}(\oP_0 U))^{\mf{n}}) \to \HH^d(\Pi_G^{\la, \mf{n_n}})$ induced by $f \mapsto f(e)$ and a homotopy inverse $\Pi \to \Pi_G$ is surjective. Noting $\HH^d(\Pi^{\la, \mf{n_n}})\simeq \HH^d(\Pi_G^{\la, \mf{n}_n})$, it 
 suffices to show $\HH^d((\Ind_{\oP_0}^{\wt{G}_0}\Pi^{\la}(\oP_0 U))^{\mf{n}}) \to \HH^d(\Pi^{\la, \mf{n}_n})$ is surjective. But the map $(\Ind_{\oP_0}^{\wt{G}_0}\Pi^{\la}(\oP_0 U))^{\mf{n}} \to \Pi^{\la, \mf{n}_n}$ already has a natural section given by identifying $(\Ind_{\oP_0}^{\wt{G}_0}\Pi^{\la}(\oP_0 U))^{\mf{n}}\simeq \mc{C}^{\la}(U, \Pi^{\la})^{\mf{n}} \simeq \mc{C}^{\on{sm}}(U, \Pi^{\la})^{\mf{n}_n}$ and $\Pi^{\la, \mf{n}_n}\simeq \mc{C}^{\la}(U, \Pi^{\la})^{U, \mf{n}_n}$, and using the injection of constant functions $\mc{C}^{\la}(U, \Pi^{\la})^{U, \mf{n}_n} \hookrightarrow \mc{C}^{\la}(U, \Pi^{\la})^{\mf{n}}$.

For a general representative $w \in W^{P}$ of minimal length $\ell(w)$, we note that the set $\Phi_w
:= \Phi^+ \cap w(\Phi^{-})$ has length $\ell(w)$. Choosing an appropriate ordering of the elements $\mf{r}_1, \dots, \mf{r}_{\ell(w)} \in \Phi_w$, we have a factorization into (not necessarily simple) reflections $w = s_{\mf{r}_1}\dots s_{\mf{r}_{\ell}}$. Note that since $w$ is minimal length, for all $i$ we have the associated nilpotent element $X_{\mf{r}_i} \in \mf{u}$ lies in the nilpotent radical of the Siegel parabolic. We likewise set $w':= s_{\mf{r}_1}w = s_{\mf{r}_2}\dots s_{\mf{r}_{\ell}}$. Then in fact we have that $w'(\Phi^+) = (w(\Phi^+) \setminus\{-\mf{r}_1\})\cup \{\mf{r}_1\}$. Now let $\mf{n}_{w'}:= w\mf{n}w^{-1} \cap w'\mf{n}(w')^{-1} \subseteq w\mf{n}w^{-1}$ be the subalgebra generated by the roots in $(w(\Phi^+) \setminus\{-\mf{r}_1\})$. Then in fact $\dim \mf{n}_{w'} = \dim \mf{n} - 1$, as if it generated the entire Lie algebra $w\mf{n}w^{-1}$, then the nilpotent Lie algebra $w'\mf{n}(w')^{-1}$ would contain both $X_{\mf{r}_1}, X_{-\mf{r}_1}$, contradicting nilpotence. Moreover, $\mf{n}_{w'}$ is in fact a Lie ideal, since $[X_{-\mf{r}_1}, X_{\mf{s}}]\in X_{\mf{t}}$ for some $\mf{t}\in w(\Phi^+)$ but it clearly cannot be $-\mf{r}_1$. The same reasoning also shows that in fact $X_{\mf{r}_1}$ also normalizes $\mf{n}_{w'}.$ The quotient $w\mf{n}w^{-1}/\mf{n}_{w'} \simeq \ip{X_{-\mf{r}_1}}$ is then a $1$-dimensional nilpotent Lie algebra with representation coming from the negative root $-\mf{r}_1$.

Now first we claim there is a short exact sequences of complexes 
\begin{equation}
\label{eq: InductiveExactSequenceForr_1}
0 \to (I_w\Pi^{\la})^{\mf{n}} \to (I_w\Pi^{\la})^{w^{-1}\mf{n}_{w'}w} \xrightarrow{w^{-1}X_{-\mf{r}_1}w}(I_w\Pi^{\la})^{w^{-1}\mf{n}_{w'}w}  \to 0.
\end{equation}
The only claim to prove is that the final map is surjective, which is equivalent to showing $(I_{w}\Pi^{\la})^{w^{-1}\mf{n}_{w'}w}$ is an $R\Gamma_{\ip{w^{-1}X_{-\mf{r}_1}w}}$-acyclic object, where $\ip{w^{-1}X_{-\mf{r}_1}w}$ is the $1$-dimensional Lie algebra generated by $w^{-1}X_{-\mf{r}_1}w$. But by the Hochschild--Serre spectral sequence, we have an embedding $\HH^1(\ip{w^{-1}X_{-\mf{r}_1}w}, (I_w\Pi^{\la})^{w^{-1}\mf{n}_{w'}w}) \hookrightarrow \HH^1(\mf{n}, I_w\Pi^{\la})$, and $\HH^1(\mf{n}, I_w\Pi^{\la}) = 0$ by \cref{corollary: IwInjectivestoAcyclics}, which gives the claim.

Passing to the long exact sequence associated to \eqref{eq: InductiveExactSequenceForr_1} and passing to cokernels/kernels gives the sequence 
\[
0 \to \HH^{i-1}((I_w\Pi^{\la})^{w^{-1}\mf{n}_{w'}w})_{w^{-1}X_{-\mf{r}_1}w}\otimes (\ip{w^{-1}X_{-\mf{r}_1}w})^{\ast} \to \HH^{i}((I_w\Pi^{\la})^{\mf{n}}) \to \HH^{i}((I_w\Pi^{\la})^{w^{-1}\mf{n}_{w'}w})^{w^{-1}X_{-\mf{r}_1}w} \to 0.
\]
Precomposing with the natural quotient map gives us a map 
$\HH^{i-1}((I_w\Pi^{\la})^{w^{-1}\mf{n}_{w'}w}) \otimes (\ip{w^{-1}X_{-\mf{r}_1}w})^{\ast} \to \HH^i((I_w\Pi^{\la})^{\mf{n}})$. The inclusion $ (I_w\Pi^{\la})^{w^{-1}w'\mf{n}(w')^{-1}w}\hookrightarrow (I_{w}\Pi^{\la})^{w^{-1}\mf{n}_{w'}w}$ then gives a map 
\begin{equation}
\label{eq: FirstConnectingMapr_1}
    c_{\mf{r}_1}: \HH^{i-1}((I_w\Pi^{\la})^{w^{-1}w'\mf{n}(w')^{-1}w})\otimes \ip{X_{-\mf{r}_1}}^{\ast}\to \HH^i((I_w\Pi^{\la})^{\mf{n}}).
\end{equation}

Combining similar sequences for the invariance of $w^{-1}w'\mf{n}(w^{-1}w')^{-1}$, and iterating for all $\mf{r}_i$, we get a natural map 
\[
s: \HH^{i - \ell(w)}((I_w\Pi^{\la})^{w^{-1}\mf{n}w})\otimes \bigotimes_{i}(w^{-1}X_{-\mf{r}_i}w)^{\ast}\xrightarrow{s} \HH^{i}((I_w\Pi^{\la})^{\mf{n}}),
\]
which is defined by a formula $s:= c_{\mf{r}_{\ell(w)}}\circ \dots \circ c_{\mf{r}_1}$.
Now we first note that as a $T$-module, $\bigotimes_{i}(w^{-1}X_{-\mf{r}_i}w)^{\ast}\simeq \HH^{\ell(w)}(w^{-1}\mf{n}_{w,\ol{U}}w,\mathbf{1})\simeq \tau_w^{-1}\HH^{\ell(w)}(\mf{n}_{w,\ol{U}},\mathbf{1})^{N_{w,\ol{U}}}\simeq L(\chi_w)$. Moreover, by the example in the $w = 1$ case we have a natural map $\iota: \Pi^{\la, \mf{n}_n}  \hookrightarrow (I_w\Pi^{\la})^{w^{-1}\mf{n}w}$ given by mapping a vector $v$ to the associated constant function. On cohomology we then get a map
$\iota_{\ast}: \tau_w^{-1}\HH^{i- \ell(w)}(\Pi^{\la, \mf{n}_n}) \otimes L(\chi_w) \to \tau_w^{-1}\HH^{i- \ell(w)}((I_w\Pi^{\la})^{w^{-1}\mf{n} w}) \otimes L(\chi_w)$, which upon composing with $s$ gives a $T$-equivariant endomorphism $\on{CT}_w \circ s \circ \iota_{\ast}: \tau_w^{-1}\HH^{i- \ell(w)}(\Pi^{\la, \mf{n}_n}) \otimes L(\chi_w) \to \tau_w^{-1}\HH^{i- \ell(w)}(\Pi^{\la, \mf{n}_n}) \otimes L(\chi_w)$. 
Note in fact that the map $\tau_w^{-1}\Pi^{\la, \mf{n}_n} \to I_w(\Pi^{\la})^{w^{-1}\mf{n}w}$ factors through $(I_w\Pi^{\la})^{w^{-1}Uw,w^{-1}\mf{n}_nw}$. After taking $N_{n,0}$ invariants we thus also have an injection $\iota_{0}\tau_w^{-1}\Pi^{\la,N_{n,0}} \hookrightarrow (I_w\Pi^{\la})^{w^{-1}N_0w}$. Likewise, taking ($w'$-twisted for varying $w'$) $N_0$-invariants of \eqref{eq: FirstConnectingMapr_1} and its invariants for all $i$ yields a map
\[
s_{0}\circ \iota_{0,\ast}:\HH^{i-\ell(w)}(\tau_w^{-1}\Pi^{\la,N_{n,0}}\otimes_L L(\chi_w)) \to \HH^i((I_w\Pi^{\la})^{N_0}),
\]
such that the endomorphism $\on{CT}_{w}\circ s_0\circ \iota_{\ast, 0}: \HH^{i-\ell(w)}(\tau_w^{-1}\Pi^{\la,N_{n,0}}\otimes_L L(\chi_w)) \to \HH^{i-\ell(w)}(\tau_w^{-1}\Pi^{\la,N_{n,0}}\otimes_L L(\chi_w))$ is merely the map induced from $\on{CT}_w\circ s \circ \iota_{\ast}$ by passing to $N_{n,0}$-invariants. The proposition then boils down to the following lemma.
\begin{lemma}
    The composition
    \[
    \on{CT}_w \circ s\circ \iota_{\ast}: \HH^{i - \ell(w)}(\Pi^{\la,\mf{n}_n}(\chi_w)) \xrightarrow{\iota_{\ast}} \HH^{i - \ell(w)}((I_{w}\Pi^{\la})^{w^{-1}\mf{n}w}(\chi_w)) \xrightarrow{s} \HH^{i}(I_w\Pi^{\la})^{\mf{n}}) \xrightarrow{\on{CT}_w} \HH^{i - \ell(w)}(\Pi^{\la,\mf{n}_n}(\chi_w))
    \]
    is the identity map.
\end{lemma}
\begin{proof}
    We pass to the associated (total) Chevalley--Eilenberg complexes, and use the explicit description of $\on{CT}_w$ from \eqref{eq: ExplicitCocycleFormula}. In particular, we need to explicitly describe \[
    s:\HH^{i-\ell(w)}(\on{Tot}(\CE(w^{-1}\mf{n}w, I_w\Pi^{\la})))\otimes \bigotimes_{i = 1}^{\ell(w)}\ip{w^{-1}X_{-\mf{r}_{i}}w}^{\ast} \to \HH^i(\on{Tot}(\CE(\mf{n},I_w\Pi^{\la}))).
    \]First, note the map $\iota_{\ast}:\HH^{i-\ell(w)}(\on{Tot}(\CE(\mf{n}_n,\Pi^{\la})) \to \HH^{i-\ell(w)}(\on{Tot}(\CE (w^{-1}\mf{n}w,I_w\Pi^{\la})))$ sends a cocycle $\sum Y_A^{\vee}\otimes f_A \mapsto \sum w^{-1}(Y_A^{\vee})\otimes \iota(f_A)$. Now we first look at the map $c_{\mf{r}_1}: \HH^{i-1}(\on{Tot}(\bigwedge (w^{-1}w'\mf{n}w(w')^{-1}w)^{\ast}\otimes I_w\Pi^{\la}))\otimes \ip{w^{-1}X_{-\mf{r}_1}w} \to \HH^i(\on{Tot}(\CE(\mf{n}, I_w\Pi^{\la})))$. Indeed, $c_{\mf{r}_1}$ arises from an edge/connecting map $\HH^1(\ip{w^{-1}X_{-\mf{r}_1}w}, \HH^{i-1}(\on{Tot}(\on{CE}(w^{-1}\mf{n}_{w'}w, I_w\Pi)))) \to \HH^i(\on{Tot}(\on{CE}(\mf{n}, I_w\Pi^{\la})))$ of a Hochschild-Serre type spectral sequence, so is naturally an inflation map which in terms of explicit cocycles and using the Chevalley--Eilenberg complex for $\ip{w^{-1}X_1w}$ sends the class of 
    \[
[\sum w^{-1}(X_{-\mf{r}_1})\otimes w^{-1}(X_A)\otimes w^{-1}(Y_B^{\vee})\wedge w^{-1}(Z_C^{\vee})\otimes f_{B,C}] \mapsto [\sum w^{-1}(X_{-r_1}^{\vee})\wedge w^{-1}(X_A^{\vee})\wedge w^{-1}(Y_B^{\vee}) \wedge w^{-1}(Z_C^{\vee}) \otimes f].
    \]
Iterating this description for the other $s_{\mf{r}_i}$, we see that the map $s = s_{\mf{r}_{\ell(w)}}\circ \dots \circ s_{\mf{r}_1}$
explicitly sends a cocycle
\begin{equation}
[\sum w^{-1}(Y_B^{\vee}) \wedge w^{-1}(Z_C^{\vee}) \otimes f_{A,B,C}] \mapsto [\sum w^{-1}(X_{[\ell(w)]}^{\vee})\wedge w^{-1}(Y_B^{\vee})\wedge w^{-1}(Z_C^{\vee}) \otimes f_{B,C}].
\end{equation}

Altogether, the map $s \circ \iota_{\ast}$ sends 
\[
[\sum Y_B^{\vee}\otimes f_B] \mapsto [\sum w^{-1}(X_{[\ell(w)]}^{\vee})\wedge w^{-1}(Y_B^{\vee})\otimes \iota(f_B)].
\]
For the purposes of proving that $s \circ \iota_{\ast}$ is a section, we may assume that we start with $v_0 =  f_{\emptyset}\otimes w^{-1}(X_{[\ell(w)]}^{\vee}$ for $f_{\emptyset} \in \Pi^{i-\ell(w),\la,\mf{n}_n}\subset \on{Tot}^{i-\ell(w)}(\on{CE}(\mf{n}_n, \Pi^{\la}))$, since $\Pi^{\la}$ is a complex of $R\Gamma_{\mf{n}_n}$-acyclic objects. Then $(s \circ \iota_{\ast})([v_0]) = [w^{-1}X_{[\ell(w)]}^{\vee}\otimes \iota(f_{\emptyset})]$

On the other hand, from \eqref{eq: ExplicitCocycleFormula} the composition of $f \mapsto f(w)$, together with the projection map
    \[
    \ol{E}_{\ell(w)}:\HH^{i}(\Pi^{\la, \mf{n}_{w}}) \to \HH^{i - \ell(w)}(\tau_w^{-1}\Pi^{\la, \mf{n}_n}\otimes_LL(\chi_w))
    \]
sends 
\[
[\sum w^{-1}(Y_B^{\vee})\wedge w^{-1}(X_{[\ell(w)]}^{\vee})\otimes f] \mapsto
[\sum w^{-1}(Y_B^{\vee})\otimes f(w) \otimes  w^{-1}(X_{[1,\dots, \ell(w)]}^{\vee})],
\]
Then the composition with $s \circ \iota_{\ast}$ sends $[v_0]  \mapsto [\iota(f_{\emptyset})(w)\otimes w^{-1}X_{[\ell(w)]}^{\vee}] = [f_{\emptyset}\otimes w^{-1}X_{[\ell(w)]}^{\vee}] = [v_0]$ which is indeed the identity.

\end{proof}
This lemma finishes the proof of the first part of the proposition, and by our discussion above, implies the second part of the proposition by passing to $N_{n,0}$-invariants.

\end{proof}

\begin{remark}
    One might wonder to what extent this argument could be generalized to give surjectivity on the other terms $\HH^{i-j}(R\Gamma_{\mf{n}_n}(\Pi \otimes \HH^j(\mf{n}_{w,\ol{U}},\mathbf{1})))$ for $j < \ell(w)$. The proof above certainly fails, as there is no clear analogue for this section $s$ we construct.

    More unfortunately, we do not expect such a surjectivity in general. Rather, we expect that the constant term must satisfy an algebraicity condition with respect to the $\mf{t}$-action (Roughly speaking, the $N_0$ action on functions on some $PwN$ intertwines positive and negative roots, which makes torus elements appear). Thus, one should only expect the ``maximal'' degree shift to extend over the whole weight space.
\end{remark}
\subsection{Degree shifting maps}
We can now combine our work with previous sections to realize all possible degrees of Jacquet functors for $G = \GL_n/F$ in the middle degree Jacquet functors of $\wt{G}$.
\begin{theorem}
\label{theorem: JacquetSubQuotient}
    Let $\wt{K}^p \subset \wt{G}(\A_{F^+}^{p,\infty})$ be a good subgroup which is decomposed with respect to $\ol{P}$. Let $\mf{m}\subset \T_G^{S}(K^p)$ be a decomposed generic, non-Eisenstein maximal ideal, and set $\wt{\mf{m}}:= \mc{S}^{\ast}(\mf{m})$
    
    Then for all $i$ there exists $w_i\in W^{\oP}$ such that $J_{B}(\wt{\HH}^d(\widetilde{K}^p)_{\wt{\mf{m}}}^{\la})$ and $\HH^d(J_{B}\Pi_{c}(\wt{K}^p)_{\wt{\mf{m}}}^{\la})$ admits $\HH^{i}(J_B(\tau_{w_i}^{-1}(\Pi(K^p)_{\mf{m}}^{\la})))\otimes_LL(\chi_{w_i})$ as a subquotient essentially admissible representation of $T(F_p^+)$.
\end{theorem}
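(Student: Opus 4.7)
The plan is to combine three main inputs: (i) Proposition \ref{prop: InductionAsDirectSummandOfBoundary}, which identifies $\on{ct}-\Ind_{\oP_0}^{\tG_0}\Pi(K^p)_{\mf{m}}$ as a $\wt{\T}^S$-equivariant direct summand of $\Pi_{\partial}(\wt{K}^p)_{\wt{\mf{m}}}$ (using that $\wt{\mf{m}}=\mc{S}^{\ast}(\mf{m})$ is non-Eisenstein, together with the Satake-transfer compatibility via Proposition \ref{proposition: EffectofHeckePolyUnderS}); (ii) the Bruhat filtration of Section \ref{Section3: ParabolicInduction}, together with the short exact sequences of Jacquet modules in Lemma \ref{lemma: BruhatSES}; and (iii) the ``top-degree constant term'' surjection of Proposition \ref{prop: MainDegreeShiftProp}. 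These ingredients together let one extract the Jacquet modules of $\Pi(K^p)_{\mf{m}}$ in every degree from the middle-degree Jacquet module of a parabolic induction, and the fundamental short exact sequence of Lemma \ref{lemma: FundamentalLongExactSequence} (available since $\wt{\mf{m}}$ is decomposed generic) then transfers the result into the middle-degree cohomology groups that appear in the statement.

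Concretely, set $\Pi := \Pi_{\oP}(\wt{K}^p)_{\wt{\mf{m}}}$ and $\Ind\Pi^{\la} := \Ind_{\oP_0}^{\tG_0}\Pi^{\la}$. First I would observe, via Proposition \ref{prop: InductionAsDirectSummandOfBoundary}, that for every $m$ the module $\HH^m(J_B(\Ind\Pi^{\la}))$ is a $T(F_p^+)$-equivariant direct summand of $\HH^m(J_B(\Pi_\partial(\wt{K}^p)_{\wt{\mf{m}}}^{\la}))$ in the category of essentially admissible representations. Iterating the short exact sequence of Lemma \ref{lemma: BruhatSES} along the stratification $\tG_0 \supset \tG_{\le k}$ then exhibits $\HH^m(J_B(I_w\Pi^{\la}))$ for each $w\in W^{\oP}$ as a subquotient of $\HH^m(J_B(\Ind\Pi^{\la}))$. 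Proposition \ref{prop: MainDegreeShiftProp} gives a continuous $T^+$-equivariant surjection on the level of $N_0$-invariants which, upon passing to Hausdorff quotients and finite slope parts (exact on essentially admissible representations by \cite[Theorem 1.1, Proposition 4.9]{FuDerived}), yields a $T(F_p^+)$-equivariant surjection
\[
\HH^m(J_B(I_w\Pi^{\la})) \twoheadrightarrow \HH^{m-\ell(w)}(J_B(\tau_w^{-1}\Pi_G^{\la}))\otimes_L L(\chi_w).
\]
The $\oP$-equivariant quasi-isomorphism $\Inf_{G_0}^{\oP_0}\Pi_G(K^p)_{\mf{m}}\to\Pi$ and Lemma \ref{lemma: IndependenceOfHomotopyForJacquetFunctors} identify the right-hand side with $\HH^{m-\ell(w)}(J_B(\tau_w^{-1}\Pi(K^p)_{\mf{m}}^{\la}))\otimes_L L(\chi_w)$.

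To conclude, for each target degree $i$ and for the claim concerning $\HH^d(J_B(\Pi_c(\wt{K}^p)_{\wt{\mf{m}}}^{\la}))$ I would take $m=d-1$ and $w_i\in W^{\oP}$ with $\ell(w_i)=d-1-i$, realising $\HH^i(J_B(\tau_{w_i}^{-1}\Pi(K^p)_{\mf{m}}^{\la}))\otimes_L L(\chi_{w_i})$ as a subquotient of $\HH^{d-1}(J_B(\Pi_\partial(\wt{K}^p)_{\wt{\mf{m}}}^{\la}))$, which by Lemma \ref{lemma: FundamentalLongExactSequence} embeds as a subrepresentation in $\HH^d(J_B(\Pi_c(\wt{K}^p)_{\wt{\mf{m}}}^{\la}))$. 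For the claim about $J_B(\wt{\HH}^d(\wt{K}^p)_{\wt{\mf{m}}}^{\la})\simeq \HH^d(J_B(\Pi(\wt{K}^p)_{\wt{\mf{m}}}^{\la}))$ I would re-run the same argument with $m=d$ and a (possibly different) $w_i$ of length $d-i$, this time using the surjection $\HH^d(J_B(\Pi(\wt{K}^p)_{\wt{\mf{m}}}^{\la}))\twoheadrightarrow \HH^d(J_B(\Pi_{\partial}(\wt{K}^p)_{\wt{\mf{m}}}^{\la}))$ from the same lemma. The main technical obstacle is to carry $T(F_p^+)$-equivariance (rather than merely $T^+$- or $Y^+$-equivariance) through every step: the Atkin--Lehner operator $U_z$ is only a priori $Y^+$-equivariant, so one must verify that the Bruhat SES, the inductive constant-term section $s$ constructed in Proposition \ref{prop: MainDegreeShiftProp}, and the direct-summand decomposition of Proposition \ref{prop: InductionAsDirectSummandOfBoundary} all remain compatible after passing to finite-slope parts and essentially admissible quotients. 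The twist by $\chi_{w_i}$ (identified with $\HH^{\ell(w_i)}(\mf{n}_{w_i,\ol{U}},\mathbf{1})^{N_{w_i,\ol{U}}}$ via \eqref{eq: DefinitionOfChi_w}) is precisely the cocycle needed to absorb the failure of the inflation $\Inf_{G_0}^{\oP_0}\Pi_G \to \Pi$ to be $\oP_0$-equivariant on the nose, so the careful bookkeeping in the proof of Proposition \ref{prop: MainDegreeShiftProp} is what makes the full $T(F_p^+)$-action descend to the degree-shifted target.
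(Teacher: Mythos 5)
Your proposal follows the same strategy as the paper's proof: identify the Jacquet modules of the parabolic induction as subquotients of the boundary Jacquet modules via \cref{prop: InductionAsDirectSummandOfBoundary}, filter by Bruhat cells using \cref{lemma: BruhatSES}, apply the constant-term surjection of \cref{prop: MainDegreeShiftProp}, and then transfer into $\HH^d(J_B\Pi_c)$ and $J_B(\wt\HH^d)$ via \cref{lemma: FundamentalLongExactSequence} with suitable choices of $\ell(w_i)$ for each target degree.

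The one point you flag as a ``technical obstacle'' but do not actually resolve --- descending the surjection of \cref{prop: MainDegreeShiftProp} through the finite-slope functor to obtain a surjection of Jacquet modules --- is handled in the paper by a short but essential observation: since $w_i\in W^{\oP}$ is a minimal-length representative, one has $w_iN_0w_i^{-1}\supset N_{n,0}$, hence $w_iT(F_p^+)^{++}w_i^{-1}\subset T_n(F_p)^{++}$. Thus a single choice of $z\in T(F_p^+)^{++}$ governs the $U_z$-action compatibly on both sides after the $\tau_{w_i}$-twist, and one can apply \cite[Theorem 4.5]{FuDerived} to the Hausdorff cohomology to preserve surjectivity at the finite-slope level. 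Without this containment, the source and target finite-slope parts would be taken with respect to incompatible submonoids and the surjection would not descend. So the proposal identifies the correct strategy and correct lemmas, but leaves open precisely the compatibility check that makes the degree-shifting argument go through; you should supply the containment $w_iT(F_p^+)^{++}w_i^{-1}\subset T_n(F_p)^{++}$ explicitly to close this gap.
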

\begin{proof}
  Note that $\HH^i(J_B(\Pi_{\mf{m}}(K^p)^{\la}))  = 0$ if $i \notin [0,\dim X_{K^pK_p}^G] = [0,d-1]$. Thus we may assume $i \le d-1$.
  
  Recall by \cref{lemma: FundamentalLongExactSequence} and \cref{prop: InductionAsDirectSummandOfBoundary}, we have $\HH^d(J_{B}(\Ind_{\oP_0}^{\tG_0}\Pi(K^p)_{\mf{m}}^{\la}))$ is a quotient of $J_{B}(\wt{\HH}^d(\wt{K}^p, L)_{\wt{\mf{m}}}^{\la})$, and $\HH^{d-1}(J_{B}(\Ind_{\oP_0}^{\tG_0}(\Pi_{\oP}(\wt{K}^p)_{\wt{\mf{m}}}^{\la}))$ is a subspace of $\HH^d(J_{B}(\Pi_{c}(\wt{K}^p)_{\wt{\mf{m}}}^{\la}))$. We now present the rest of the proof for compactly supported cohomology, with the only difference being the choice of $w_i$ (in particular, the value of $\ell(w_i)$).
  
  Choose $w_i$ such that $\ell(w_i) = [F^+:\Q]n^2 - 1- i$, so that $d - \ell(w_i) = i$. Applying \cref{prop: MainDegreeShiftProp} for $w = w_i$, $\Pi = \Pi_{\oP}(\wt{K}^p)_{\wt{\mf{m}}}$ and $\Pi_G = \Pi(K^p)_{\mf{m}}$ yields a continuous $T(F_p^+)^+$-equivariant surjection $\HH^{d-1}((I_{w_i}\Pi_{\oP}(\wt{K}^p)_{\wt{\mf{m}}}^{\la})^{N_0}) \to \HH^{i}(\tau_{w_i}^{-1}(\Pi(K^p)_{\mf{m}}^{\la})^{N_{n,0}})\otimes_LL(\chi_{w_i})$. By \cref{lemma: BruhatSES} and the previous paragraph, it suffices check that this surjection remains a surjection after passing to finite slope parts. 

  Recalling \cite[Proposition 4.9]{FuDerived}, Jacquet functors may be computed as the finite slope part of the Hausdorff quotient of $N_0$-cohomology. In particular, passing to Hausdorff quotients preserves surjectivity, so we get a surjection of $T(F_p^+)^+$-modules $\HH_{\on{haus}}^d((I_{w_i}\Pi_{\oP}(\wt{K}^p)_{\wt{\mf{m}}}^{\la})^{N_0}) \to \HH_{\on{haus}}^{i}(\tau_{w_i}^{-1}(\Pi(K^p)_{\mf{m}}^{\la})^{N_{n,0}})\otimes_LL(\chi_{w_i})$. Now recall by the definition of $W^{\oP}$ we have $w_iN_0w_i^{-1}\supset N_{n,0}$, which implies $w_iT(F_p^+)^+w_i^{-1}\subset T_n(F_p)^+$, and in fact $w_iT(F_p^+)^{++}w_i^{-1}\subset T_n(F_p)^{++}$. Choosing an element $z \in T(F_p^+)^{++}$, we thus have by \cite[Theorem 4.5]{FuDerived}
  a surjection $\HH_{\on{haus}}^d((I_{w_i}\Pi_{\oP}(\wt{K}^p)_{\wt{\mf{m}}}^{\la})_{z-\on{fs}}^{N_0}) \to \tau_{w_i}^{-1}(\HH_{\on{haus}}^{i}((\Pi(K^p)_{\mf{m}}^{\la})^{N_{n,0}})_{w_izw_i^{-1}-\on{fs}})\otimes_LL(\chi_{w_i})$, which in turn is a surjection
  $\HH^d(J_{B}(I_{w_i}\Pi(K^p)_{\mf{m}}^{\la})) \twoheadrightarrow \tau_{w_i}^{-1}\HH^{i}(J_{B_n}(\Pi(K^p)_{\mf{m}}^{\la}))\otimes_L L(\chi_{w_i})$, as desired.
\end{proof}

Consider the map
$j_{w}:\widehat{T(F_p^+)} \to \widehat{T(F_p^+)}$ sending a character $c$ to $w^{-1}(c)\cdot \chi_w$. Concretely, on $\mc{W}$ this map sends a weight $\lambda \mapsto j_w(\lambda) = w^{-1}(\lambda - w_0^{\tG}\rho^{\tG}) + w_0^{\tG}\rho^{\tG}$, where $\rho^{\tG}$ is the half sum of positive roots.

\begin{theorem}
\label{theorem: Embeddings of eigenvarieties}
    For any $i \in [0, d-1]$ let $w_i\in W^{\oP}$ be as in \cref{theorem: JacquetSubQuotient}. Then the map $\mc{S}^{\ast}\times j_{w_i}:\Spec\T^S(K^p)_{\mf{m}}\times \widehat{T(F_p^+)} \to \Spec \wt{\T}^S(\wt{K}^p)_{\wt{\mf{m}}}\times \widehat{T(F_p^+)}$ induces a closed embedding of rigid varieties $\mc{E}^i(K^p)_{\mf{m}} \hookrightarrow
    \mc{E}_c^d(\wt{K}^p)_{\wt{\mf{m}}}$ fitting into the following diagram 
    \[
    \begin{tikzcd}
        \mc{E}^i(K^p)_{\mf{m}}\ar[r, hook]\ar[d] &\mc{E}_{c}^d(\wt{K}^p)_{\wt{\mf{m}}}\ar[d] \\
        \mc{W}\ar[r,"{\lambda \mapsto j_{w_i}(\lambda)}"]&\mc{W}
    \end{tikzcd}.
    \] 
\end{theorem}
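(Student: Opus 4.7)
The plan is to transport the subquotient statement of \cref{theorem: JacquetSubQuotient} from essentially admissible $T(F_p^+)$-representations to coherent sheaves on $\widehat{T(F_p^+)}$, and then read off the closed embedding from the definition of eigenvarieties as supports of Hecke actions on coherent sheaves.

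First, I would check that the operation $V \mapsto \tau_{w_i}^{-1}V \otimes_L L(\chi_{w_i})$ on essentially admissible $T(F_p^+)$-representations corresponds, after passing to strong duals, to pushforward along $j_{w_i}: \widehat{T(F_p^+)} \to \widehat{T(F_p^+)}$ on associated coherent sheaves. This is essentially a bookkeeping exercise on eigencharacters: a character $\delta$ appearing in $V$ becomes a character $w_i^{-1}(\delta) \cdot \chi_{w_i} = j_{w_i}(\delta)$ in $\tau_{w_i}^{-1}V \otimes_L L(\chi_{w_i})$. The identification of the additive formula $j_{w_i}(\lambda) = w_i^{-1}(\lambda + w_0^{\tG}\rho^{\tG}) - w_0^{\tG}\rho^{\tG} = w_i^{-1}\cdot_{\on{alg}}\lambda$ on algebraic weights with the multiplicative formula $w_i^{-1}(\delta)\cdot \chi_{w_i}$ follows directly from the description \eqref{eq: DefinitionOfChi_w} of $\chi_{w_i}$.

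Next, I would apply \cref{theorem: JacquetSubQuotient} with $\ell(w_i) = d - 1 - i$ to obtain an $\wt{\T}^S(\wt{K}^p)_{\wt{\mf{m}}}$-equivariant realization of $\tau_{w_i}^{-1}\HH^i(J_{B_n}(\Pi(K^p)_{\mf{m}}^{\la})) \otimes_L L(\chi_{w_i})$ as a subquotient of $\HH^d(J_B(\Pi_c(\wt{K}^p)_{\wt{\mf{m}}}^{\la}))$, where the Hecke equivariance on the left-hand side is through the Satake transfer $\mc{S}$. Dualizing and applying the first step, this exhibits $(j_{w_i})_{\ast}\mc{M}_{\mf{m}}^i$ (with $\wt{\T}^S(\wt{K}^p)_{\wt{\mf{m}}}$-action via $\mc{S}$) as a $\wt{\T}^S(\wt{K}^p)_{\wt{\mf{m}}}$-equivariant subquotient of the coherent sheaf on $\widehat{T(F_p^+)}$ whose Hecke support is $\mc{E}_c^d(\wt{K}^p)_{\wt{\mf{m}}}$.

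Finally, I would conclude by taking supports. Writing $\mc{A}^i$ and $\wt{\mc{A}}_c^d$ for the coherent $\mc{O}_{\widehat{T}}$-algebras generated by the images of the Hecke actions, the subquotient relation yields a surjection $(j_{w_i})^{\ast}\wt{\mc{A}}_c^d \twoheadrightarrow \mc{A}^i$ of coherent sheaves of algebras (compatibly with $\mc{S}$), hence a closed embedding of relative Specs over $\widehat{T(F_p^+)}$. Since $j_{w_i}$ is an isomorphism of rigid spaces (a composition of the $w_i^{-1}$-action and translation by $\chi_{w_i}$), the induced map $\mc{S}^{\ast}\times j_{w_i}: \mc{E}^i(K^p)_{\mf{m}} \to \mc{E}_c^d(\wt{K}^p)_{\wt{\mf{m}}}$ is the desired closed embedding, and the diagram commutes essentially by construction, since both weight maps are pulled back from the projection to $\widehat{T(F_p^+)}$. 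The main obstacle I anticipate is cleanly identifying the twist-and-tensor operation with $(j_{w_i})_{\ast}$ at the level of coherent sheaves rather than just on points, but once dualization of essentially admissible representations is set up as in \cite[\textsection 2.3]{EmertonInterpolation}, this reduces to the weight computation described in the first paragraph.
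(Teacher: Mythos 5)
Your proposal is correct and takes essentially the same approach as the paper: both proceed by invoking \cref{theorem: JacquetSubQuotient} to exhibit the twisted dual Jacquet module for $G$ as a $\wt{\T}^S$-equivariant subquotient (via $\mc{S}$) of $\mc{M}_{c,\wt{\mf{m}}}^d$, deduce a surjection of the coherent $\mc{O}_{\widehat{T}}$-algebras $\wt{\mc{A}} \twoheadrightarrow L(\chi_{w_i}) \otimes_L \tau_{w_i}^{-1}\mc{A}$, and pass to relative spectra with the weight shift recorded by $j_{w_i}$. The extra paragraph you include identifying the twist $\tau_{w_i}^{-1}(-)\otimes L(\chi_{w_i})$ with $(j_{w_i})_\ast$ on coherent sheaves is the same bookkeeping the paper folds into the phrase ``the map on $\widehat{T}$ is twisted via $j_{w_i}$.''
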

\begin{proof}
    
    Let $\mc{M}_{c,\wt{\mf{m}}}^d:= (\HH^d(J_{B}(\Pi_{c}(\wt{K}^p)_{\wt{\mf{m}}}^{\la, N_0})))'$ and $\mc{M}_{\mf{m}}^i := \HH^i(J_{B_n}(\Pi(K^p)_{\mf{m}}^{\la}))$ be the strong duals, which are coadmissible $\mc{O}_{\widehat{T}}(\widehat{T})$-modules. Recall the eigenvarieties $\mc{E}_c^d(\wt{K}^P)_{\wt{\mf{m}}}$ and $\mc{E}^i(K^p)_{\mf{m}}$ are defined via the morphisms $\psi_c^d: \wt{\T}^S(\wt{K}^p)_{\wt{\mf{m}}} \to \End_{\mc{O}_{\widehat{T}}}(\mc{M}_{c,\wt{\mf{m}}}^d)$ and $\psi^i: \wt{\T}^S(\wt{K}^p)_{\wt{\mf{m}}} \to \End_{\mc{O}_{\widehat{T}}}(\mc{M}_{\mf{m}}^i)$, and passing to the associated $\mc{O}_{\widehat{T}}$-algebras $\wt{\mc{A}}, \mc{A}$ generated by the images of $\psi_c^d, \psi^i$ respectively. By \cref{theorem: JacquetSubQuotient} the map $S: \wt{\T}^S(\wt{K}^p)_{\wt{\mf{m}}} \to \T^S(K^p)_{\mf{m}}$ then induces a surjective morphism of $\mO_{\widehat{T}}(\wh{T})$-algebras $\wt{\mc{A}} \to L(\chi_{w_i})\otimes_L\tau_{w_i}^{-1}\mc{A}$.
    Passing to the relative spectra over $\widehat{T}$ gives the desired closed embedding, where we note that the map on $\widehat{T}$ is twisted via $j_{w_i}$ to incorporate the $T$-action on $L(\chi_{w_i})\otimes_L\tau_{w_i}^{-1}A$.
\end{proof}

\section{The unitary group in middle degree}
\label{Section5: UnitaryGroup}
 \cref{theorem: Embeddings of eigenvarieties} enables us to study eigenvarieties for $G$ via a single eigenvariety for $\tG$. The goal of this section is study $\tG$, and in particular prove \cref{theorem: Step 3}. For the rest of this article, we will often suppress tame level from our notation for eigenvarieties and denote $\mc{E}_{c,\wt{\mf{m}}}^d:= \mc{E}_{c}^d(\wt{K}^p)_{\wt{\mf{m}}}, \mc{E}_{\wt{\mf{m}}}^d:= \mc{E}^d(\wt{K}^p)_{\wt{\mf{m}}}$.

The proof is largely inspired by the treatment given in \cite[Section 3]{BHSAnnalen}, which combines locally analytic representations with standard eigenvariety tools from \cite{Buz07}. We note, however, the key input into their discussion is a Banach representation $\Pi$ such that $\Pi|_{H}\simeq \mc{C}(H,L)^{\oplus M}$ for some compact open subgroup $H \subset \tG_0 = \tG(\mO_{F_p^+})\simeq \GL_{2n}(\mc{O}_{F,p})$ (in other words, $\Pi$ is an \textit{injective} admissible representation of $\tG_0$). Such a property is typically \textit{not} satisfied by the (compactly supported) completed cohomology groups $\wt{\HH}_{(c)}^i(\wt{K}^p, L)_{\wt{\mf{m}}}$ whenever $\wt{\mf{m}}$ is Eisenstein. The key fact we will need is that under a genericity assumption on $\wt{\mf{m}}$, the complex $\Pi_c(\wt{K}^p)_{\wt{\mf{m}}}$ can be represented by a finite complex of Banach representations $\Pi_{c, \wt{\mf{m}}}^{\on{bdd}}$ such that $\Pi_{c, \wt{\mf{m}}}^{\on{bdd},i}|_{H}\simeq \mc{C}(H, L)^{\oplus m_i}$ for all $i$, \textit{concentrated in degrees} $[0, d]$. This property gives us good control on derived Jacquet modules in middle degree, and allows us to push through the arguments of \cite{BHSAnnalen}.

\subsection{Compactly supported cohomology}

We first recall some key definitions and properties about torsion and completed cohomology in our setting.
\begin{definition}
\label{definition: DecomposedGeneric}
    A continuous representation $\overline{\rho}: \Gal_F \to \GL_d(\overline{\F}_p)$ is called \textit{decomposed generic} if there exists a prime $\ell$ which splits completely in $F$, and for all places $\nu \mid \ell$ of $F$, $\overline{\rho}|_{\Gal_{F_{\nu}}}$ is unramified with the eigenvalues $\alpha_1,\dots, \alpha_d$ of $\overline{\rho}(\Frob_{\nu})$ satisfying $\alpha_i/\alpha_j \neq \ell$ for all $i \neq j$.
\end{definition}
\begin{theorem}[Caraiani--Scholze \cite{CS19}, Koshikawa \cite{KoshikawaGeneric}]
\label{theorem: CaraianiScholzeTorsionVanishing}
    Let $\wt{\mf{m}}$ be a maximal ideal in the support of $\HH^{\ast}(\widetilde{X}_{\wt{K}^p\wt{K}_p}, \F_p)$ such that $\overline{\rho}_{\wt{\mf{m}}}$ is decomposed generic.
    Then we have $\HH^i(\wt{X}_{\wt{K}^p\wt{K}_p}, \F_p)_{\wt{\mf{m}}} = 0$ for $i < d$, $\HH_c^i(\wt{X}_{\wt{K}^p\wt{K}_p}, \F_p)_{\wt{\mf{m}}} = 0$ for $i > d,$  and $\wt{\HH}^i(\wt{K}^p, \Z_p) = 0$ for $i > d$. Consequently, the integral completed cohomology $\wt{\HH}^i(\wt{K}^p, \Z_p)_{\wt{\mf{m}}}$ is concentrated in degree $i = d$.
\end{theorem}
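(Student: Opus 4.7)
The plan is to assemble the theorem from the cited works, with the two mod-$p$ vanishing statements coming from Caraiani--Scholze--Koshikawa and the completed-cohomology statement from Scholze. I would invoke the main torsion-vanishing theorem of Caraiani--Scholze \cite{CS19}, as extended by Koshikawa \cite{KoshikawaGeneric} to the present quasi-split setting $\wt{G} = U(n,n)/F^+$: the associated unitary Shimura variety $\wt{X}_{\wt{K}^p\wt{K}_p}^{\wt{G}}$ has complex dimension $d = n^2[F^+:\Q]$, and after localising at a decomposed generic maximal ideal $\wt{\mf{m}}$ the mod-$p$ étale cohomology concentrates in degrees $\ge d$ while compactly supported mod-$p$ cohomology concentrates in degrees $\le d$. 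This is proved via the Hodge--Tate period map on the perfectoid Shimura variety at infinite level and an analysis of its geometry under the decomposed-generic hypothesis, and directly gives the first two stated vanishings.

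For $\wt{\HH}^i(\wt{K}^p, \Z_p) = 0$ when $i > d$, I would invoke Scholze's theorem \cite[Thm.~4.2.1]{Sch15} on completed étale cohomology of Shimura varieties, which holds globally (without any localisation) and is again a consequence of the perfectoid Shimura variety construction together with the Hodge--Tate period map. The lift from mod-$p$ coefficients to $\Z_p$ coefficients follows from the long exact sequence attached to $0 \to \Z_p \to \Z_p \to \F_p \to 0$ together with Nakayama, using that $\wt{\HH}^i(\wt{K}^p, \Z_p)$ is a finitely generated $\Z_p[[\wt{K}_p]]$-module.

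Finally, for the consequential statement that $\wt{\HH}^i(\wt{K}^p, \Z_p)_{\wt{\mf{m}}}$ is concentrated in degree $d$: the vanishing for $i > d$ is immediate from the preceding. For $i < d$, one uses that $\wt{\HH}^i(\wt{K}^p, \F_p)_{\wt{\mf{m}}} \simeq \varinjlim_{\wt{K}_p \to 1}\HH^i(\wt{X}_{\wt{K}^p\wt{K}_p}, \F_p)_{\wt{\mf{m}}}$ since cohomology commutes with the relevant direct limit and the idempotent associated to $\wt{\mf{m}}$ is compatible across levels; the first mod-$p$ vanishing then gives $\wt{\HH}^i(\wt{K}^p, \F_p)_{\wt{\mf{m}}} = 0$ for $i < d$, and another Nakayama argument (applied to the finitely generated $\Z_p[[\wt{K}_p]]_{\wt{\mf{m}}}$-module $\wt{\HH}^i(\wt{K}^p, \Z_p)_{\wt{\mf{m}}}$) lifts this to $\Z_p$. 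The main obstacle is really just verifying that the hypotheses of \cite{CS19} and \cite{KoshikawaGeneric} match the present setup (in particular that the decomposed-generic condition suffices and that the Koshikawa extension covers $U(n,n)$); the substantive mathematical content lives entirely in those cited works.
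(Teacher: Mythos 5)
Your overall plan matches the paper's: the two mod-$p$ vanishings are quoted from Caraiani--Scholze as extended by Koshikawa, the unlocalised bound $\wt{\HH}^i(\wt{K}^p,\Z_p)=0$ for $i>d$ is quoted from Scholze's perfectoid Shimura variety work, and the concentration statement is a formal consequence; the paper simply cites \cite[Theorem 1.3]{KoshikawaGeneric} for the first part and \cite[Theorem 2.6.2, Lemma 4.6.2]{CS19} for the completed-cohomology part.

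There is, however, a genuine technical slip in the lifting steps. You twice invoke Nakayama by asserting that $\wt{\HH}^i(\wt{K}^p,\Z_p)$ (respectively its localisation at $\wt{\mf{m}}$) is a finitely generated module over $\Z_p[[\wt{K}_p]]$. That is false: completed \emph{homology} $\wt{\HH}_i(\wt{K}^p,\Z_p)$ is a finitely generated $\Z_p[[\wt{K}_p]]$-module, but completed \emph{cohomology} is its Pontryagin/Schikhof dual, an admissible $\wt{K}_p$-representation that is typically of infinite rank over $\Z_p$ and is not finitely generated over the Iwasawa algebra. As written, the Nakayama arguments have no footing. The repair is routine: either pass to completed homology, where $\wt{\HH}_i(\wt{K}^p,\Z_p)_{\wt{\mf{m}}}$ is finitely generated over the local ring $\Z_p[[\wt{K}_p]]$ and $\wt{\HH}_i(\wt{K}^p,\F_p)_{\wt{\mf{m}}}\simeq \wt{\HH}_i(\wt{K}^p,\Z_p)_{\wt{\mf{m}}}\otimes_{\Z_p[[\wt{K}_p]]}\F_p$ after invoking the appropriate universal-coefficient statement, so Nakayama applies and the duality gives the cohomological statement; or, more elementarily, induct on $n$ at each finite level $\wt{K}_p$ using the long exact sequence from $0\to\Z/p^n\to\Z/p^{n+1}\to\F_p\to 0$ to get $\HH^i(\wt{X}_{\wt{K}^p\wt{K}_p},\Z/p^n)_{\wt{\mf{m}}}=0$ for $i<d$, then pass to the filtered colimit over $\wt{K}_p$ and the inverse limit over $n$ (a zero inverse system). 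Either route closes the gap; as stated your argument is not correct.
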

\begin{proof}
    The first part follows from \cite[Theorem 1.3]{KoshikawaGeneric} (which removes certain hypotheses from \cite{CS19}), and the assertion about completed cohomology follows from \cite[Theorem 2.6.2 Lemma 4.6.2]{CS19}.
\end{proof}

In general we call $\wt{\mf{m}}\subset \wt{\T}^S(\wt{K}^p)$ \textit{decomposed generic} if the associated Galois representation $\ol{\rho}_{\wt{\mf{m}}}$ is. The same definition applies to maximal ideals $\mf{m} \subset \T^S(K^p)$ for $G$. For the rest of this section, we will assume that $\wt{\mf{m}}$ is decomposed generic.

Note that dually, \cref{theorem: CaraianiScholzeTorsionVanishing} implies that $\HH_{i}^{BM}(\wt{X}_{\wt{K}^p\wt{K}_p}, \F_p)_{\wt{\mf{m}}} = 0$ for $i > d$. Now consider the complex $C_{\bullet}^{\on{BM}, \on{BS}}(\wt{K}^p\wt{K}_p, \mO_L[[\wt{K}_p]])_{\wt{\mf{m}}}$ for $\wt{K}_p$ pro-$p$. This is a perfect complex of $\mO_L[|\wt{K}_p|]$-modules. We thus have $C_{\bullet}^{\on{BS}}(\wt{K}^p\wt{K}_p, \mO_L[[\wt{K}_p]])_{\wt{\mf{m}}}\otimes_{\mO_L[[\wt{K}_p]]}^{\mathbb{L}}\mO_L/\mf{m}_L\simeq C_{\bullet}^{\on{BS}}(\wt{K}^p\wt{K}_p, \mO_L[[\wt{K}_p]])_{\wt{\mf{m}}}\otimes_{\mO_L[[\wt{K}_p]]}\mO_L/\mf{m}_L\simeq C_{\bullet}^{\on{BM},\on{BS}}(\wt{K}^p\wt{K}_p, \mO_L/\mf{m}_L)_{\wt{\mf{m}}}$ is concentrated in degrees $[0, d]$, and so the theory of \textit{minimal resolutions} (in the guise of \cite[Proposition 2.1.9]{GeeNewton}) implies the following:

\begin{proposition}
\label{proposition: compactlysupportedBoundedComplex}
$\Pi_c(\wt{K}^p)_{\wt{\mf{m}}}$ is quasi-isomorphic to a complex $\Pi_{c,\wt{\mf{m}}}^{\on{bdd}}$ of admissible Banach representations of $H \subset \tG_0$ concentrated in degrees $[0, d]$ for such that for some pro-$p$ compact open subgroup $H \subset \wt{G}_0$ we have $\Pi_{c,\wt{\mf{m}}}^{\bullet,\on{bdd}}\mid_{H}\simeq \mc{C}(H, L)^{\oplus m_i}$. Similarly, for $\Pi(\wt{K}^p)_{\wt{\mf{m}}}$ is quasi-isomorphic to a complex of injective admissible representations concentrated in degrees $[d,2d]$.
\end{proposition}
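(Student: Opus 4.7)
The plan is to apply the theory of minimal resolutions \cite[Proposition 2.1.9]{GeeNewton} on the integral homological side, and then dualise. First fix $\wt{K}_p \subset \wt{G}_0$ a pro-$p$ compact open subgroup such that $\wt{K}^p\wt{K}_p$ is good; then $\mO_L[[\wt{K}_p]]$ is a complete local Noetherian ring. By the discussion preceding the proposition, both $C_{\bullet}^{\on{BS}}(\wt{K}^p\wt{K}_p, \mO_L[[\wt{K}_p]])_{\wt{\mf{m}}}$ and $C_{\bullet}^{\on{BM}, \on{BS}}(\wt{K}^p\wt{K}_p, \mO_L[[\wt{K}_p]])_{\wt{\mf{m}}}$ are representable, in $\mathsf{D}(\mO_L[[\wt{K}_p]])$, by perfect complexes of $\mO_L[[\wt{K}_p]]$-modules.

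Next, I would combine Poincar\'{e}--Lefschetz duality at finite level with \cref{theorem: CaraianiScholzeTorsionVanishing} to pin down the degrees in which the mod-$\mf{m}_L$ homology is supported. Each $\wt{X}_{\wt{K}^p\wt{K}_p'}$ (for $\wt{K}_p' \lhd \wt{K}_p$ a further normal open subgroup) is an orientable smooth manifold of dimension $2d$, so Poincar\'{e}--Lefschetz gives natural identifications $\HH^{i}(\wt{X}_{\wt{K}^p\wt{K}_p'}, \F_p) \simeq \HH^{\on{BM}}_{2d-i}(\wt{X}_{\wt{K}^p\wt{K}_p'}, \F_p)$ and $\HH^{i}_{c}(\wt{X}_{\wt{K}^p\wt{K}_p'}, \F_p) \simeq \HH_{2d-i}(\wt{X}_{\wt{K}^p\wt{K}_p'}, \F_p)$. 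Translating the vanishings of \cref{theorem: CaraianiScholzeTorsionVanishing}, we get $\HH^{\on{BM}}_j(\wt{X}_{\wt{K}^p\wt{K}_p'}, \F_p)_{\wt{\mf{m}}} = 0$ for $j > d$ and $\HH_j(\wt{X}_{\wt{K}^p\wt{K}_p'}, \F_p)_{\wt{\mf{m}}} = 0$ for $j < d$. Passing to the inverse limit over $\wt{K}_p'$ (and using that the perfect complexes in question commute with the derived mod-$\mf{m}_L$ reduction), the mod-$\mf{m}_L$ homology of $C_{\bullet}^{\on{BM}, \on{BS}}(\wt{K}^p\wt{K}_p, \mO_L[[\wt{K}_p]])_{\wt{\mf{m}}}$ is concentrated in degrees $[0, d]$, while that of $C_{\bullet}^{\on{BS}}(\wt{K}^p\wt{K}_p, \mO_L[[\wt{K}_p]])_{\wt{\mf{m}}}$ is concentrated in degrees $[d, 2d]$.

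Applying \cite[Proposition 2.1.9]{GeeNewton} then produces \emph{minimal} representatives of these perfect complexes by complexes of finite free $\mO_L[[\wt{K}_p]]$-modules concentrated precisely in the same degree ranges (the number of free generators in degree $i$ being $\dim_{k} \HH_i(- \otimes^{\mathbb{L}} k)$, where $k = \mO_L/\mf{m}_L$). Finally, I would dualise: the continuous $\mO_L$-dual of $\mO_L[[\wt{K}_p]]^{\oplus m_i}$ is $\mc{C}(\wt{K}_p, \mO_L)^{\oplus m_i}$, which after inverting $p$ becomes $\mc{C}(\wt{K}_p, L)^{\oplus m_i}$, an injective admissible Banach $\wt{K}_p$-representation. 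Dualising termwise yields the sought $\Pi_{c, \wt{\mf{m}}}^{\on{bdd}}$ in degrees $[0, d]$ and $\Pi_{\wt{\mf{m}}}^{\on{bdd}}$ in degrees $[d, 2d]$, quasi-isomorphic to $\Pi_c(\wt{K}^p)_{\wt{\mf{m}}}$ and $\Pi(\wt{K}^p)_{\wt{\mf{m}}}$ respectively, with $H = \wt{K}_p$.

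The core content is already packaged in the torsion-vanishing theorem and the minimal resolution technology, so there is no serious obstacle; the main thing to be careful about is just the bookkeeping for the dualisation, namely checking that the topological/continuous $L$-duality between finite free $\mO_L[[\wt{K}_p]]$-modules and $\mc{C}(\wt{K}_p, L)$-copies behaves well with respect to the chosen quasi-isomorphism, so that the resulting dual complex indeed represents $\Pi_{c}(\wt{K}^p)_{\wt{\mf{m}}}$ (resp.\ $\Pi(\wt{K}^p)_{\wt{\mf{m}}}$) in the derived category of admissible Banach $\wt{K}_p$-representations.
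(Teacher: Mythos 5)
Your proof follows the same broad outline as the paper's (Caraiani--Scholze torsion vanishing, minimal resolutions via \cite[Proposition 2.1.9]{GeeNewton}, then dualisation), but you insert an unnecessary Poincar\'{e}--Lefschetz step that introduces two subtleties the paper avoids. The paper does not use Poincar\'{e} duality at all: it reads off from \cref{theorem: CaraianiScholzeTorsionVanishing} both vanishings $\HH^i(\wt{X}_{\wt{K}^p\wt{K}_p'}, \F_p)_{\wt{\mf{m}}} = 0$ for $i < d$ and $\HH^i_c(\wt{X}_{\wt{K}^p\wt{K}_p'}, \F_p)_{\wt{\mf{m}}} = 0$ for $i > d$, and then passes to $\HH_i(\wt{X}_{\wt{K}^p\wt{K}_p'}, \F_p)_{\wt{\mf{m}}}$ and $\HH_i^{\on{BM}}(\wt{X}_{\wt{K}^p\wt{K}_p'}, \F_p)_{\wt{\mf{m}}}$ by $\F_p$-linear duality (universal coefficients), which degree-for-degree dualises $\HH^i \leftrightarrow \HH_i$ and $\HH^i_c \leftrightarrow \HH_i^{\on{BM}}$ and manifestly preserves the maximal ideal $\wt{\mf{m}}$ (since localisation of a finite-dimensional commutative algebra module commutes with $\F_p$-linear dual).

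By contrast, your route via $\HH^i(\wt{X}) \simeq \HH^{\on{BM}}_{2d-i}(\wt{X})$ and $\HH^i_c(\wt{X}) \simeq \HH_{2d-i}(\wt{X})$ has a genuine Hecke-equivariance wrinkle: Poincar\'{e}--Lefschetz duality intertwines the Hecke action on one side with the \emph{transposed} Hecke action $[KgK] \mapsto [Kg^{-1}K]$ on the other, so the vanishing you deduce is a priori at the maximal ideal $\iota(\wt{\mf{m}})$ rather than $\wt{\mf{m}}$. This is not fatal here --- the decomposed generic condition on $\overline{\rho}_{\wt{\mf{m}}}$ is invariant under the relevant involution so the argument can be patched --- but it is an extra thing to check, as is the orientability of the locally symmetric space that your cap-product duality requires. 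Since both cohomology-side vanishings are already handed to you by the cited theorem, you should simply use $\HH^i_c$-vanishing for $i > d$ (resp.\ $\HH^i$-vanishing for $i < d$) and $\F_p$-linear duality, which is what the paper does; the Poincar\'{e}--Lefschetz step is redundant and buys you nothing.

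Everything else in your proposal --- the reduction to perfect complexes of $\mO_L[[\wt{K}_p]]$-modules, minimal representatives concentrated in the degree range detected by the mod-$\mf{m}_L$ fibre, and the final termwise dualisation $\mO_L[[\wt{K}_p]]^{\oplus m_i} \leadsto \mc{C}(\wt{K}_p, L)^{\oplus m_i}$ --- matches the paper's proof exactly.
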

\begin{proof}
    Noting that $\Hom_{\on{cont}}(\wt{\HH}_{(c)}^i(\wt{K}^p, \Q_p)_{\wt{\mf{m}}},\Q_p)\simeq \wt{\HH}_i^{(BM)}(\wt{K}^p,\Q_p)_{\wt{\mf{m}}}$, it suffices to work with homology. The discussion above (in particular, \cite[Proposition 2.1.9]{GeeNewton}) implies $C_{\bullet}^{\on{BM},\on{BS}}(\wt{K}^p, \Z_p)_{\wt{\mf{m}}}$ is quasi isomorphism to a complex $B_{\bullet}$ of finite free $\mc{O}[[H]]$-modules, such that $B_{\bullet}\otimes_{\mc{O}[[H]]]}\mc{O}([[H]])/\mf{m}\simeq \bigoplus_i \HH_{i}(\wt{K}^pH, \F_p)_{\wt{\mf{m}}}[-i]$. Since $\HH_{i}^{\on{BM}}(\wt{K}^pH, \F_p)_{\wt{\mf{m}}}$ is concentrated in degrees $[0,d]$, so is $B_{\bullet}$. Then taking $(B_{\bullet}\otimes_{\Z_p}\Q_p)^{\vee}$ yields the desired complex. The statement for $\Pi(\wt{K}^p)_{\wt{\mf{m}}}$ follows from the analogous vanishing of $\HH_i(\wt{K}^p, \F_p)_{\wt{\mf{m}}} = 0$ for $i <d$.
\end{proof}
\begin{remark}
    An alternative proof (and the one we originally conceived of) can also be given using the Poincar\'{e} duality spectral sequence for completed (co)homology \cite[1.3]{CalEmSurvey}.
\end{remark}

\subsection{Middle degree cohomology}
We briefly say something about eigenvarieties $\mc{E}_{\wt{\mf{m}}}^d$ for middle degree completed cohomology. The direction of vanishing for torsion cohomology in fact tells us that $\mc{E}_{\mf{m}}^d$ coincides with Emerton's eigenvarieties from \cite{EmertonInterpolation}.
\begin{lemma}
\label{lemma: DerivedEqualsNormalJacquet}
There is an isomorphism $\HH^d(J_B(\Pi(\wt{K}^p)_{\wt{\mf{m}}}^{\la}))\simeq J_B(\widetilde{\HH}^d(\wt{K}^p, L)_{\wt{\mf{m}}}^{\la})$ of essentially admissible representations of $T(F_p^+)$.
\end{lemma}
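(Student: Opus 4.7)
The plan is to leverage Proposition 5.2, which tells us that $\Pi(\wt{K}^p)_{\wt{\mf{m}}}$ is quasi-isomorphic to a complex $\Pi^{\on{bdd}}$ of injective admissible Banach $\wt{K}_p$-representations concentrated in degrees $[d,2d]$. By \cref{lemma: IndependenceOfHomotopyForJacquetFunctors}, we are free to compute $\HH^d(J_B(-))$ using this representative. Since $(-)^{\la}$ is exact on admissible Banach representations by \cite[Theorem 7.1]{ST03}, the complex $\Pi^{\on{bdd},\la}$ is also concentrated in degrees $[d,2d]$, and its cohomology in degree $d$ is exactly $\wt{\HH}^d(\wt{K}^p,L)_{\wt{\mf{m}}}^{\la}$.

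Now the derived Jacquet functor is obtained by first taking $N_0$-invariants, then taking the finite slope part (with the cohomology computed using the Hausdorff quotient). The key point is that for a complex concentrated in degrees $\ge d$, the functor $\HH^d$ is simply the kernel of the first nonzero differential, and this kernel commutes with the (left exact) functor of $N_0$-invariants. More precisely, $\HH^d(\Pi^{\on{bdd},\la,N_0}) = (\HH^d(\Pi^{\on{bdd},\la}))^{N_0} = \wt{\HH}^d(\wt{K}^p,L)_{\wt{\mf{m}}}^{\la,N_0}$, and this is automatically Hausdorff in the relevant sense because it is the kernel of a continuous map of locally convex spaces — no ``closure'' is needed since nothing maps into degree $d$. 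Then since the finite slope functor is exact by \cite[Theorem 1.1]{FuDerived} (and its proof relating Hausdorff cohomology to finite slope parts, as in \cite[Proposition 4.9]{FuDerived}), it commutes with $\HH^d$.

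Stringing these observations together gives
\[
\HH^d(J_B(\Pi(\wt{K}^p)_{\wt{\mf{m}}}^{\la})) \simeq \HH^d((\Pi^{\on{bdd},\la,N_0})_{\on{fs}}) \simeq (\wt{\HH}^d(\wt{K}^p,L)_{\wt{\mf{m}}}^{\la,N_0})_{\on{fs}} \simeq J_B(\wt{\HH}^d(\wt{K}^p,L)_{\wt{\mf{m}}}^{\la}),
\]
where the final isomorphism is Emerton's original definition of the Jacquet module of a locally analytic representation. The isomorphism is $T(F_p^+)$-equivariant as a topological isomorphism of essentially admissible representations because all the maps in question are induced by the $\wt{G}$-extension structure on $\Pi^{\on{bdd}}$ and respect the respective topologies.

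The main ``obstacle'' is purely bookkeeping: verifying that Fu's derived Jacquet functor (defined using Hausdorff cohomology and finite slope parts of $N_0$-invariant complexes) genuinely degenerates to Emerton's Jacquet functor in the extremal degree where all higher $\mathrm{Ext}$-type contributions from the complex vanish. The key input that makes this work — and the reason we need the decomposed generic hypothesis — is precisely the torsion-vanishing statement of Caraiani--Scholze--Koshikawa (\cref{theorem: CaraianiScholzeTorsionVanishing}), which forces $\Pi^{\on{bdd}}$ to live in degrees $\ge d$.
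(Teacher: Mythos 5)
Your proof is correct and takes essentially the same route as the paper's: represent $\Pi(\wt{K}^p)_{\wt{\mf{m}}}$ by a complex of injectives concentrated in degrees $[d,2d]$ (via \cref{proposition: compactlysupportedBoundedComplex}, which relies on \cref{theorem: CaraianiScholzeTorsionVanishing}), note that $\HH^d$ is then simply the kernel of the first differential and commutes with $N_0$-invariants, observe the Hausdorff quotient is vacuous in bottom degree since the kernel is already closed, and finish with \cite[Proposition 4.9]{FuDerived}. The one minor imprecision is the appeal to ``exactness of the finite slope functor'' to commute it with $\HH^d$ — that functor is only left exact, but you don't actually need to commute it with cohomology here, since \cite[Proposition 4.9]{FuDerived} already expresses $\HH^d(J_B(-))$ as the finite slope part of the Hausdorff cohomology, which is exactly how the paper proceeds.
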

\begin{proof}
    By \cref{proposition: compactlysupportedBoundedComplex} we can represent $\Pi(\wt{K}^p)_{\wt{\mf{m}}}$ by a complex of injective admissible $H$-representations $I_{d} \xrightarrow{g_d} \dots \xrightarrow{g_{2d-1}} I_{2d}$ concentrated in degree $[d,2d]$. Thus, $\HH^d(J_B(\Pi(\wt{K}^p)_{\wt{\mf{m}}}^{\la})$ is computed by $(((\ker g_{d})^{\la, N_0})_{\on{haus}})_{\on{fs}}$ by \cite[Proposition 4.9]{FuDerived} (using that taking $N_0$-invariants is left exact, and thus commutes with kernels), and where $(-)_{\on{haus}}$ denote the Hausdorff quotient. However, since $g_{d}$ is a continuous map, $(\ker g_{d})^{\la,N_0}$ is a closed subspace of $I_d^{\la,N_0}$, and is thus automatically Hausdorff. Thus we actually just have $\HH^d(J_B(\Pi(\wt{K}^p)_{\wt{\mf{m}}}^{\la})) = ((\ker g_{d}^{\la})^{N_0})_{\on{fs}} = J_B((\ker g_d)^{\la}) = J_B(\wt{\HH}^d(\wt{K}^p, L)_{\wt{\mf{m}}}^{\la})$, as required.
\end{proof}

When the cohomology $\wt{\HH}^{\ast}(\wt{K}^p, L)_{\wt{\mf{m}}}$ is concentrated in a single degree, the relation between the Jacquet functor and classical automorphic forms is also more lucid. By Emerton's spectral sequence \cite[Theorem 0.5]{EmertonInterpolation}, concentration in one degree implies that the locally algebraic vectors $\HH^d(\wt{K}^p, L)_{\wt{\mf{m}}}^{\lalg} \simeq \bigoplus_{W} W^{\vee}\otimes \HH^d(\wt{K}^p, W)_{\wt{\mf{m}}},$ where $W$ ranges over irreducible algebraic representations of $\wt{G}(F_p^+)$, and  $\HH^d(\wt{K}^p, W^{\vee}):=\varinjlim_{\wt{K}_p}\HH^d(\wt{X}_{\wt{K}^p\wt{K}_p}, W^{\vee})$. So the locally algebraic vectors is directly related to classical cohomology, and by \cite[Proposition 4.3.6]{Emerton_Jacquet_I} $J_B(\wt{\HH}^d(\wt{K}^p,L)_{\wt{\mf{m}}}^{\lalg})\simeq \bigoplus_{\mathsf{L}(\lambda)}J_B(\mathsf{L}(\lambda)\otimes_L\HH^d(\wt{K}^p, \mathsf{L}(\lambda)^{\vee})_{\wt{\mf{m}}})\simeq \bigoplus_{\lambda}\lambda\otimes_LJ_B(\HH^d(\wt{K}^p, \mathsf{L}(\lambda)^{\vee}))(\delta_B^{-1})$, where $\lambda$ ranges over highest weight vectors $\lambda \in (\Z_{\on{dom}}^{2n})^{\Hom(F^+, \ol{\Q}_p)}$.
In general, we call a point $x = (y, \delta) \in \mc{E}_{\wt{\mf{m}}}^d$ \textit{classical} if the space $\Hom_{T(F_p^+)}(\delta, J_B(\wt{\HH}^d(\wt{K}^p,L)_{\wt{\mf{m}}}^{\lalg})[\wt{\T}^S = y]) \neq 0$.

Now fix an isomorphism $\iota: \ol{\Q}_p \simeq \C$ and $\pi$ is an automorphic representation of $\wt{G}(\A_{F^+})$ of weight $\lambda$ which is unramified outside $S$ and thus has Hecke eigenvalues $\iota^{-1}\circ \psi_{\pi}: \wt{\T}^S \to \ol{\Q}_p$.

Suppose moreover $J_B(\wt{\HH}^d(\wt{K}^p,L)_{\wt{\mf{m}}}^{\lalg})[\psi_{\pi}] \neq 0$, then in fact that $J_B(\wt{\HH}^d(\wt{K}^p, \mathsf{L}(\lambda)))[\psi_{\pi}]\neq 0$. Now for all $\ol{\nu} \in S_p(F^+)$ suppose the Jacquet module $J_B(\iota^{-1}\circ \pi_{\ol{\nu}})$ admits a subquotient $\theta_{\ol{\nu}}: T(F_{\ol{\nu}}^+) \to \ol{\Q}_p^{\times}$ and set $\theta:= \prod_{\ol{\nu}}\theta_{\ol{\nu}}: T(F_p^+) \to \ol{\Q}_p^{\times}$. 
 Then by the above discussion we have $\Hom_{T(F_p^+)}(\lambda^{\vee}\theta\cdot \delta_B^{-1}, J_B(\wt{\HH}^d(\wt{K}^p, L)_{\wt{\mf{m}}}^{\la})) \neq 0$, and thus $(\psi, \lambda^{\vee} \cdot \theta\cdot \delta_B^{-1})\in \mc{E}_{\wt{\mf{m}}}^d(\ol{\Q}_p)$. 

\begin{definition}
\label{definition: NumericallyNoncritical}
    A character $\delta_{\ol{\nu}}\in \widehat{(F_{\nu}^{\times})^{2n}}(\overline{\Q}_p)$ is \textit{numerically noncritical} if for all $\tau \in \Hom_{\Q_p}(F_{\nu}, \overline{\Q}_p)$ we have:
    \begin{enumerate}
        \item the weights $\on{wt}_{\tau}(\delta_{\ol{\nu},1}), \dots \on{wt}_{\tau}(\delta_{\ol{\nu},2n})$ are a (strictly) increasing sequence of integers;
        \item for each $i = 1, \dots, 2n-1$ we have 
        \[
        v_p((\delta_{\ol{\nu}, 1}\dots \delta_{\ol{\nu}, i})(p)) < \on{wt}_{\tau}(\delta_{\nu, i+1}) - \on{wt}(\delta_{\ol{\nu}, i}).
        \]
    \end{enumerate}
    The motivation for this definition comes from small-slope classicality criteria. 
\end{definition}
\begin{lemma}
\label{lemma: SmallSlopeClassicalForUsualJacquetFunctor}
    Let $z = (y,\delta) \in \mc{E}_{\wt{\mf{m}}}^{d}$ be such that $j_{2n}(\delta) = \lambda\delta_{\on{sm}},$ where $\lambda$ is a strictly dominant algebraic character, $\delta_{\on{sm}}$ is smooth, and $\delta$ is numerically noncritical. Then $z$ is classical.
\end{lemma}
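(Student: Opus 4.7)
The strategy is to combine the concentration of middle-degree completed cohomology in a single degree with Emerton's small-slope classicality criterion for Jacquet modules of admissible Banach representations. Since $\wt{\mf{m}}$ is decomposed generic, \cref{theorem: CaraianiScholzeTorsionVanishing} implies that $\wt{\HH}^\ast(\wt{K}^p,L)_{\wt{\mf{m}}}$ is concentrated in degree $d$, so $V:=\wt{\HH}^d(\wt{K}^p,L)_{\wt{\mf{m}}}$ is an honest admissible Banach representation of $\wt{G}(F_p^+)$. By \cref{lemma: DerivedEqualsNormalJacquet}, the derived Jacquet functor collapses to Emerton's usual Jacquet functor in this degree, so the point $z=(y,\delta)$ corresponds to a nonzero $T(F_p^+)$-equivariant embedding $\delta\hookrightarrow J_B(V^{\la})[y]$.

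The next step is to invoke Emerton's classicality criterion for $J_B$ of an admissible Banach representation: for $V$ an admissible Banach representation and $\delta$ a numerically noncritical character, the natural inclusion $J_B(V^{\lalg})[\delta]\hookrightarrow J_B(V^{\la})[\delta]$ is an isomorphism. Combined with the decomposition $V^{\lalg}\simeq \bigoplus_{\lambda'}\mathsf{L}(\lambda')\otimes\HH^d(\wt{K}^p,\mathsf{L}(\lambda')^\vee)_{\wt{\mf{m}}}$ coming from Emerton's locally algebraic spectral sequence (which degenerates precisely because $V$ is concentrated in degree $d$), and \cite[Proposition 4.3.6]{Emerton_Jacquet_I} describing $J_B$ of locally algebraic vectors, this produces a nonzero homomorphism $\delta\to J_B(V^{\lalg})[y]$, which is exactly the definition of $z$ being classical.

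The main obstacle will be bookkeeping: one must verify that our notion of numerical noncriticality in \cref{definition: NumericallyNoncritical} really matches the slope hypothesis of Emerton's criterion, after accounting for the normalization mismatch flagged in \cref{remark: UzOperatorNormalisation}. Our $U_z$ differs from Emerton's by the modulus character $\delta_B^{-1}$, which shifts the slope bounds by the $\delta_B$-weight of $z$, and we have the further Galois-style shift $j_{2n}(\delta)=\lambda\delta_{\on{sm}}$ built into the identification of $\mc{E}_{\wt{\mf{m}}}^d$ with deformation data. Unpacking these two normalization twists place-by-place over $S_p(F^+)$ and comparing to the two inequalities in \cref{definition: NumericallyNoncritical} should show that strict dominance of $\lambda$ yields the integrality/increasing-weights condition and that the second inequality gives precisely Emerton's small-slope bound, at which point the classicality of $z$ is immediate.
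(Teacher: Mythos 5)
Your proposal is correct and follows essentially the same route as the paper, whose (very terse) proof simply cites Emerton's criterion \cite[Theorem 4.4.5]{Emerton_Jacquet_I} for $J_B$ commuting with locally algebraic vectors together with \cite[Remark 2.21]{NT21} and \cref{remark: UzOperatorNormalisation} for the normalization bookkeeping; you have just unpacked those ingredients (the degree-$d$ concentration via \cref{theorem: CaraianiScholzeTorsionVanishing}, the collapse of the derived Jacquet functor via \cref{lemma: DerivedEqualsNormalJacquet}, the locally algebraic decomposition, and the matching of numerical noncriticality to Emerton's slope hypothesis) more explicitly.
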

\begin{proof}
    This result is a formal consequence of Emerton's original criterion \cite[Theorem 4.4.5]{Emerton_Jacquet_I} for when the Jacquet functor commutes with passing to locally algebraic vectors. (See \cite[Remark 2.21]{NT21} for the relationship between $j_n$, numerically noncritical, and non-critical in the sense of \cite[Definition 4.4.3]{Emerton_Jacquet_I}, together with \cref{remark: UzOperatorNormalisation}).
\end{proof}

While this corollary gives a clear understanding of ``classical points'' in $\mc{E}_{\wt{\mf{m}}}^d$,  we remark $\mc{E}_{\wt{\mf{m}}}^d$ is not well-suited to proving some sort of density statement, since the coherent sheaf $\mc{M}_{\wt{\mf{m}}}^d  = (J_B(\wt{\HH}^d(\wt{K}^p, L)_{\wt{\mf{m}}}^{\la}))'$ (locally) contains \textit{torsion} over the weight space $\mc{W}$. Indeed, by \cref{corollary: SmallSupport} and \cref{lemma: FundamentalLongExactSequence}, we have an embedding of a (locally) torsion module $\mc{M}_{\partial, \wt{\mf{m}}}^{d} \hookrightarrow \mc{M}_{\wt{\mf{m}}}^d$. So we focus instead on the compactly supported case.

\subsection{Middle-degree derived Jacquet module}
 We use the complex $\Pi_{c, \wt{\mf{m}}}^{\on{bdd}}$ of injective admissible Banach representations of $H$ from
\cref{proposition: compactlysupportedBoundedComplex} which represents $\Pi_{c}(\wt{K}^p)_{\wt{\mf{m}}}$. Since these are \textit{finite} complexes of injectives, the usual trick of \cite[Corollary 10.4.7]{WeibelHomAlg} upgrades this quasi-isomorphism to a homotopy equivalence of chain complexes in $\Ban_{H}^{\on{ad}}$. This fact provides $H$-equivariant homotopies
\begin{equation}
\label{eq: LiftToHomotopies}
\begin{tikzcd}
\Pi_{c,\wt{\mf{m}}}^{\on{bdd},\bullet}\ar[r, "n"', shift right = 0.5ex] & \Pi_{c, \wt{\mf{m}}}^{\bullet}\ar[l, "s"', shift right = 0.5ex]\ar[r, "i"', shift right = 0.5ex] &A_{c, \wt{\mf{m}}}^{\bullet} \ar[l, "p"', shift right = 0.5ex]
\end{tikzcd}
\end{equation}
such that $s, n$ are $H$-equivariant homotopy inverses, and $i,p$ are $\wt{G}_0$-equivariant homotopy inverses.

Using \eqref{eq: LiftToHomotopies} after choosing an element $z \in T(F_p^+)^{++}$ we can consider the cohomology $\HH^d(J_{B}(\Pi_{c, \wt{\mf{m}}}^{\on{bdd},\la}))$, which by \cref{lemma: IndependenceOfHomotopyForJacquetFunctors} is isomorphic to $\HH^{d}(J_B(\Pi_c(\wt{K}^p)_{\wt{\mf{m}}}^{\la}))$. This module is an essentially admissible $T(F_p^+)$-representation by \cite[Proposition 3.20]{FuDerived}. Let $Y_H \subset T(F_p^+)$ be the closed subgroup generated by $z$ and $T(F_p^+) \cap H$. Since $\Pi_{c, \wt{\mf{m}}}^{\on{bdd}}$ is concentrated in degrees $[0, d]$, there is a natural surjection of essentially admissible $Y_H$-representations 
\[
J_{B}(\Pi_{c,\wt{\mf{m}}}^{d,\on{bdd},\la}) \twoheadrightarrow \HH^d(J_{B}(\Pi_{c}(\wt{K}^p)_{\wt{\mf{m}}}^{\la})).
\]
Let $\mc{M}_c^d, \mc{N}$ be the coherent sheaves on $\widehat{Y}_H\simeq \mc{W}_H \times_{L}\G_m$ associated to $\HH^d(J_{B}(\Pi_{c,\wt{\mf{m}}}^{\on{bdd},\la}))$ and $J_{B}(\Pi_{c, \wt{\mf{m}}}^{\on{bdd},d,\la})$ respectively, so that we have a natural embedding $\mc{M}_c^d \hookrightarrow \mc{N}$. Let $\mc{Z}_{c}^d := \supp_{\mc{W}\times_L\G_m}\mc{M}_c^d$ and $\mc{Z}_{\mc{N}} := \supp_{\mc{W}_H
\times_L\G_m}\mc{N}$ be their supports. Note that the restriction map $\on{res}_H: \mc{W} \to \mc{W}_H$ is a finite \'{e}tale cover, and also that we have an inclusion of analytic closed subsets $\on{res}_H(\mc{Z}_{c}^d) \subset \mc{Z}_{\mc{N}} \subset \mc{W}_H\times_L \G_m$.

We can use standard methods in the theory of eigenvarieties (as presented in \cite[\textsection 3, \textsection 5]{BHSAnnalen}) to prove that in some sense, $\mc{M}$ is ``large'' over the weight space $\mc{W}$. We note that the following arguments have strong similarities with those in \cref{proposition: U_pfactorisationDiagram}. The main difference is that here we work with (a version of ) the ``$U_1z$''-action instead of ``$U_2z$''-action, which are homotopic by \cref{lemma: U_1zhomotopicToU_2z}. First, let $\pi_{\mc{W}}: \widehat{Y}_H \to \mc{W}_H$ denote the projection onto weight space.

\begin{proposition}
\label{prop: FredholmHypersurface}
    The closed analytic subset $\mc{Z}_{\mc{N}} \subset \mc{W}_H\times_L \G_m$ is naturally a Fredholm hypersurface.
    Moreover there exists an affinoid open admissible cover $(U_i'')_i \subset \mc{Z}_{\mc{N}}$ so that the induced maps $\pi_{\mc{W}}: U_i'' \to \pi_{\mc{W}}(U_i'')  = W_i'' \subset \mc{W}_H$ is a finite surjective onto a Zariski open $W_i'' \subset \mc{W}_H$, and $U_i''$ is a connected component of $\pi_{\mc{W}}^{-1}(W_i'')$, over which $\Gamma(U_i'',\mc{N})$ is a finite projective $\mc{O}_{\mc{W}_H}(W_i'')$-module. 

    Correspondingly, there is a admissible affinoid cover $(U_i')_i \subset \mc{Z}_c^d$ such that the induced maps $\pi_{\mc{W}}:U_i' \to W_i':= \pi_{\mc{W}}(U_i)\subset \mc{W}$ are finite surjective onto an affinoid open, and $U_i'$ is a connected component of $\pi_{\mc{W}}^{-1}(W_i')$. Moreover, $\Gamma(U_i', \mc{M}_c^d)$ finitely generated \textit{torsion-free} $\mc{O}_{\mc{W}}(W_i')$-module, and $\on{res}_H(\mc{Z}_{c}^d)$ is a union of irreducible components of $\mc{Z}_{\mc{N}}$.
\end{proposition}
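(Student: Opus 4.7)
The plan is to specialize the Colmez--Dospinescu style factorization of the $U_z$-operator developed in \cref{proposition: U_pfactorisationDiagram} to the finite complex of injective admissible Banach representations $\Pi_{c,\wt{\mf{m}}}^{\on{bdd},\la}$. The crucial feature enabling this is that each $\Pi_{c,\wt{\mf{m}}}^{\on{bdd},i,\la}$ restricts to $\mc{C}^{\la}(H,L)^{\oplus m_i}$, which is exactly the shape of the modules treated in that proposition. The only subtlety is that the natural Atkin--Lehner action here is $U_1 z$ (since $\Pi_{c,\wt{\mf{m}}}^{\on{bdd}}$ is equipped with a $\wt{G}$-extension via \eqref{eq: LiftToHomotopies}) rather than $U_2 z$; however, \cref{lemma: U_1zhomotopicToU_2z} ensures the two induce the same characteristic power series and the same finite slope parts, so the conclusions of \cref{proposition: U_pfactorisationDiagram}, \cref{corollary: ComplexesFredholmhypersurfaces}, and the arguments of \cite[Lemme 3.10]{BHSAnnalen} carry over.

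First I would represent $((\Pi_{c,\wt{\mf{m}}}^{\on{bdd},\la})^{N_0})'$ as an inverse limit $\varprojlim_m V_{m,\bullet}$ of complexes of (Pr)-Banach $A_m := \mc{O}_{\mc{W}_H}(U_m)$-modules equipped with a factorization of $U_z$ as in diagram \eqref{eq: FactorizingUzOperator}, which is concentrated in degrees $[0,d]$. The sheaf $\mc{N}$ is then cut out in the top degree directly, with no subquotient intervening: by \cref{lemma: PrComplexesHaveSlopeDecompositions}, the Fredholm determinant $F_d(T) = \det(1 - TU_z \mid V_{m,d})$ stabilises for $m \gg 0$, and its vanishing locus in $\mc{W}_H \times \G_m$ is the Fredholm hypersurface $\mc{Z}_{\mc{N}}$. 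For any $x \in \mc{Z}_{\mc{N}}$, applying a Buzzard-style slope factorization $F_d = Q \cdot R$ over a sufficiently small affinoid $\Omega \subset \mc{W}_H$ produces the desired affinoid $U_i''$ as the vanishing locus of $Q$ inside $\Omega \times \G_m$, which is a connected component of $\pi_{\mc{W}}^{-1}(\Omega)$. Over $U_i''$, $\mc{N}(U_i'')$ is identified with the slope $\le h$ part of $V_{m,d}\widehat{\otimes}_{A_m}\mc{O}(\Omega)$, and hence is a finite projective $\mc{O}(\Omega)$-module.

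To transfer these conclusions to $\mc{M}_c^d$, I will use the natural embedding $\mc{M}_c^d \hookrightarrow \mc{N}$ dual to the surjection $J_B(\Pi_{c,\wt{\mf{m}}}^{d,\on{bdd},\la}) \twoheadrightarrow \HH^d(J_B(\Pi_{c,\wt{\mf{m}}}^{\on{bdd},\la}))$ (the map is surjective because $\Pi_{c,\wt{\mf{m}}}^{\on{bdd}}$ vanishes in cohomological degrees $>d$). Over each $U_i''$, $\mc{M}_c^d(U_i'')$ is then a coherent subsheaf of the finite projective $\mc{O}(W_i'')$-module $\mc{N}(U_i'')$, and is therefore itself a finitely generated \emph{torsion-free} $\mc{O}(W_i'')$-module. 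Taking $U_i'$ to be $U_i''$ restricted to $\mc{Z}_c^d$ (viewed over $\mc{W}$ via the finite étale cover $\mc{W}\to \mc{W}_H$) then produces the claimed cover of $\mc{Z}_c^d$ with the required finite generation and torsion-freeness.

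Finally, to see that $\on{res}_H(\mc{Z}_c^d)$ is a union of irreducible components of $\mc{Z}_{\mc{N}}$, I would observe that on each connected affinoid $U_i''$ the torsion-free submodule $\mc{M}_c^d(U_i'') \subseteq \mc{N}(U_i'')$ is either zero or of positive generic rank over $W_i''$: in the first case $U_i''$ contributes nothing to $\mc{Z}_c^d$, while in the second we have $\supp_{U_i''} \mc{M}_c^d = U_i''$. Since the irreducible components of $\mc{Z}_{\mc{N}}$ are glued from connected $U_i''$ by the standard theory of Fredholm hypersurfaces, $\mc{Z}_c^d$ is a union of such components. The main anticipated obstacle is purely bookkeeping — verifying that the Colmez--Dospinescu factorization, slope stabilisation, and compatibility across the cover behave as in \cref{proposition: U_pfactorisationDiagram} despite our working with $U_1 z$ and with $\Pi_{c,\wt{\mf{m}}}^{\on{bdd},\la}$ rather than with the overconvergent $C^{\on{BS}}$-complex. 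This is handled by combining \cref{proposition: U_pfactorisationDiagram} with \cref{lemma: U_1zhomotopicToU_2z}, which guarantees these two setups produce the same finite slope data.
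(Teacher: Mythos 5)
Your proposal follows the same route as the paper: specialize the Colmez--Dospinescu factorization of $U_z$ from \cref{proposition: U_pfactorisationDiagram} to the Banach complex $\Pi_{c,\wt{\mf{m}}}^{\on{bdd},\la}$ (with \cref{lemma: U_1zhomotopicToU_2z} absorbing the difference between the $U_1z$- and $U_2z$-normalisations and the key point being that the homotopy-defined $z$-action is nevertheless compatible with the $(-)_H^{(h)}$-functors), deduce the Fredholm structure of $\mc{Z}_{\mc{N}}$ via \cite[Lemme 3.10]{BHSAnnalen}, then use the embedding $\mc{M}_c^d \hookrightarrow \mc{N}$ to propagate torsion-freeness and transfer to $\mc{W}$ through the finite \'etale cover $\on{res}_H$. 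The only loose point is your intermediate assertion $\supp_{U_i''}\mc{M}_c^d = U_i''$ when $\mc{M}_c^d(U_i'')$ is nonzero: since $U_i''$ is connected but need not be irreducible, torsion-freeness over $\mc{O}(W_i'')$ only gives that $\supp_{W_i''}\Gamma(U_i'',\mc{M}_c^d)$ is empty or all of $W_i''$; the paper closes the gap by noting that $\pi_{\mc{W}}$ is finite on $\mc{Z}_{\mc{N}}$ so $\dim\mc{Z}_{\mc{M}_c^d} = \dim\mc{Z}_{\mc{N}}$, forcing $\mc{Z}_{\mc{M}_c^d}$ to be a union of irreducible components — a cleaner dimension count that you should substitute for the literal equality of supports.
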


\begin{proof}
    The proof follows the arguments of \cite[Lemme 3.10, Proposition 5.3]{BHSAnnalen} up to replacing certain inputs. We give a summary of the main steps, and how they push through in our setting. Let $V:= \Pi_{c,\wt{\mf{m}}}^{\on{bdd},d,\la}$.
    Suppose that $H\subset \wt{G}_0$ is uniform pro-$p$ with an Iwahori decomposition $H \simeq (\ol{N} \cap H) \times (T \cap H) \times (N \cap H)$. Let $N_H:= N \cap H$. By construction we have that $V|_{H}\simeq \mc{C}(H, L)^{\oplus m_d}$ for some $m_d \ge 0$. For the statements regarding $\mc{N}$, the essential key point is  \cite[Proposition 5.3]{BHSAnnalen} in our setting:
    that is, there exists a countable admissible affinoid open cover $W_1' \subset \dots \subset W_h' \subset \dots$ of $\mc{W}$ and (Pr) Banach $A_h:= \mc{O}_{\mc{W}}(W_h')$-modules $V_h$ such that $((V^{\la})^{N_0})'\simeq\varprojlim_hV_h$, and the dual $U_z$-action induces $A_h$-compact endomorphisms $z_h: V_h \to V_h$ fitting into a commutative diagram
    \[
    \label{diag: UpOperatorFactorization}
    \begin{tikzcd}
    V_{h+1}\ar[r]\ar[d, "z_{h+1}"']&V_{h+1}\otimes_{A_{h+1}}A_h\ar[d, "{z_{h+1}\otimes_{A_{h+1}}1_{A_h}}"']\ar[r,"\beta_h"]&V_h \ar[dl, "\alpha_h"]\ar[d, "z_h"]\\
V_{h+1}\ar[r]&V_{h+1}\otimes_{A_{h+1}}A_h\ar[r, "\beta_h"]&V_h
\end{tikzcd}
    \]
    where the $\beta_h$ are $A_h$-compact.
    In our case, it suffices to show such a description goes through when one only has this action up to homotopy, namely $U_{z}$ as defined via \eqref{eq: LiftToHomotopies}. The key will be that the $V_h$ are (roughly) functorial in $V$, and the morphisms $\alpha_h$ comes from some property of increasing a radius of analyticity, which is also functorial.
    
    More rigorously, just as in \cref{proposition: U_pfactorisationDiagram} we use the functors $(-)_H^{(h)}$constructed in \cite{ColmezDospinescu}. First note the Iwahori decomposition induces a topological isomorphism
    \[
    \mc{C}^{(h)}(H,L)\simeq \mc{C}^{(h)}(N\cap H, L)\wh{\otimes}_L \mc{C}^{(h)}(T\cap H, L)\wh{\otimes}_L\mc{C}^{(h)}(N\cap H, L).
    \]

    A version of Emerton's untwisting lemma \cite[3.6.4]{Emerton_la_reps} readily implies that we have a $T_H$-equivariant isomorphism

\begin{equation}
\label{eq: (h)AnalyticUntwisting}
V^{N_0}|_{H}\simeq \mc{C}^{(h)}(H, L)^{\oplus m_d, N_0} \simeq \mc{C}^{(h)}(\ol{N}\cap H, L)^{\oplus m_d} \widehat{\otimes}_L \mc{C}^{(h)}(T_H, L).
\end{equation}

Now setting $B_h:= D_{p^{-r_h}}(T_H, L)$, where $r_h = \frac{1}{\varphi(p^h)} = \frac{1}{(p-1)p^{h-1}}$. The natural restriction map $B_{h+1} \to B_{h}$ necessarily factors through $D_{<p^{-r_h}}(T\cap H, L) \simeq D^{(h)}(T_H,L):= (C^{(h)}(T \cap H, L))'$. Now set
\begin{align*}
V_h &:= (\mc{C}^{(h)}(\ol{N}\cap H, L)^{\oplus m})'\widehat{\otimes}_L B_h\\
&\simeq (\mc{C}^{(h)}(\ol{N}\cap H, L)^{m} \widehat{\otimes}_L \mc{C}^{(h+1)}( T \cap H, L))'\widehat{\otimes}_{D^{(h)}(T_H, L)}B_h.
\end{align*}
By \eqref{eq: (h)AnalyticUntwisting} and the fact that $\varinjlim_{h}(V_H^h)^{N_H}= V^{N_H}$ we readily have $(V^{N_h})'\simeq \varprojlim_{h}V_h$. By definition, the $V_H$ are ONable Banach modules over $B_h$. To recover a weight space description, note that $T_H\simeq \Z_p^{\oplus M}$ for some $M \ge 0$, so the Amice transform implies that $B_h \simeq \Gamma(W_h'', \mc{O}_{\mc{W}_H})$ for some affinoid open $W_h'' \subset \mc{W}_H$, and the collection of $W_h''$ form an admissible cover. 

We now show the dual Hecke action $U_z^{\ast}$ both descends to the $V_h$ and has the relevant factorization properties in \cref{diag: UpOperatorFactorization}. Shrinking $H$ if necessary, we may assume both $H$ and $zHz^{-1}$ act on $\Pi_{c,\wt{\mf{m}}}^{\on{bdd}}$. Then we claim that if $v \in V_H^{(h)}$, then $U_zv \in V_{zHz^{-1}}^{(h)}$. Indeed, this fact would be true if $V$ had an action of the full group $\wt{G}$, rather than an action up to homotopy. However, note that the homotopies $ s, p, i , n$ are each continuous and $\wt{G}_0$-equivariant, and $()_{H}^{(h)}$, $()_{zHz^{-1}}^{(h)}$ are both functors, so the result follows from the corresponding fact on the cochain complex of Banach representations $A^{\bullet}$. This simple fact is all we need to push through \cite[Proposition 5.3]{BHSAnnalen}.





Namely, this fact along with \cref{factsaboutColmezDospinescu} implies the Hecke action satisfies
\begin{equation}
\label{eq: HeckeactionAnalyticity}
U_{z}(\mc{C}^{(h)}(\ol{N}\cap H, L)^{m} \widehat{\otimes}_L \mc{C}^{(h)}( T \cap H, L)) \subset \mc{C}^{(h-1)}(\ol{N}\cap H, L)^{m} \widehat{\otimes}_L \mc{C}^{(h)}(T \cap H, L).
\end{equation}
We then get the diagram \cref{diag: UpOperatorFactorization} by setting $z_{h}:= U_{z}^{\ast}$ induced on the first factor (which can be defined since all the homotopies and the group action on $A^{\bullet}$ are continuous), and $\alpha_h:= \alpha_h'\otimes 1$ where $\alpha_h'$ is dual to the Hecke action map \eqref{eq: HeckeactionAnalyticity}, and
$\beta_h = \beta_{h}'\otimes 1$ where $\beta_h'$ is simply the dual map of the natural inclusion 
\[
\mc{C}^{(h)}(\ol{N} \cap H, L)\widehat{\otimes}_L \mc{C}^{(h+1)}(T \cap H, L)\to \mc{C}^{(h+1)}(\ol{N} \cap H, L)\widehat{\otimes}_L \mc{C}^{(h+1)}(T \cap H, L).
\]
It is a general fact that the induced map $\beta_h'$ is compact, and so $\beta_h'$ is a $B_h$-compact map. The rest of the Proposition regarding $\mc{Z}_{\mc{N}}$ then follows exactly as in \cite[Lemme 3.10]{BHSAnnalen}.

We now turn to $\mc{M}_c^d$. First note that for an admissible cover $(U_h'')_h$ of $\mc{Z}_{\mc{N}}$, we have embeddings of $\mc{O}_{\mc{W}_H}(W_h')$-modules $\Gamma(U_h'', \mc{M}_c^d) \hookrightarrow \Gamma(U_h'', \mc{N})$, so that $\Gamma(U_h'', \mc{M}_c^d)$ is a submodule of a finite projective module over a noetherian integral domain, and is thus finitely generated and torsion-free over $\mc{O}_{\mc{W}_H}(W_h'')$. We also have an inclusion $\on{res}_H(\mc{Z}_{\mc{M}_c^d}) \subset \mc{Z}_{\mc{N}}$. In particular, either $\supp_{W_h''}\Gamma(U_h'', \mc{M}_c^d) = \emptyset$ or $W_h''$. As locally on $\mc{Z}_{\mc{N}}$, the projection to $\mc{W}_H$ is finite, it follows that $\dim \mc{Z}_{\mc{M}_c^d}  = \mc{Z}_{\mc{N}}$, and thus $\mc{Z}_{\mc{M}_c^d}$ is a union of components of $\mc{Z}_{\mc{N}}$, and is thus also a Fredholm hypersurface.

To get the statement over the whole $\mc{W}$, we again utilize the finite \'{e}tale cover $\on{res}_H:\mc{W} \to \mc{W}_H$. That is, we define the admissible cover $U_h'$ of $\mc{Z}_{\mc{M}_c^d}$ by $U_h':= (\on{res}_H\times \on{id}_{\G_m})^{-1}(U_h'')$. Moreover, if $W_h''$ is an admissible cover of $\mc{W}_H$ admitting diagrams as in \eqref{diag: UpOperatorFactorization}, then set $A_h:= L[T_0]\otimes_{L[T\cap H]\cap H]}B_h\simeq \Gamma(W_h', \mc{O}_{\mc{W}})$ for $W_h' = \on{res}_H^{-1}(W_h'')$. 
Moreover, since $A_h$ is a finite \'{e}tale $B_h$-algebra,  $\Gamma(U_h', \mc{M}_c^d)$'s is a direct factor of $L[T_0]\otimes_{L[T\cap H]}\Gamma(U_h'', \mc{M}_c^d)$, and thus a submodule of a finite free $A_h$-module.
\end{proof}

\begin{proposition}
\label{proposition: EquidimensionalAndOpennessOfWeightMap}
    There exists an admissible affinoid open cover $(U_i)_{i \in \Z_{\ge0}}$ of $\mc{E}_{c,\wt{\mf{m}}}^d$ such that $\mc{O}_{\mc{E}_c^d}(U_i)$ is a subalgebra of the endomorphism of a finitely generated torsion-free algebra over $\mc{O}_{\mc{W}}(\kappa(U_i))$.

    As a result, the eigenvariety $\mc{E}_{c,\wt{\mf{m}}}^d$ is equidimensional of dimension $\dim \mc{W}$ with no embedded components. Moreover, the image of any irreducible component of $\mc{E}_{c,\wt{\mf{m}}}^d$ along the weight map $\kappa: \mc{E}_{c,\wt{\mf{m}}}^d \to \mc{W}$ is Zariski open.
\end{proposition}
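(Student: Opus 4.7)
The plan is to pull back the cover produced by \cref{prop: FredholmHypersurface} and reduce each assertion to commutative algebra over the regular base ring $\mc{O}(W_i')$.

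First I would take the affinoid cover $(U_i')_i$ of $\mc{Z}_c^d$ from \cref{prop: FredholmHypersurface}, refining if necessary so that each $W_i' := \pi_{\mc{W}}(U_i') \subset \mc{W}$ is connected (hence irreducible, as $\mc{W}$ is smooth). Since $\mc{E}_{c,\wt{\mf{m}}}^d$ is the relative $\on{Spec}$ over $\widehat{T(F_p^+)}$ of the coherent $\mc{O}_{\widehat{T}}$-algebra $\mc{A}$ generated by the image of $\wt{\T}^S(\wt{K}^p)_{\wt{\mf{m}}}$ in $\End_{\mc{O}_{\widehat{T}}}(\mc{M}_c^d)$, the structure map $\mc{E}_{c,\wt{\mf{m}}}^d \to \mc{Z}_c^d$ is finite; I would let $U_i$ be its preimage of $U_i'$, yielding an admissible affinoid cover. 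The Hecke action on $\mc{M}_c^d(U_i')$ is $\mc{O}(U_i')$-linear and hence $\mc{O}(W_i')$-linear, so
\[
\mc{O}_{\mc{E}_c^d}(U_i) = \mc{A}(U_i') \hookrightarrow \End_{\mc{O}(U_i')}(\mc{M}_c^d(U_i')) \subset \End_{\mc{O}(W_i')}(\mc{M}_c^d(U_i')).
\]
Since $\mc{M}_c^d(U_i')$ is a finitely generated torsion-free $\mc{O}(W_i')$-module by \cref{prop: FredholmHypersurface}, this yields the first assertion of the proposition. Moreover $\End_{\mc{O}(W_i')}(\mc{M}_c^d(U_i'))$ is itself finite and torsion-free over $\mc{O}(W_i')$ (torsion-freeness: $rf = 0$ for $r \in \mc{O}(W_i') \setminus 0$ forces $f(m) = 0$ for all $m \in \mc{M}_c^d(U_i')$), and so $\mc{O}(U_i)$ is finite and torsion-free over $R_i := \mc{O}(W_i')$.

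For the equidimensionality and absence of embedded components, I would argue algebraically on each affinoid. Set $A := \mc{O}(U_i)$ and $R := R_i$, a regular integral domain of dimension $\dim \mc{W}$. Any associated prime $\mf{p} \in \on{Ass}_A(A)$ satisfies $\mf{p} \cap R = (0)$: if $r \in \mf{p} \cap R$ were a nonzero element annihilating some nonzero $a \in A$, this would contradict torsion-freeness of $A$ as an $R$-module. Since $R \hookrightarrow A$ is integral, primes above $(0)$ are precisely the minimal primes of $A$, so every associated prime of $A$ is minimal and $\dim A/\mf{p} = \dim R = \dim \mc{W}$. Gluing over the cover gives equidimensionality of dimension $\dim \mc{W}$ and no embedded components on $\mc{E}_{c,\wt{\mf{m}}}^d$.

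For the openness of images of irreducible components, fix an irreducible component $X$ of $\mc{E}_{c,\wt{\mf{m}}}^d$. For any $i$ with $X\cap U_i \neq \emptyset$, the intersection $X\cap U_i$ is a union of irreducible components of the equidimensional affinoid $U_i$, each of which is finite over $W_i'$ via $\kappa$. Since $W_i'$ is connected and irreducible of dimension $\dim\mc{W}$, a finite map from an irreducible space of the same dimension to it is surjective, so $\kappa(X \cap U_i) = W_i'$. Hence $\kappa(X) = \bigcup_{i:\, X\cap U_i \neq \emptyset} W_i'$ is a union of Zariski opens and therefore Zariski open in $\mc{W}$. The main obstacle is the no-embedded-components claim: \cref{prop: FredholmHypersurface} only yields torsion-freeness (not flatness) of $\mc{M}_c^d$ over weight space, so the usual Cohen--Macaulay argument invoked in the flat setting is unavailable, and the associated-prime argument above is precisely what replaces that input.
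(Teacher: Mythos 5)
Your treatment of the affinoid cover, the torsion-freeness of $\mc{O}(U_i)$ over $\mc{O}(W_i')$, the equidimensionality, and the absence of embedded components is correct and essentially identical to the paper's own argument: the paper also passes to associated primes $\mf{p}$ of $B := \mc{O}(U_i)$, uses torsion-freeness to force $\mf{p}\cap R = (0)$, and then uses integrality of $R \hookrightarrow B/\mf{p}$ to pin down $\dim B/\mf{p} = \dim R$ for every associated prime. Your remark that this associated-prime argument is what replaces the usual flatness/Cohen--Macaulay input is exactly the right diagnosis.

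The gap is in the openness of $\kappa(X)$. Your argument rests on the claim that each $W_i'$ is Zariski open in $\mc{W}$, so that $\kappa(X) = \bigcup_i W_i'$ is automatically Zariski open. But this is not a safe reading of \cref{prop: FredholmHypersurface}. Its second paragraph explicitly describes the $W_i'$ as \emph{affinoid} opens, and the source it is adapting (\cite[Lemme 3.10]{BHSAnnalen}, ultimately Buzzard's eigenvariety machine) only produces affinoid images $W_i'$; an affinoid subdomain of an open polydisc (e.g.\ a closed ball inside the open unit ball) is admissible open but not Zariski open, and a union of such is not automatically Zariski open. The ``Zariski open'' in the first paragraph of \cref{prop: FredholmHypersurface} should be read with suspicion, and indeed the proof given there only produces affinoids. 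So your local cover argument cannot reach the conclusion: it shows $\kappa(X)$ is a nonempty admissible open, which is strictly weaker than what is needed. The paper instead proceeds via the global structure theory of Fredholm hypersurfaces: an irreducible component $C$ of $\mc{E}_{c,\wt{\mf{m}}}^d$ maps (by the finiteness and equidimensionality you have already established) onto an irreducible component of the Fredholm hypersurface $\mc{Z}_{\mc{M}_c^d}$; such a component is itself a Fredholm hypersurface by \cite[4.2.2]{Con99}; and the image of a Fredholm hypersurface in $\mc{W}$ is Zariski open by \cite[6.4.4]{Che04}. This last step encodes a genuinely global fact about Fredholm series (the image is the non-vanishing locus of the coefficients of the factor), and is not something that can be recovered from the local affinoid description of the cover.
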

\begin{proof}
This proof is also essentially an adaptation of \cite[Proposition 3.11, Corollaire 3.12, Corollaire 3.13]{BHSAnnalen} to the setting at hand. The only difference is that instead locally for an admissible open $U_i$, we have an embedding $\mc{O}_{\mc{E}_{c,\wt{\mf{m}}}^d}(U_i) \hookrightarrow \End_{\mc{O}_{\mc{W}}(W_i)}(\Gamma(U_i',\mc{M}_c^d))$, where $\Gamma(U_i',\mc{N}_c^d)$ is torsion--free over $A:= \mc{O}_{\mc{W}}(W_i)$ by \cref{prop: FredholmHypersurface}. As a finite $A$-module, $\End_{\mc{O}_{\mc{W}}(W_i)}(\Gamma(U_i',\mc{M}_c^d))$ is torsion-free itself. To prove $\mc{E}_{c,\wt{\mf{m}}}^d$ is equidimensional it suffices to work affine locally and prove $B:=\mc{O}_{\mc{E}_c^d}(U_i)$ is equidimensional. Let $\mf{p}\subset B$ be any associated prime. Then we have an embedding of $B$-modules $B/\mf{p} \hookrightarrow B$, which implies $B/\mf{p}$ must also be $A$-torsion-free. In particular, the structure morphism $A \to B/\mf{p}$ is injective, and is thus an integral extension of Noetherian rings. Standard dimension theory then implies $\dim(B/\mf{p}) = \dim A = \dim \mc{W}$. Since this equality holds for any \textbf{minimal} prime, we have $B$ is equidimensional. Moreover, since this dimension is constant for any associated prime, the only associated primes are minimal primes, and thus there are no embedded components.

Fixing an irreducible component $C\subset \mc{E}_{c,\wt{\mf{m}}}^d$, its image in $\mc{Z}_{\mc{M}_c^d}$ is then irreducible of dimension $\dim \mc{W} = \dim Z_{\mc{M}_c^d}$, and is thus also irreducible. An irreducible component of a Fredholm hypersurface is also a Fredholm hypersurface by \cite[4.2.2]{Con99}, and the openness is then proven in \cite[6.4.4]{Che04}
\end{proof}

We now can prove our abstract density criterion. Let $f:\mc{E}_{c,\wt{\mf{m}}}^d \to \wh{T} \to \mc{W}\times \G_m$, be the structure map. Note that by definition $f$ factors through $\mc{Z}_{\mc{M}_{c}^d}\subset \mc{W}\times \G_m$.

\begin{proposition}
\label{proposition: AbstractDensityCriterion}
    If $S \subsetneq \mc{Z}_{\mc{M}_c^{d}}$ and $Y\subsetneq \mc{W}$ are closed subvarieties such that each irreducible component is positive codimension, then the set \[
        \mc{E}_{S,Y} = \{e\in \mc{E}_{c,\wt{\mf{m}}}^d: f(e) \notin S, \kappa(e) \notin Y, e \text{ locally dominant algebraic}, j_{2n}(e)\text{ numerically noncritical}\}
        \]
        is Zariski dense in $\mc{E}_{c,\wt{\mf{m}}}^d$, and accumulates at any point of algebraic weight.
\end{proposition}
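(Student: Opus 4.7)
The plan is to prove this density and accumulation statement by working locally on the admissible affinoid cover $(U_i)_i$ of $\mc{E}_{c,\wt{\mf{m}}}^d$ provided by \cref{proposition: EquidimensionalAndOpennessOfWeightMap}, reducing the problem to density statements in weight space $\mc{W}$, where we can use standard arguments. First, recall that on each $U_i$, the structure map $\kappa:U_i \to W_i := \kappa(U_i) \subset \mc{W}$ is finite and $\mc{O}_{\mc{E}_c^d}(U_i)$ embeds into the endomorphisms of a finitely generated torsion-free $\mc{O}_{\mc{W}}(W_i)$-module. Since Zariski density is local and accumulation only needs to be checked on arbitrarily small neighborhoods, it suffices to prove for each $U_i$ that $\mc{E}_{S,Y}\cap U_i$ is Zariski dense in $U_i$, and accumulates at any point of algebraic weight in $U_i$.

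The key observation is that on each $U_i$ the $U_z$-slopes of the points are uniformly bounded by some $h_i \in \Q_{\ge 0}$ (this is visible from \cref{proposition: U_pfactorisationDiagram} or directly from the fact that $U_i$ sits inside the annulus $U_{\le h_i}$ inside $\mc{W}_H \times \G_m$ as in \cref{corollary: ComplexesFredholmhypersurfaces}). Define $W_i^{\reg}\subset W_i$ to be the set of dominant algebraic weights $\lambda$ such that, for every embedding $\tau$ and every consecutive pair, the weight gap $\lambda_{\tau, j+1}-\lambda_{\tau,j}$ strictly exceeds $h_i$ (this is a Zariski-dense accumulating set in $W_i$, since the algebraic weights satisfying open half-space inequalities always are). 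Let $S_i := \pi_{\mc{W}}(f(U_i)\cap S)$, which is a proper analytic closed subset of $W_i$ since $f|_{U_i}$ has finite image on $S$ and $S$ has positive codimension components in $\mc{Z}_{\mc{M}_c^d}$. Then the set $W_i^{\reg}\setminus(Y \cup S_i)$ remains Zariski-dense in $W_i$ and accumulates at any algebraic weight.

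Next, I would argue that for any $\lambda \in W_i^{\reg}\setminus(Y\cup S_i)$, every point $e\in \kappa^{-1}(\lambda)\cap U_i$ automatically lies in $\mc{E}_{S,Y}$. Indeed, such $e$ has algebraic weight $\lambda$ avoiding $Y$, it lies in $U_i\setminus f^{-1}(S)$ by construction of $S_i$, and the slope bound $h_i$ combined with the weight gap condition forces the second inequality in \cref{definition: NumericallyNoncritical} to hold at $e$ (the strict monotonicity of weights is already encoded in $\lambda$ being dominant algebraic with gap $>h_i\ge 0$). Thus $\kappa^{-1}(W_i^{\reg}\setminus(Y\cup S_i))\cap U_i\subset \mc{E}_{S,Y}\cap U_i$.

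Finally, to conclude density, I would use that $\kappa: U_i\to W_i$ is finite and surjective onto a union of irreducible components of $W_i$ (here openness of the image from \cref{proposition: EquidimensionalAndOpennessOfWeightMap} is crucial, since then any Zariski-dense subset of $\kappa(U_i)$ pulls back to a Zariski-dense subset of $U_i$ under the finite map, by invoking going-down on each irreducible component). The only subtle step is the accumulation at a point $e_0$ of algebraic weight $\lambda_0$: one shrinks to a connected affinoid neighborhood $V\subset U_i$ of $e_0$ mapping finitely to an affinoid neighborhood $\kappa(V)\subset W_i$ of $\lambda_0$, applies the above argument inside $V$, and uses that $W_i^{\reg}\setminus(Y\cup S_i)$ accumulates at $\lambda_0$. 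The main potential obstacle is ensuring the slope bound $h_i$ is uniform on $U_i$ and compatible with the cover from \cref{prop: FredholmHypersurface}, but this follows directly from the fact that each $U_i$ is contained in a product $W_i''\times U_{\le h}$ built into the construction.
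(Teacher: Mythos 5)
Your overall architecture matches the paper's: reduce to a density statement in weight space via the local finiteness of $\kappa$ on the affinoid cover, find a Zariski-dense and accumulating set of dominant algebraic weights satisfying the numerical noncriticality inequalities, pull back, and conclude. But your gap condition is not strong enough, and this is a genuine hole.

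The second inequality in \cref{definition: NumericallyNoncritical} is a bound on the \emph{cumulative} valuation $v_p\bigl((\delta_{\ol{\nu},1}\cdots\delta_{\ol{\nu},i})(p)\bigr)$ by the single weight gap $\on{wt}_\tau(\delta_{\ol\nu,i+1})-\on{wt}_\tau(\delta_{\ol\nu,i})$. Even if you know the individual valuations $v_p(j_{2n}(\delta)_j(p))$ are bounded by some $C$ on the affinoid (which is what the paper actually uses, citing \cite[Proposition 3.19]{BHSAnnalen}, not the $U_z$-slope bound $h_i$ — that bounds only the particular combination $v_p(\delta(z))$, not the individual coordinates), the left side can be as large as $iC$. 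A \emph{uniform} gap $\lambda_{\tau,j+1}-\lambda_{\tau,j}>h_i$ then fails at position $i$ as soon as $i\geq 2$. The paper avoids this by imposing a ``staircase'' condition on the weights,
\[
\lambda_{1,\tau}-\lambda_{2,\tau}>C_{\mathrm{max}},\qquad
\lambda_{i,\tau}-\lambda_{i+1,\tau}>\lambda_{i-1,\tau}-\lambda_{i,\tau}+(C_{\mathrm{max}}+1),
\]
which forces the gap at position $i$ to grow linearly in $i$ and thereby beat the cumulative valuation. For $2n=2$ your condition collapses to the paper's, but for $2n\geq 4$ it is strictly weaker, so your asserted containment $\kappa^{-1}(W_i^{\mathrm{reg}}\setminus(Y\cup S_i))\cap U_i\subset\mc{E}_{S,Y}$ does not hold: points lying over such weights need not be numerically noncritical. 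Replacing your ``gap $>h_i$'' condition with the paper's staircase inequalities, and swapping the $U_z$-slope bound $h_i$ for the affinoid-boundedness of each $|j_{2n}(\delta)_j|_p$, would repair the argument. A secondary, more cosmetic point: the paper first proves accumulation at algebraic weights and then deduces Zariski density from equidimensionality and openness of the weight map, whereas you prove density on each affinoid directly; both reductions are fine, but the paper's is cleaner to carry out because one works inside a single irreducible component $X$ and a single connected affinoid.
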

\begin{proof}
    The proof falls into two steps:
    \begin{enumerate}
        \item The weights $w(E)$ accumulates at any algebraic point of $\mc{W}$;
        \item Use the first point to show Zariski-density of $E$.
    \end{enumerate}

    Before proving Step 1, we show Step 1 implies Step 2 (i.e. Zariski-density). It suffices to show any irreducible component $X \subset \mc{E}_{c,\wt{\mf{m}}}^d$ lies in the closure. By \cref{proposition: EquidimensionalAndOpennessOfWeightMap}, $\kappa(X)\subset \mc{W}$ is Zariski open, and thus there is a point $x\in X$ of algebraic weight. Now suppose $\overline{\mc{E}_{S,Y}}\not\supset X$. Then $\dim \overline{\mc{E}_{S,Y}\cap X} < \dim X = \dim \mc{W}$. But recall that locally on the eigenvariety $\mc{E}_{c,\wt{\mf{m}}}^d$ we have an affinoid open $U\ni x$ such that the induced map $\kappa_{U \cap X}: U\cap X \to \kappa(U\cap X)$ is finite surjective. 
As $\kappa_{U \cap X}$ is finite, the image of $\overline{\mc{E}_{S,Y} \cap U \cap X}$ is an analytic closed subset of $\kappa(U \cap X)$ containing $\kappa(\mc{E}_{S,Y} \cap U \cap X)$ which by the accumulation property is both nonempty and equal to $\kappa_X(U)$, which has $\dim \mc{W}$, contradicting the lack of density up top.

Now we prove Step 1: Let $x$ be a point of algebraic weight. Consider a connected affinoid open neighborhood $X\ni U\ni x$ contained in a fixed irreducible component $X \subset \mc{E}_{c,\wt{\mf{m}}}^{d}$, such that $\kappa: U \to \kappa(U)$ is finite surjective and $V:= \kappa(U) \subset \mc{W}$ is itself affinoid open. As $S$ is a nowhere dense closed subset of $X$, we have $S \cap U\subset U$ itself is an affinoid which is a union of irreducible proper closed subsets $Z_{i_1},\dots, Z_{i_k}$. 

Now following \cite[Proposition 3.19]{BHSAnnalen}, on the affinoid open $U$, the function $(y, \delta) \mapsto |j_{2n}(\delta)_i|_p$ is uniformly bounded in its valuation by some absolute constant $p^{C_{\on{max}}}$, where . Since $\kappa|_U$ is finite, the image of the closed subset $S \cap U = Z_{i_1}\cup \dots \cup Z_{i_k}$ remains closed and nowhere Zariski-dense, so the set of ``bad weights'' $B = \kappa(S \cap U) \cup (V \cap Y)$ is again proper closed in the irreducible $V$. Now we may consider the set $Q$ of characters $\delta: T(\mc{O}_{F^+,p}) \to \overline{\Q}_p^{\times}$ regular dominant algebraic of weight $-w_0\lambda$ such that for all $\tau: F^+ \hookrightarrow \overline{\Q}_p$
\begin{align*}
    &\lambda_{1,\tau} - \lambda_{2, \tau} > C_{\on{max}}\\
    &\lambda_{i,\tau} - \lambda_{i+1,\tau} > \lambda_{i - 1, \tau} - \lambda_{i,\tau} + (C_{\on{max}} + 1)
\end{align*}
for some fixed absolute constant $C_{\on{max}}$. By construction, every weight in $Q$ is numerically noncritical. Moreover, $Q \cap (V \setminus B)$ is Zariski dense in $V$, and thus by finiteness and equidimensionality $\kappa^{-1}(Q \cap (V \setminus B)) \subset \mc{E}_{S,Y}$ is Zariski-dense in $U$. This completes Step 1.
\end{proof}

\subsection{Density of cuspidal classical points}
We can now combine our work with \cref{corollary: SmallSupport} and \cref{lemma: FundamentalLongExactSequence} to prove the main theorem of this section.
\begin{definition}
    We call a point $x = (y,\delta)\in \mc{E}_{c,\wt{\mf{m}}}^d$ \textit{sufficiently interior classical} if:
    \begin{enumerate}
        \item $y$ lies in the support of the Hecke module $(J_B(\wt{\HH}^d(\wt{K}^p, L)_{\wt{\mf{m}}}^{\la}))'$;
        \item the character $\delta$ is dominant algebraic for $\tG$ and CTG in the sense of \cref{definition: CTG}.
    \end{enumerate}
\end{definition}
\begin{corollary}
\label{corollary: EcdClassicalPointsDense}
    $\mc{E}_{c,\wt{\mf{m}}}^d$ contains a Zariski-dense and accumulating set of points $(y,\delta)$ of numerically non-critical sufficiently interior classical points.
\end{corollary}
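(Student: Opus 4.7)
The plan is to invoke \cref{proposition: AbstractDensityCriterion} for suitable closed subvarieties $S\subset \mc{Z}_{\mc{M}_c^d}$ and $Y\subset \mc{W}$, and then translate the output into the sufficiently interior classical condition. For $Y$ I would take the non-CTG locus $\mc{W}\setminus \mc{W}_{CTG}$, which by \cref{Lemma: MostWeightsAreCTG} is contained in a proper analytic closed subvariety of each affinoid open of $\mc{W}$, so has positive codimension in every irreducible component. For $S$ I would take the intersection $\mc{Z}_{\mc{M}_c^d}\cap \supp_{\wh{T}}\mc{M}_{\partial,\wt{\mf{m}}}^{d-1}$.

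The main technical point -- and the step I expect to be the real obstacle -- is checking that $S$ has positive codimension in every irreducible component of $\mc{Z}_{\mc{M}_c^d}$. For this I would combine two inputs: \cref{proposition: EquidimensionalAndOpennessOfWeightMap} says that $\mc{Z}_{\mc{M}_c^d}$ is equidimensional of dimension $\dim \mc{W}$ with a local admissible cover on which the weight projection is finite surjective onto an affinoid open of $\mc{W}$; meanwhile, \cref{corollary: SmallSupport} says that $\mc{M}_{\partial,\wt{\mf{m}}}^{d-1}$ is locally \emph{torsion} over $\mc{W}$ on the admissible affinoid cover of its support, hence projects onto a proper analytic closed subset of $\mc{W}$ in each local piece. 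Pulling back along the finite weight projection then cuts out a subvariety of strictly smaller dimension in each affinoid piece of $\mc{Z}_{\mc{M}_c^d}$, giving the required codimension bound.

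With this in hand, \cref{proposition: AbstractDensityCriterion} produces a Zariski dense and accumulating subset $\mc{E}_{S,Y}\subset \mc{E}_{c,\wt{\mf{m}}}^d$. The remaining verification is essentially formal: numerical noncriticality and the locally dominant algebraic condition on $\delta$ are built into the conclusion of the criterion, and $\kappa(e)\notin Y$ gives the CTG condition. Finally, for $e=(y,\delta)$ with $f(e)\notin S$ the stalk $(\mc{M}_{\partial,\wt{\mf{m}}}^{d-1})_{(y,\delta)}$ vanishes, so the exact sequence \eqref{eq: FundamentalExactSequenceIntro} of \cref{lemma: FundamentalLongExactSequence} forces $\mc{M}_{\wt{\mf{m}}}^d\to \mc{M}_{c,\wt{\mf{m}}}^d$ to be surjective on stalks at $(y,\delta)$. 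This implies $y$ lies in the support of $\mc{M}_{\wt{\mf{m}}}^d$, which via \cref{lemma: DerivedEqualsNormalJacquet} is identified with the support of $(J_B(\wt{\HH}^d(\wt{K}^p,L)_{\wt{\mf{m}}}^{\la}))'$, completing the check.
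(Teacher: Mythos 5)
Your proof is correct and essentially identical to the paper's: same choice of $S$ and $Y$, same invocation of \cref{corollary: SmallSupport} (together with the finiteness of the weight projection) to get positive codimension, and the same use of the four-term sequence from \cref{lemma: FundamentalLongExactSequence} to pass from non-vanishing of the stalk of $\mc{M}_{c,\wt{\mf{m}}}^d$ to non-vanishing of the stalk of $\mc{M}_{\wt{\mf{m}}}^d$, then identifying $\mc{M}_{\wt{\mf{m}}}^d$ with the underived Jacquet module via \cref{lemma: DerivedEqualsNormalJacquet}.
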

\begin{proof}
    We apply \cref{proposition: AbstractDensityCriterion} in the case $S = \supp_{\mc{W}\times \G_m}\HH^{d-1}(J_{B}(\Pi_{\partial, \wt{\mf{m}}}^{\la}))'$, and $Y = \mc{W}_{\on{non-CTG}}$. In particular, by \cref{corollary: SmallSupport} we have that for any affinoid open subsets $U_i'= \subset \mc{Z}_c^d:=\supp_{\mc{W}\times \G_m}\mc{M}_{c}^d$, we have $\pi_{\mc{W}}(S \cap U_i'), \pi_{\mc{W}}(Y \cap U_i') \subset \Omega:= \pi_{\mc{W}}(U_i')$ are proper Zariski--closed subsets in an affinoid, and are thus finite unions of irreducible components, each of which has dimension $< \dim \Omega = \dim\mc{W}$. Thus, \cref{proposition: AbstractDensityCriterion} applies and so the set $\mc{E}_{S,Y}$ of points $x = (y, \delta)$ with $\delta$ locally dominant algebraic, $j_{2n}(\delta)$ numerically noncritical whose weight also avoids the $S$ and $Y$. By \cref{lemma: FundamentalLongExactSequence} if $x = (y,\delta) \in \mc{E}_{S,Y}$, then $(\mc{M}_{\partial}^{d-1})_{x} = 0$ by Nakayama's lemma (working locally around $f(y)$ so that all the modules are finitely generated) and the definition of $S$, so $(\mc{M}_c^d)_{x} \neq 0 \implies (\mc{M}^d)_x \neq 0$, and thus for any $x \in \mc{E}_{S,Y}$, we get a corresponding point $x \in \mc{E}_{\wt{\mf{m}}}^{d}$ on the middle degree eigenvariety. But then using numerical noncriticality, the classicality criterion \cref{lemma: SmallSlopeClassicalForUsualJacquetFunctor} implies $x$ corresponds to a Hecke eigensystem in some $\HH^d(\wt{K}^p, \mathsf{L}(\lambda))_{\wt{\mf{m}}}$ for $\lambda$ a dominant weight for $\tG$. By the CTG assumption on the weight $\lambda$ (i.e. $Y$), $x$ corresponds to a \textit{cuspidal} automorphic representation for $\wt{G}(\A_{F^+})$.
\end{proof}

\section{Trianguline Galois representations}
\label{Section6: Trianguline}

We can now combine the density of classical (cuspidal) points in $\mc{E}_{c, \wt{\mf{m}}}^{d}$ with degree shifting to derive our main theorem. First, recall that for $K, L/\Q_p$ finite extensions a continuous local Galois representation $r: \Gal_{K} \to \GL_n(L)$ is said to be \textbf{trianguline} of parameter $\delta: (K^{\times})^n \to L^{\times}$ if the associated rigid $(\varphi, \Gamma_K)$-module $D_{\on{rig}}^{\dagger}(r)$ has an increasing filtration by sub-$(\varphi, \Gamma_K)$ modules with rank $1$ (ascending) graded pieces $\mc{R}_{L,K}(\delta_1), \dots, \mc{R}_{L,K}(\delta_n)$. 

Now we need to discuss Galois representations associated to classical automorphic forms for the unitary group $\wt{G}$, along with their local-global compatibility results. For shorthand, if $r: \Gal_F \to \GL_n(L)$ is a continuous representation and $\nu \in S_{\ell}(F)$ is a place, we let $r_{\nu}:= r|_{\Gal_{F_{\nu}}}$.
\begin{theorem}
\label{theorem: ShinGaloisreps}
    Assume $F$ contains an imaginary quadratic field $F_0$, $p$ is a split prime in $F_0$, and $\pi$ is a cuspidal automorphic representation of $\tG(\A_{F^+})$ which $\xi$-cohomological for $\xi$ an irreducible algebraic representation of $(\Res_{F^+/\Q}\tG)_{\C}$. Then for any isomorphism $\iota: \overline{\Q}_p \simeq \C$ there is an associated Galois representation $r_{\pi, \iota}: \Gal_{F} \to \GL_{2n}(\overline{\Q}_p)$ satisfying:
    \begin{enumerate}
        \item For any $\ell \neq p$ which is unramified in $F$ and above which $\pi$ is unramified and for all $\nu \mid \ell$ in $F$, we have $r_{\pi, \iota,\nu}$ is unramified and the characteristic polynomial of $r_{\pi, \iota}(\Frob_{\nu})$ is equal to the image of $\widetilde{P}_{\nu}(X)$ in $\overline{\Q}_p[X]$ coming from the base change of $\iota^{-1}(\pi_{\ol{\nu}})$, where $\nu$ lies above $\ol{\nu}\in S_p(F^+)$ .
        \item For any $\nu \mid p$ in $F$ we have $r_{\pi, \iota,\nu}$ is de Rham with Hodge--Tate weights for each $\tau: F \hookrightarrow \overline{\Q}_p$ given by
        \[
        \on{HT}_{\tau}(r_{\pi, \iota,\nu}) = \{\wt{\lambda}_{\tau, 1} + 2n - 1, \wt{\lambda}_{\tau, 2} + 2n - 2, \dots, \wt{\lambda}_{\tau, 2n}\},
        \]
        where $\wt{\lambda} \in (\Z_{\on{dom}}^{2n})^{\Hom(F, \overline{\Q}_p)}$ is the highest weight of the representation $\iota^{-1}(\xi \otimes \xi)^{\vee}$ of $\GL_{2n}/\ol{\Q}_p$.
    \end{enumerate}
    Suppose moreover that for each $\ol{\nu}|p$ we have $\iota^{-1}\pi_{\ol{\nu}} \simeq \Ind_{\ol{B}_{2n}(F_{\nu})}^{\GL_{2n}(F_{\nu})}\theta_{\ol{\nu}}$ is an unramified principal series, and set $\theta:= \prod_{\ol{\nu}}\theta_{\ol{\nu}}$. Then we have:
    \begin{enumerate}[label = (\alph*)]
        \item $r_{\pi, \iota,\nu}$ is crystalline, and the Frobenius operator on $D_{\on{crys}}(r_{\pi, \iota,\nu})$ is semisimple with eigenvalues
        \[
        \varphi_{\nu, i} = \theta_{\ol{\nu},i}(\varpi_{\nu})|\varpi_{\nu}|_{F_{\nu}}^{-i+1}.
        \]
        \item Moreover if the refinement $j_{2n}(\theta \cdot \wt{\lambda}^{\vee})_{\ol{\nu}}$ is numerically noncritical in the sense of \cref{definition: NumericallyNoncritical} then $r_{\pi, \iota}|_{\Gal_{F_{\nu}}}$ is trianguline with parameter $j_{2n}(\theta\cdot \wt{\lambda}^{\vee})_{\ol{\nu}}$.
    \end{enumerate}
\end{theorem}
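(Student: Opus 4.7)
The plan is to assemble this theorem by combining several known deep results; the name ``Shin Galois representations'' signals that this is essentially a compilation rather than an original construction. For existence of $r_{\pi,\iota}$ together with properties (1) and (2), the approach is to base change $\pi$ to a regular algebraic essentially conjugate self-dual cuspidal automorphic representation $\Pi$ of $\GL_{2n}(\A_F)$. This is possible via the endoscopic classification for quasi-split unitary groups (Mok, refined by Kaletha--Minguez--Shin--White), which applies here since $\tG = U(n,n)$ is quasi-split over $F^+$. One can then apply the construction of Galois representations attached to such conjugate self-dual $\Pi$ due to Chenevier--Harris, building on earlier work of Shin. Property (1) becomes a consequence of Chebotarev density and unramified local-global compatibility, compatible with the Satake transfer computation in \cref{proposition: EffectofHeckePolyUnderS}. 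Property (2), the Hodge--Tate weight formula, follows from the compatibility of the Hodge filtration of $r_{\Pi,\iota,\nu}$ with the infinity type of $\Pi$, which encodes $\wt{\lambda}$ via the standard $\rho$-shift.

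For property (a), when each $\iota^{-1}\pi_{\ol{\nu}}$ is unramified principal series, the base change $\Pi_{\nu}$ is likewise unramified principal series, and local-global compatibility at $p$ for unramified representations (due to Caraiani) gives that $r_{\pi,\iota,\nu}$ is crystalline, with Frobenius eigenvalues on $D_{\on{crys}}$ matching the Satake parameters of $\Pi_{\nu}$, up to an arithmetic-versus-geometric normalisation twist. The precise formula $\varphi_{\nu, i} = \theta_{\ol{\nu},i}(\varpi_\nu)|\varpi_\nu|_{F_{\nu}}^{-i+1}$ is then a direct unwinding of the local Langlands correspondence together with the Satake transfer identity of \cref{proposition: EffectofHeckePolyUnderS}.

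For property (b), the plan is to invoke the general principle that a crystalline representation equipped with a numerically noncritical refinement admits a triangulation of $D_{\on{rig}}^{\dagger}$ whose parameter is explicitly determined by the refinement together with the Hodge--Tate weights. Concretely, the ordering of Frobenius eigenvalues encoded in $\theta_{\ol{\nu}}$ defines a refinement of $r_{\pi,\iota,\nu}$, and the numerical noncriticality hypothesis of \cref{definition: NumericallyNoncritical} is precisely the non-critical slope condition ensuring that this refinement is compatible with the Hodge filtration; then the standard theory of $(\varphi,\Gamma)$-modules over the Robba ring (e.g.\ \cite{BC09,KPX14}) produces the triangulation with the expected parameter $j_{2n}(\theta \cdot \wt{\lambda}^{\vee})_{\ol{\nu}}$. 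The main obstacle in this writeup is essentially bookkeeping rather than mathematical: the many normalisations involved (the geometric Artin map, the cyclotomic shift packaged in $j_{2n}$, the choice of parabolic induction from $\ol{B}$ versus $B$, and the dualisation built into $\wt{\lambda}^{\vee}$) must be matched carefully so that the parameter extracted from the triangulation agrees on the nose with the stated $j_{2n}(\theta\cdot \wt{\lambda}^{\vee})_{\ol{\nu}}$, rather than with some Weyl-conjugate or twist.
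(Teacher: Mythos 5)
Your proposal is correct and takes essentially the same route as the paper's proof: the paper delegates the existence of $r_{\pi,\iota}$ together with (1), (2), and (a) to \cite[Thm.~2.3.3]{10AuthorPaper} (itself packaging Shin's construction via base change and Caraiani's local-global compatibility at $p$), then for (b) invokes \cite[Lemma~2.8]{NT21}, which is exactly the clean statement you are invoking from \cite{BC09,KPX14} that a crystalline representation with a numerically noncritical refinement is trianguline with the expected parameter. The one substantive comment is that the ``bookkeeping'' you defer --- unwinding $\rec_{F_\nu}(\pi_{\ol\nu}\otimes|\det|^{(1-2n)/2})$ into explicit Frobenius eigenvalues and matching the resulting parameter against $j_{2n}(\theta\cdot\wt\lambda^\vee)_{\ol\nu}$ --- is in fact the entire written content of the paper's proof, so while you have correctly located the inputs, you have not carried out the step where the genuine risk of error lies.
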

\begin{proof}
    The theorem is essentially \cite[Thm. 2.3.3]{10AuthorPaper} (which in turn is an adaptation of \cite{Shin14}), once we note several consequences. Recall (a consequence of) the refined statement of local-global compatibility at finite places due to Caraiani (see \cite[Theorem 2.1.9 (3)]{CN2023}): if $\ell$ is prime which splits in $F_0$, then for each $\nu \in S_p(F)$ in $F$ lying above $\ol{\nu}\in S_p(F^+)$, we have an isomorphism
    \[
    \WD(r_{\pi, \iota}|_{\Gal_{F_{\nu}}})^{\on{F}-ss} \simeq \iota^{-1}\rec_{F_\nu}(\pi_{\ol{\nu}}\otimes |\det|^{(1-2n)/2}).
    \]
    Plugging in $\pi_{\ol{\nu}} = \Ind_{\ol{B}_{2n}(F_{\nu})}^{\GL_{2n}(F_{\nu})}\theta_{\ol{\nu}}$ to the right side yields
    
    \begin{align*}
    \rec_{F_\nu}(\pi_{\ol{\nu}}\otimes |\det|^{(1-2n)/2})&\simeq \theta_{\ol{\nu}}\delta_{B}^{1/2}| \cdot|_{F_{\nu}}^{(1-2n)/2}\circ \Art_{F_{\nu}}^{-1}\\
    &= (\theta_{\ol{\nu},1}|\cdot|_{F_{\nu}}^{(2n-1)/2}, \theta_{\ol{\nu},2}|\cdot|_{F_{\nu}}^{(2n-3)/2},\dots, \theta_{\ol{\nu}, 2n} |\cdot|_{F_{\nu}}^{(1-2n)/2})|\cdot|_{F_{\nu}}^{(1-2n)/2}\circ \Art_{F_{\nu}}^{-1}\\
    &= (\theta_{\ol{\nu},1}\circ \Art_{F_{\nu}}^{-1},\theta_{\ol{\nu},2}|\cdot|_{F_{\nu}}^{-1}\circ \Art_{F_{\nu}}^{-1},\dots, \theta_{\ol{\nu}, 2n}|\cdot|_{F_{\nu}}^{1-2n}\circ \Art_{F_{\nu}}^{-1}),
    \end{align*}
    so the eigenvalues for $\Frob_{\nu}$ are $\theta_{\ol{\nu},1}(\varpi_{\nu}), \dots, \theta_{\ol{\nu},2n}(\varpi_{\nu})|\varpi_{\nu}|_{F_{\nu}}^{1-2n}$.

    Then applying \cite[Lemma 2.8]{NT21} implies $r_{\pi,\iota}|_{\Gal_{F_{\nu}}}$ is trianguline with parameter $\delta$ defined by
    \[
    \delta_{\nu, i}(x) = \theta_{\ol{\nu},i}(x)|x|_{F_{\nu}}^{1-i}\cdot \prod_{\tau}\tau(x)^{-\wt{\lambda}_{\tau, n-i + 1} - i + 1}
    \]
    Now recalling the cyclotomic character $\eps_{\on{cyc}} \circ \Art_{F_{\nu}} = N_{F_{\nu}/\Q_p}(x)|x|_{F_{\nu}}$, we then have $N_{F_{\nu}/\Q_p}(x)^{1-i} = \prod_{\tau}\tau(x)^{1-i} = (\eps_{\cyc}\circ \Art_{F_{\nu}}(x))^{1-i}|x|_{F_{\nu}}^{i-1}$. Thus $\delta_{\nu, i} = \theta_{\ol{\nu}, i}\prod_{\tau}\tau(x)^{\wt{\lambda}_{\tau, n-i + 1}}\cdot (\eps_{\cyc}\circ \Art_{F_{\nu}})^{1-i} = j_{2n, i}(\theta\cdot \lambda^{\vee})_{\ol{\nu}}$. Thus, $r_{\pi, \iota}|_{\Gal_{F_{\nu}}}$ is in fact trianguline of parameter $j_{2n}(\theta\cdot \lambda^{\vee})_{\ol{\nu}}$, as required.
\end{proof}

We recall certain deformation spaces of trianguline (pseudo)representations. Fix a continuous residual representation $\ol{r}: \Gal_{K} \to \GL_n(k_L)$, let $R_{\ol{r}}^{\square}$ be the associated framed Galois deformation ring, and let $X^{\square}(\ol{r})$ be the rigid generic fibre of $\Spf(R_{\ol{r}}^{\square})$. Now let $\mc{T}_L^n:= \widehat{T_n(K)}_L$ be the rigid space of continuous characters of $(K^{\times})^{n}$. Let $\mc{T}_{\on{reg}}^{n} \subset \mc{T}_L^n$ be the Zariski dense open subset of characters $\delta = (\delta_1,\dots, \delta_n)$ such that $\delta_{i}/\delta_j \neq z^{-\mathbf{k}}, (\varepsilon_{\on{cyc}}\circ \Art_{K})\cdot z^{\mathbf{k}}$ for all $i \neq j$ and $\mathbf{k} \in \Z_{\ge 0}^{\Hom(K, \ol{\Q}_p)}$. Now let $U_{\tri}(\rbar) = \{(r,\delta): X^{\square}(\ol{r}) \times \mc{T}_{\on{reg}}^{n}: r \text{ is trianguline of parameter $\delta$}\}$. We then recall the \textit{trianguline variety} $X_{\tri}^{\square}(\ol{r})$ is defined to be the Zariski closure of $U_{\tri}(\rbar)$ in $X^{\square}(\rbar)\times \mc{T}_L^n$. We could also make a similar definition taking representations at all places $\nu \mid p$ at once. Basic structural results (see \cite[Theoreme 2.6]{BHSAnnalen}) tell us that $X_{\tri}^{\square}(\rbar)$ is quasi-Stein rigid space, equidimensional of dimension $n^2 + \frac{n(n+1)}{2}[K:\Q_p]$ and $U_{\on{tri}}(\rbar)$ is Zariski open, dense, and smooth, and the natural map $\omega:U_{\on{tri}}(\rbar) \to \mc{W}_L$ is also smooth. 

Although $X_{\tri}^{\square}(\rbar)$ is defined by a closure, we still know the following about all the points in this space: if $x = (r,\delta) \in X_{\tri}^{\square}(\rbar)$ then by \cite[Theorem 6.3.13]{KPX14} $r$ is trianguline of some parameter $\delta' = (\delta_1', \dots, \delta_n')$ satisfying $\delta_i^{-1}\delta_i': K^{\times} \to \ol{\Q}_p^{\times}$ is an algebraic character. In particular, all representations on $X_{\tri}^\square(\rbar)$ are trianguline, but need not be trianguline of parameter the given character $\delta$. Note, however, that the weights of $\delta$ and $\delta'$ must nevertheless coincide (see \cite[Proposition 2.9]{BHSAnnalen}). 

\begin{proposition}
    \label{prop: CrystallineGaloisRepsAssociatedToClassicalPoints}
    Keep the assumptions of \cref{theorem: ShinGaloisreps}.
    If $z = (y, \delta = \delta_{\on{sm}}\lambda^{\vee}) \in \mc{E}_{c,\wt{\mf{m}}}^d$ is a numerically noncritical sufficiently interior point of weight $\lambda$ such that for all $\nu |p$ of $F$ the representation $\Ind_{\ol{B}_{2n}(F_{\nu})}^{\GL_{2n}(F_{\nu})}(\delta_{\on{sm}, \ol{\nu}})$ is absolutely irreducible, there is an associated Galois representation $r_{z}: \Gal_F \to \GL_{2n}(\ol{\Q}_p)$ matching Frobenius and Hecke eigenvalues at unramified places, such that $r_z$ is crystalline at $\nu | p$ with Hodge--Tate weights given by $\on{wt}_{\tau}(j_{2n}(\delta_{z})_{\ol{\nu}})$ and Frobenius eigenvalues given by $\delta_{\nu, i}(\varpi_{\nu})\prod_{\tau\in \Hom(F_{\nu}, \ol{\Q}_p)}\tau(\varpi_{\nu})^{-\lambda_{\tau, 2n - i + 1} - i + 1}$, and in particular $(r_z, j_{2n}(\delta_z)_{\ol{\nu}}) \in U_{\tri}(\ol{\rho}_{\wt{\mf{m}}}|_{\Gal_{F_{\nu}}})$.

    Consequently, there is a pseudorepresentation $D: \Gal_F \to \mc{O}(\mc{E}_{c, \wt{\mf{m}}}^d)$ such that for any numerically noncritical sufficiently interior classical $z \in \mc{E}_{c,\wt{\mf{m}}}^d(\ol{\Q}_p)$, the specialization $D_{z}: G_F \to \mc{O}(\mc{E}_{c, \wt{\mf{m}}}^d) \to \ol{\Q}_p$ is equal to $\on{tr} r_{z}$.
\end{proposition}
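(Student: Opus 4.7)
The sufficient interiority hypothesis, combined with the dominant algebraic CTG weight and the numerical noncriticality of $\delta$, places us in the setting of \cref{lemma: SmallSlopeClassicalForUsualJacquetFunctor} and produces a classical Hecke eigensystem contributing to $\HH^d(\wt{K}^p, \mathsf{L}(\lambda))_{\wt{\mf{m}}}$. Exactly as in the proof of \cref{corollary: EcdClassicalPointsDense}, the CTG hypothesis on $\lambda$, combined with Clozel's purity lemma applied to boundary cohomology and Franke's theorem, rules out non-cuspidal contributions, so this eigensystem comes from a regular algebraic cuspidal automorphic representation $\pi$ of $\wt{G}(\A_{F^+})$ of weight $\lambda$. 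The absolute irreducibility of $\Ind_{\ol{B}_{2n}(F_\nu)}^{\GL_{2n}(F_\nu)}(\delta_{\on{sm}, \ol{\nu}})$, combined with the description of the Jacquet module of an unramified irreducible principal series as a direct sum of Weyl conjugates of its inducing character, identifies $\iota^{-1}\pi_{\ol{\nu}}$ with this principal series (up to Weyl re-indexing that does not affect the induction), and lets us take $\theta_{\ol{\nu}} := \delta_{\on{sm}, \ol{\nu}}$.

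Next, apply \cref{theorem: ShinGaloisreps} to $\pi$ and set $r_z := r_{\pi, \iota}$. Part (1) matches Hecke and Frobenius eigenvalues at unramified places, while parts (2) and (a) yield crystallinity at each $\nu | p$ with the stated Hodge--Tate weights and Frobenius eigenvalues once one unfolds $\delta_{\ol{\nu}, i}(\varpi_\nu) = \theta_{\ol{\nu}, i}(\varpi_\nu) \prod_\tau \tau(\varpi_\nu)^{\lambda^{\vee}_{\tau, i}}$ and uses $|\varpi_\nu|_{F_\nu}^{-i+1} = \prod_\tau \tau(\varpi_\nu)^{-(i-1)}$, together with the identity $\on{wt}_\tau(j_{2n}(\delta)_{\ol{\nu}, i}) = \on{wt}_\tau(\delta_{\ol{\nu}, i}) + i - 1$ coming from $\on{HT}(\eps_{\cyc}) = -1$. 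Numerical noncriticality then activates part (b), producing triangulinity of $r_{z,\nu}$ with parameter $j_{2n}(\delta_z)_{\ol{\nu}}$. The strictly increasing integral $\tau$-weights (from numerical noncriticality) together with the Weil-bound constraints on cuspidal Satake parameters $\theta_{\ol{\nu}, i}(\varpi_\nu)$ forbid the ratios $\delta_i/\delta_j$ from hitting the excluded characters $z^{-\mathbf{k}}$ or $(\eps_{\cyc} \circ \Art_{F_\nu}) z^{\mathbf{k}}$, placing the pair in $U_{\tri}(\ol{\rho}_{\wt{\mf{m}}}|_{\Gal_{F_\nu}})$.

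For the pseudocharacter $D$, the plan is to invoke the unitary-group analogue of Scholze's construction: for the (Eisenstein) maximal ideal $\wt{\mf{m}} = \mc{S}^*(\mf{m})$ of $\wt{\T}^S(\wt{K}^p)$ there exists a continuous $2n$-dimensional pseudocharacter $\wt{D}: \Gal_F \to \wt{\T}^S(\wt{K}^p)_{\wt{\mf{m}}}/\wt{I}$ with $\wt{I}$ nilpotent, whose characteristic polynomial at $\Frob_\nu$ for $\nu \notin S$ coincides with the image of $\wt{P}_\nu(X)$; this is the direct analogue of \cref{lemma: GaloisRepsAssociatedToEigenvarieties} for $\wt{G}$. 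Composing with the structure map $\wt{\T}^S(\wt{K}^p)_{\wt{\mf{m}}} \to \mc{O}(\mc{E}_{c, \wt{\mf{m}}}^d)$ induced by the Hecke action on $\mc{M}_{c, \wt{\mf{m}}}^d$ and, if necessary, descending to the reduced quotient (justified because specialization at any of the Zariski-dense classical points from \cref{corollary: EcdClassicalPointsDense} yields the trace of a genuine Galois representation, so the nilpotent image of $\wt{I}$ must lie in the nilradical of $\mc{O}(\mc{E}_{c, \wt{\mf{m}}}^d)$) produces the desired $D$.

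Finally, for any numerically noncritical sufficiently interior classical $z$, the pseudocharacters $D_z$ and $\on{tr} r_z$ are continuous $2n$-dimensional pseudocharacters on $\Gal_F$ whose values at $\Frob_\nu$ for all $\nu \notin S$ agree---both equal the image at $z$ of the Hecke operator $\wt{T}_{\nu, 1}$ (via the construction of $\wt{D}$ and \cref{theorem: ShinGaloisreps}(1), respectively)---and more generally their characteristic polynomials at Frobenius elements agree. Chebotarev density and continuity then force $D_z = \on{tr} r_z$. The main obstacle I foresee is the precise formulation and citation of $\wt{D}$ in the Eisenstein unitary-group setting with exactly this characteristic-polynomial normalization, but this should follow from the same Scholze--Newton--Thorne-style machinery underlying \cref{lemma: GaloisRepsAssociatedToEigenvarieties}.
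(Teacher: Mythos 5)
Your construction of $r_z$ from the small-slope classicality criterion together with \cref{theorem: ShinGaloisreps} is essentially identical to the paper's argument, and the unwinding of the Frobenius eigenvalues and the regularity check for $U_{\tri}$ are correct. The construction of the pseudocharacter $D$, however, is where you diverge: the paper does \emph{not} pull $D$ back from a pseudorepresentation on the big Hecke algebra $\wt{\T}^S(\wt{K}^p)_{\wt{\mf{m}}}$, but instead runs Chenevier's interpolation argument (\cite[Proposition 7.1.1]{Che04}, \cite[Example 2.32]{Che14}) directly on $\mc{O}(\mc{E}_{c,\wt{\mf{m}}}^d)$: the bounded subring $\mc{O}(\mc{E}_{c,\wt{\mf{m}}}^d)^{\le 1}$ is compact, receives the Hecke action integrally, and injects into $\prod_{z\in Z}\C_p$ by Zariski density of the classical points from \cref{corollary: EcdClassicalPointsDense}, so the product of the $\on{tr}\,r_z$ glues to a determinant valued in $\mc{O}(\mc{E}_{c,\wt{\mf{m}}}^d)^{\le 1}$. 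This buys you a construction that lives entirely on the eigenvariety with no appeal to any global pseudorepresentation statement for $\wt{G}$.

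Your route is plausible but has two concrete problems as written. First, the paper never establishes (or cites) a continuous determinant $\wt{D}\colon\Gal_F\to\wt{\T}^S(\wt{K}^p)_{\wt{\mf{m}}}$ at an Eisenstein maximal ideal of the unitary group; the only result of this kind in the paper, \cref{lemma: GaloisRepsAssociatedToEigenvarieties}, is for $G=\GL_n/F$ and explicitly requires $\mf{m}$ non-Eisenstein. Such a $\wt{D}$ does exist in the literature (e.g.\ the determinant constructions underlying \cite{Sch15} and \cite{10AuthorPaper}), but supplying it is a genuine additional input, and you flag this uncertainty yourself. Second, the nilpotent ideal $\wt{I}$ in your framing is misplaced: in the determinant/pseudorepresentation formalism no nilpotent ideal is needed (that enters only when upgrading a pseudorepresentation to a genuine representation), and if you do insist on factoring through $\wt{\T}^S(\wt{K}^p)_{\wt{\mf{m}}}/\wt{I}$, the composite with $\psi\colon\wt{\T}^S(\wt{K}^p)_{\wt{\mf{m}}}\to\mc{O}(\mc{E}_{c,\wt{\mf{m}}}^d)$ lands in the quotient $\mc{O}(\mc{E}_{c,\wt{\mf{m}}}^d)/\psi(\wt{I})$ rather than in $\mc{O}(\mc{E}_{c,\wt{\mf{m}}}^d)$ as the statement requires; ``descending to the reduced quotient'' changes the target ring and does not repair this. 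Reformulating the big-Hecke-algebra input as a determinant valued honestly in $\wt{\T}^S(\wt{K}^p)_{\wt{\mf{m}}}$ would close the second gap, but the first (the citation for $\wt{D}$) remains.
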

\begin{proof}

    By definition and the discussion before \cref{definition: NumericallyNoncritical} (taking into account that usual Jacquet module $J_B(\Ind_{\ol{B}_{2n}(F_{\nu})}^{\GL_{2n}(F_{\nu})}\theta_{\ol{\nu}})$ admits $\delta_B\cdot \theta_{\ol{\nu}}$ as a subquotient), $z$ takes the form $z = (\psi_{\pi},\delta_{\on{sm}}\cdot \lambda^{\vee})$, where $\pi$ is a cuspidal automorphic representation of $\wt{G}(\A_F^+)$ of weight $\lambda$ such that $\pi_{\ol{\nu}}\simeq \Ind_{\ol{B}_{2n}(F_{\nu})}^{\GL_{2n}(F_{\nu})}\delta_{\on{sm}, \ol{\nu}}$. Thus, the first part of the proposition follows from \cref{theorem: ShinGaloisreps}.
    Now let $\mO(\mc{E}_{c,\wt{\mf{m}}}^d)^{\le 1}\subset \mO(\mc{E}_{c,\wt{\mf{m}}}^d)$ be the subring of bounded elements. 
    The existence of the determinant $D$ is a standard argument (see \cite[Proposition 7.1.1]{Che04}), which is a consequence of:
    the Zariski density of noncritical sufficiently interior classical points, which implies the map $\mc{O}(\mc{E}_c^d)^{\le 1} \to \prod_{z \in Z}\C_p$ is injective, the fact that $\mc{O}(\mc{E}_c^d)^{\le 1}$ is compact, and the integrality of Hecke operators (i.e. the Hecke action $\psi: \wt{\T}^S \to \mc{O}(\mc{E}_c^d)$ factors through $\mc{O}(\mc{E}_c^d)^{\le 1}$). These facts imply that the trace of Frobenius at unramified primes of the product determinant $D_{\prod Z} := \prod_{z} \det r_z$ factors through $\mc{O}(\mc{E}_{c,\wt{\mf{m}}}^d)^{\le 1}$. \cite[Example 2.32]{Che14} then implies we can glue determinants, and $D_{\prod Z}$ factors through the desired determinant.
\end{proof}

The existence of the determinant $D: \Gal_F \to \mc{O}(\mc{E}_{c, \wt{\mf{m}}}^d)$ then yields a closed embedding $\iota_{c, \tG}': \mc{E}_{c, \wt{\mf{m}}}^{d}\hookrightarrow X_{\on{ps}}(\ol{\rho}_{\wt{\mf{m}}})^{2n} \times \widehat{T(F_p^+)}$. To keep in line with \cref{prop: CrystallineGaloisRepsAssociatedToClassicalPoints} and match the weights, we define $\iota_{c, \tG}:= (\on{id}\times j_{2n})\circ \iota_{c,\tG}'$.
We would like to interpolate from classical points to say more about the image of $\iota_{c,\tG}$. However, we run into the issue that $\mc{E}_{c,\wt{\mf{m}}}^{d}$ only carries a \textit{pseudorepresentation}, rather than an actual representation. Along these lines, we can similarly define the \textit{trianguline pseudodeformation space} $X_{\on{ps}, \tri}(\ol{\rho}_{\wt{\mf{m}},\nu})$ to be the Zariski closure of the image of $\bigcup_{\ol{r}: \on{tr} \ol{r} = \on{tr} (\ol{\rho}_{\wt{\mf{m}}})_{\nu}}U_{\tri}(\rbar)$ inside $X_{\ps}(\ol{\rho}_{\wt{\mf{m}},\nu})\times \wh{T(F_p^+)}$. A version of this space was first studied by Hellmann \cite{He12} (who called this a \textit{finite slope space}), and in the case of $K = \Q_p$ and $\ol{r}: \Gal_{K} \to \ol{\F}_p$ a local residual pseudorepresentation he showed $X_{\on{ps}, \tri}(\rbar)$ is equidimensional of dimension $1 + \frac{n(n+1)}{2}$. A refined consequence is the following:
\begin{corollary}
    For all $\nu \in S_p(F)$, the composition $\on{res}_{\nu}\circ \iota_{c, \tG}: \mc{E}_{c, \wt{\mf{m}}}^d \to X_{ps}(\ol{\rho}_{\wt{\mf{m}}}) \times \wh{T(F_p^+)}\to X_{ps}(\ol{\rho}_{\wt{\mf{m}},\nu}) \times \wh{T(F_p^+)}$ factors through $X_{\ps, \tri}(\ol{\rho}_{\wt{\mf{m}},\nu})$.
\end{corollary}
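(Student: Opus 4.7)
The plan is to prove the factorization by combining the Zariski density of classical points in $\mc{E}_{c,\wt{\mf{m}}}^d$ with the fact that $X_{\ps,\tri}(\ol{\rho}_{\wt{\mf{m}},\nu})$ is closed by construction. Since the target space is defined as a Zariski closure of genuine trianguline parameters, it will suffice to verify the factorization pointwise on a Zariski-dense subset and then invoke continuity.

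First I would extract the Zariski-dense, accumulating set $Z \subset \mc{E}_{c,\wt{\mf{m}}}^d$ of numerically noncritical sufficiently interior classical points from \cref{corollary: EcdClassicalPointsDense}. For each $z = (\psi_\pi, \delta) \in Z$, \cref{prop: CrystallineGaloisRepsAssociatedToClassicalPoints} supplies a Galois representation $r_z: \Gal_F \to \GL_{2n}(\ol{\Q}_p)$ which is crystalline at every $\nu \mid p$, with Hodge--Tate weights and crystalline Frobenius eigenvalues matching the character $j_{2n}(\delta)_{\ol{\nu}}$. By the cited statement, $(r_z|_{\Gal_{F_\nu}}, j_{2n}(\delta)_{\ol{\nu}})$ already lies in $U_{\tri}(r_z|_{\Gal_{F_\nu}}^{\mathrm{ss}})$, and its image under trace therefore lies in $X_{\ps,\tri}(\ol{\rho}_{\wt{\mf{m}},\nu})$ by definition. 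Unwinding the definition of $\iota_{c,\tG}$ via the determinant $D$ constructed in that proposition, the point $(\on{res}_\nu \circ \iota_{c,\tG})(z)$ is precisely this trace-paired-with-character, so $(\on{res}_\nu \circ \iota_{c,\tG})(Z) \subseteq X_{\ps,\tri}(\ol{\rho}_{\wt{\mf{m}},\nu})$.

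Next I would close up. Since $X_{\ps,\tri}(\ol{\rho}_{\wt{\mf{m}},\nu})$ is Zariski closed in $X_{\ps}(\ol{\rho}_{\wt{\mf{m}},\nu}) \times \wh{T(F_p^+)}$, the preimage
\[
\mc{E}_Z := (\on{res}_\nu \circ \iota_{c,\tG})^{-1}\bigl(X_{\ps,\tri}(\ol{\rho}_{\wt{\mf{m}},\nu})\bigr)
\]
is a Zariski closed analytic subspace of $\mc{E}_{c,\wt{\mf{m}}}^d$ containing $Z$. By \cref{proposition: EquidimensionalAndOpennessOfWeightMap}, $\mc{E}_{c,\wt{\mf{m}}}^d$ is equidimensional of dimension $\dim \mc{W}$ with no embedded components, so every associated prime is a minimal prime, and consequently a closed subspace containing a Zariski-dense set of classical points must equal $\mc{E}_{c,\wt{\mf{m}}}^d$ on the level of underlying reduced spaces. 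Since $X_{\ps,\tri}(\ol{\rho}_{\wt{\mf{m}},\nu})$ carries its reduced-induced structure (it is defined as a Zariski closure), this set-theoretic containment upgrades to the desired factorization of rigid analytic morphisms.

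The main potential obstacle is not a substantive one but a bookkeeping matter: ensuring that the parameter recorded at a classical point $z$ in $\mc{E}_c^d$ (namely $j_{2n}(\delta)_{\ol{\nu}}$) coincides precisely with the trianguline parameter of $r_z|_{\Gal_{F_\nu}}$ used in defining $U_{\tri}$. This match is exactly the content of \cref{prop: CrystallineGaloisRepsAssociatedToClassicalPoints} (itself resting on the computation of Frobenius eigenvalues and Hodge--Tate weights in \cref{theorem: ShinGaloisreps}), so once one has that proposition in hand the corollary is a formal density argument. Beyond this, the only mildly subtle point is that the pseudorepresentation-valued map $\iota_{c,\tG}$ lands in $X_{\ps,\tri}$ rather than a characteristic-zero trianguline variety; this is precisely the reason Hellmann's variant $X_{\ps,\tri}$ is the natural target here, since the eigenvariety a priori produces only a determinant, not a lift to a true Galois representation.
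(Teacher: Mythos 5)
Your argument follows the same route as the paper's: extract the Zariski-dense set of numerically noncritical, sufficiently interior classical points from \cref{corollary: EcdClassicalPointsDense}, check via \cref{prop: CrystallineGaloisRepsAssociatedToClassicalPoints} that each such point maps into $U_{\tri}$ (hence its trace into $X_{\ps,\tri}$), and then close up using that $X_{\ps,\tri}$ is Zariski closed by construction. One cosmetic point: in the middle paragraph you write $U_{\tri}(r_z|_{\Gal_{F_\nu}}^{\mathrm{ss}})$, but $U_{\tri}$ is indexed by a \emph{residual} representation; you mean $U_{\tri}(\ol{\rho}_{\wt{\mf{m}}}|_{\Gal_{F_\nu}})$ as in the paper.

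The one genuinely shaky step is the last sentence. You claim the set-theoretic containment upgrades to a morphism factorization because $X_{\ps,\tri}$ carries its reduced-induced structure. That is not the right invocation. If $f\colon X \to Y$ is a morphism and $Z \subset Y$ is a closed subspace with ideal sheaf $\mc{I}_Z$, then $f(X) \subset Z$ as sets means every section of $f^{\#}(\mc{I}_Z)$ vanishes pointwise on $X$, hence is nilpotent; to conclude it is zero you need $X$ (here $\mc{E}_{c,\wt{\mf{m}}}^d$) to be reduced, not $Z$. Reducedness of the target does nothing here (in fact it makes $\mc{I}_Z$ as large as possible, which only makes the factorization harder). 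Your appeal to ``no embedded components'' shows the associated primes are all minimal, but does not by itself rule out non-reduced irreducible components. So either you need to establish reducedness of $\mc{E}_{c,\wt{\mf{m}}}^d$ (or its image under the closed embedding $\iota_{c,\tG}$, which is the fact the paper's own proof sketch flags), or you should content yourself with the factorization on the level of underlying reduced spaces, which suffices for the downstream pointwise applications in \cref{proposition : PointwiseTriangulineforEcd} and \cref{theorem: MainTheorem1}.
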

\begin{proof}
    This is a consequence of \cref{prop: CrystallineGaloisRepsAssociatedToClassicalPoints}, the Zariski-density of classical points, the definition of the trianguline pseudodeformation space, and the fact that $\iota_{c,\tG}$ is a closed embedding. 
\end{proof}

For the sake of deducing results for $G = \GL_n/F$ (for example, pointwise triangulinity), however, it is better to work with framed deformation spaces. So we need to pass from pseudorepresentations, to representations. We will do this by using \cite[Lemma 7.8.11]{BC09} by Bella\"{i}che--Chenevier, and the following lemma.

\begin{lemma}
\label{lemma: LocalZariskiDensity}
    Let $X$ be a rigid space equipped with increasing admissible affinoid open cover $X = \bigcup_{n= 1}^{\infty} U_n$ and suppose $Z \subset X$ is a Zariski-dense and accumulating subset. Let $V_m:= \ol{Z \cup U_m} \subset U_m$ denote the Zariski-closure of the subset $Z \cap U_m$. Then for all $j \ge 1$, there exists $j' \ge j$ such that $U_j \subseteq V_{j'}$.
\end{lemma}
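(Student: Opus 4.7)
The plan is to build, for each $n \ge 1$, a ``stable Zariski closure'' $\widetilde V_n \subset U_n$ from the $V_{j'}$ with $j' \ge n$, glue them along the nested admissible cover to a single closed analytic subspace $\widetilde Z \subset X$ containing $Z$, and then invoke Zariski density of $Z$ to force $\widetilde Z = X$. Since $\widetilde V_j = U_j$ is precisely the assertion we want (for $j' \gg 0$), this will finish the proof. The key observations are that the affinoids $U_n$ are Noetherian and that the cover $\{U_n\}$ is increasing, which makes the gluing compatibility trivial.

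The first step is to check monotonicity: for $j' \le j''$ one has $V_{j'} \subset V_{j''}$ inside $U_{j''}$, because $V_{j''} \cap U_{j'}$ is a closed analytic subspace of $U_{j'}$ containing $Z \cap U_{j'}$, so must contain its Zariski closure $V_{j'}$. Intersecting with any fixed $U_n$ (for $n \le j'$) produces an ascending chain $V_{j'} \cap U_n \subset V_{j'+1} \cap U_n \subset \cdots$ of closed analytic subspaces of the Noetherian affinoid $U_n$. This chain must stabilize, and I will denote the eventual value by $\widetilde V_n \subset U_n$.

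Next, for $n \le m$ one has $\widetilde V_m \cap U_n = \widetilde V_n$, since taking $j'$ large enough that both stabilizations are achieved gives $\widetilde V_m \cap U_n = (V_{j'} \cap U_m) \cap U_n = V_{j'} \cap U_n = \widetilde V_n$. Because $\{U_n\}$ is an increasing admissible cover, this is exactly the compatibility datum needed to glue the family $\{\widetilde V_n\}$ into a closed analytic subspace $\widetilde Z \subset X$ with $\widetilde Z \cap U_n = \widetilde V_n$. By construction $Z \cap U_n \subset V_n \subset \widetilde V_n$, so $Z \subset \widetilde Z$; Zariski density of $Z$ in $X$ then forces $\widetilde Z = X$, and in particular $\widetilde V_j = U_j$. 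Unwinding the definition of $\widetilde V_j$ gives some $j' \ge j$ with $V_{j'} \cap U_j = U_j$, i.e.\ $U_j \subset V_{j'}$.

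The only delicate point is verifying that $\widetilde Z$ is genuinely a closed analytic subspace of $X$, which amounts to confirming that the Zariski-closure operation on affinoids and the gluing data behave as expected along the admissible cover; this is essentially formal given that the cover is increasing (so every pairwise compatibility reduces to the single containment $U_n \subset U_m$ above), and no accumulation hypothesis is actually invoked. The accumulation assumption on $Z$ is inherited from the setting in which the lemma will be applied but plays no role in the proof itself.
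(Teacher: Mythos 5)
Your proof is correct, and it is genuinely self-contained where the paper merely cites the proof of \cite[Lemma 5.9]{CHJ17} with a short remark about what changes in the absence of normality. Your argument hinges on two clean observations: (i) the monotonicity $V_{j'} \subseteq V_{j''}$ for $j' \le j''$, obtained by noting that $V_{j''} \cap U_{j'}$ is a closed analytic subspace of $U_{j'}$ containing $Z \cap U_{j'}$ and hence its Zariski closure; and (ii) Noetherian stabilization of the ascending chain $V_{j'} \cap U_n$ inside the affinoid $U_n$, which produces the compatible system $\{\widetilde V_n\}$ that glues to a closed analytic subspace $\widetilde Z \supseteq Z$. Zariski density then forces $\widetilde Z = X$, and unwinding the stabilization at level $j$ yields the desired $j'$. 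The paper, by contrast, leans on the structure of the argument in \cite{CHJ17}, which is carried out under a normality (or smoothness) hypothesis on $X$; the paper's remark explains that in that setting $V_m$ is even a union of connected components of $U_m$ and hence affinoid open, a stronger conclusion that your general argument does not (and cannot) reach. Your route avoids this hypothesis altogether, which is a genuine simplification of the statement's dependencies. You are also correct that the accumulation hypothesis plays no role in the proof of this particular lemma; it is carried along because it is in force in the applications. Two small points worth making explicit if you write this up in full: the ascending chain condition you invoke is on closed analytic subsets of the affinoid $U_n$, which reduces to Noetherianity of the affinoid algebra $\mathcal{O}(U_n)$; and the gluing of the $\widetilde V_n$ to a closed analytic subspace of $X$ is the gluing of coherent ideal sheaves along the admissible cover $\{U_n\}$, which is standard but deserves a one-line justification.
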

\begin{proof}
    This claim follows from the proof of \cite[Lemma 5.9]{CHJ17}\footnote{A separate proof was communicated by Brian Conrad. We thank him for also figuring out the shape of the lemma statement.}. The only difference there is that when $X$ is normal (or smooth, as assumed in \textit{loc. cit.}), the subsets $\ol{Z \cap U_m}$ are a union of \textit{connected} components of $V_m$, and are thus also affinoid open in $X$, implying the $V_m$ themselves form an affinoid open cover.
\end{proof}
\begin{lemma}
\label{lemma: TriangulineInsensitiveToSemisimple}
    Let $r: \Gal_{F} \to \GL_m(\overline{\Q}_p)$ be a continuous representation. Then $r_{\nu}$ is trianguline if and only if $(r^{\on{ss}})_{\nu}$ is trianguline. Moreover if $r_{\nu}$ is trianguline of parameter $\delta$, then $(r^{\on{ss}})_{\nu}$ is also trianguline of parameter $\delta$.
\end{lemma}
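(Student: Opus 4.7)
Let $K = F_\nu$. I would run the argument through the exact, fully faithful functor $D_{\on{rig}}^\dagger$ from continuous $\Gal_K$-representations on finite-dimensional $\overline{\Q}_p$-vector spaces to $(\varphi, \Gamma_K)$-modules over $\mc{R}_{L,K}$. Exactness of $D_{\on{rig}}^\dagger$ means that a Jordan--H\"{o}lder filtration $0 = r^0 \subset r^1 \subset \cdots \subset r^k = r$ of $r$ as a $\Gal_F$-representation induces, upon restriction to $\Gal_K$, a filtration $0 = M_0 \subset M_1 \subset \cdots \subset M_k = D_{\on{rig}}^\dagger(r_\nu)$ by saturated sub-$(\varphi, \Gamma_K)$-modules with $M_i/M_{i-1} \simeq D_{\on{rig}}^\dagger((r^i/r^{i-1})|_{\Gal_K})$. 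Since $r^{ss}|_{\Gal_K}\simeq \bigoplus_i (r^i/r^{i-1})|_{\Gal_K}$, the problem reduces to showing that $\bigoplus_i M_i/M_{i-1}$ is trianguline with parameter exactly $\delta$.

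The core structural input I would prove is the following: if $D$ is trianguline of parameter $(\delta_1, \ldots, \delta_m)$ and $A \subseteq D$ is a saturated sub-$(\varphi, \Gamma_K)$-module, then both $A$ and $D/A$ are trianguline with parameters given by complementary ordered subsequences of $(\delta_1, \ldots, \delta_m)$. The plan is to intersect the triangulation flag $0 = F_0 \subset F_1 \subset \cdots \subset F_m = D$ with $A$: each $F_j \cap A$ is saturated in $D$ (since $D/(F_j \cap A)$ embeds into $D/F_j \times D/A$, which is torsion-free), and the successive quotients embed as saturated sub-$(\varphi, \Gamma_K)$-modules of $F_j/F_{j-1} \simeq \mc{R}_{L,K}(\delta_j)$. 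Because the only saturated sub-modules of a rank-one $(\varphi, \Gamma_K)$-module are zero or the whole thing, each graded piece is either $0$ or $\mc{R}_{L,K}(\delta_j)$; discarding zero pieces gives a triangulation of $A$ whose parameter is an ordered subsequence of $(\delta_1, \ldots, \delta_m)$, and the analogous argument on $D/A$ (passing to images rather than intersections) handles the quotient.

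Iterating this claim along the Jordan--H\"{o}lder filtration, each $M_i/M_{i-1}$ is trianguline with parameter $\sigma_i$ an ordered subsequence of $\delta$, and the $\sigma_i$ partition $\delta$ (as ordered sequences compatibly). I would then build a triangulation of $\bigoplus_i M_i/M_{i-1}$ by processing the characters $\delta_1, \ldots, \delta_m$ in order; at step $j$, identify the unique $i$ with $\delta_j \in \sigma_i$ and extend the flag by the next graded piece coming from the triangulation of $M_i/M_{i-1}$. Since each $\sigma_i$ is an ordered subsequence of $\delta$, this interleaving respects the individual triangulations and yields a triangulation of $D_{\on{rig}}^\dagger((r^{ss})_\nu)$ with parameter exactly $\delta$. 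For the converse of the biconditional, if $(r^{ss})_\nu$ is trianguline then each direct summand $(r^i/r^{i-1})|_\nu$ is trianguline by the core claim applied to the direct sum, and then $r_\nu$ is trianguline by induction along the Jordan--H\"{o}lder filtration, using the elementary observation that an extension of trianguline $(\varphi, \Gamma_K)$-modules is trianguline (lift the triangulation of the quotient to the total space and prepend the triangulation of the sub).

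The main technical obstacle is bookkeeping with saturations in the category of $(\varphi, \Gamma_K)$-modules over the Robba ring: verifying that $F_j \cap A$ and its image in $F_j/F_{j-1}$ really are saturated in the appropriate senses, and that after discarding zero pieces one obtains a genuine triangulation flag rather than a merely successive quotient of rank-one modules. These are all elementary manipulations within the framework developed in \cite{KPX14,Liu15}, but their careful handling is where all the work lies.
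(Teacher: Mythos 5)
The biconditional part of your plan (triangulinity of $r_\nu$ $\Leftrightarrow$ triangulinity of $(r^{\on{ss}})_\nu$) is fine: intersecting the triangulation flag with a saturated sub $A\subset D$ produces a flag whose graded pieces are torsion-free of rank $\le 1$, hence free of rank $0$ or $1$, so $A$ and $D/A$ inherit triangulations; extensions and direct summands of trianguline modules are trianguline, and this closes the equivalence. That much really does ``follow from the definition,'' which is essentially all the paper says.

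The ``core structural input'' you state, however, is wrong as written, and the gap is exactly the point you wave off as bookkeeping. You need $(F_j\cap A)/(F_{j-1}\cap A)$ to be a \emph{saturated} sub-$(\varphi,\Gamma_K)$-module of $F_j/F_{j-1}\simeq\mc{R}_{L,K}(\delta_j)$, so that it is forced to be $0$ or all of $\mc{R}_{L,K}(\delta_j)$. But the cokernel of that inclusion is $F_j/\bigl(F_{j-1}+(F_j\cap A)\bigr)\simeq (F_j+A)/(F_{j-1}+A)$, which is a \emph{subquotient} (not a submodule) of the torsion-free $D/A$, and can carry torsion: the fact that $F_{j-1}$ and $A$ are each saturated in $D$ does \emph{not} make $F_{j-1}+(F_j\cap A)$ saturated in $F_j$ (sums of saturated submodules are not saturated in general). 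What you actually get is that each nonzero graded piece is a rank-one free submodule of $\mc{R}_{L,K}(\delta_j)$, i.e.\ of the form $\mc{R}_{L,K}(\delta_j z^{\mathbf{k}_j})$ with $\mathbf{k}_j\in\Z_{\ge 0}^{\Hom(K,\ol{\Q}_p)}$ not necessarily zero, and dually for $D/A$ with the opposite sign. So the parameter of $A$ need not be a subsequence of $\delta$, and the interleaving at the end of your argument only produces a triangulation of $\bigoplus_i M_i/M_{i-1}$ with parameter equal to $\delta$ up to algebraic twists distributed across the pieces, not $\delta$ itself. (Already in rank $3$ one can cook up saturated $A$ for which the induced graded pieces genuinely involve a nontrivial $z^{\mathbf{k}}$.)

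Two honest remarks on context. First, the paper's own proof of this lemma is a single sentence and does not address this subtlety; the parameter part of the statement deserves more care than either your plan or the paper's proof provides. Second, this appears to be harmless for the paper's purposes: in the application (\cref{proposition : PointwiseTriangulineforEcd}) the conclusion is only ``trianguline of parameter $j_{2n}(\delta)_{\ol\nu}\cdot\delta'$ for some algebraic character $\delta'$,'' and that weaker version of the present lemma (parameter equal to $\delta$ up to an algebraic twist) is all that is needed. If you want to prove the lemma exactly as stated, you will need an additional input beyond flag-intersection -- for instance, exploiting that $A$, $D$, $D/A$ are all \'etale, or comparing Sen weights to constrain the $\mathbf{k}_j$; this is genuinely nontrivial and not merely ``elementary manipulations within the framework of \cite{KPX14,Liu15}.''
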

\begin{proof}
    This result follows from the definition of triangulinity. Note of course if $(r^{\on{ss}})_{\nu}$ is trianguline of parameter $\delta$, $r_{\nu}$ need not have parameter $\delta$.
\end{proof}

\begin{proposition}
\label{proposition : PointwiseTriangulineforEcd}
    If $x = (y, \delta) \in \mc{E}_{c, \wt{\mf{m}}}^d(\ol{\Q}_p)$ and $t = \iota_{c, \tG}(x) = \on{tr} \rho,$ is the associated determinant for $\rho$ semisimple, then $\rho|_{\Gal_{F_{\nu}}}$ is a trianguline Galois representation with $\on{HT}_{\tau}(\rho_{\nu}) = \on{wt}_{\tau}(j_{2n}(\delta)_{\ol{\nu}})$ and parameter $j_{2n}(\delta)_{\ol{\nu}}\cdot \delta'$, for $\delta'$ an algebraic character.
\end{proposition}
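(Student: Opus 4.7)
The plan is to interpolate from the Zariski-dense set of numerically noncritical sufficiently interior classical points furnished by \cref{corollary: EcdClassicalPointsDense}, at which triangulinity is known a priori via the crystallinity statement of \cref{prop: CrystallineGaloisRepsAssociatedToClassicalPoints}. The four main ingredients will be \cref{lemma: LocalZariskiDensity}, the lifting of pseudorepresentations via \cite[Lemma 7.8.11]{BC09}, the Kedlaya--Pottharst--Xiao theory of global triangulations in families \cite[Theorem 6.3.13]{KPX14}, and the reduction to the semisimple case afforded by \cref{lemma: TriangulineInsensitiveToSemisimple}.

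First, using \cref{proposition: EquidimensionalAndOpennessOfWeightMap} I would choose a connected affinoid neighborhood $V$ of $x$ in $\mc{E}_{c,\wt{\mf{m}}}^d$ mapping finitely to an affinoid open of $\mc{W}$. Let $Z \subset \mc{E}_{c,\wt{\mf{m}}}^d$ be the Zariski-dense, accumulating set of classical points produced by \cref{corollary: EcdClassicalPointsDense}. Applying \cref{lemma: LocalZariskiDensity} to an admissible affinoid cover refining $\{V\}$, I shrink $V$ so that $Z \cap V$ is Zariski dense in $V$. The determinant $D\colon\Gal_F \to \mc{O}(\mc{E}_{c,\wt{\mf{m}}}^d)$ of \cref{prop: CrystallineGaloisRepsAssociatedToClassicalPoints} restricts to a continuous pseudorepresentation $D_V\colon\Gal_F \to \mc{O}(V)$ which specializes at $x$ to $\on{tr}\rho$. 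Applying \cite[Lemma 7.8.11]{BC09} (working \'etale-locally to accommodate the possibly reducible $\rho$, and using that $\ol{\rho}_{\wt{\mf{m}}}$ is decomposed generic to separate the irreducible constituents by classical Hecke data), I obtain an actual Galois representation $\rho_V\colon \Gal_F \to \GL_{2n}(B)$ over some finite faithfully flat $\mc{O}(V)$-algebra $B$, with $\on{tr}\rho_V$ matching the pullback of $D_V$, and whose specialization at a preimage $\tilde{x}$ of $x$ has semisimplification $\rho$.

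The family $\rho_V|_{\Gal_{F_\nu}}$ together with the universal character $j_{2n}(\delta)_{\ol{\nu}}\colon (F_\nu^\times)^{2n} \to B^\times$ now constitutes a refined family of Galois representations in the sense of \cite[\textsection 6.3]{KPX14}: at each $z \in Z \cap V$ the specialization is crystalline, hence trianguline of parameter $j_{2n}(\delta_z)_{\ol{\nu}}$ by \cref{prop: CrystallineGaloisRepsAssociatedToClassicalPoints}, and these classical points are Zariski-dense in $\Sp B$. Invoking \cite[Theorem 6.3.13]{KPX14} yields an algebraic character $\delta'$ such that $\rho_V|_{\Gal_{F_\nu}}$ admits a global triangulation with parameter $j_{2n}(\delta)_{\ol{\nu}}\cdot \delta'$ (possibly after a birational modification of $\Sp B$). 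Specializing at $\tilde{x}$ gives that $\rho^{\on{ss}}|_{\Gal_{F_\nu}}$ is trianguline with parameter $j_{2n}(\delta)_{\ol{\nu}} \cdot \delta'$, and \cref{lemma: TriangulineInsensitiveToSemisimple} upgrades this to $\rho|_{\Gal_{F_\nu}}$ itself. The Hodge--Tate--Sen weights match by the continuity of Sen weights in families together with their explicit expression $\on{wt}_\tau(j_{2n}(\delta_z)_{\ol{\nu}})$ at classical $z$. The main obstacle I anticipate is the second step: the direct hypotheses of \cite[Lemma 7.8.11]{BC09} are cleanest for absolutely irreducible $\rho$, so some care is needed to treat semisimple $\rho$ with repeated or reducible constituents, presumably by decomposing according to residual characters on a finite \'etale cover of $V$ where classical points separate the summands.
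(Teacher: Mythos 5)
Your proposal follows essentially the same route as the paper — both use \cref{lemma: LocalZariskiDensity}, \cite[Lemma 7.8.11]{BC09}, \cite[Theorem 6.3.13]{KPX14}, and \cref{lemma: TriangulineInsensitiveToSemisimple}, in the same order — but there are two genuine gaps in the way you have set up the middle step.

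First, you describe the output of \cite[Lemma 7.8.11]{BC09} as a Galois representation over a ``finite faithfully flat $\mc{O}(V)$-algebra $B$,'' and you anticipate having to handle reducibility by passing to an \'etale cover where classical Hecke data separate constituents. Neither of these is how that lemma behaves. It produces a reduced, separated, quasi-compact rigid space $Y$ with a \emph{proper, generically finite, surjective} map $f\colon Y \to V$ (a blow-up-type construction, not finite flat), together with a locally free $\mc{O}_Y$-module $\mc{M}$ of rank $2n$ and a Galois action whose trace pulls back the pseudocharacter, and whose fibres \emph{semisimplify} to the correct representations. Reducibility of $\rho$ is handled automatically by this construction; the decomposed-generic hypothesis is not what is doing the work here.

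Second — and this is the more serious gap — the assertion ``these classical points are Zariski-dense in $\Sp B$'' does not follow. Zariski density of $Z\cap V$ in $V$ does not a priori give Zariski density of $f^{-1}(Z\cap V)$ in the cover. The paper fixes this by replacing $Y$ with $Y' := \overline{f^{-1}(Z\cap V)}$, and then uses \emph{properness} of $f$ (so that $f(Y')$ is a closed analytic subset of $V$ containing $Z\cap V$, hence all of $V$) to keep surjectivity over $V$. Without this replacement, the density hypothesis needed for \cite[Theorem 6.3.13]{KPX14} is unjustified. Moreover, to apply that theorem one needs a Zariski-dense set of points where the \emph{actual fibre} $\mc{M}_{Y,y}\otimes\overline{k(y)}$ — not just its semisimplification — is strictly trianguline. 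The BC09 module only matches the semisimplification in general, so the paper restricts further to the Zariski-dense \emph{open} locus of $Y'$ on which fibres are automatically semisimple; only then is the family densely pointwise strictly trianguline. These two repair steps (close up the preimage of $Z$, then restrict to the semisimple open locus) are what make the KPX interpolation argument go through, and they are missing from your proposal.
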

\begin{proof}
    The idea is to reduce to the case of Galois representations. Since a densely pointwise strictly trianguline $(\varphi, \Gamma_K)$-module over a rigid space $X$ is globally pointwise trianguline by \cite[6.3.13]{KPX14}, the corresponding statement is true upon replacing $\mc{E}_{c, \wt{\mf{m}}}^d$ with, say, $X_{\tri}^{\square}(\ol{\rho}_{\wt{\mf{m}},\nu})$. As such a statement is less clear for $X_{\on{ps},\tri}(\ol{\rho}_{\wt{\mf{m}},\nu})$ we instead work directly on the eigenvariety. Write $\mc{E}_{c, \wt{\mf{m}}}^d = \cup_{m}U_m$ as an increasing admissible cover of affinoid open subsets. Likewise let $Z \subset \mc{E}_{c,\mf{m}}^d$ be the set of numerically noncritical, sufficiently interior classical points, which is Zariski-dense and accumulating by \cref{corollary: EcdClassicalPointsDense}.
    If $x = (t, \delta) \in \mc{E}_{c,\wt{\mf{m}}}^{d}(\overline{\Q}_p)\subset X_{ps, \on{tri}}(\ol{\rho}_{\wt{\mf{m}}})\times \widehat{T(F_p^+)}$ and $U_{m'}\ni x$, then recall from \cref{lemma: LocalZariskiDensity} that for some index $m \ge m'$ we have that $U_{m'} \subset V_{m}:= \ol{U_{m} \cap Z}$, which is still an affinoid. The map $j_m:V_m \to X_{ps, \tri}(\ol{\rho}_{\wt{\mf{m}}})$ yields a rank $2n$ pseudorepresentation $T:\Gal_{F} \to j_m^{\ast}\mc{O}_{\mc{E}_{c,\wt{\mf{m}}}^d}(V_m)$. Then by \cite[Lemma 7.8.11]{BC09} there exists a reduced, separated, quasi-compact rigid space $Y$ equipped with a proper, generically finite, surjective map $f: Y \to V_{m}$ such that there exists a locally free coherent $\mc{O}_Y$-module $\mc{M}$ of rank $2n$ with a continuous $\Gal_{F}$-action $R: \Gal_{F} \to \GL_{\mc{O}(Y)}(\mc{M})$, such that the trace $\on{tr} R = f^{\ast}T$, and for any $y \in Y$ the specialization $(M_{Y,y}\otimes_{\mc{O}_{Y,y}} \overline{k(y)})^{ss}\simeq \rho_{f(y)}$, the semisimple representation corresponding to the point $f(y)\in V_m$. Moreover, in a Zariski-dense open subset of $Y$, the fibre $M_{Y,y}\otimes_{\mO_{Y,y}}\ol{k(y)}$ is automatically semisimple!

    By the general theory in \cite[Theorem 2.2.17]{KPX14} we can attach to $M$ a $(\varphi, \Gamma_{F_{\nu}})$-module $D_{\on{rig}}^{\dagger}(\mc{M})$ over the relative Robba ring $\mc{R}_{Y,F_{\nu}}$.  
    Looking at the Zariski-closure $Y':= \ol{f^{-1}(Z \cap U_m)} \subset Y$, we note the composite map $f: Y' \to U_{m}$ remains surjective (in particular, $x$ is in the image!) since $f$ is proper and thus maps closed analytic subsets to closed analytic subsets by \cite[Chapter 9, Proposition 2]{BGR84}. Thus renaming $Y'$ as $Y$, we may assume that the preimage of $Z\cap V_{m}$ is Zariski-dense in $Y$.

    Now the natural map $\mc{E}_{c, \wt{\mf{m}}}^d \to \widehat{T(F_p^+)}$, yields a morphism $Y \to V_m \xrightarrow{\pi_{T(F_p^+)}\circ \iota_{c,\wt{G}}} \widehat{T(F_p^+)}$, and thus produces universal characters \[
    \chi = \begin{pmatrix}
        \chi_1&\\
              & \ddots \\
              & & \chi_{2n}\\
    \end{pmatrix}: T(F_p^+) \to \mc{O}(Y)^{\times}
    \]
    By \cref{prop: CrystallineGaloisRepsAssociatedToClassicalPoints} the specializations $(D_{\on{rig}}^{\dagger}(\mc{M})_{Y,y}\otimes \overline{k(y)})^{\on{ss}}$ are strictly trianguline of parameter $\chi_{y}$ for all $y \in f^{-1}(Z\cap U_{m})$, a Zariski dense subset. Moreover, restricting to the Zariski-dense \textit{open} locus where the fibres $\mc{M}_{Y,y}\otimes \overline{k(y)}$ are semisimple, we then get $D_{\on{rig}}^{\dagger}(\mc{M})_{Y,y}$ is densely pointwise strictly trianguline with parameter $\chi_y$. Thus we can apply \cite[Theorem 6.3.13]{KPX14} to get that $D_{\on{rig}}^{\dagger}(\mc{M})$ is pointwise trianguline at \textit{all} points $x$, with parameter given by $\chi_x \cdot \delta_x$ for $\delta_x$ some algebraic character depending on $x$, such that $\on{wt}_{\tau}(\chi_x \cdot \delta_x) = \on{wt}_{\tau}(\chi_x)$ for all $\tau: F_{\nu}\hookrightarrow \ol{\Q}_p$. Then \cref{lemma: TriangulineInsensitiveToSemisimple} implies for all $x \in V_m$ that $\rho_{x,\nu}$ is trianguline of parameter $\chi_x\cdot \delta_x$, as required.

\end{proof}

We can now return to our main interest, $G = \Res_{F/F^+}\GL_n$.
The pointwise triangulinity of eigenvarieties for $G$ follows essentially immediately from \cref{proposition : PointwiseTriangulineforEcd} and \cref{theorem: Embeddings of eigenvarieties}, but to derive statements on the triangulation or weights we need to introduce an extra degree of freedom in our framework.

Keep the assumptions of \cref{theorem: ShinGaloisreps}. Let $\overline{\nu} \in S_p(F^{+})$ and $\nu, \nu^c \in S_p(F)$ be the two places lying over $\overline{\nu}$ (recall we are assuming $p$ splits in the imaginary quadratic extension contained in $F$). 
Then using \cref{theorem: Embeddings of eigenvarieties} we have all $i$ and $w\in W^{\oP}$ such that $\ell(w) = d - 1 = i$, we have a diagram as follows: 

\begin{equation}
\label{eq: TriangulineDiagram}
\begin{tikzcd}[row sep=0.75em, column sep = 0.75em]
    && \mc{E}_c^d(\wt{K}^p)_{\wt{\mf{m}}}\arrow[rrrr, "\iota_{c,\tG}"]\ar[rrrrrrdd, dotted] &&&&
    X_{ps}(\ol{\rho}_{\wt{\mf{m}}})\times \widehat{T(F_p^+)} \ar[rrrr]&&&& X_{ps}(\ol{\rho}_{\wt{\mf{m}},\nu})\times \widehat{T(F_{\ol{\nu}}^+)}
    \\
    \\
    &&&&&&&&X_{ps, \tri}(\ol{\rho}_{\wt{\mf{m}},\nu})\ar[rruu, hook]
    \\
    \\
    && \mc{E}^i(K^p)_{\mf{m}} \ar[uuuu, "j_w"]\arrow[rrrr,"\iota_{\mf{m}}^i"]\ar[rrrrrruu, dotted] &&&& X^{\square}(\ol{\rho}_{\mf{m}})\times \widehat{T_n(F_p)}\ar[rrrr, "{\on{res}_{\nu,\nu^c}}"]\ar[uuuu, "d_{ps,w}"]&&&&(X^{\square}(\ol{\rho}_{\mf{m},\nu})\times \wh{T_n(F_{\nu})})\times (X^{\square}(\ol{\rho}_{\mf{m},\nu^c})\times \wh{T_n(F_{\nu^c})})\ar[uuuu, "d_{ps, w,\ol{\nu}}"]
    \end{tikzcd}
\end{equation}
In the diagram, the map $\on{res}$ is induced by the natural restriction maps $X^{\square, n}(\ol{\rho}_{\mf{m}}) \to X^{\square, n}(\ol{\rho}_{\mf{m},\nu}) \times X^{\square, n}(\ol{\rho}_{\mf{m},\nu^c})$.
Let us spell out more explicitly the maps $d_{ps,w}, d_{ps,w,\ol{\nu}}$.
Recall by construction, the embedding $j_w$ sends a point $(x,\delta = (\delta_{\nu})_{\nu \in S_p(F)}) \in (\Spf \T^S(K^p)_{\mf{m}})^{\on{rig}} \times \widehat{T_n(F_p)}$ to $(x \circ \mc{S}, j_w(\delta))\in (\Spf\wt{\T}^{S}(\wt{K}^p)_{\wt{\mf{m}}})^{\on{rig}}\times \widehat{T(F_p^+)}$, where  \[
(j_w(\delta))_{\ol{\nu}} = w^{-1}((\delta_{\nu},-w_0\delta_{\nu^c}) + \rho_{\cyc,\ol{\nu}}^{\tG}) - \rho_{\cyc,\ol{\nu}}^{\tG}
\]
As we have shifted the two maps from the eigenvarieties to the deformation spaces via $\iota_{c,\wt{G}}(x, \delta) =(\on{tr}(\rho_x), \delta + \rho_{\cyc}^{\tG})$ and $\iota_{\mf{m}}^i(x,\delta) = (r_x, \delta + \rho_{\cyc}^{G})$,  $d_{ps,w}$ is \textit{defined} to be the map 

\begin{align*}
(r, \delta) &\mapsto (\on{tr}(r \oplus r^{c,\vee}(1-2n)), (w^{-1}((\delta_{\nu} -  \rho_{\cyc,\nu}^{G}, -w_0(\delta_{\nu^c} - \rho_{\cyc,\nu^c}^{G})) + \rho_{\cyc}^{\tG}))_{\ol{\nu}})\\
&= (\on{tr}(r \oplus r^{c,\vee}(1-2n)), (w^{-1}(\delta_{\nu}, -w_0\delta_{\nu^c}(1-2n)))_{\ol{\nu}},
\end{align*}
where the second line follows from the fact that $\rho_{\cyc,\ol{\nu}}^{\tG} = (\rho_{\cyc,\nu}^{G},  \rho_{\cyc,\nu}^{G}(-n))$.
We have some flexibility with the transfer map coming from twisting by characters, as in \cite[p. 
 37]{10AuthorPaper}. Let $\chi: \Gal_F \to \mO_{L}^{\times}$ be a continuous character such that $\chi \circ \Art_{F_{\nu}}$ is trivial on $\det(K_{\nu})$ for each $\nu \notin S$ of $F$. Then there is an isomorphism $f_{\chi}: \mc{H}(G^S, K^S) \to \mc{H}(G^S, K^S)$ given by the formula $f_{\chi}(f)(g) = \chi(\Art_{F}(\det(g)))^{-1}f(g).$ Then given $\mf{m}\subset \T^{S}$ a maximal ideal, we define $\mf{m}(\chi) = f_{\chi}(\mf{m})$. Moreover, let $\wt{\mf{m}}(\chi) := \mc{S}^{\ast}(\mf{m}(\chi))$. 
 The next lemma tells us the effect of such a twisting on eigenvarieties. Let $\chi_p: \GL_n(F_p) \to \mO^\times$ be the character defined by $(g_{\nu})_{\nu \in S_p(F)} \mapsto \prod_{\nu \in S_p(F)}\chi(\Art_{F_{\nu}}(\det(g_{\nu})))$.
 \begin{lemma}
 \label{lemma: TwistByCharacters}
     There is a natural isomorphism of essentially admissible representations of $T(F_p^+)$
     \[
     R_{\chi}:\HH^i(J_B(\Pi(K^p)_{\mf{m}}^{\la})) \to \HH^i(J_B(\Pi(K^p)_{\mf{m}(\chi)}^{\la}))\otimes_{L} \chi_{p},
     \]
     which intertwines the actions of $\mc{H}(G^S,K^S)\otimes_{\Z}\mO$ on the left with the action of $\mc{H}(G^S,K^S)\otimes_{\Z}\mO$ by precomposing with $f_{\chi}$ on the right.

     As a result if $\mf{m} \subset \T^S(K^p)$ is a non-Eisenstein maximal ideal and $\chi \mod \mf{m}_L$ is trivial then we have an isomorphism of rigid varieties $\mc{E}^i(K^p)_{\mf{m}} \simeq \mc{E}^i(K^p)_{\mf{m}(\chi)}$ fitting into a diagram 
     \[
     \begin{tikzcd}
         \mc{E}^i(K^p)_{\mf{m}} \ar[r]\ar[d, "\on{tw}_{\chi}"]&X^{\square}(\ol{\rho}_{\mf{m}})\times \widehat{T_n(F_p)}\ar[d, "t_{\chi}"]\\ \mc{E}^i(K^p)_{\mf{m}(\chi)}\ar[r] &X^{\square}(\ol{\rho}_{\mf{m}(\chi)})\times \widehat{T_n(F_p)}
     \end{tikzcd},
     \]
     where the map $t_{\chi}: X^{\square}(\ol{\rho}_{\mf{m}}) \times \widehat{T_n(F_p)} \to X^{\square}(\ol{\rho}_{\mf{m}(\chi)}) \times \widehat{T_n(F_p)} $ sends $(\rho, (\delta_1,\dots, \delta_n)) \mapsto (\rho \otimes \chi, (\delta_{\nu, 1} \otimes \chi \circ \Art_{F_{\nu}}, \dots, \delta_{\nu,n} \otimes \chi \circ \Art_{F_\nu})_{\nu})$
 \end{lemma}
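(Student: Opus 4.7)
The plan is to realize $R_\chi$ as the effect of tensoring by the global character
\[
\widetilde{\chi}_{\mathbb{A}}: \GL_n(\mathbb{A}_F^{\infty}) \to \mathcal{O}_L^{\times}, \quad g \mapsto \prod_{\nu} \chi(\Art_{F_\nu}(\det g_\nu)),
\]
on the complex computing completed cohomology, and then to transport this operation through the various functors defining the Jacquet module and the eigenvariety. The hypothesis that $\chi\circ\Art_{F_\nu}$ is trivial on $\det(K_\nu)$ for all $\nu \notin S$ of $F$ ensures $\widetilde{\chi}_{\mathbb{A}}|_{K^S}$ is trivial, so tensoring with the one-dimensional $L$-line $L(\widetilde{\chi}_{\mathbb{A}})$ defines a natural endofunctor on the category of admissible $G(F_p)$-representations carrying a tame Hecke action with $K^p$-level. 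First I would apply this twist to the complex $A^\bullet$ (or $\Pi(K^p)$ equipped with its $G$-extension): as an underlying complex of Banach spaces nothing changes, but the $G(F_p)$-action is multiplied by $\chi_p$ (the restriction of $\widetilde{\chi}_{\mathbb{A}}$ to $G(F_p)$), while each tame Hecke operator $[K^S g K^S]$ is multiplied by the scalar $\widetilde{\chi}_{\mathbb{A}}(g)$ (constant on $K^S gK^S$ because $\widetilde{\chi}_{\mathbb{A}}$ factors through $\det$ and is trivial on $\det(K^S)$); this is exactly the ring isomorphism $f_\chi$.

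Next I would translate this on maximal ideals. The twist by $f_\chi$ carries the direct summand $\Pi(K^p)_{\mf{m}}$ of the semi-local decomposition \eqref{eq: SemiLocalDecomp} to the summand indexed by $\mf{m}(\chi) = f_\chi(\mf{m})$, producing a $K_p$-equivariant (and $G$-extension-compatible up to the $\chi_p$-twist) quasi-isomorphism
\[
\Pi(K^p)_{\mf{m}} \otimes_L L(\widetilde{\chi}_{\mathbb{A}}) \;\simeq\; \Pi(K^p)_{\mf{m}(\chi)}.
\]
Since tensoring with a character of $G(F_p)$ is exact and commutes with passage to locally analytic vectors, to $N_0$-invariants, to Hausdorff quotients, and to the finite slope part taken with respect to any $z\in T^{++}$ (as $\chi_p$ is a character of the \emph{abelian} quotient, it commutes with every relevant operator), the formation of the derived Jacquet module intertwines the twist: for any locally analytic $G(F_p)$-representation $V$ one has $J_B(V\otimes \chi_p) \simeq J_B(V)\otimes (\chi_p|_{T(F_p)})$ naturally as essentially admissible $T(F_p)$-representations. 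Combining these yields the desired natural isomorphism $R_\chi$, which by construction intertwines the $\mathcal{H}(G^S,K^S)\otimes_\Z \mathcal{O}$-actions on the two sides up to precomposition with $f_\chi$.

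For the eigenvariety statement, I would first use that the defining characteristic polynomial relation $P_\nu(X) \bmod \mf{m}$ transforms under $f_\chi$ into $P_\nu(X)$ twisted by $\chi(\Art_{F_\nu}(\varpi_\nu))$-factors, which is precisely the effect of tensoring the Galois representation by $\chi$. Hence $\overline{\rho}_{\mf{m}(\chi)} \simeq \overline{\rho}_{\mf{m}}\otimes \overline{\chi}$; under the assumption that $\chi \bmod \mf{m}_L$ is trivial, this gives a canonical isomorphism $\overline{\rho}_{\mf{m}(\chi)}\simeq \overline{\rho}_{\mf{m}}$, and hence an isomorphism $X^{\square}(\overline{\rho}_{\mf{m}}) \simeq X^{\square}(\overline{\rho}_{\mf{m}(\chi)})$ realized on $\overline{\Q}_p$-points by $\rho \mapsto \rho\otimes \chi$. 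Because $R_\chi$ identifies the two eigenvariety data (with Hecke action twisted by $f_\chi$ and $T(F_p)$-action twisted by $\chi_p$), it gives the asserted isomorphism $\mathcal{E}^i(K^p)_{\mf{m}} \simeq \mathcal{E}^i(K^p)_{\mf{m}(\chi)}$; the diagram then commutes by matching the Galois twist on the first factor with the character twist on the second, which is exactly the definition of $t_\chi$.

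The most delicate step will be bookkeeping between the two twists (by $\chi_p$ on the $G(F_p)$-side and by $f_\chi$ on the Hecke side) while keeping track of the homotopies that define the $G$-extension, the passage to locally analytic vectors, and the finite slope part: all of these operations must be checked to be compatible with tensoring by a 1-dimensional character, which they are because $\chi_p$ is $K_p$-valued in $\mathcal{O}_L^{\times}$, hence preserves all integral and Banach structures. No other genuine difficulty arises once the twist is set up carefully at the level of the $G$-extension $(\Pi(K^p), A^\bullet, i, p)$.
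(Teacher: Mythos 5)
Your proposal takes essentially the same route as the paper: twist by the global character $\widetilde{\chi}_{\mathbb{A}}$ at the level of the complex computing completed cohomology, then carry this compatibility through the derived Jacquet machinery and the eigenvariety construction. The eigenvariety bookkeeping at the end (matching $\overline{\rho}_{\mathfrak m(\chi)}\simeq\overline{\rho}_{\mathfrak m}\otimes\overline\chi=\overline{\rho}_{\mathfrak m}$, identifying the map $t_\chi$) is correct and matches the paper.

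The one place you gloss over the load-bearing step is the sentence ``tensoring with the one-dimensional $L$-line $L(\widetilde{\chi}_{\mathbb{A}})$ defines a natural endofunctor on the category of admissible $G(F_p)$-representations carrying a tame Hecke action with $K^p$-level.'' On that abstract category --- a $G(F_p)$-representation together with a commuting ring map from $\mathcal{H}(G^S,K^S)$ to its $G(F_p)$-endomorphisms --- tensoring by a character $\chi_p$ of $G(F_p)$ alone cannot change the Hecke action, since the tame Hecke operators are not in the image of $G(F_p)$; your claimed scaling by $\widetilde{\chi}_{\mathbb{A}}(g)$ would simply be false. What actually makes the twist scale the Hecke operators is that $\widetilde{\chi}_{\mathbb{A}}$ descends (because $\chi\circ\Art_F\circ\det$ kills $G(F)$ and $K^p$) to an $\mathcal{O}_L^\times$-valued locally constant function on $X_K$, and it is multiplication by this \emph{nontrivial} function --- not the identity map of the underlying complex --- that gives the isomorphism intertwining the $\chi_p$-twisted $G(F_p)$-structure with the $f_\chi$-twisted Hecke action; the scaling by $\widetilde{\chi}_{\mathbb{A}}(g)$ comes out when one pushes the function through the two legs of the Hecke correspondence. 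This finite-level, torsion-coefficient statement is exactly what the paper black-boxes as \cite[Lemma 2.5]{Hevesi23}, and then dualizes and runs through the Iwasawa and Borel--Serre complexes on the homology side; once that input is in place, the rest of your sketch --- passage to locally analytic vectors, $N_0$-invariants, Hausdorff cohomology, finite slope parts, localization at $\mathfrak m$ --- does reproduce the paper's proof.
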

 \begin{proof}
     By \cite[Lemma 2.5]{Hevesi23}, we have for any smooth left $K_p$-module $V$ which is a finite free $\mO/\pi^m$-module, there is an isomorphism
     \[
     R\Gamma(X_K, V) \to R\Gamma(X_K,V,V\otimes \chi_p)
     \]
     which intertwines the natural action of $\mc{H}(G^S,K^S)\otimes_{\Z}\mO$ with the natural action precomposed with $f_{\chi}$. Moreover, the proof in \textit{loc. cit.} implies if $V$ has a bimodule structure, this isomorphism is equivariant for this structure. Passing to the dual by applying $\RHom(-,\mO/\pi^m)$ yields a isomorphism $C_{\bullet}(X_{K^p})\otimes_{\Z[K_p]}V^{\vee}\to C_{\bullet}(X_{K^p})\otimes_{\Z[K_p]}V^{\vee}\otimes \chi_p^{-1}$ in $\mathsf{D}(\mO/\pi^m)$ . Plugging in $V^{\vee} = \mO[K_p/K_p']$ yields a $K_p$-equivariant isomorphism
     $
     C_{\bullet}(X_{K^p})\otimes_{\Z[K_p]}\mO/\pi^{m}[K_p/K_p']\to C_{\bullet}(X_{K^p})\otimes_{\Z[K_p]}\mO[K_p/K_p']\otimes \chi_p^{-1},
     $
     and passing to the derived limits over $K_p'$ and $m$ yields an isomorphism
     $C_{\bullet}(X_{K^p})\otimes_{\Z[K_p]}\mO[[K_p]]\to C_{\bullet}(X_{K^p})\otimes_{\Z[K_p]}\mO[[K_p]]]\otimes \chi_p^{-1}$ in $\mathsf{D}(\mO[[K_p]])$
     intertwining the actions of $\mc{H}(G^S,K^S)$ via a twist by $f_{\chi}^{-1}$. Moreover, using the isomorphism $C_{\bullet}(X_{K^p})\otimes_{\Z[G]}\mO[[G]]\to C_{\bullet}(X_{K^p})\otimes_{\Z[G]}\mO[[G]]\otimes \chi_p^{-1}$, this map is in fact $G$-equivariant. Taking a Borel--Serre complex, we get a continuous $K_p$-linear homotopy equivalence
     \[   C_{\bullet}^{\on{BS}}(X_{K^p})\otimes_{\Z[K_p]}D(K_p)\to C_{\bullet}^{\on{BS}}(X_{K^p})\otimes_{\Z[K_p]}D(K_p)\otimes \chi_p^{-1}.
     \]
     Passing to (Hausdorff) $N_0$-coinvariance, taking Hausdorff cohomology and finite slope parts yields the desired isomorphism 
     $\HH_i(J_B^{\vee}(C^{\on{BS}}(K,D(K_p)))) \simeq \HH_i(J_B^{\vee}(C^{\on{BS}}(K,D(K_p)\otimes \chi_p^{-1})))$ with the desired intertwining of Hecke actions. Running the same argument for the complexes localized at $\mf{m}$ yields an isomorphism
      $\HH_i(J_B^{\vee}(C^{\on{BS}}(K,D(K_p))))_{\mf{m}} \simeq \HH_i(J_B^{\vee}(C^{\on{BS}}(K,D(K_p)\otimes \chi_p^{-1})))_{\mf{m}}$, where the Hecke action on the right is twisted by $f_{\chi}^{-1}.$ Passing to the associated Hecke algebras an eigenvarieties, we get that the map on Hecke algebras $f_{\chi}:\mc{H}^S(G^S,K^S)\otimes_{\Z}\mO \to \mc{H}^S(G^S,K^S)\otimes_{\Z}\mO$ induces an isomorphism of rigid spaces $f_{\chi}^{\ast}:\Spf T^S(K^p)_{\mf{m}(\chi)} \times \widehat{T_n(F_p)} \to \Spf T^S(K^p)_{\mf{m}} \times \widehat{T_n(F_p)}$ which sends $(x: \T^S(K^p) \to \ol{\Q}_p, \delta = (\delta_1,\dots, \delta_n))\mapsto (x \circ f_{\chi}, (\delta_{1,\nu} \otimes \chi_{\nu} \circ \Art_{F_\nu}, \dots, \delta_{n,\nu}\otimes \chi_{\nu} \circ \Art_{F_\nu})_{\nu})$. The isomorphism we have just construct then implies that this map descends to an isomorphism 
      \[
      f_{\chi}^{\ast}:E^i(K^p)_{\mf{m}(\chi)} \simeq E^i(K^p)_{\mf{m}}.
      \]
      We then define $\on{tw}_{\chi} = f_{\chi}^{-1}$. The resulting effect on Galois representations (i.e. the diagram involving the Galois deformations) follows from the construction.
 \end{proof}

Note that for any $\chi$ as above, we get an induced (framed) twisted variant of $d_{ps,w}$ which we denote $d_{w,\chi}': X^{\square}(\ol{\rho}_{\mf{m}(\chi)})\times \widehat{T} \hookrightarrow X^{\square}(\ol{\rho}_{\wt{\mf{m}}(\chi)})\times \widehat{T}$. On representations, this map concretely sends $\rho\otimes \chi \mapsto \rho\otimes \chi \oplus \rho^{c,\vee}\otimes \chi^{c,\vee}(1-2n)$. Note that this map only depends on the reduction $\ol{\chi}$. Twisting this map by $\chi^{-1}$, we then have a map $d_{\chi}:X^{\square,n}(\overline{\rho}_{\mf{m}})\hookrightarrow X^{\square, 2n}(\overline{\rho}_{\wt{\mf{m}}(\chi)}(\chi^{-1}))$. Since any character twist of a (strictly) trianguline representation remains (strictly) trianguline, for any $\chi$ as above we have $\on{tr}\circ\on{res}_{\nu}\circ  d_{\chi}\circ \iota_{\mf{m}(\chi)}^i: \mc{E}^i(K^p)_{\mf{m}} \to X_{ps}(\ol{\rho}_{\wt{\mf{m}}(\chi)}(\chi^{-1})|_{\Gal_{F_{\nu}}}) \times \widehat{T}$ naturally factors through the trianguline pseudodeformation space
$X_{ps,\tri}(\ol{\rho}_{\wt{\mf{m}}(\chi)}(\chi^{-1}))$.
Now recall the shape of algebraic Hecke characters of a CM field. Namely, if $\chi: \Gal_F\to \ol{\Q}_p^{\times}$ is (the Galois representation corresponding to) an algebraic Hecke character of weight $\lambda \in \Z^{\Hom(F,\ol{\Q}_p)}$, then there exists $w \in \Z$ such that $\lambda_{\tau} + \lambda_{\tau c} = w$ for all $\tau$. In particular, $\chi\cdot \chi^{c}$ has constant weight $w$. We now derive the main theorem of the paper:

\begin{proof}[Proof of \cref{theorem: MainTheorem1}]

    The pointwise triangulinity follows by combining \cref{proposition : PointwiseTriangulineforEcd} and \cref{theorem: Embeddings of eigenvarieties}. To pin down the triangulation and weights, the idea is to twist the eigenvariety $\mc{E}^i(K^p)_{\mf{m}}$ by genuinely $p$-adic, non-integral weights using \cref{lemma: TwistByCharacters}. For a given point $(x,\delta)\in \mc{E}^i(K^p)_{\mf{m}}$ we claim we can choose distinct continuous characters $\chi_1,\chi_2: \Gal_{F}\to \mO_L^{\times}$ (taking $L/\Q_p$ sufficiently large) such that $\chi_1,\chi_2$ are trivial$\mod \mf{m}_L$ and the following hold:
    \begin{itemize}
    \item for each $\nu \in S_p(F)$ and $\tau: F_{\nu} \hookrightarrow \ol{\Q}_p$, the representations  $\rho_x, \chi_1^{-1}\chi_1^{c,\vee}\rho_{x}^{c,\vee}(1-2n), \chi_2^{-1}\chi_2^{c,\vee}\rho_{x}^{c,\vee}(1-2n)$ pairwise have no $\tau$-Hodge-Tate-Sen weights differing by an integer.
    \item for each $\nu \in S_p(F)$ the characters $\delta_v, -w_0^{G}\chi_{1,\nu}^{-1}\chi_{1,\nu^c}^{\vee}\delta_{v^c}(1-2n), -w_0^G\chi_{2,\nu}^{-1}\chi_{2,\nu^c}^{\vee}\delta_{v^c}(1-2n) \in \widehat{T_n(F_{\nu})}$ are such that no two pairwise characters differ by an algebraic character.
\end{itemize}   

Indeed, it suffices to show that we can choose $\chi_1,\chi_2$ so that for any $\tau: F_{\nu} \hookrightarrow \ol{\Q}_p$ the characters $a_1:=\chi_{1,\nu}^{-1}\chi_{1,\nu^c}^{\vee},a_2:=\chi_{2,\nu}^{-1}\chi_{2,\nu^c}^{\vee}$ have $\tau$-Sen weights such that $\on{wt}_{\tau}(a_1), \on{wt}_{\tau}(a_2),  \on{wt}_{\tau}(a_1a_2^{-1})$ are all non-integral and $\on{wt}_{\tau}(a_1), \on{wt}_{\tau}(a_2)$ avoid the discrete countable set $ \{(\Z + \weight_{\tau}(\rho_{x,\nu})_i - \weight_{\tau}(\rho_{x,\nu^c}^{\vee})_j)_{1\le i,j\le n}, (\Z + \weight_{\tau}(j_n(\delta)_{\nu,i}) - \weight_{\tau}(j_n(\delta)_{v^c,j}))_{1 \le i, j \le n} \}\subset \Z_p$. But by the above description of algebraic Hecke characters (in particular, composing the norm character with $p$-adic weights of $\mO_L^{\times}$), after passing to all possible $p$-adic characters we have for any $\lambda$ in an open subgroup of $\Z_p$ there exists a continuous character $\chi$ with $\chi^{-1}\chi^{c,\vee}$ having constant weight $\lambda$. As such, we simply choose $\chi_1,\chi_2$ so that $a_1,a_2$ have nonintegral weights whose difference is also nonintegral, and $a_1,a_2$ avoid the sets mentioned above, which is possible since $\Z_p$ is uncountable.

Let $\chi$ be either $\chi_1$ or $\chi_2$ as just constructed. Now, the proof of \cref{proposition : PointwiseTriangulineforEcd} implies for any $z = (x_z,\delta_z)\in \mc{E}_{c,\wt{\mf{m}}(\chi)}^d$ there is a (not necessarily open) affinoid $U_z$, a proper surjective map $f_z:Y_z \to U_z$, and a Galois representation $R_{z}$ on a locally free $\mc{O}_{Y_z}$-module $M$ of rank $2n$ such that $\on{tr} R_z= f_z^{\ast}T|_{U_z}$, and for all $y \in Y_z$, the specialization $(M_{Y_z,y}\otimes_{\mc{O}_{Y_z,y}}\ol{k(y)})^{\on{ss}}\simeq \rho_{f_z(y)}$, and in fact for all $y$, the specialization $(M_{Y_x,y}\otimes_{\mc{O}_{Y_x,y}}\ol{k(y)})^{\on{ss}}\simeq \rho_{f_z(y)}$ is trianguline at $\nu \mid p$ of parameter $j_{2n}(\delta_{f_z(y)})_{\ol{\nu}}\delta'$ for $\delta'$ some algebraic character depending on $y$. Thus via diagram \eqref{eq: TriangulineDiagram} we then have for any $\chi: \Gal_F \to \ol{\Q}_p^{\times}$ satisfying the conditions in \cref{lemma: TwistByCharacters} we have for any $(x,\delta) \in \mc{E}^i(K^p)_{\mf{m}}$ that $\rho_x\oplus \chi^{-1}\chi^{c, \vee}\rho_{x}^{c,\vee}(1-2n)$ is trianguline of parameter $(\delta_{\ol{\nu}}'\cdot w^{-1}(j_{n}(\delta)_{\nu}, -w_0(j_n(\delta)_{\nu^c})(1-2n)\cdot (\chi_{\nu}\chi_{\nu^c})^{-1}\circ \Art_{F_{\nu}}))_{\ol{\nu}}\in \widehat{T(F_p^+)}$, for $\delta'$ an algebraic character. 

Note since $w$ is by construction a minimal length representative, if $j_n(\delta)_{\nu} = (\delta_{1},\dots, \delta_n)$ and $-w_0j_n(\delta_{\nu^c}) = (t_1,\dots, t_n)$, then the relative order of the $\delta_i$ (respectively, the order amongst the $t_i$) is preserved within the conjugated character $d_{ps,w,\chi}(j_n(\delta)):= w^{-1}(j_{n}(\delta)_{\nu}, -w_0(j_n(\delta)_{\nu^c})(1-2n)\cdot (\chi_{\nu}\chi_{\nu^c})^{-1}\circ \Art_{F_{\nu}}))_{\ol{\nu}}$. Thus, if $\mc{F}_{\bullet,\chi_1}$ (respectively, $\mc{F}_{\bullet,\chi_2}$) is a triangulation of $D_{\on{rig}}^{\dagger}(\rho_x) \oplus D_{\on{rig}}^{\dagger}(\chi_{1,\nu}^{-1}\chi_{1,\nu^c}^{-1}\otimes \rho_{x, v^c}^{\vee}(1-2n))$ (respectively, $D_{\on{rig}}^{\dagger}(\rho_x) \oplus D_{\on{rig}}^{\dagger}(\chi_{2,\nu}^{-1}\chi_{2,\nu^c}^{-1}\otimes \rho_{x, v^c}^{\vee}(1-2n))$ with graded pieces given up to twists by algebraic characters by $d_{ps,w,\chi_1}(j_n(\delta))$ (resp, $d_{ps,w,\chi_1}(j_n(\delta))$, then by considering the image of the filtrations when projecting down to $D_{\on{rig}}^{\dagger}(\rho_x)$ for both $\chi_1, \chi_2$, the resulting filtration must have graded pieces of which, up to multiplying by algebraic characters, are in the intersection $\{d_{ps,w,\chi_1}(j_n(\delta))_i\}\cap \{d_{ps,w,\chi_1}(j_n(\delta))_i\} = \{j_{n}(\delta_i)\}$ by the assumptions on the characters $\chi_1,\chi_2$ and $j_n(\delta)$ are in the correct order since $w\in W^{\oP}$. The statement about the Hodge--Tate--Sen weights follows similarly, since by \cref{proposition : PointwiseTriangulineforEcd} we know the $\tau$-Sen weights of $\rho_x \oplus \rho_x^{c,\vee}(1-2n)\chi_i^{-1}\chi_i^{c,\vee}$ for the characters $\chi_i: \Gal_F \to \mO_L^{\times}$.
\end{proof}

\subsection{Towards a deformation-theoretic statement}
\label{subsection: finalsubsection}
We would like to have a more deformation-theoretic version of \cref{theorem: MainTheorem1}, so that we can study eigenvarieties via spaces of trianguline representations as in \cite{BHSIHES}. In essence, our proof of \cref{theorem: MainTheorem1} came from studying maps
\begin{align*}
d_{w, \chi,\ol{\nu}}:X^{\square}(\ol{\rho}_{\mf{m},\nu})\times \wh{T_n(F_{\nu})}\times X^{\square}(\ol{\rho}_{\mf{m},\nu^c})\times \wh{T_n(F_{\nu^c})}  \to X^{\square}(\ol{\rho}_{\wt{\mf{m}}(\ol{\chi}),\nu}(\ol{\chi}_{\nu}^{-1}))\times \wh{T(F_{\ol{\nu}}^+)}\\
((r_{\nu},\delta_{\nu}), (r_{\nu^c}, \delta_{\nu^c}))\mapsto (r_{\nu}\oplus r_{\nu^c}^{\vee}(1-2n)(\chi_{\nu}\chi_{\nu}^c)^{-1},w^{-1}(\delta_{\nu}, -w_0^G\delta_{\nu^c}(1-2n)(\chi_{\nu}\chi_{\nu}^c)^{-1})).
\end{align*}
In actuality, we considered this map after taking the pseudocharacter, we which we denoted $d_{ps,w,\chi, \ol{\nu}}$. Note of course then $d_{ps,w,\chi, \ol{\nu}} = \on{tr}\circ d_{w, \chi,\ol{\nu}}$.
a natural strategy is to constrain the preimage $P_{ps,\chi} := d_{ps,w,\chi,\ol{\nu}}^{-1}(X_{ps, \tri}(\ol{\rho}_{\wt{\mf{m}}(\ol{\chi}),\nu}(\ol{\chi}_{\nu}^{-1}))) \subset X^{\square}(\ol{\rho}_{\mf{m},\nu})\times \widehat{T_n(F_{\nu})}\times X^{\square}(\ol{\rho}_{\mf{m},\nu^c})\times \widehat{T_n(F_{\nu^c})}$, since in \eqref{eq: TriangulineDiagram} the map $\on{res}_{\nu,\nu^c} \circ \iota_{\mf{m}}^i$ from $\mc{E}^i(K^p)_{\mf{m}}$ factors through $P_{ps,\chi}$.

\begin{lemma}
\label{lemma: ContainsTriangulineVariety}
    We have an inclusion $P_{ps,\chi}\supset X_{\tri}^{\square}(\ol{\rho}_{\mf{m,\nu}}) \times X_{\tri}^{\square}(\ol{\rho}_{\mf{m,\nu^c}})$.
\end{lemma}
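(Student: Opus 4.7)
The plan is to check $P_{ps,\chi}$ contains a Zariski-dense open subset of $X_{\tri}^{\square}(\ol{\rho}_{\mf{m},\nu})\times X_{\tri}^{\square}(\ol{\rho}_{\mf{m},\nu^c})$ and then invoke closedness. First, $P_{ps,\chi}$ is Zariski closed in the ambient product $X^{\square}(\ol{\rho}_{\mf{m},\nu})\times \wh{T_n(F_{\nu})}\times X^{\square}(\ol{\rho}_{\mf{m},\nu^c})\times \wh{T_n(F_{\nu^c})}$, being the preimage of the Zariski closed set $X_{\ps, \tri}(\ol{\rho}_{\wt{\mf{m}}(\ol{\chi}),\nu}(\ol{\chi}_{\nu}^{-1}))$ under the morphism $d_{ps,w,\chi,\ol{\nu}}$. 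Since $X_{\tri}^{\square}(\ol{\rho}_{\mf{m},\nu})\times X_{\tri}^{\square}(\ol{\rho}_{\mf{m},\nu^c})$ is by definition the Zariski closure of $U_{\tri}(\ol{\rho}_{\mf{m},\nu})\times U_{\tri}(\ol{\rho}_{\mf{m},\nu^c})$, it suffices to exhibit some Zariski dense open $V \subset U_{\tri}(\ol{\rho}_{\mf{m},\nu})\times U_{\tri}(\ol{\rho}_{\mf{m},\nu^c})$ whose image under $d_{ps,w,\chi,\ol{\nu}}$ lands in $X_{\ps, \tri}(\ol{\rho}_{\wt{\mf{m}}(\ol{\chi}),\nu}(\ol{\chi}_{\nu}^{-1}))$.

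Next I would take $V$ to be the open locus on which the concatenated $2n$-tuple $(\delta_{\nu}, -w_0^G\delta_{\nu^c}(1-2n)(\chi_{\nu}\chi_{\nu}^c)^{-1})$ lies in the regular subset $\mc{T}_{\on{reg}}^{2n}$ of $\wh{T_{2n}(F_{\nu})}$. Concretely, this rules out the (finitely many) cross-ratios between an entry of $\delta_{\nu}$ and an entry of $-w_0^G\delta_{\nu^c}(1-2n)(\chi_{\nu}\chi_{\nu}^c)^{-1}$ being of the bad form $z^{-\mathbf{k}}$ or $(\varepsilon_{\cyc}\circ \Art_{F_{\nu}})\cdot z^{\mathbf{k}}$; each such condition cuts out a Zariski closed nowhere dense subset, so $V$ is Zariski open and dense. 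For a point of $V$ the $(\varphi, \Gamma_{F_{\nu}})$-module $D_{\on{rig}}^{\dagger}(r_{\nu}\oplus r_{\nu^c}^{\vee}(1-2n)(\chi_{\nu}\chi_{\nu}^c)^{-1})$ is a direct sum of two strictly trianguline pieces with jointly regular parameters, and the plan is to interleave the natural filtrations of the two summands into a single triangulation whose parameter is the shuffle $w^{-1}(\delta_{\nu}, -w_0^G\delta_{\nu^c}(1-2n)(\chi_{\nu}\chi_{\nu}^c)^{-1})$ dictated by $w \in W^{\oP}$. Writing $\ol{R} := \ol{\rho}_{\mf{m},\nu}\oplus \ol{\rho}_{\mf{m},\nu^c}^{\vee}(1-2n)(\ol{\chi}_{\nu}\ol{\chi}_{\nu}^c)^{-1}$, whose trace agrees with $\on{tr}\ol{\rho}_{\wt{\mf{m}}(\ol{\chi}),\nu}(\ol{\chi}_{\nu}^{-1})$ by the Satake transfer compatibility, the image $d_{w,\chi,\ol{\nu}}(V)$ then lies in $U_{\tri}(\ol{R})$, hence $d_{ps,w,\chi,\ol{\nu}}(V)\subset X_{\ps,\tri}(\ol{\rho}_{\wt{\mf{m}}(\ol{\chi}),\nu}(\ol{\chi}_{\nu}^{-1}))$ and therefore $V \subset P_{ps,\chi}$.

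The main point to verify is the interleaving claim: given strictly trianguline $(\varphi,\Gamma_{F_{\nu}})$-modules $D_1$ and $D_2$ whose combined parameter lies in $\mc{T}_{\on{reg}}^{2n}$, the split direct sum $D_1\oplus D_2$ admits a triangulation realising any shuffle of the two parameters. This should follow by induction on the rank: at each step one extracts a saturated rank-one sub-$(\varphi,\Gamma_{F_{\nu}})$-module of $\mc{R}_{L,F_{\nu}}(\eta)$-type from the remaining quotient, where $\eta$ is the next character in the prescribed shuffle. The joint regularity hypothesis ensures that such a subobject is unique up to scalar and that it arises cleanly from one of the two summands (rather than from a non-split extension between the two sides), so the inductive step closes. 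Granted this, the inclusion $X_{\tri}^{\square}(\ol{\rho}_{\mf{m},\nu})\times X_{\tri}^{\square}(\ol{\rho}_{\mf{m},\nu^c}) \subset P_{ps,\chi}$ follows from closedness and Zariski density.
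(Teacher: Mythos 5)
Your overall strategy matches the paper's: exhibit a Zariski dense open subset of $X_{\tri}^{\square}(\ol{\rho}_{\mf{m},\nu})\times X_{\tri}^{\square}(\ol{\rho}_{\mf{m},\nu^c})$ that lands in $X_{\ps,\tri}$, then conclude by Zariski closedness of $P_{ps,\chi}$. You are in fact more explicit than the paper at the key step, since the paper silently passes over the verification that $U_{w,\chi}\subset P_{ps,\chi}$, whereas you address it head-on via the shuffle-triangulation of the direct sum.

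There is however a genuine gap in the density step. You assert that each regularity condition ``cuts out a Zariski closed nowhere dense subset'' of $U_{\tri}(\ol{\rho}_{\mf{m},\nu})\times U_{\tri}(\ol{\rho}_{\mf{m},\nu^c})$, but this is not automatic: the preimage of a nowhere dense Zariski closed set under an arbitrary morphism can fail to be nowhere dense (indeed it can be everything, on a component that maps into the bad locus). What rescues this is precisely the smoothness (or at least flatness/dominance on each component) of the parameter map $U_{\tri}(\ol{\rho}_{\mf{m},\nu})\times U_{\tri}(\ol{\rho}_{\mf{m},\nu^c}) \to \widehat{T(F_{\ol{\nu}}^+)}$, which the paper invokes explicitly via \cite[Th\'eor\`eme 2.6]{BHSAnnalen}. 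Without this input your $V$ is only known to be Zariski open, not dense, and the closure argument does not run.

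On the interleaving claim, your conclusion is correct but the argument is both more elaborate and slightly misdirected. Since $D_1\oplus D_2$ is a \emph{split} direct sum with given triangulations $0\subset\mc{F}_1^{(1)}\subset\cdots\subset D_1$ and $0\subset\mc{F}_1^{(2)}\subset\cdots\subset D_2$, the shuffle filtration $\mc{F}_k := \mc{F}_{a(k)}^{(1)}\oplus\mc{F}_{b(k)}^{(2)}$ (with $a(k)+b(k)=k$ incremented according to the prescribed $w^{-1}$-shuffle) is manifestly a filtration by sub-$(\varphi,\Gamma_{F_\nu})$-modules with the required rank-one graded pieces, with no uniqueness, saturation, or regularity input needed for the construction itself. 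Regularity is only used to place the combined parameter inside $\mc{T}_{\on{reg}}^{2n}$ so that the point lies in $U_{\tri}(\ol{R})$. The appeal to uniqueness of the rank-one sub ``up to scalar'' and to ruling out ``non-split extensions between the two sides'' is unnecessary and, as stated, not actually justified; the direct construction is both simpler and airtight.
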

\begin{proof}
    Letting $c_{w,\chi}: \widehat{T(F_{\ol{\nu}}^+)}\simeq \widehat{T(F_{\ol{\nu}}^+)}$ denote the isomorphism sending $(\delta_{\nu}, -w_0\delta_{\nu^c}) \to w^{-1}(\delta, -w_0\delta_{\nu^c}(1-2n)(\chi_{\nu}\chi_{\nu^c})^{-1})$. 
    Note that we have Zariski dense and open inclusions  $c_{w,\chi}^{-1}(\widehat{T(F_{\ol{\nu}}^+)}_{\on{reg}}) \subset \widehat{T_n(F_{\nu})}_{\on{reg}} \times \widehat{T_n(F_{\nu^c})}_{\on{reg}} \subset \widehat{T(F_{\ol{\nu}}^+)}$.

    Let $U_{w,\chi}:= \{((r_1,\delta_1), (r_2,\delta_2))\in U_{\tri}(\ol{\rho}_{\mf{m,\nu}}) \times U_{\tri}(\ol{\rho}_{\mf{m,\nu^c}}): c_{w,\chi}(\delta_1, -w_0^G\delta_2) \in \widehat{T(F_{\ol{\nu}}^+)}_{\on{reg}}\}$ be the pullback of $U_{\tri}(\ol{\rho}_{\mf{m,\nu}}) \times U_{\tri}(\ol{\rho}_{\mf{m,\nu^c}})$ to $c_{w,\chi}^{-1}(\widehat{T(F_{\ol{\nu}}^+)}_{\on{reg}})$. Then since the morphism $U_{\tri}(\ol{\rho}_{\mf{m,\nu}}) \times U_{\tri}(\ol{\rho}_{\mf{m,\nu^c}}) \to \widehat{T(F_{\ol{\nu}}^+)}$ is smooth and $c_{w,\chi}^{-1}(\widehat{T(F_{\ol{\nu}}^+)}_{\on{reg}}) \subset \widehat{T_n(F_{\nu})}_{\on{reg}} \times \widehat{T_n(F_{\nu^c})}_{\on{reg}}$ is Zariski dense and open, the $U_{w,\chi}$ is also Zariski dense and open in $U_{\tri}(\ol{\rho}_{\mf{m,\nu}}) \times U_{\tri}(\ol{\rho}_{\mf{m,\nu^c}})$. Since $P_{ps,\chi}$ is by construction Zariski-closed in (the product of) deformation spaces, it contains the Zariski-closure of $U_w$, which is $X_{\tri}^{\square}(\ol{\rho}_{\mf{m,\nu}}) \times X_{\tri}^{\square}(\ol{\rho}_{\mf{m,\nu^c}})$.
\end{proof}
 \noindent Note that $\on{tr}(X_{\tri}^{\square}(\ol{\rho}_{\wt{\mf{m}}(\ol{\chi}),\nu}(\ol{\chi}_{\nu}^{-1}))) \subset X_{ps, \tri}(\ol{\rho}_{\wt{\mf{m}}(\ol{\chi}),\nu}(\ol{\chi}_{\nu}^{-1}))$, and thus we have an inclusion of Zariski closed subspaces $d_{w,\chi, \ol{\nu}}^{-1}(X_{\tri}^{\square}(\ol{\rho}_{\wt{\mf{m}}(\ol{\chi}),\nu}(\ol{\chi}_{\nu}^{-1})))=: P_{\chi} \subset P_{ps,\chi}$ for any $\chi$. Moreover, the same proof as in \cref{lemma: ContainsTriangulineVariety} shows $P_{\chi} \supset X_{\tri}^{\square}(\ol{\rho}_{\mf{m},\nu})\times X_{\tri}^{\square}(\ol{\rho}_{\mf{m},\nu^c})$. Ultimately, we would like to reduce the study of $P_{ps,\chi}$ to $P_{\chi}$, since $X_{\tri}^{\square}$ has been more thoroughly studied. We end the article by studying $P_{\chi}$ and discuss some lemmas that might be helpful in showing $P_{\chi} \subset P_{ps,\chi}$ might actually be an equality.


\begin{lemma}
    Let $\rho:\Gal_F \to \GL_{2n}(\mO_L)$ be a continuous representation such that $\overline{\rho}^{\on{ss}} \simeq \overline{\rho}_{\mf{m}} \oplus \overline{\rho}_{\mf{m}}^{c,\vee}(1-2n)$, where $\overline{\rho}_{\mf{m}}: \Gal_F \to \GL_{n}(\overline{\F}_p)$ is an absolutely irreducible representation. Then if $(\rho|_{\Gal_{F_{\nu}}}, \delta)\in X_{\on{tri}}^{\square}(\ol{\rho}_{\wt{\mf{m}}, \nu})$, we also have $(\rho^{\on{ss}}|_{\Gal_{F_{\nu}}}, \delta)\in X_{\on{tri}}^{\square}(\ol{\rho}_{\wt{\mf{m}},\nu})$.
\end{lemma}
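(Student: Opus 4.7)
The plan is to realise $(\rho^{\on{ss}},\delta)$ as the specialisation at $t=0$ of a one-parameter family $(\rho_{t},\delta)$ of points of $X_{\on{tri}}^{\square}(\ol{\rho}_{\wt{\mf{m}},\nu})$, obtained by degenerating $\rho$ via a suitable cocharacter of $\GL_{2n}$, and then to invoke the Zariski-closedness of $X_{\on{tri}}^{\square}$ inside $X^{\square}\times \mc{T}_{L}^{2n}$. Write $\rbar:=\ol{\rho}_{\wt{\mf{m}},\nu}$, semisimple with absolutely irreducible constituents by hypothesis. The key observation is that conjugation by $\on{Stab}(\rbar):=\{g\in \GL_{2n}(\mO_{L}): g\rbar g^{-1}=\rbar\}$ acts on $X^{\square}(\rbar)$ and preserves $X_{\on{tri}}^{\square}(\rbar)$, since conjugating a $(\varphi,\Gamma_{F_{\nu}})$-module by a constant matrix gives a trianguline module with the same triangulation parameter.

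I would first choose a Jordan--H\"older filtration $0\subset V_{1}\subset\cdots\subset V_{k}=L^{2n}$ of $\rho$ with irreducible quotients $\rho_{i}$ of dimensions $d_{i}$, each of which reduces mod $\mf{m}_{L}$ to an absolutely irreducible summand of $\rbar$. Since the saturated lattices $V_{i}\cap \mO_{L}^{2n}$ reduce modulo $\mf{m}_{L}$ to $\Gal_{F_{\nu}}$-stable subspaces of $\rbar$ (which by irreducibility of the summands match the ambient coordinate block decomposition), one can find $P\in 1+\mf{m}_{L}M_{2n}(\mO_{L})\subset \on{Stab}(\rbar)$ so that $\rho':=P\rho P^{-1}$ is block upper triangular in the framing basis, with diagonal blocks $\rho_{1},\dots,\rho_{k}$ and strictly upper off-diagonal blocks $B_{ij}\in \mf{m}_{L}\cdot M_{d_{i}\times d_{j}}(\mO_{L})$; since $P\in \on{Stab}(\rbar)$, we still have $(\rho',\delta)\in X_{\on{tri}}^{\square}(\rbar)$. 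Fixing integers $a_{1}>\cdots>a_{k}$, I then set $g_{t}:=\on{diag}(t^{a_{1}}I_{d_{1}},\dots,t^{a_{k}}I_{d_{k}})$ and $\rho_{t}:=g_{t}\rho'g_{t}^{-1}$; its $(i,j)$-block equals $t^{a_{i}-a_{j}}B_{ij}$, a polynomial in $t$ lying in $\mO_{L}\langle t\rangle$ (and in $\mf{m}_{L}\cdot \mO_{L}\langle t\rangle$ off the diagonal). Hence $\rho_{t}:\Gal_{F_{\nu}}\to \GL_{2n}(\mO_{L}\langle t\rangle)$ is a continuous representation reducing mod $\mf{m}_{L}$ to $\rbar$, and $F:\mathbf{B}^{1}_{L}=\on{Sp}(L\langle t\rangle)\to X^{\square}(\rbar)\times \mc{T}_{L}^{2n}$, $t\mapsto (\rho_{t},\delta)$, is a morphism of rigid spaces with $F(1)=(\rho',\delta)$ and $F(0)=(P\rho^{\on{ss}}P^{-1},\delta)$.

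For $t\in \mO_{L}^{\times}$ the matrix $g_{t}$ is invertible and reduces to a block-diagonal scalar matrix commuting with $\rbar$, so $g_{t}\in \on{Stab}(\rbar)$ and $F(t)=(g_{t}\rho'g_{t}^{-1},\delta)$ is a $\on{Stab}(\rbar)$-translate of $(\rho',\delta)$, hence lies in $X_{\on{tri}}^{\square}(\rbar)$. The preimage $F^{-1}(X_{\on{tri}}^{\square}(\rbar))$ is therefore a Zariski-closed analytic subset of $\mathbf{B}^{1}_{L}$ containing infinitely many classical points; since every proper closed analytic subset of $\mathbf{B}^{1}_{L}$ is finite, this preimage equals all of $\mathbf{B}^{1}_{L}$. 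Specialising at $t=0$ yields $(P\rho^{\on{ss}}P^{-1},\delta)\in X_{\on{tri}}^{\square}(\rbar)$, and conjugating by $P^{-1}\in \on{Stab}(\rbar)$ gives $(\rho^{\on{ss}},\delta)\in X_{\on{tri}}^{\square}(\rbar)$. The main obstacle I anticipate is the simultaneous coordinate matching in the first step---lifting the JH filtration of $\rho$ while keeping the framing adjustment $P$ in $1+\mf{m}_{L}M_{2n}(\mO_{L})$ so that $\rho'$ stays in $X^{\square}(\rbar)$---which is possible precisely because $\rbar$ is already block-diagonal in the framing basis, so the obstruction to lifting the residual direct-sum decomposition is unipotent modulo $\mf{m}_{L}$.
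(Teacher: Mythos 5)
Your proof is correct and takes essentially the same approach as the paper: degenerate the extension to the split form via a one-parameter cocharacter family, observe the generic fibres land in $X_{\tri}^{\square}(\rbar)$, and invoke Zariski closedness. A small but real improvement you make is to track the framing explicitly (conjugating by $P\in 1+\mf{m}_L M_{2n}(\mO_L)$, and checking $g_t\in\on{Stab}(\rbar)$ for $t$ a unit), a point the paper glosses over by simply saying ``$\rho$ is conjugate to'' an upper-triangular form; your $\mathbf{B}^1_L$-parametrization and ``infinitely many points'' argument also replaces the paper's appeal to the universal property of $R^{\square}_{\rbar}$ over $\mO_L[[t]]$ and density of $\{t\neq 0\}$, but both routes to Zariski-closedness are sound. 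One minor point worth being explicit about: your claim that each Jordan--H\"older quotient $\rho_i$ reduces to a single absolutely irreducible summand of $\rbar$ is only forced because $\overline{\rho}^{\on{ss}}$ has exactly two absolutely irreducible constituents with multiplicity one, so $k\le 2$; for larger multiplicities the matching of local JH factors to residual blocks would require more care, but in the situation at hand your step-one conjugation is fine.
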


\begin{proof}
    If $\rho$ is irreducible, then it is automatically semisimple, and there is nothing to prove. Otherwise, we have $\rho^{ss}\simeq V \oplus W$, where $V$ and $W$ are lifts of the irreducible representations $\overline{\rho}_{\mf{m}}, \overline{\rho}_{\mf{m}}^{c,\vee}$, respectively. Suppose WLOG that $V$ is a subrepresentation of $\rho$. As a result, $\rho$ is conjugate to a representation of the form 
    \[
    \begin{pmatrix}
        V & \ast\\
        0 & W
    \end{pmatrix},
    \]
    for some potentially nontrivial $\ast$. Now for all $t$, we may consider the auxiliary representation 
    \[
    \rho_{t}:= \begin{pmatrix}
        V & t\cdot \ast\\
        0 & W
    \end{pmatrix}: \Gal_F \to \GL_2(\mc{O}_{L}[[t]])
    \]
    For $t \neq 0$, we have $\rho_{t}\simeq \rho$, as they are conjugate by the matrix $\begin{pmatrix}
        t\cdot I_n & 0\\
        0 & I_n
    \end{pmatrix}.$ 
    The universal property of the deformation space then gives a map 
    \[
    f:(\on{Spf}(\mO_L[[t]]))^{\on{rig}} \to X^{\square,2n}(\ol{\rho}_{\wt{\mf{m}},\nu}) \times \widehat{T(F_p^+)},
    \]
    where on the second factor it is just the constant map $\delta \in \widehat{T}$. We know that restricting to $f|_{(\on{Spf}(\Z_p[[t]]))^{\on{rig}}\setminus \{0\}}$, the image $(\rho_t, \delta) \in X_{\on{tri}}^{\square}(\ol{\rho}_{\mf{m},\nu})$ since $(\rho, \delta) \in X_{\tri}^{\square, 2n}(\ol{\rho}_{\mf{m}})$ and conjugation by $\begin{pmatrix}
        t\cdot I_n & 0\\
        0 & I_n
    \end{pmatrix}$ is an automorphism on the trianguline variety. As $X_{\on{tri}}^{\square}(\ol{\rho}_{\mf{m}})$ is Zariski-closed and $(\Spf(\mO_L[[t]]))^{\on{rig}}\setminus \{0\}$ is a dense open in $\Spf(\mO_L[[t]])$, we have a factorization $f: \on{Spf}(\mO_L[[t]]) \to X_{\on{tri}}^{\square}(\ol{\rho}_{\wt{\mf{m}}})$. Specializing, to $t = 0$, we have $\rho_0 \simeq \begin{pmatrix}
        V & 0\\
        0 & W
    \end{pmatrix} \simeq \rho^{ss}$ and thus $(\rho^{\on{ss}},\delta)\in X_{\tri}^{\square}(\ol{\rho}_{\wt{\mf{m}},\nu})$, as required.
\end{proof}

We now study $P_{\chi}$ for varying $\chi$. In particular, for all characters $\chi: \Gal_F \to \mO_L^{\times}$ as in \cref{lemma: TwistByCharacters} we are led to study $W:= \bigcap_{\chi}P_{\chi}\supset X_{\tri}^{\square}(\ol{\rho}_{\mf{m},\nu})\times X_{\tri}^{\square}(\ol{\rho}_{\mf{m},\nu^c})$. To get a grasp on this intersection, we first look at the open dense subset $U_{\tri}(\ol{\rho}_{\wt{\mf{m}}(\chi),\nu}(\ol{\chi}_{\nu}^{-1}))$ for varying $\chi$.

\begin{lemma}
\label{lemma: MatchesOnRegularTrianguline}
         There is an inclusion $\bigcap_{\chi} d_{w,\chi,\ol{\nu}}^{-1}(U_{\on{tri}}(\ol{\rho}_{\wt{\mf{m}}(\chi),\nu}(\ol{\chi}_{\nu}^{-1})))\subset U_{\tri}(\ol{\rho}_{\mf{m},\nu})\times U_{\tri}(\ol{\rho}_{\mf{m},\nu^c})$. In fact for any $x = ((\rho_{x,\nu}, \delta_{\nu}), (\rho_{x,\nu}, \delta_{\nu^c}))$, there exists two continuous characters $\chi_1,\chi_2: \Gal_{F} \to \mO_L^{\times}$ such that if $d_{w,\chi_1,\ol{\nu}}(x)\in U_{\on{tri}}(\ol{\rho}_{\wt{\mf{m}}(\chi_1),\nu}(\ol{\chi}_{1,\nu}^{-1}))$ and $d_{w,\chi_2,\ol{\nu}}(x)\in U_{\on{tri}}(\ol{\rho}_{\wt{\mf{m}}(\chi_2),\nu}(\ol{\chi}_{2,\nu}^{-1}))$, then $x\in U_{\tri}(\ol{\rho}_{\mf{m},\nu})\times U_{\tri}(\ol{\rho}_{\mf{m},\nu^c})$.
\end{lemma}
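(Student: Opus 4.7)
The plan is to prove the second, more technical assertion; the first inclusion then follows by specializing to $\chi \in \{\chi_1,\chi_2\}$. Given $x = ((\rho_{x,\nu},\delta_\nu),(\rho_{x,\nu^c},\delta_{\nu^c}))$, the first step is to choose (enlarging $L$ if necessary) two continuous characters $\chi_1,\chi_2\colon \Gal_F \to \mO_L^\times$, trivial mod $\mf{m}_L$, satisfying the following separation conditions: for each embedding $\tau\colon F_\nu \hookrightarrow \ol{\Q}_p$ and each $j\in\{1,2\}$, the $\tau$-Sen weights of $\rho_{x,\nu}$ and of $(\chi_{j,\nu}\chi_{j,\nu^c})^{-1}\rho_{x,\nu^c}^\vee(1-2n)$ have no integer differences, and the analogous separation holds between the $\delta_\nu$-entries and the twisted $\delta_{\nu^c}$-entries of $\eta_j := w^{-1}(\delta_\nu,-w_0^G\delta_{\nu^c}(1-2n)(\chi_{j,\nu}\chi_{j,\nu^c})^{-1})$. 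As in the proof of \cref{theorem: MainTheorem1}, such $\chi_j$ exist because the $p$-adic weight of $\chi_\nu\chi_{\nu^c}$ realises an open subset of $\Z_p$ as $\chi$ varies, so we can avoid the countably many forbidden integer-difference loci.

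Assume now $d_{w,\chi_j,\ol{\nu}}(x) \in U_{\on{tri}}$ for $j=1,2$, so that the $(\varphi,\Gamma_{F_\nu})$-module
\[
D^{(j)} := D_{\on{rig}}^\dagger(\rho_{x,\nu}) \oplus D_{\on{rig}}^\dagger\bigl((\chi_{j,\nu}\chi_{j,\nu^c})^{-1}\rho_{x,\nu^c}^\vee(1-2n)\bigr)
\]
admits a strict triangulation $\mc{F}_\bullet^{(j)}$ with regular parameter $\eta_j$. I would then argue inductively on $k$ that $\mc{F}_k^{(j)} = (\mc{F}_k^{(j)} \cap D_{\on{rig}}^\dagger(\rho_{x,\nu})) \oplus (\mc{F}_k^{(j)} \cap D_{\on{rig}}^\dagger((\chi_{j,\nu}\chi_{j,\nu^c})^{-1}\rho_{x,\nu^c}^\vee(1-2n)))$. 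For the inductive step, given the splitting for $\mc{F}_{k-1}^{(j)}$, the projection of $\mc{F}_k^{(j)}/\mc{F}_{k-1}^{(j)} \simeq \mc{R}_{L,F_\nu}(\eta_{j,k})$ into the quotient of the twisted summand vanishes whenever $\eta_{j,k}$ is Sen-matched with $\rho_{x,\nu}$ rather than with the twist: this follows from the fact that $\on{Hom}_{(\varphi,\Gamma_{F_\nu})}(\mc{R}_{L,F_\nu}(\eta), M) = 0$ whenever the Sen weight of $\eta$ differs from every Jordan--H\"older Sen weight of $M$ in some embedding $\tau$. The symmetric analysis handles the opposite case, and one concludes $\mc{F}_\bullet^{(j)}$ is the direct sum of trianguline filtrations on the two summands.

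It follows that $\rho_{x,\nu}$ is strictly trianguline with parameter given by the subsequence of $\eta_j$ whose entries share Sen weights with $\rho_{x,\nu}$. Because $w \in W^{\oP}$ is of minimal length, $w^{-1}$ preserves the relative order of the $\delta_\nu$-entries (and likewise of the $\delta_{\nu^c}$-entries), so this subsequence equals $\delta_\nu$ on the nose; regularity of $\eta_j$ restricts to regularity of $\delta_\nu$, whence $(\rho_{x,\nu},\delta_\nu) \in U_{\on{tri}}(\ol{\rho}_{\mf{m},\nu})$. The identical argument for the second summand gives $(\rho_{x,\nu^c},\delta_{\nu^c}) \in U_{\on{tri}}(\ol{\rho}_{\mf{m},\nu^c})$, completing the proof of the second assertion.

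The main obstacle lies in the splitting step: one must carefully verify the vanishing of the relevant morphism groups between rank-one $(\varphi,\Gamma)$-modules and track the induction cleanly. One must also invoke the order-preservation property of minimal-length $w \in W^{\oP}$ to conclude the sub-parameter matches $\delta_\nu$ in its given order rather than a permutation thereof.
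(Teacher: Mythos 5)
Your approach — pass to the split $(\varphi,\Gamma)$-module $D^{(j)}$, inductively show the regular triangulation splits across the two summands using Sen-weight separation, then read off a triangulation of $\rho_{x,\nu}$ with parameter a subsequence of $\eta_j$, ordered correctly because $w \in W^{\oP}$ is of minimal length — is the same skeleton the paper uses, and the splitting induction is sound given adequate separation. However, there is a genuine gap in the final identification of the subsequence.

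The problem is that your stated separation conditions are (a) Sen weights of $\rho_{x,\nu}$ and Sen weights of the $\chi_j$-twisted $\rho_{x,\nu^c}^\vee$ piece have no integer differences, and (b) the $\delta_\nu$-entry weights and the twisted $\delta_{\nu^c}$-entry weights of $\eta_j$ have no integer differences. These two conditions, together with the multiset equality of Sen weights coming from $d_{w,\chi_j,\ol\nu}(x)\in U_\tri$, give two partitions of the same set of $\Z$-congruence classes into two disjoint blocks — but they do \emph{not} force those two partitions to coincide. Concretely, nothing you have assumed rules out that some Sen weight of $\rho_{x,\nu}$ coincides (mod $\Z$) with the weight of a twisted $\delta_{\nu^c}$-entry rather than with a $\delta_\nu$-entry, so the ``subsequence of $\eta_j$ whose entries share Sen weights with $\rho_{x,\nu}$'' could a priori contain twisted $\delta_{\nu^c}$-entries. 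If that happened, your argument would produce a triangulation of $\rho_{x,\nu}$ of the wrong parameter, and you could not conclude $(\rho_{x,\nu},\delta_\nu)\in U_\tri(\ol\rho_{\mf{m},\nu})$.

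There are two standard ways to close this. The cheap fix is to add cross-separation conditions to the choice of $\chi_j$: require in addition that the Sen weights of $\rho_{x,\nu}$ are separated (mod $\Z$) from the weights of the twisted $\delta_{\nu^c}$-entries, and symmetrically; this is achievable for the same countability reasons you already invoke. The paper's fix is different and is precisely the reason the lemma calls for \emph{two} characters: since the Sen weights of $\rho_{x,\nu}$ are intrinsic (independent of $\chi$), they must lie in $\{\weight_\tau(\delta_{\nu,i})\}\cup\{\text{twisted }\delta_{\nu^c}\text{-weights for }\chi_1\}$ \emph{and} in the analogous set for $\chi_2$; the two twisted-weight sets are separated from each other, so intersecting forces the Sen weights of $\rho_{x,\nu}$ into $\{\weight_\tau(\delta_{\nu,i})\}$, and the rest of your splitting argument then applies. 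You carry $\chi_1$ and $\chi_2$ through your proof but never actually use the second one; the paper's argument uses it exactly here. Either repair is straightforward, but as written your proof is incomplete on this point.
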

\begin{proof}
    We are tasked with showing that if $\rho_{x,\nu} \oplus \chi_{\nu}^{-1}\chi_{\nu^c}^{-1}\rho_{x,\nu^c}^{\vee}(1-2n)$ is trianguline of parameter $w^{-1}(\delta_{\nu}, (-w_0\delta_{\nu^c})(1-2n)\chi_{\nu}^{-1}\chi_{\nu^c}^{-1}) \in \widehat{T(F_{\ol{\nu}}^+)}$ for many $\chi$ then $\rho_{x,\nu}$ is trianguline of parameter $\delta\in \wh{T_n(F_{\nu})}_{\on{reg}}$.
    We follow the same strategy as in the proof of \cref{theorem: MainTheorem1}, where we choose two characters $\chi_1, \chi_2$ such that $\rho , \rho^{c,\vee} \otimes \chi_1^{c,\vee}\chi_1^{-1}(1-2n), \rho^{c,\vee} \otimes \chi_2^{c, \vee}\chi_2^{-1}(1-2n)$ have no pairwise Sen weights (between each of the representations) differing by an integer, and same for the triangulations. If both $w^{-1}(\delta_{\nu}, (-w_0\delta_{\nu^c})(1-2n)\chi_{1,\nu}^{-1}\chi_{1,\nu^c}^{-1})$ and $w^{-1}(\delta_{\nu}, (-w_0\delta_{\nu^c})(1-2n)\chi_{2,\nu}^{-1}\chi_{2,\nu^c}^{-1})$ are regular parameters, then these triangulations are unique. Then by considering the  filtrations $\mc{F}_{\bullet, \chi_i}$ on $D_{\on{rig}}^{\dagger}(\rho_{x,\nu}) \oplus D_{\on{rig}}^{\dagger}(\rho_{x,\nu^c}^{\vee}\otimes\chi_{i,\nu^c}^{-1}\chi_{i,\nu}^{-1}(1-2n))$, the weights of the parameters/subquotients of the triangulation of which map nontrivially to (a subquotient of) $D_{\on{rig}}^{\dagger}(\rho)$ must be contained in the intersection of the sets $\{(\weight_{\tau}(\delta_{\nu,i}))_i, (\weight_{\tau}(\delta_{\nu^c,j}\chi_1^{c,\vee}\chi_1^{-1}(1-2n)))_j\} \cap \{(\weight_{\tau}(\delta_{\nu,i}))_i, -(\weight_{\tau}((\delta_{\nu^c,j}\chi_2^{c,\vee}\chi_2^{-1}(1-2n)))_j\}= \{(\weight_{\tau}(\delta_i))_i\}$, so by fact that the pairwise weights cannot differ by integers, only the $\mc{R}_L(\delta_{\nu,i})$ can admit non-zero maps to $\rho_{x,\nu}$, and they give a triangulation. We then similarly get a triangulation on $\rho_{x,\nu^c}^{\vee}$, and thus on $\rho_{x,\nu^c}$.
\end{proof}
\noindent We end with discussing two other examples.
\begin{example}[$n = 2$, $F$ imaginary quadratic]
\label{example: Bianchi Case on}
    We can try to pin down precisely the conditions we need in the Bianchi case. Here, $F^+ = \Q$ and $p = \nu \nu^c$ is split in $F$. Thus in this case, $T(F_{\ol{\nu}}^+) = T_2(F_{\nu})\times T_2(F_{\nu^c})\simeq (\Q_p^{\times})^4$.  We can also say a little more about the element $w \in W^{\oP}$. Since we are localizing at a non-Eisenstein maximal ideal and $F$ is imaginary quadratic, the only degrees of cohomology we are interested in are $i = 1,2$. In this case the middle degree $d = 4$, so the lengths of the needed Weyl group representatives are $\ell(w_1) = 2, \ell(w_1) = 1$. In the set $W^{\oP}\simeq (S_2\times S_2)\backslash S_4$, two such representatives are $w_1  = \begin{pmatrix}
       &1&& \\
        &&1&\\
        1&&&\\
        &&&1
    \end{pmatrix}$ and $w_2  = \begin{pmatrix}
       1&&& \\
        &&1&\\
        &1&&\\
        &&&1
    \end{pmatrix}$.
Given $(e_1,e_2, f_1, f_2) \in \widehat{T(F_{\ol{\nu}}^+)}$, we then have $w_1^{-1}(e_1,e_2, f_1,f_2) = (f_1, e_1, e_2, f_2)$ and $w_2^{-1}(e_1,e_2, f_1, f_2) = (e_1, f_1, e_2, f_2)$. For simplicity let us focus on the case of $w = w_1$, which would correspond to studying $\HH^1(J_{B_2}(\Pi(K^p)_{\mf{m}}^{\la}))\simeq J_{B_2}(\wt{\HH}^1(K^p,L)_{\mf{m}}^{\la})$, this equality holding since one can show by hand that $\wt{\HH}^0(K^p,L)_{\mf{m}}$ is Eisenstein.
    In this case we study a variant of the map $d_{w_1,\chi, \ol{\nu}}$ from before, defined as 
    \[
    d_{\chi}: X^{\square}(\ol{\rho}_{\mf{m},\nu}) \times \wh{(\Q_p^{\times})^2} \times X^{\square}(\ol{\rho}_{\mf{m},\nu^c})\times \widehat{(\Q_p^{\times})^2} \hookrightarrow X^{\square}(\ol{\rho}_{\wt{\mf{m}}(\chi),\nu}(3)\ol{\chi}_{\nu^c}) \times \widehat{(\Q_p^{\times})^4}
    \]
    \begin{align*}
    (\rho_{\nu}, (\delta_{\nu,1},\delta_{\nu, 2}) , \rho_{\nu^c},  (\delta_{\nu^c,1}, \delta_{\nu^c,2})) &\mapsto (\chi_{\nu}\chi_{\nu^c}\rho_{\nu}(3) \oplus \rho_{\nu^c}^{\vee},d_{\chi}(\delta))\\
    d_{\chi}(\delta)&:= (-\delta_{\nu^c, 2}, \delta_{\nu,1}\chi_{\nu}\chi_{\nu^c}(3), \delta_{\nu,2}\chi_{\nu}\chi_{\nu^c}(3), -\delta_{\nu^c,1})
    \end{align*}
    We are interested in the preimage of $X_{\on{tri}}^{\square}(\ol{\rho}_{\wt{\mf{m}}(\chi),\nu}(3)\ol{\chi}_{\nu^c})$ under $d_{\chi}$ for various characters $\chi$. By \cite[Theorem 6.3.13]{KPX14}, if $(\chi_{\nu}\chi_{\nu^c}\rho_{\nu}(3) \oplus \rho_{\nu^c}^{\vee}, d_{\chi}(\delta)) \in X_{\on{tri}}^{\square}(\ol{\rho}_{\wt{\mf{m}}(\chi),\nu}(3)\ol{\chi}_{\nu^c})$ then $\chi_{\nu}\chi_{\nu^c}\rho_{\nu}(3) \oplus \rho_{\nu^c}^{\vee}$ is trianguline of parameter $d_{\chi}(\delta)\cdot (z^a,z^b, z^c, z^d)$ where $a,b,c,d\in \Z$ satisfy $a + b + c + d = 0$. Choosing $\chi_1,\chi_2$ appropriately as in the proof of \cref{theorem: MainTheorem1}, we may assume that $\rho_{\nu^c}^{\vee}$ is trianguline of parameters $-w_0\delta_{\nu^c} = (-\delta_{\nu^c,2}, -\delta_{\nu^c,2})\cdot (z^r,z^s)$ for some $r + s = 0$, and $\chi_{\nu}\chi_{\nu}^{-1}\delta_{\nu,i}\delta_{\nu^c,j}$ has non-integral weight for $1\le i,j \le 2$. In particular, the non-zero map  $\mc{R}_{L, \Q_p}(-\delta_{\nu^c,2}z^a) \hookrightarrow D_{\on{rig}}^{\dagger}(\chi_{\nu}\chi_{\nu^c}\rho_{\nu}(3) \oplus \rho_{\nu^c}^{\vee})$ induces a nonzero map $\mc{R}_{L, \Q_p}(-\delta_{\nu^c,2}z^a) \to D_{\on{rig}}^{\dagger}(\rho_{\nu^c}^{\vee})$. Similarly, there is a nonzero map $D_{\on{rig}}^{\dagger}(\rho_{\nu^c}^{\vee}) \to \mc{R}_{L, \Q_p}(-\delta_{\nu^c, 1}z^d)$ through which the triangulation on $\rho_{\nu}\chi_{\nu}\chi_{\nu^c}(2n-1)\oplus \rho_{\nu^c}$ factors. This gives a resulting triangulation of $\rho_{\nu^c}^{\vee}$, and thus $a+ d= 0$. Moreover, by \cite[Lemma 3.9]{CNT24} we in fact have $a \le 0$. Then by \cite[Proposition 6.2.8]{KPX14} there is a nonzero map $\mc{R}_{L, \Q_p}(-\delta_{\nu^c,2}) \to \mc{R}_{L, \Q_p}(-\delta_{\nu^c,2}\cdot z^{a})$, and thus a nonzero map $\mc{R}_{L, \Q_p}(-\delta_{\nu^c,2}) \to D_{\on{rig}}^{\dagger}(\rho_{\nu^c}^{\vee})$. Then by dualizing, to show $(\rho_{\nu^c}, \delta_{\nu^c}) \in X_{\tri}^{\square}(\ol{\rho}_{\mf{m},\nu^c})$, we are reduced to the following question.
    \begin{conjecture}
    When $F_{\nu} = \Q_p$, we have $(\rho, \delta_1,\delta_2) \in X_{\on{tri}}^{\square}(\ol{\rho}_{\mf{m},\nu^c})$ if and only if there is a nonzero map $D_{\on{rig}}^{\dagger}(\rho) \to \mc{R}_{L, \Q_p}(\delta_2)$, $\det \rho = \delta_1\delta_2$, and with $\tau$-Sen weights $(\on{wt}_{\tau}(\delta_1), \on{wt}_{\tau}(\delta_2))$.
    \end{conjecture}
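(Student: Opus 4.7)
The plan is to prove both directions using the theory of $(\varphi,\Gamma)$-modules over $\mc{R}_{L,\Q_p}$, reducing the statement to the closure description of the rank-$2$ trianguline variety.

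The forward direction is essentially formal. Suppose $(\rho,\delta_1,\delta_2)\in X_{\on{tri}}^\square(\ol{\rho}_{\mf{m},\nu^c})$. By \cite[Theorem 6.3.13]{KPX14}, $D^\dagger_{\on{rig}}(\rho)$ is strictly trianguline with parameter $(\delta_1',\delta_2')$ where $\delta_i' = \delta_i\cdot z^{k_i}$ for some $k_i\in\Z$, and in particular admits a surjection onto $\mc{R}_{L,\Q_p}(\delta_2')$. The determinant condition $\det\rho=\delta_1\delta_2$ forces $k_1+k_2=0$, and matching the multiset of Sen weights (using \cite[Proposition 2.9]{BHSAnnalen} together with $\on{wt}(z^k)=-k$) forces either $k_1=k_2=0$ or $\{k_1,k_2\}=\{\on{wt}(\delta_1)-\on{wt}(\delta_2),\on{wt}(\delta_2)-\on{wt}(\delta_1)\}$. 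In the first case the existing surjection is the required map; in the second case, one composes with multiplication by an appropriate power of $t\in\mc{R}_{L,\Q_p}$ (tracking the unramified factor relating $z$ and $\varepsilon_{\on{cyc}}$) to manufacture a nonzero map into $\mc{R}_{L,\Q_p}(\delta_2)$.

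For the backward direction, given $\rho,\delta_1,\delta_2$ satisfying the three conditions, saturating the image of the given nonzero map $f\colon D^\dagger_{\on{rig}}(\rho)\to\mc{R}_{L,\Q_p}(\delta_2)$ yields a short exact sequence
\[
0\to\mc{R}_{L,\Q_p}(\delta_1')\to D^\dagger_{\on{rig}}(\rho)\to\mc{R}_{L,\Q_p}(\delta_2')\to 0
\]
with $\delta_i'=\delta_iz^{k_i}$ and $k_1+k_2=0$, so $\rho$ is already trianguline of a parameter differing from $(\delta_1,\delta_2)$ only by an algebraic twist. To conclude $(\rho,\delta_1,\delta_2)\in X_{\on{tri}}^\square$, I would construct an analytic family $(\rho_s,\delta_{1,s},\delta_{2,s})$ over an affinoid disc $S=\on{Sp} L\ip{s}$ specializing at $s=0$ to our triple and with $s$-generic fibres lying in the regular locus $U_{\on{tri}}(\ol{\rho}_{\mf{m},\nu^c})$. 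Concretely, deform $(\delta_1,\delta_2)$ to $(\delta_1\eta_1^s,\delta_2\eta_2^s)$ along generic continuous characters $\eta_1,\eta_2\in\wh{\Q_p^{\times}}$ that push the parameter into $\mc{T}_{\on{reg}}^2$ for $s\ne 0$, and lift the triangulation along the family using the unobstructed deformation theory of $(\varphi,\Gamma)$-modules \cite[\textsection 2.4]{KPX14} together with the vanishing of $\HH^2(\mc{R}_{L,\Q_p}(\delta_1'(\delta_2')^{-1}))$. The limit $s\to 0$ sits in $X_{\on{tri}}^\square$ by Zariski-closedness.

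The principal obstacle is the case when $\rho$ is non-split trianguline with a nontrivial algebraic twist ($k_2\ne 0$ in the second Sen-weight alternative); here a naive deformation of the parameter need not preserve saturation of the triangulation across the family. In rank $2$ over $\Q_p$ this is resolved using the geometry of $X_{\on{tri}}^\square(\rbar)$ developed by Chenevier, Colmez, and Hellmann: every irreducible component meets the regular locus, and a refinement-switching/companion-point analysis characterises precisely which algebraically-shifted parameters appear in the closure. A cleaner alternative route -- which I would attempt first -- is to identify the reduced rigid space $X_{\on{tri}}^\square(\rbar)$ in the rank-$2$ case directly with the locus cut out by the three stated conditions, by exploiting smoothness of $U_{\on{tri}}$ over weight space and a fibre-dimension count against $\wh{T_2(\Q_p)}$.
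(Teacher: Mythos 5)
The statement you are attempting is explicitly labelled a \emph{Conjecture} in the paper: the author does not prove it, and in fact introduces it precisely as an open question to which the surrounding calculation (the Bianchi case in \cref{example: Bianchi Case on}) has been reduced. There is therefore no paper proof to compare your argument against, and anything you write here should be judged as an original attempt on an open problem, not as a reconstruction of a known argument.

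On the merits of the attempt itself. The forward (``only if'') direction is the easier one and your route is roughly right, but there is a real gap at the point where you claim to ``compose with multiplication by an appropriate power of $t$'' to manufacture the nonzero map $D_{\on{rig}}^{\dagger}(\rho)\to\mc{R}_{L,\Q_p}(\delta_2)$. Writing $\delta_2'=\delta_2 z^{k_2}$, a nonzero $(\varphi,\Gamma)$-module map $\mc{R}_{L,\Q_p}(\delta_2')\to\mc{R}_{L,\Q_p}(\delta_2)=\mc{R}_{L,\Q_p}(\delta_2' z^{-k_2})$ exists (and is then a closed embedding, in particular nonzero after composing with the quotient map) only when $-k_2\le 0$, i.e.\ when the twist has the correct sign; for the other sign there is no map at all. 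Determining the sign is not formal: in the paper's reduction to this conjecture a constraint of exactly this kind is extracted from \cite[Lemma 3.9]{CNT24}, a step your argument silently omits. Without such an input the forward direction is incomplete.

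The backward (``if'') direction is the direction actually needed for the paper's application, and it is the direction that makes this a genuine conjecture. Your proposed analytic-family argument does not go through as written: you want to deform $(\delta_1,\delta_2)$ into $\mc{T}^2_{\on{reg}}$ and simultaneously carry along a triangulation of $D^\dagger_{\on{rig}}(\rho_s)$ whose graded pieces specialize to a parameter differing from $(\delta_1,\delta_2)$ by an algebraic twist, but ``lifting the triangulation'' is not an unobstructed problem once the twist is nontrivial — precisely because the saturation of a sub-$(\varphi,\Gamma)$-module is not stable in families, and the Ext groups controlling the lift can jump. You flag this yourself as the ``principal obstacle,'' and the fallback you gesture at (identifying $X_{\on{tri}}^\square$ with the locus cut out by the three conditions via smoothness of $U_{\on{tri}}$ and a dimension count) is not an argument: equal dimension and a containment of the regular locus do not force equality of Zariski closures without control of the non-regular strata, which is exactly where companion points live. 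In the generic crystalline case the closure is pinned down by \cite[Theorem 4.2.3]{BHSIHES} and \cite[Theorem 4.1]{Wu24} (which the paper uses in \cref{corollary: GenericCrystallineOnTriangulineVariety}), but a proof of the full conjecture must handle de Rham non-crystalline, semistable, and non-de Rham trianguline points as well. In short: the forward direction needs a sign input you do not supply, and the backward direction remains open after your sketch, which matches the fact that the paper states this as a conjecture rather than a theorem.
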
 
\noindent This conjecture would imply at least that the projection of $\bigcap_{\chi}d_{\chi}^{-1}(X_{\on{tri}}^{\square}(\ol{\rho}_{\wt{\mf{m}}(\chi),\nu}(3)\ol{\chi}_{\nu^c}))$ to deformation space at $\nu^c$ lands in $X_{\tri}^{\square}(\ol{\rho}_{\mf{m},\nu^c})$. By symmetry, a similar result would also hold for $\nu$.
\end{example}
\subsection{The case of generic crystalline representations}
A key case in which there is strong control on points of the trianguline variety comes from the local models of \cite{BHSIHES}, and the extension to non-regular representations by \cite{Wu24}. Let $r: \Gal(\ol{K}/K) \to \GL_n(\ol{\Q}_p)$ be a crystalline representation, and denote its reduction by $\ol{r}$. Associated to $r$ then is the rank $n$ $\varphi$-module $D_{\on{crys}}(r)$ over $L\otimes_{\Q_p}K_0$, where $K_0\subset K$ denotes the maximal unramified subextension. Fixing an embedding $\tau: K_0 \hookrightarrow L$, let $\varphi_1, \dots, \varphi_n$ be eigenvalues of $\varphi^{[K_0:\Q_p]}$ on $L\otimes_{1\otimes \tau_0, L\otimes_{\Q_p}K_0}D_{\on{crys}}(r)$. We say $r$ is \textit{generic} if we have $\varphi_{i}\varphi_j^{-1} \notin \{1, p^{[K_0:\Q_p]}\}$ for all $i\neq j$. This notion is independent of the choice of $\tau_0$.

If $r$ is crystalline generic, we fix an ordering $\ul{\varphi} = (\varphi_1,\dots, \varphi_n)$ of the Frobenius eigenvalues. Our goal now is to describe the classification of points on $X_{\on{tri}}^{\square}(\ol{r})$ with representation $r$. Let $\lambda = (\on{HT}_{\tau}(r))_{\tau} = (\lambda_{\tau, 1}, \dots, \lambda_{\tau, n})_{\tau: K \hookrightarrow \ol{\Q}_p}$ be the dominant vector of Hodge--Tate weights, such that the distinct weights appear with multiplicities $m_{\tau, 1}, \dots, m_{\tau, s_{\tau}}$. Let $W_{\on{stab}}\subset W$ be the stabilizer subgroup of $\lambda$. Then for any $w \in W/W_{\on{stab}}$, we consider a character of $(K^{\times})^n$, denoted by $z^{-w(\lambda)}\on{unr}(\ul{\varphi})$, with components given by 
\[
(z^{-w(\lambda)_1}\on{unr}(\varphi_1),\dots, z^{-w(\lambda)_n}\on{unr}(\varphi_n))\in \widehat{T_L^n}
\]
Let $x_w \in X_{\ol{r}}^{\square}\times \widehat{T_L^n}$ be the point $x_w = (r,z^{-w(\lambda)}\on{unr}(\ul{\varphi})).$ Associated to the point $x:= x_{w_0}\in U_{\tri}(\ol{r})$ we can construct a certain permutation $w_x \in (S_n)^{[K:\Q_p]}$ (see \cite[\textsection 4.1]{Wu24} for a brief summary).

\begin{theorem}[{\cite[Theorem 4.2.3]{BHSIHES}, \cite[Theorem 4.1]{Wu24}}]
\label{theorem: LocalCompanionPoints}
    $x_w\in X_{\on{tri}}^{\square}(\rbar)$ iff $w \ge w_x$ in $W/W_{\on{stab}}$. In fact, the set of $(r,\delta') \in X^{\square}(\ol{r})\times \widehat{T}_L$ such that $(r,\delta') \in X_{\tri}^{\square}(\ol{r})$ is precisely 
    \[
    \bigcup_{\mc{R} = \ul{\varphi}}\{(r,z^{-w(\lambda)}\on{unr}(\ul{\varphi}): w_x \le w\},
    \]
    where the union runs over all possible orderings $\mc{R}$ of $\varphi_1,\dots, \varphi_n$, i.e., \emph{refinements}.
\end{theorem}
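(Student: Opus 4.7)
The approach follows the local models strategy for the trianguline variety developed in [BHSIHES] and extended by Wu in [Wu24] to accommodate non-regular Hodge-Tate weights. The plan is to trade the question about the existence of companion points for a combinatorial statement about the closure of Schubert cells in an appropriate (partial) flag variety.

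I would first unpack the combinatorial invariant $w_x$. A refinement $\ul\varphi$ of $r$ is the same as a complete $\varphi$-stable flag $\on{Fil}_{\ul\varphi}^{\bullet}$ on $D_{\on{crys}}(r)$, while the Hodge filtration provides a second (possibly non-full) filtration $\on{Fil}_{\on{Hodge}}^{\bullet}$ whose type is recorded by the parabolic $P\subset G = \on{Res}_{K/\Q_p}\GL_n$ with Weyl group $W_{\on{stab}}$. The relative position of these two flags is an element of $W/W_{\on{stab}}$, which by definition is $w_x$. In particular, for $w = e$ the point $x_e = (r, z^{-\lambda}\on{unr}(\ul\varphi))$ is the most regular companion point that the ordering $\ul\varphi$ can produce.

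Next I would construct the local model. On a formal neighborhood $\Omega$ of $x_e$ in $X^{\square}(\ol r)\times \widehat{T^n_L}$, the genericity hypothesis on $\ul\varphi$ guarantees that the universal $(\varphi, \Gamma_K)$-module carries a canonical $\varphi$-stable full flag deforming $\on{Fil}_{\ul\varphi}^{\bullet}$, while the universal Hodge filtration defines a point of the partial flag variety $G/P$; recording their relative position gives a morphism $\kappa : \Omega \to \widetilde{\mf{g}}_P$ to a Grothendieck-Springer-type resolution. The technical heart of the proof, [BHSIHES, Theorem 2.6] in the regular case and [Wu24, Section 4] in general, is that $\kappa$ is formally smooth, with image the closed Schubert variety $\overline{X}_{w_x} \subset G/P$. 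The companion points $x_w$ correspond under $\kappa$ to the $W/W_{\on{stab}}$-orbit of the identity coset, and membership in $X_{\on{tri}}^{\square}(\ol r)$ translates to membership in $\overline{X}_{w_x}$. Classical Bruhat theory then yields $x_w \in X_{\on{tri}}^{\square}(\ol r)$ iff $w_x \le w$ in the Bruhat order on $W/W_{\on{stab}}$. For exhaustion, any $(r, \delta') \in X_{\on{tri}}^{\square}(\ol r)$ gives a triangulation of $D_{\on{rig}}^{\dagger}(r)$ by \cite[Theorem 6.3.13]{KPX14} up to an algebraic twist with vanishing Sen weights; genericity forbids nontrivial such twists, and the classification of $\varphi$-stable full flags on $D_{\on{crys}}(r)$ in the generic case shows every such flag comes from ordering the Frobenius eigenvalues, pinning down the data $(\ul\varphi, w)$.

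The main obstacle is the construction and formal smoothness of $\kappa$ in the non-regular setting. When Hodge-Tate weights collide, the naive map to the full flag variety $G/B$ fails to be smooth; Wu's resolution is to pass to the partial flag variety $G/P$, adjust the deformation functor to remember only the coarser filtration, and check smoothness via an explicit tangent-space computation on the Steinberg-type variety $\widetilde{\mf{g}}_P$. Once this geometric input is in place, the theorem reduces to the combinatorics of Schubert closures, and the full classification of refined companion points follows by ranging over all possible orderings $\ul\varphi$ of $\varphi_1, \dots, \varphi_n$.
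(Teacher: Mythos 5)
This theorem is quoted from \cite{BHSIHES} and \cite{Wu24}; the paper does not supply a proof of its own, so there is no internal argument to compare against. What you have written is a sketch of how the cited sources establish the result, and as such it is a faithful summary of the local models strategy: interpret $w_x$ as the relative position of the refinement flag and the Hodge filtration, construct a map from a formal neighborhood in the deformation space to a Steinberg/Grothendieck--Springer-type variety, show this map is formally smooth (BHSIHES in the regular case, Wu's extension to partial flag varieties when Hodge--Tate weights collide), and read off the companion-point criterion from the closure relations among Schubert cells, with exhaustion supplied by \cite[Theorem 6.3.13]{KPX14} and genericity. One small imprecision worth flagging: the local model in \cite{BHSIHES} is not simply a map to $G/P$ with image $\overline{X}_{w_x}$; it is a morphism to the fiber product $\widetilde{\mf{g}}_P\times_{\mf{g}}\widetilde{\mf{g}}$ (a Steinberg-type variety), and the completed local ring of $X_{\tri}^{\square}(\ol{r})$ at the given point is identified, up to a formally smooth morphism, with the completed local ring of a particular union of irreducible components of this variety indexed by elements $\ge w_x$. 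The Bruhat-order statement then follows from the closure relations on that variety rather than from Schubert varieties in $G/P$ directly. But the overall architecture you describe is correct.
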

\begin{corollary}
\label{corollary: GenericCrystallineOnTriangulineVariety}
    For any $i \ge 0$, if $x = ((r_{\nu},\delta_{\nu}),(r_{\nu^c},\delta_{\nu^c})) \in X^{\square, n}(\ol{\rho}_{\mf{m},\nu})\times \widehat{T}_L \times X^{\square, n}(\ol{\rho}_{\mf{m},\nu^c})\times \widehat{T}_L$ is such that $r_{\nu}, r_{\nu^c}$ are crystalline generic, then there exists an algebraic Hecke character $\chi:\Gal_F \to \ol{\Z}_p^{\times}$ such that if $d_{w,\chi, \ol{\nu}}(x) \in X_{\tri}^{\square}(\ol{\rho}_{\wt{\mf{m}}(\chi),\nu}\ol{\chi}_{\nu}^{-1})$, then $x \in X_{\on{tri}}^{\square}(\ol{\rho}_{\mf{m},\nu})\times X_{\on{tri}}^{\square}(\ol{\rho}_{\mf{m},\nu})$.
\end{corollary}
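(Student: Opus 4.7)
The strategy is to use \cref{theorem: LocalCompanionPoints} to reduce the triangulinity of the direct sum $r_\nu^{\mathrm{new}} := r_\nu \oplus r_{\nu^c}^\vee \otimes (\chi_\nu\chi_{\nu^c})^{-1}(1-2n)$ to triangulinity of each summand, for a carefully chosen algebraic Hecke character $\chi$. First, I would choose $\chi$ (satisfying the hypotheses of \cref{lemma: TwistByCharacters}) so that $r_\nu^{\mathrm{new}}$ is crystalline generic in the sense of \cref{theorem: LocalCompanionPoints}, and moreover so that the multisets of $\varphi^{[F_{\nu,0}:\Q_p]}$-eigenvalues on $D_{\mathrm{crys}}(r_\nu)$ and on $D_{\mathrm{crys}}(r_{\nu^c}^\vee\otimes(\chi_\nu\chi_{\nu^c})^{-1}(1-2n))$ are disjoint, with suitably separated Hodge--Tate weights. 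Because algebraic Hecke characters of the CM field $F$ form an infinite family parametrized by integer tuples $(a_\tau)$ subject only to $a_\tau + a_{\tau c} = w$, while the exclusion conditions to avoid are finite unions of explicit linear relations, such a $\chi$ exists.

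Assuming $d_{w,\chi,\ol{\nu}}(x) \in X_{\tri}^\square(\ol{\rho}_{\wt{\mf{m}}(\chi),\nu}\ol{\chi}_\nu^{-1})$, applying \cref{theorem: LocalCompanionPoints} to $r_\nu^{\mathrm{new}}$ shows that the parameter $\delta^{\mathrm{new}} := w^{-1}(\delta_\nu, -w_0^G \delta_{\nu^c}(1-2n)(\chi_\nu\chi_{\nu^c})^{-1})$ has the form $z^{-w'(\lambda^{\mathrm{new}})}\on{unr}(\ul{\varphi}^{\mathrm{new}})$ for some refinement $\ul{\varphi}^{\mathrm{new}}$ and Weyl element $w'\geq w_{x^{\mathrm{new}}}$, and the associated triangulation $\Fil_\bullet D_{\on{rig}}^\dagger(r_\nu^{\mathrm{new}})$ is induced by a $\varphi$-stable flag $\Fil_\bullet^\varphi D_{\mathrm{crys}}(r_\nu^{\mathrm{new}})$. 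The disjointness of Frobenius eigenvalues forces each $\Fil_i^\varphi$ to split as a direct sum with respect to the canonical decomposition $D_{\mathrm{crys}}(r_\nu^{\mathrm{new}}) = D_{\mathrm{crys}}(r_\nu) \oplus D_{\mathrm{crys}}(r_{\nu^c}^\vee\otimes(\chi_\nu\chi_{\nu^c})^{-1}(1-2n))$, since $\varphi$-stable subspaces respect the isotypic decomposition. This splitting descends to $(\varphi,\Gamma)$-modules, yielding induced triangulations on each summand.

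Finally, since $w \in W^{\oP}$ is a minimal length representative, it preserves the internal order within each of the two $n$-blocks, so the induced triangulation on $D_{\on{rig}}^\dagger(r_\nu)$ has graded pieces $\mc{R}(\delta_{\nu,i})$ appearing in the order $i=1, \dots, n$, exhibiting $r_\nu$ as trianguline of parameter $\delta_\nu$. Symmetrically, after untwisting and dualizing, $r_{\nu^c}$ is trianguline of parameter $\delta_{\nu^c}$. Invoking the converse direction of \cref{theorem: LocalCompanionPoints} (which is an if-and-only-if when the Galois representation is crystalline generic, even for non-regular parameters by Wu's extension) then places $(r_\nu, \delta_\nu) \in X_{\tri}^\square(\ol{\rho}_{\mf{m},\nu})$ and similarly for the other factor. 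The main technical obstacle I foresee is a clean verification of the splitting step together with a check that the Weyl group inequality $w' \geq w_{x^{\mathrm{new}}}$ descends to $w'|_{i} \geq w_{x_\nu}$ for the induced triangulation on each summand; this should follow from the compatibility of the Bruhat order with the block decomposition corresponding to $W^{\oP}$, but it is the step that demands the most careful bookkeeping.
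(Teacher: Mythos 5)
Your proposal is correct and follows the same overall strategy as the paper: choose $\chi$ so that the $2n$-dimensional representation $r_\nu\oplus(\chi_\nu\chi_{\nu^c})^{-1}r_{\nu^c}^\vee(1-2n)$ stays crystalline generic with well-separated Hodge--Tate weights (and Frobenius), then apply \cref{theorem: LocalCompanionPoints} to both the $2n$-dimensional representation and the two $n$-dimensional summands and compare Weyl group elements via the $W^{\oP}$-block structure. The one place your route diverges in emphasis is how you establish that $w'$ respects the block structure: you argue by physically splitting the $\varphi$-stable flag on $D_{\mathrm{crys}}$ using the disjointness of Frobenius eigenvalues, whereas the paper instead reads this off directly from the disjointness of Hodge--Tate weights on the parameter side (the parameter $w^{-1}(\delta_\nu,-w_0^G\delta_{\nu^c}(1-2n)\chi^{-1})$ has disjoint weight blocks, forcing $w'\in w^{-1}(S_n\times S_n)^{\Hom}$). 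Both routes are valid under the genericity hypotheses you impose, but the paper's weight-side argument lets it sidestep the explicit flag-splitting and weight bookkeeping you flag as potentially delicate; with your approach you still need the weight disjointness to identify the induced parameters on the summands as $\delta_\nu$ (and not just some filtration), so in the end the two sets of disjointness conditions are both being used. Your identification of the key remaining step — that $w'\geq w_{x^{\mathrm{new}}}$ with $w'\in w^{-1}(S_n\times S_n)^{\Hom}$ descends to the block components because $w\in W^{\oP}$ is a minimal length representative — matches precisely what the paper asserts and would be the place to make the argument fully rigorous.
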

\begin{proof}
First, we claim we can choose a crystalline character $\chi: \Gal_F \to \mO_L^{\times}$ (enlarging $L$ if necessary) such that $r_{\nu}\oplus (\chi_{\nu}\chi_{\nu^c})^{-1}r_{\nu^c}^{\vee}(1-2n)$ is also crystalline generic and the representations $r_{\nu},(\chi_{\nu}\chi_{\nu^c})^{-1}r_{\nu^c}^{\vee}(1-2n)$ pairwise share no Hodge--Tate weights in common. The statement about Hodge--Tate weights follows from letting the weight of $(\chi_{\nu}\chi_{\nu^c})^{-1}$ be very large, and the statement about genericity amounts to choosing $\chi_{\nu}\chi_{\nu^c}$ to have crystalline Frobenius avoiding a finite list of elements of some p-adic field $L$, which we can ensure holds by taking the field of rationality of the Frobenius to be sufficiently large. Let $\mathbf{h}$ be the dominant vector of Hodge--Tate weights for $\rho_{\nu}$, $a_{\chi}:= (-\on{wt}_{\tau}(\chi_{\nu}\chi_{\nu^c}),\dots, -\on{wt}_{\tau}(\chi_{\nu}\chi_{\nu^c}))_{\tau} \in (\Z^n)^{\Hom(F_{\nu},\ol{\Q}_p)}$, and for any $k \in \Z$ write $(\ul{k}) = (k, \dots, k) \in (\Z^n)^{\Hom(F_{\nu}, \ol{\Q}_p)}$. Moreover, we may also choose $\chi$ so that $(\mathbf{h}^{c,\vee} + a_{\chi} + \ul{(2n-1)}, \mathbf{h})\in (\Z^{2n})^{\Hom(F_{\ol{\nu}}^+,\ol{\Q}_p)}$ is a dominant vector. Now suppose $d_{w,\chi,\ol{\nu}}(x) \in X_{\tri}^{\square}(\ol{\rho}_{\wt{\mf{m}}(\chi),\nu}\ol{\chi_{\nu}}^{-1})$. Letting $R_{r_{\nu}}, R_{(\chi_{\nu}\chi_{\nu^c})^{-1}r_{\nu^c}^{\vee}}$ be refinements for $r_{\nu}, (\chi_{\nu}\chi_{\nu^c})^{-1}r_{\nu^c}^{\vee}$, we let $\varphi_{\chi} = w^{-1}(\mc{R}_{r_{\nu}}, \mc{R}_{(\chi_{\nu}\chi_{\nu^c})^{-1}r_{\nu^c}}^{\vee})$ be a refinement ordered according to the fixed $w\in W^{\oP}$. We can then apply \cref{theorem: LocalCompanionPoints} to $d_{w,\chi,\ol{\nu}}(x)$ to write $d_{w,\chi,\ol{\nu}}(x) = (r_{\nu} \oplus (\chi_{i,\nu}\chi_{\nu^c})^{-1}r_{\nu^c}^{\vee}(1-2n), z^{-w'(\mathbf{h}^{c,\vee} + a_{\chi} + \ul{(2n-1)}, \mathbf{h})}\on{unr}(\varphi_{\chi}))$ for $\mathbf{h} \in (\Z^n)^{\Hom(F_{\nu},\ol{\Q}_p)}$, for some $w' \ge w_{\varphi_{\chi}}$. By definition of $d_{w,\chi, \ol{\nu}}$, and the assumption on pairwise sharing no Hodge--Tate weights, we have $w' \in w^{-1}(S_n\times S_n)^{\Hom(F_{\ol{\nu}}^+,\ol{\Q}_p)}$. On the other hand by construction we have $w_{\varphi_{\chi}} \in (w)^{-1}(S_n\times S_n)^{\Hom(F_{\ol{\nu}}^+,\ol{\Q}_p)}$. Thus writing out $w' = w^{-1}(w_1, w_2)$ and $w_{\mc{R}} = (w)^{-1}(w_{r_{\nu}},w_{r_{\nu^c}^{\vee}})$, we claim the inequality $w'\ge w_{\mc{R}}$ implies $w_1\ge w_{r_{\nu}}$. Indeed, this follows from the fact that $w\in W^{\oP}$ is a minimal length representative. Then another application of \cref{theorem: LocalCompanionPoints} implies that $(r, z^{-w_1(\mathbf{h})}\on{unr}(\varphi)) \in X_{\tri}^{\square}(\ol{\rho}_{\mf{m},\nu})$, as desired. The same argument works for $r_{\nu^c}^{\vee}$.
\end{proof}
\noindent An extension of significant interest would be to show if there is a Zariski open neighborhood $d_{w,\chi,\ol{\nu}}^{-1}(X_{\tri}^{\square}(\ol{\rho}_{\wt{\mf{m}}(\chi),\nu}(\ol{\chi}_{\nu}^{-1}))\supset U \ni x$ such that $U \subset X_{\on{tri}}^{\square}(\ol{\rho}_{\mf{m},\nu}).$
In particular, at regular crystalline points then one could attempt to locally model eigenvarieties as closed subvarieties of the trianguline local models constructed in \cite{BHSIHES}.

\bibliographystyle{alpha}
\bibliography{references}

\newcommand{\etalchar}[1]{$^{#1}$}
\begin{thebibliography}{JNWE24}

\bibitem[A'C24]{AC24}
Lambert A'Campo.
\newblock Rigidity of automorphic {G}alois representations over {CM} fields.
\newblock {\em Int. Math. Res. Not. IMRN}, (6):4541--4623, 2024.

\bibitem[ACC{\etalchar{+}}23]{10AuthorPaper}
Patrick~B. Allen, Frank Calegari, Ana Caraiani, Toby Gee, David Helm, Bao~V. Le~Hung, James Newton, Peter Scholze, Richard Taylor, and Jack~A. Thorne.
\newblock Potential automorphy over {CM} fields.
\newblock {\em Ann. of Math. (2)}, 197(3):897--1113, 2023.

\bibitem[AS08]{AS08}
Avner Ash and Glenn Stevens.
\newblock {$p$}-adic deformations of arithmetic cohomology.
\newblock {\em preprint}, 2008.

\bibitem[BC09]{BC09}
Jo\"{e}l Bella\"{\i}che and Ga\"{e}tan Chenevier.
\newblock Families of {G}alois representations and {S}elmer groups.
\newblock {\em Ast\'{e}risque}, (324):xii+314, 2009.

\bibitem[BGR84]{BGR84}
S.~Bosch, U.~G\"untzer, and R.~Remmert.
\newblock {\em Non-{A}rchimedean analysis}, volume 261 of {\em Grundlehren der mathematischen Wissenschaften [Fundamental Principles of Mathematical Sciences]}.
\newblock Springer-Verlag, Berlin, 1984.
\newblock A systematic approach to rigid analytic geometry.

\bibitem[BHS17]{BHSAnnalen}
Christophe Breuil, Eugen Hellmann, and Benjamin Schraen.
\newblock Une interpr\'{e}tation modulaire de la vari\'{e}t\'{e} trianguline.
\newblock {\em Math. Ann.}, 367(3-4):1587--1645, 2017.

\bibitem[BHS19]{BHSIHES}
Christophe Breuil, Eugen Hellmann, and Benjamin Schraen.
\newblock A local model for the trianguline variety and applications.
\newblock {\em Publ. Math. Inst. Hautes \'Etudes Sci.}, 130:299--412, 2019.

\bibitem[BS73]{BorelSerre}
A.~Borel and J.-P. Serre.
\newblock Corners and arithmetic groups.
\newblock {\em Comment. Math. Helv.}, 48:436--491, 1973.

\bibitem[Buz07]{Buz07}
Kevin Buzzard.
\newblock Eigenvarieties.
\newblock In {\em {$L$}-functions and {G}alois representations}, volume 320 of {\em London Math. Soc. Lecture Note Ser.}, pages 59--120. Cambridge Univ. Press, Cambridge, 2007.

\bibitem[CD14]{ColmezDospinescu}
Pierre Colmez and Gabriel Dospinescu.
\newblock Compl\'{e}t\'{e}s universels de repr\'{e}sentations de {$\text{GL}_2(\Bbb{Q}_p)$}.
\newblock {\em Algebra Number Theory}, 8(6):1447--1519, 2014.

\bibitem[CE12]{CalEmSurvey}
Frank Calegari and Matthew Emerton.
\newblock Completed cohomology---a survey.
\newblock In {\em Non-abelian fundamental groups and {I}wasawa theory}, volume 393 of {\em London Math. Soc. Lecture Note Ser.}, pages 239--257. Cambridge Univ. Press, Cambridge, 2012.

\bibitem[Che04]{Che04}
Ga\"etan Chenevier.
\newblock Familles {$p$}-adiques de formes automorphes pour {${\rm GL}_n$}.
\newblock {\em J. Reine Angew. Math.}, 570:143--217, 2004.

\bibitem[Che14]{Che14}
Ga\"etan Chenevier.
\newblock The {$p$}-adic analytic space of pseudocharacters of a profinite group and pseudorepresentations over arbitrary rings.
\newblock In {\em Automorphic forms and {G}alois representations. {V}ol. 1}, volume 414 of {\em London Math. Soc. Lecture Note Ser.}, pages 221--285. Cambridge Univ. Press, Cambridge, 2014.

\bibitem[CHJ17]{CHJ17}
Przemys\l~aw Chojecki, D.~Hansen, and C.~Johansson.
\newblock Overconvergent modular forms and perfectoid {S}himura curves.
\newblock {\em Doc. Math.}, 22:191--262, 2017.

\bibitem[Clo90]{Clo90}
Laurent Clozel.
\newblock Motifs et formes automorphes: applications du principe de fonctorialit\'e.
\newblock In {\em Automorphic forms, {S}himura varieties, and {$L$}-functions, {V}ol.\ {I} ({A}nn {A}rbor, {MI}, 1988)}, volume~10 of {\em Perspect. Math.}, pages 77--159. Academic Press, Boston, MA, 1990.

\bibitem[CM09]{CalMaz09}
Frank Calegari and Barry Mazur.
\newblock Nearly ordinary {G}alois deformations over arbitrary number fields.
\newblock {\em J. Inst. Math. Jussieu}, 8(1):99--177, 2009.

\bibitem[CN23]{CN2023}
Ana Caraiani and James Newton.
\newblock On the modularity of elliptic curves over imaginary quadratic fields, 2023.

\bibitem[CNT24]{CNT24}
Laurent Clozel, James Newton, and Jack~A. Thorne.
\newblock Non-abelian base change for symmetric power liftings of holomorphic modular forms, 2024.

\bibitem[Col08]{Col08}
Pierre Colmez.
\newblock Repr\'esentations triangulines de dimension 2.
\newblock Number 319, pages 213--258. 2008.
\newblock Repr\'esentations $p$-adiques de groupes $p$-adiques. I. Repr\'esentations galoisiennes et $(\phi,\Gamma)$-modules.

\bibitem[Con99]{Con99}
Brian Conrad.
\newblock Irreducible components of rigid spaces.
\newblock {\em Ann. Inst. Fourier (Grenoble)}, 49(2):473--541, 1999.

\bibitem[CS19]{CS19}
Ana Caraiani and Peter Scholze.
\newblock On the generic part of the cohomology of non-compact unitary shimura varieties, 2019.

\bibitem[dJ95]{deJong95}
A.~J. de~Jong.
\newblock Crystalline {D}ieudonn\'e{} module theory via formal and rigid geometry.
\newblock {\em Inst. Hautes \'Etudes Sci. Publ. Math.}, (82):5--96, 1995.

\bibitem[Eme06a]{Emerton_Jacquet_I}
Matthew Emerton.
\newblock Jacquet modules of locally analytic representations of {$p$}-adic reductive groups. {I}. {C}onstruction and first properties.
\newblock {\em Ann. Sci. \'{E}cole Norm. Sup. (4)}, 39(5):775--839, 2006.

\bibitem[Eme06b]{EmertonInterpolation}
Matthew Emerton.
\newblock On the interpolation of systems of eigenvalues attached to automorphic {H}ecke eigenforms.
\newblock {\em Invent. Math.}, 164(1):1--84, 2006.

\bibitem[Eme07]{Emerton_Jacquet_II}
Matthew Emerton.
\newblock Jacquet modules of locally analytic representations of {$p$}-adic reductive groups. {II}. {T}he relation to parabolic induction.
\newblock {\em To appear in J. Institut Math. Jussieu.}, 2007.

\bibitem[Eme17]{Emerton_la_reps}
Matthew Emerton.
\newblock Locally analytic vectors in representations of locally {$p$}-adic analytic groups.
\newblock {\em Mem. Amer. Math. Soc.}, 248(1175):iv+158, 2017.

\bibitem[Fra98]{Fr98}
Jens Franke.
\newblock Harmonic analysis in weighted {$L_2$}-spaces.
\newblock {\em Ann. Sci. \'Ecole Norm. Sup. (4)}, 31(2):181--279, 1998.

\bibitem[Fu22a]{Fu22Kostant}
Weibo Fu.
\newblock The cohomology of $p$-adic distribution representations, 2022.

\bibitem[Fu22b]{FuDerived}
Weibo Fu.
\newblock A derived construction of eigenvarieties, 2022.

\bibitem[GN22]{GeeNewton}
Toby Gee and James Newton.
\newblock Patching and the completed homology of locally symmetric spaces.
\newblock {\em J. Inst. Math. Jussieu}, 21(2):395--458, 2022.

\bibitem[Han17]{Han17}
David Hansen.
\newblock Universal eigenvarieties, trianguline {G}alois representations, and {$p$}-adic {L}anglands functoriality.
\newblock {\em J. Reine Angew. Math.}, 730:1--64, 2017.
\newblock With an appendix by James Newton.

\bibitem[Hau16]{Hau16}
Julien Hauseux.
\newblock Extensions entre s\'eries principales {$p$}-adiques et modulo {$p$} de {$G(F)$}.
\newblock {\em J. Inst. Math. Jussieu}, 15(2):225--270, 2016.

\bibitem[Hel12]{He12}
Eugen Hellmann.
\newblock Families of trianguline representations and finite slope spaces, 2012.

\bibitem[Hev23]{Hevesi23}
Bence Hevesi.
\newblock Ordinary parts and local-global compatibility at $\ell=p$, 2023.

\bibitem[HLTT16]{HLTT16}
Michael Harris, Kai-Wen Lan, Richard Taylor, and Jack Thorne.
\newblock On the rigid cohomology of certain {S}himura varieties.
\newblock {\em Res. Math. Sci.}, 3:Paper No. 37, 308, 2016.

\bibitem[HT01]{HT01}
Michael Harris and Richard Taylor.
\newblock {\em The geometry and cohomology of some simple {S}himura varieties}, volume 151 of {\em Annals of Mathematics Studies}.
\newblock Princeton University Press, Princeton, NJ, 2001.
\newblock With an appendix by Vladimir G. Berkovich.

\bibitem[JNWE24]{JNWE24}
Christian Johansson, James Newton, and Carl Wang-Erickson.
\newblock Moduli stacks of galois representations and the $p$-adic local langlands correspondence for $\mathrm{GL}_2(\mathbb{Q}_p)$, 2024.

\bibitem[Jon11]{Jon11}
Owen T.~R. Jones.
\newblock An analogue of the {BGG} resolution for locally analytic principal series.
\newblock {\em J. Number Theory}, 131(9):1616--1640, 2011.

\bibitem[Kis03]{Ki03}
Mark Kisin.
\newblock Overconvergent modular forms and the {F}ontaine-{M}azur conjecture.
\newblock {\em Invent. Math.}, 153(2):373--454, 2003.

\bibitem[Kos21]{KoshikawaGeneric}
Teruhisa Koshikawa.
\newblock On the generic part of the cohomology of local and global shimura varieties, 2021.

\bibitem[KPX14]{KPX14}
Kiran~S. Kedlaya, Jonathan Pottharst, and Liang Xiao.
\newblock Cohomology of arithmetic families of {$(\varphi,\Gamma)$}-modules.
\newblock {\em J. Amer. Math. Soc.}, 27(4):1043--1115, 2014.

\bibitem[Liu15]{Liu15}
R.~Liu.
\newblock Triangulation of refined families.
\newblock {\em Comment. Math. Helv.}, 90(4):831--904, 2015.

\bibitem[NT16]{NT16}
James Newton and Jack~A. Thorne.
\newblock Torsion {G}alois representations over {CM} fields and {H}ecke algebras in the derived category.
\newblock {\em Forum Math. Sigma}, 4:Paper No. e21, 88, 2016.

\bibitem[NT21]{NT21}
James Newton and Jack~A. Thorne.
\newblock Symmetric power functoriality for holomorphic modular forms.
\newblock {\em Publ. Math. Inst. Hautes \'Etudes Sci.}, 134:1--116, 2021.

\bibitem[RJRC22]{SolidLocallyAnalyticI}
Joaqu\'{\i}n Rodrigues~Jacinto and Juan~Esteban Rodr\'{\i}guez~Camargo.
\newblock Solid locally analytic representations of {$p$}-adic {L}ie groups.
\newblock {\em Represent. Theory}, 26:962--1024, 2022.

\bibitem[RJRC23]{SolidLocallyAnalyticII}
Joaqu\'{\i}n Rodrigues~Jacinto and Juan~Esteban Rodríguez~Camargo.
\newblock Solid locally analytic representations, 2023.

\bibitem[Sch15]{Sch15}
Peter Scholze.
\newblock On torsion in the cohomology of locally symmetric varieties.
\newblock {\em Ann. of Math. (2)}, 182(3):945--1066, 2015.

\bibitem[Shi14]{Shin14}
Sug~Woo Shin.
\newblock On the cohomological base change for unitary similitude groups.
\newblock {\em Compos. Math.}, 150(2):191--228, 2014.
\newblock Appendix to \textit{Galois representataions associated to holomorphic limits of discrete series,} by Wushi Goldring.

\bibitem[ST02a]{ST02}
P.~Schneider and J.~Teitelbaum.
\newblock Banach space representations and {I}wasawa theory.
\newblock {\em Israel J. Math.}, 127:359--380, 2002.

\bibitem[ST02b]{STJAMS}
Peter Schneider and Jeremy Teitelbaum.
\newblock Locally analytic distributions and {$p$}-adic representation theory, with applications to {${\rm GL}_2$}.
\newblock {\em J. Amer. Math. Soc.}, 15(2):443--468, 2002.

\bibitem[ST03]{ST03}
Peter Schneider and Jeremy Teitelbaum.
\newblock Algebras of {$p$}-adic distributions and admissible representations.
\newblock {\em Invent. Math.}, 153(1):145--196, 2003.

\bibitem[Tar23]{Tarrach2023}
Guillem Tarrach.
\newblock $s$-arithmetic (co)homology and $p$-adic automorphic forms, 2023.

\bibitem[Urb11]{Urb11}
Eric Urban.
\newblock Eigenvarieties for reductive groups.
\newblock {\em Ann. of Math. (2)}, 174(3):1685--1784, 2011.

\bibitem[Wei94]{WeibelHomAlg}
Charles~A. Weibel.
\newblock {\em An introduction to homological algebra}, volume~38 of {\em Cambridge Studies in Advanced Mathematics}.
\newblock Cambridge University Press, Cambridge, 1994.

\bibitem[Wu24]{Wu24}
Zhixiang Wu.
\newblock Local models for the trianguline variety and partially classical families.
\newblock {\em Ann. Sci. \'Ec. Norm. Sup\'er. (4)}, 57(3):615--711, 2024.

\end{thebibliography}

\end{document}